\titleformat{\subsection}{\it}{\thesubsection.\enspace}{1.5pt}{}
\titleformat{\subsubsection}{\it}{\thesubsubsection.\enspace}{1.5pt}{}
\newtheorem{theorem}{Theorem}[section]
\newtheorem{lemma}[theorem]{Lemma}
\newtheorem{proposition}[theorem]{Proposition}
\newtheorem{remark}[theorem]{Remark}
\newtheorem{corollary}[theorem]{Corollary}
\numberwithin{equation}{section}
\newcommand\tu{{\tilde{u}}}
\newcommand\tv{{\tilde{v}}}
\newcommand\tw{{\tilde{w}}}
\newcommand\tue{{\tilde{u}^{\epsilon}}}
\newcommand\twe{{\tilde{w}^{\epsilon}}}
\newcommand\use{{(u^s+\tilde{u}^{\epsilon})}}
\newcommand\uve{{\big(\partial_x\tilde u^{\epsilon}+K\partial_{y}\tilde u^{\epsilon}+\partial_{y}K(u^s+\tilde u^{\epsilon})\big)}}
\newcommand\uveo{{\big(\partial_x\tilde u_{0}+\partial_{y}(K(u^s_{0}+\tilde u_{0}))\big)}}
\newcommand\uvmuo{{(\partial_x\mu^{\epsilon}+\partial_{y}\mu^{\epsilon})}}
\newcommand\tvae{{\tilde{\varphi}^{\epsilon}}}
\newcommand\ir{\int_{{\mathbb{R}^3_+}}}
\newcommand\useo{{(u^s_{0}+\tilde{u}_{0})}}
\newcommand\KK{\big(K(u^s + \tilde{u}^\epsilon)\big)}
\begin{document}
	\title{Long time well-posedness for the 3D Prandtl boundary
		layer equations with a special structure \hspace{-4mm}}
	\author{Yuming Qin$^{\ast}$ \quad Junchen Liu \\[10pt]
		\small {School of Mathematics and Statistics, Institute for Nonlinear Sciences, Donghua University,}\\
		\small {201620, Shanghai, P. R. China}\\[5pt]
	}

	\footnotetext{Email: \it $^{\ast}$   yuming@dhu.edu.cn, yuming\_qin@hotmail.com}
	\date{}
	
	\maketitle

	\begin{abstract}
		{This paper is concerned with  existence,
			uniqueness and stability  of  the  solution for the 3D   Prandtl equation in a polynomial weighted  Sobolev space. The main novelty of this paper is to directly  prove the long time well-posedness to  3D   Prandtl equation under monotonicity condition $\partial_{z} u >0$ and  a special structural assumption $v=Ku$ $\big(\partial_{z}\big(\frac{v}{u}\big) \equiv 0\big)$  by the energy method. Moreover, the solution's lifespan can be extended to any large $T$, provided that the initial data with a perturbation lie in the monotonic shear profile of small size $e^{-T}$.	
			This result extends the local well-posedness results established 
			by Liu-Wang-Yang  \cite{Liu-Wang-Yang-1-2017} (Adv. Math. 308 (2017) 1074-1126) 
			and Qin-Wang \cite{Qin-Wang-2024} (J. Math. Pure.  Appl. 194 (2025)  103670) for the 3D Prandtl equations to long-time well-posedness.
%			This result improves the work \cite{Xu-Zhang-2017} by Xu and Zhang  on the 2D	Prandtl equations, achieving an extension to the three-dimensional case.
		
		}

		\vspace*{5pt}
		\noindent{\it {\rm Keywords}}: 3D Prandtl boundary layer equation; energy method; well-posedness theory;  special
		structure.
		
	\vspace*{5pt}
		\noindent{\it {\rm  Mathematics Subject Classification:}}\ {\rm 35M13; 35Q35; 76D10; 76D03; 76N20}
	\end{abstract}

	\tableofcontents

\section{Introduction}
In this paper, we consider the long time well-posedness of the 3D Prandtl  equations in domain
 $\mathbb{R}^{3}_{+} \overset{def}{=} \{(x, y, z)|(x, y, z) \in \mathbb{R}^{2}\times \mathbb{R}^+; z>0 \}$, which read a fluid flow
\begin{equation}
\left\{\begin{aligned}
	&\partial_{t}u+u\partial_{x}u+v\partial_{y}u+w\partial_{z}u+\partial_{x}P=\partial^{2}_{z}u,\\
	&\partial_{t}v+u\partial_{x}v+v\partial_{y}v+w\partial_{z}v+\partial_{y}P=\partial^{2}_{z}v,\\
	&\partial_{x}u+\partial_{y}v+\partial_{z}w=0,\\
	&(u, v, w)|_{z=0}=(0, 0, 0), \ \ \underset{z\rightarrow +\infty}{\lim}(u,v)=\big(U(t, x, y), V(t, x, y)\big),\\
	&(u,v)|_{t=0}=\big(u_{0}(x,y,z), v_{0}(x,y,z)\big),
\end{aligned}\right.	
\label{3d prandtl}
\end{equation}
where $(u,v)$ and $w$ are  the tangential component and the normal  component of the velocity field respectively. $\big(U(t, x, y), V(t, x, y)\big)$ and $P(t, x, y, z)$ are the boundary traces of the tangential velocity field and pressure of the outer flow, satisfying Bernoulli equations
\begin{equation}
	\left\{\begin{aligned}
		\partial _{t}U+(U\partial _{x} +V\partial _{y} )U+\partial _{x}P=0,\\
		\partial _{t}V+(U\partial _{x} +V\partial _{y}  )V+\partial _{y}P=0.
	\end{aligned}\right.	
	\label{Bernoulli}
\end{equation}

The Prandtl boundary layer equations were first introduced by Ludwig Prandtl in 1904 to describe fluid flow near a solid boundary.      When a fluid flows over a solid surface, such as the wing of an airplane, the viscosity of the fluid leads to the formation of a thin layer near the boundary, called the boundary layer, where the effects of viscosity are significant.     Outside this thin boundary layer, the viscosity is negligible, and the flow can be approximated by the inviscid Euler equations.   One of the central problems in fluid mechanics is the rigorous justification of the inviscid limit of the Navier-Stokes equations with no-slip boundary conditions.     In this limit, the viscosity tends to zero, and the solutions of the Navier-Stokes system are expected to converge to the solutions of the Euler equations, except near the boundary, where the Prandtl boundary layer plays a crucial role.     A key step in proving the inviscid limit is to establish the well-posedness of the Prandtl system, which governs the boundary layer flow. Without this well-posedness, the connection between solutions of the Navier-Stokes and Euler equations in such limits would remain elusive.

Early studies of the Prandtl equations primarily focused on two-dimensional (2D) flows, where considerable progress was made, firstly by Oleinik \cite{Oleinik-1966}, who  proved
the local existence and uniqueness in H\"older
spaces for the 2D Prandtl equations under the monotonicity condition on the tangential velocity.
This result, along with an expanded introduction to boundary layer theory, was presented in the classical  book \cite{Oleinik-Samokhin-1999} by Oleinik and Samokhin. By using  a so-called  Crocco transformation developed in \cite{Oleinik-1966,Oleinik-Samokhin-1999},  Xin and Zhang \cite{Xin-Zhang-2004} obtained a global existence of BV weak solutions to the 2D unsteady
Prandtl system  with the addition of favorable condition $(\partial_{x}P \leq 0)$ on pressure. Motivated
by a direct energy method, instead of considering Crocco transformation, which can  recover Oleinik's well-posedness results,  Alexandre et al. \cite{Alexandre-Wang-Xu-Yang-2015}
proved that the solution exists locally with respect to time in the weight Sobolev spaces via 
applying Nash-Moser iteration, 
when the initial datum is a small
perturbation of a monotonic shear flow,  but the life span of the solution is very short.
This is a bit different from \cite{Alexandre-Wang-Xu-Yang-2015}. 
Masmoudi and Wong \cite{Masmoudi-Wong-2015}  obtained
a prior estimate of the  regularized Prandtl equations  by using classical energy method, and then proved the local existence to the two-dimensional Prandtl equations  by using weak convergence method. The key
observation is that  
a cancellation
property in the convection terms to overcome the loss of $x$-derivative in the tangential direction, which is valid under the monotonicity assumption.
Based on the works \cite{Masmoudi-Wong-2015,Alexandre-Wang-Xu-Yang-2015},
the first result of global existence of  solutions to the  2D Prandtl equations  in the Sobolev space with a polynomial weight is traced back to Xu and Zhang \cite{Xu-Zhang-2017}, who obtained the long time well-posedness  on the half plane,  and proved that the lifespan 
$T$ of solutions can be arbitrarily large when the initial datum is a small perturbation around the monotonic shear profile.  Moreover, there
are some results on the two-dimensional  Prandtl boundary layer equations  under the monotonicity assumption, see \cite{Kukavica-Masmoudi-Vicol-Wong-2014,Fan-Ruan-Yang-2021,Wang-Xie-Yang-2015,Chen-Wang-Zhang-2018}.

In violation of Oleinik's monotonicity setting, some instability and ill-posedness mechanisms are unfiltered out. Grenier \cite{Grenier-2000},
Hong and Hunter \cite{Hong-Hunter-2003}
gave  nonlinearly unstable solutions of Prandtl boundary layer equations. 
Some results of ill-posedness to the 2D  Prandtl  equations were established 
for linear cases in \cite{Gerard-Varet-Dormy-2010,Liu-Yang-2017} and 
nonlinear cases in \cite{Gerard-Varet-Nguyen-2010,Guo-Nguyen-2011}. 
Finite-time blow-up of smooth solutions with certain class of initial data was showed by E-Engquist \cite{E-Engquist-1997}. Recently, Dalibard et al. \cite{Dalibard-Dietert-GerardVaret-Marbach-2018}
considered a unsteady interactive boundary layer  model, which is a famous extension of the Prandtl equation, and then studied
linear well-posedness 
and  strong unrealistic instabilities.

Without Oleinik's monotonicity assumption for the  2D case, the solutions  and data are
desired to be in the analytic or Gevrey classes.
For the framework of the analyticity, by using a Cauchy-Kowalewski 
argument, Sammartino and Caflisch \cite{Sammartino-Caflisch-1-1998,Sammartino-Caflisch-2-1998} first proved
local well-posedness to the 2D  Prandtl boundary (also holds on the 3D case)
with initial data that are analytic in $x$-variable and  $y$-variable. 
The result in \cite{Sammartino-Caflisch-1-1998,Sammartino-Caflisch-2-1998}
was later improved by Lombardo, Cannone and Sammartino \cite{Lombardo-Cannone-Sammartino-2003} via removing the requirement of analyticity
in $y$ variable on the initial data. This improvement relies on the regularizing effect of the diffusion operator $\partial_{t}-\partial_{y}^{2}$.
For a complete survey on the analyticity hypothesis for the data,
we refer the readers to \cite{Kukavica-Vicol-2013,Cannone-Lombardo-Sammartino-2001,Cannone-Lombardo-Sammartino-2013,Kukavica-Masmoudi-Vicol-Wong-2014,Igntova-Vicol-2016,Zhang-Zhang-2016,Paicu-Zhang-2021} and the references therein.
For the framework of the Gevrey class,
G\'erard-Varet and Masmoudi \cite{Gerard-Varet-Masmoudi-Gevrey-2015}
first  proved
the local well-posedness of the two-dimensional Prandtl  equations 
for the initial data without analyticity or monotonicity that belong to the Gevrey class $\frac{7}{4}$.
The Gevrey index $\sigma=\frac{7}{4}$  in \cite{Gerard-Varet-Masmoudi-Gevrey-2015}  was extended to  $\sigma \in [\frac{3}{2}, 2]$ in  \cite{Li-Yang-2020}, 
for data that are small perturbations of a shear flow with a single
non-degenerate critical point,  where $\sigma=2$ is optimal by combining
with the ill-posedness results in  \cite{Gerard-Varet-Dormy-2010}.
The well-posedness for the linearized Prandtl equation around a non-monotonic shear flow was   obtained by Chen, Wang and Zhang \cite{Chen-Wang-Zhang-2018} in Gevery class $2-\theta$ for any $\theta>0$.
After that, Dietert and G\'ervard-Varet \cite{Dietert-Gerard-Varet-2019}
achieved the local well-posedness  for the initial data with Gevrey class $2$ in the horizontal variable $x$ and Sobolev regularity in normal variable $y$, and further improved the result of \cite{Gerard-Varet-Masmoudi-Gevrey-2015}, which is due to the removal of single non-degenerate critical points on the Gevrey setting. 
Inspired by aforementioned works of local existence  especially  \cite{Dietert-Gerard-Varet-2019},
Wang, Wang and Zhang \cite{Wang-Wang-Zhang-2024} proved global existence of Gevrey-2 small solutions,
which is an extension from small  analytic data in \cite{Paicu-Zhang-2021} to optimal Gevrey regular data.

Compared to the 2D case, the results of the three-dimensional boundary layer equations were very few. 
A well-posedness theory for the
three-dimensional Prandtl equations was first studied by Sammartino and Caflisch \cite{Sammartino-Caflisch-1-1998,Sammartino-Caflisch-2-1998}
in the analytic case. 
 Qin and  Wang \cite{Qin-Wang-2024}, Liu, Yang and Wang  \cite{Liu-Wang-Yang-1-2017} obtained the local existence 
of solutions to the 3D Prandtl equations with a special structure by the energy method. 
Later on, Liu, Yang and Wang  \cite{Liu-Wang-Yang-1-2017} also \cite{Liu-Wang-Yang-3-2017}
gave an ill-posedness criterion  which means that 3D Prandtl equations can be linearly unstable around the shear flow  even under the monotonic conditions. 
Without any structual assumption, the local well-posedness  was solved by 
Li, Masmoudi and Yang \cite{Li-Masmoudi-Yang-2021} based on the establishment of  a novel  cancellation in Gevrey spaces with the optimal class of regularity 2.
Gevrey well-posedness with Gevrey index$\leq2$ of the 2D and 3D Prandtl equations of degenerate hyperbolic type was  proved in \cite{Li-Xu-2021}.
%Compared to the 2D cases, the results of the three-dimensional boundary layer equations were very few. 
%A well-posedness theory for the
%three-dimensional Prandtl equations was first studied by Sammartino and Caflisch \cite{Sammartino-Caflisch-1-1998,Sammartino-Caflisch-2-1998}
%in the analytic case. 
% Qin and  Wang \cite{Qin-Wang-2024}, Liu, Yang and Wang  \cite{Liu-Wang-Yang-1-2017} obtained the local existence 
%to 3D Prandtl equations with a special structure by the energy method. 
%Whereafter, Liu, Yang and Wang  \cite{Liu-Wang-Yang-1-2017} also \cite{Liu-Wang-Yang-3-2017}
% gave an ill-posedness criterion  which means that 3D Prandtl equations can be linearly unstable around the shear flow  even under the monotonic conditions. 
%Without any structual assumption, the local well-posedness  was solved by 
%Li, Masmoudi and Yang \cite{Li-Masmoudi-Yang-2021} based on the establishment of  a novel  cancellation in Gevrey spaces with the optimal class of regularity 2.
% Gevrey well-posedness with Gevrey index$\leq2$ to the 2D and 3D Prandtl equations of degenerate hyperbolic type was  proved in \cite{Li-Xu-2021}.
 Recently, in \cite{CRY2025}, the local well-posedness of the 3D compressible boundary layer equation is obtained when the initial datum is real-analytic in the tangential direction and has Sobolev regularity in the normal direction.

In the aforementioned works, only local-posedness in three-dimensional case are achieved. Global existence of weak solutions to 3D  Prandtl  equations and 3D axially
symmetric Prandtl  equations was obtained by Liu et al. \cite{Liu-Wang-Yang-2-2017} and Pan et al. \cite{Pan-Xu-2024}, respectively. In  the analytical framework,
Zhang and Zhang \cite{Zhang-Zhang-2016} showed that 
the Prandtl system  in $\mathbb{R}_{+}\times \mathbb{R}^{d-1} (d=2, 3)$ has a unique solution with the lifespan $T_{\epsilon} \geq \epsilon^{-\frac{4}{3}}$. 
When initial
datum is real-analytic with respect to the tangential variable, 
Lin and Zhang \cite{Lin-Zhang-2020} got an almost global existence solution by introducing new linearly-good unknowns  for the 3D Prandtl system whose lifespan is greater than $(\epsilon^{-1}/\log(\epsilon^{-1}))$. The analytical results  are extended to Gevrey-2 spaces by Pan and Xu \cite{Pan-Xu-2022}. Moreover,
the lifespan of the Gevrey-2 solution is at least of size $\epsilon^{M}$ if the initial data are  with size of $\epsilon$.

%Long time existence of 3D Prnadlt equations  was shown by Zhang-Zhang \cite{Zhang-Zhang-2016}  and Pan-Xu \cite{Pan-Xu-2022} in generalized Gevrey spaces. 

%In order to overcome the difficulty of coupling the degeneracy of viscous dissipation with the derivative loss of non-local terms in 3D Prandlt equations,  introduced new linearly-good unknowns and obtained a result of almost global existence.
To our best knowledge, so far there is no result  concerning on the long time
behavior of solutions for the 3D Prandtl equations in Sobolev framework. This is our preliminary interest of this paper. The main purpose
of this paper is to achieve that  the long time well-posedness, which improve the result of \cite{Xu-Zhang-2017} to the 3D  setting.
Since the appearance of the secondary flow in the 3D Prandtl equations, the monotonicity assumption  is
insufficient to ensure the long time  well-posedness of Prandtl equations in Sobolev space. Inspired by  \cite{Liu-Wang-Yang-3-2017,Liu-Wang-Yang-1-2017,Qin-Wang-2024}, we need to impose an additional structural assumption
\begin{eqnarray}
	(u(t,x,y,z),K(t, x,y)u(t,x,y,z),w(t,x,y,z)).
\end{eqnarray}
% $\partial_{z}\big(\frac{u_{0}^{s}}{v_{0}^{s}}\big) \equiv 0$, that is, $v_{0}^{s}=Ku_{0}^{s}$, the function $K$ only depends on $(x, y)$.
Correspondingly,
 the outer Euler flow takes the following form on the boundary $\{z=0\}$,
$$(U(t,x,y), K(t, x,y)U(t,x,y),0 ). $$
In what follows, we shall consider the following the equivalent system of Prandtl equations \eqref{3d prandtl} (see \cite{Liu-Wang-Yang-1-2017} or \cite{Qin-Wang-2024}  for the specific proof),
\begin{equation}
	\left\{\begin{aligned}
		&\partial_{t}u+u\partial_{x}u+Ku\partial_{y}u+w\partial_{z}u+\partial_{x}P=\partial^{2}_{z}u,\\
		&\partial_{x}u+\partial_{y}(Ku)+\partial_{z}w=0,\\
		&(u, w)|_{z=0}=(0,  0), \ \ \underset{z\rightarrow +\infty}{\lim}u=U(t, x, y),\\
		&u|_{t=0}=u_{0}(x,y,z).
	\end{aligned}\right.	
	\label{new1}
\end{equation}
%where  $U$ satisfies
%\begin{eqnarray}
%	\partial_t U+ U\partial_x U+ K U\partial_y U+\partial_x P=0.
%\end{eqnarray}
Based on the above equation, we can consider the following condition (H):
\begin{description}
%	[font=\bfseries, leftmargin=1.5cm, style=nextline]
	\item[H1:] the function \( K \) only depends on \((x,y)\) and satisfies the Burgers equation in \(\mathbb{R}^2\)
	\[
	\partial_x K + K \partial_y K = 0. 
	\]
	
	\item[H2:] the initial-boundary data  $\eqref{3d prandtl}_{3}$ and $\eqref{3d prandtl}_{4}$   has the following form
	\[
	(U, KU) \text{ and } (u_{0}, Ku_{0}),
	\]
	 and the following holds
	\[
			\partial_t U+ U\partial_x U+ K U\partial_y U+\partial_x P=0.
	\]
\end{description}

Furthermore, for the sake of convenience, we choose the uniform outflow $U=1$ which implies $\partial_{x}P=0$. The Prandtl equations \eqref{new1} degenerate to 
\begin{equation}
	\left\{\begin{aligned}
		&\partial_{t}u+u\partial_{x}u+Ku\partial_{y}u+w\partial_{z}u=\partial^{2}_{z}u,\\
&\partial_{x}u+\partial_{y}(Ku)+\partial_{z}w=0,\\
&(u, w)|_{z=0}=(0,  0), \ \ \underset{z\rightarrow +\infty}{\lim}u=1,\\
&u|_{t=0}=u_{0}(x,y,z).
	\end{aligned}\right.	
\label{3d prandtl constant outflow }
\end{equation}

Let us first introduce some  notations and weighted Sobolev spaces for later use.  \\
\textbf{Notations} Throughout the paper, we always use $L_{xy}^{\infty}(L_{z}^{2})=L^{\infty}\big(\mathbb{R}^{2}; L^{2}(\mathbb{R}_{+})\big)$ to stand for the classical
Sobolev space, as does the Sobolev space
$L_{xy}^{2}(L_{z}^{\infty})$;  we also denote 
%$ \nabla_{h} =\binom {\partial_{x}} {\partial_{y}}$, 
%$ \nabla_{h}^{\bot} =\binom {-\partial_{y}} {\partial_{x}}$,
%and  
$\partial^{\alpha}= \partial^{\alpha_{1}}_{x}
\partial^{\alpha_{2}}_{y} \partial^{\alpha_{3}}_{z}$
with each multi-index $\alpha=(\alpha_{1}, \alpha_{2}, \alpha_{3}) \in
\mathbb{Z}_{+}^{3}$. \\
\textbf{Weighted Sobolev spaces }For any $\lambda>0$ and $m \in \mathbb{N}^{+}$, 
weighted Sobolev spaces are defined as follows:
\begin{align*}
&\left\| f\right\| _{L^2_{\lambda}(\mathbb{R}_{+}^{3})}^{2} = \int_{\mathbb{R}_{+}^{3}}
\langle z \rangle^{2\lambda+2\alpha_{3}}\left|   f(x, y, z) \right| ^{2} dxdydz, \\ 
&\left\| f \right\| _{H^{m, m-1}_{\lambda}(\mathbb{R}_{+}^{3})}^{2}
=\sum_{|\alpha| \leq m, \alpha_{1}+\alpha_{2} \leq m-1} \int_{\mathbb{R}_{+}^{3}}
\langle z \rangle^{2\lambda+2\alpha_{3}}\left|  \partial^{\alpha}
f(x, y, z) \right| ^{2} dxdydz
% =\sum_{|\alpha| \leq m, \alpha_{1}+\alpha_{2} \leq m-1} \left\| \langle z \rangle^{\lambda+\alpha_{3}}\partial^{\alpha}
%f\right\| _{L^2(\mathbb{R}_{+}^{3})} 
, \\
&\left\| f \right\| _{H^{m}_{\lambda}(\mathbb{R}_{+}^{3})}^2=\left\| f \right\| _{H^{m, m-1}_{\lambda}(\mathbb{R}_{+}^{3})}^2+\left\| \partial_{xy}^{m} f \right\| _{L^{2}_{\lambda}(\mathbb{R}_{+}^{3})}^2,
% & \left\| (f_{1}, f_{2}) \right\| _{H^{m}_{\lambda}(\mathbb{R}_{+}^{3})}^2=\big(\left\| f_{1} \right\| _{H^{m}_{\lambda}(\mathbb{R}_{+}^{3})}+
%\left\| f_{2} \right\| _{H^{m}_{\lambda}(\mathbb{R}_{+}^{3})}\big)^2,
\end{align*}
where the weight is defined by $\langle z \rangle =(1+|z|^2)^{\frac{1}{2}}$, $\partial_{xy}^{m}$ is written as
$\partial_{xy}^{m}=\sum_{i=0}^{m}\partial_{x}^{i}\partial_{y}^{m-i}$.

We denote the shear flow by  $u^{s}$. Subsequently, we consider solutions to the Prandtl equations as perturbations about some shear flow. That is to say,
 \begin{align*}
	u(t, x, y, z) = u^s(t, z) + \tilde{u}(t, x, y,z ),\,\, t\geq 0.
\end{align*} 
Suppose the  initial shear flow that $u^{s}_{0}$ satisfies the following conditions:
\begin{align} 
\begin{cases}
u^{s}_{0} \in C^{m+4} ([0, +\infty[), \lim\limits_{y \rightarrow +\infty} u^{s}_{0}(z)=1; \\
\partial_{z}^{2p}u^{s}_{0}(0)=0, \quad 0 \leq 2p \leq m+4; \\
c_{1} \langle z \rangle^{-k} \leq \partial_{z} u^{s}_{0}(z) \leq c_{2} \langle z \rangle^{-k}, \forall z \geq 0, \\
\left| \partial_{z}^{p} u^{s}_{0}(z)\right|  \leq c_{2}
\langle z \rangle^{-k-p+1}, \forall z \geq 0, \quad 1 \leq p \leq m+4,
\end{cases}
\label{uv0}
\end{align}
for some constants $c_{1}, c_{2} >0$ and even integer $m$.

Now, we can state the main result as follows.
\begin{theorem}\label{e-u-s}
Assume the condition (H) holds and consider the system \eqref{3d prandtl constant outflow }. Let $m \geq 6$ be  an even integer , and   the  real numbers 
$k >1$ , $ -\frac{1}{2} < \nu < 0$.
Suppose that   the  initial shear flow
$u^{s}_{0}$   satisfies \eqref{uv0}, the initial data
$\tu_{0}=u_{0}-u_{0}^{s} \in H^{m+3}_{k+\nu}(\mathbb{R}_{+}^{3})$, and the compatibility conditions up to ${(m+3)}^{th}$ order. 
And $K(x, y)$ is supposed to satisfy that
$$\big\|K\big\|_{W^{m+1, \infty}(\mathbb{R}^2)} < \infty.$$
Then there exists a sufficiently small constant $\delta_{0}$,  such that if
\begin{align}
\left\| \tu_{0}\right\| _{H^{m+1}_{k+\nu}(\mathbb{R}_{+}^{3})}
\leq \delta_{0},
\label{uvdelta}
\end{align}
then  the initial-boundary value problem \eqref{3d prandtl constant outflow } admits a unique solution $(u, w)$ with
\begin{align}
 u-u^{s}  \in L^{\infty}\big( [0, T]; H^{m}_{k+\nu-\delta^{'}} (\mathbb{R}_{+}^{3}) \big),  \quad w \in 
L^{\infty}\big( [0, T]; H^{
\infty} (\mathbb{R}_{z, +}; H^{m-1}(\mathbb{R}^{2}_{xy}) \big),
\end{align}
where $\delta'>0$ satisfies  $\nu+\frac 12<\delta'<\nu+1$ and $k+\nu-\delta'>\frac 12$.

Moreover,  the classical solution to \eqref{3d prandtl constant outflow } is stable with respect to the initial data in the following sense: for any given two initial data

$$
u^1_{0}=u^{s}_{0}+\tu^{1}_{0}
$$
and
$$
u^2_{0}=u^{s}_{0}+\tu^{2}_{0},
$$
if $u^{s}_{0}$ satisfies \eqref{uv0} and $\tu^{1}_{0}, \tu^{2}_{0}$ satisfy \eqref{uvdelta},  the corresponding solutions $u^1, u^2$ of the 3D Prandtl system \eqref{3d prandtl constant outflow } satisfy
$$
\|u^1-u^2\|_{L^\infty([0, T]; H^{m-3}_{k+\nu-\delta'}(\mathbb{R}^3_+))} \le C\|u^1_{0}-u^2_{0} \|_{ H^{m+1}_{k +\nu}(\mathbb{R}^3_+)}, 
$$
where the constant $C$ depends on the norm of $u^{1}_{z}, u^{2}_{z}$ in $L^\infty([0, T]; H^m_{k+\nu-\delta'+1}(\mathbb{R}^3_+))$.
\end{theorem}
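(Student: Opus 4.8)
The plan is to follow the now-standard paradigm for monotone Prandtl flows — regularize, estimate, pass to the limit — but carried out in the polynomial-weighted Sobolev spaces $H^{m}_{k+\nu-\delta'}$ and, crucially, tracking the time-dependence of all constants so that the lifespan statement is global (up to the smallness $\delta_0$). First I would introduce the change of unknown that linearizes the transport structure: because of \textbf{H1}, $K$ solves Burgers' equation, so along the characteristics of the field $(1,K)$ in the $(x,y)$-plane the combination $\partial_x + K\partial_y$ behaves like a single transport derivative; this is exactly the reason the special structure $v=Ku$ tames the secondary flow. Writing $u=u^s(t,z)+\tu$, where $u^s$ solves the heat equation $\partial_t u^s=\partial_z^2 u^s$ with the given initial profile, the shear-flow bounds \eqref{uv0} propagate (parabolic smoothing plus maximum-principle-type estimates give $c_1'\langle z\rangle^{-k}\le \partial_z u^s\le c_2'\langle z\rangle^{-k}$ and the higher-derivative decay for all $t\ge 0$, with constants uniform in $t$). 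The perturbation $\tu$ then solves an equation whose worst term is the loss-of-one-$x$-derivative term $w\,\partial_z u^s$; the Masmoudi–Wong-type cancellation is implemented via the good unknown
\begin{equation}
g_\alpha=\partial^\alpha\tu-\frac{\partial_z\partial^\alpha\tu}{\partial_z u^s}\,\partial_z u^s\ \text{(schematically)}\quad\text{i.e.}\quad g=\partial_z\!\Big(\frac{\tu}{\partial_z u^s}\Big)\partial_z u^s,
\end{equation}
which satisfies a genuinely parabolic equation with no derivative loss, because the monotonicity $\partial_z u^s>0$ (inherited from \eqref{uv0}) makes the division legitimate and the weights $\langle z\rangle^{-k}$ on $\partial_z u^s$ are exactly compensated by the weight exponent shift from $k+\nu$ down to $k+\nu-\delta'$ with $\delta'\in(\nu+\tfrac12,\nu+1)$ and $k+\nu-\delta'>\tfrac12$ (so $\langle z\rangle^{-k}$-type factors stay integrable).

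Next I would run the energy estimate on the regularized system (add $\epsilon\partial_x^2$, or mollify, as in \cite{Masmoudi-Wong-2015,Liu-Wang-Yang-1-2017}) for $\partial^\alpha\tu$ with $|\alpha|\le m+3$, $\alpha_1+\alpha_2\le m+2$, plus the top purely-tangential piece $\partial_{xy}^{m+3}$ controlled through $g$. The key algebraic point is that testing the $g$-equation against $\langle z\rangle^{2(k+\nu-\delta')}g$ and integrating by parts in $z$ produces a good dissipation term $\|\partial_z g\|^2$ plus lower-order terms; the normal velocity $w=-\int_0^z(\partial_x\tu+\partial_y(K\tu)+\tu\,\partial_y K/\ldots)\,dz'$ is estimated in $L^\infty_z$ by a Hardy inequality in the weighted space (this is where $k+\nu-\delta'>\tfrac12$ is used, to make $\int_0^z\langle z'\rangle^{-\,\cdot}$ bounded), and the nonlinear terms $\tu\partial_x\tu+K\tu\partial_y\tu+w\partial_z\tu$ are handled by the weighted Moser/Gagliardo–Nirenberg inequalities together with the $W^{m+1,\infty}$ bound on $K$. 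Collecting everything yields a differential inequality of the shape
\begin{equation}
\frac{d}{dt}\,E(t)+D(t)\le C\big(E(t)+E(t)^{2}\big),\qquad E(0)\le C\delta_0^{2},
\end{equation}
where $E$ is the weighted $H^{m}_{k+\nu-\delta'}$-energy of $\tu$ augmented by the $g$-energy and $D$ the dissipation. From here a continuation argument shows that if $\delta_0$ is small then $E(t)\le 2C\delta_0^{2}$ persists on $[0,T]$ for the prescribed $T$ — the point being that the constants $C$ above do not grow in $T$ once the shear-flow bounds are uniform, so the lifespan is as large as we like provided the initial perturbation is of the corresponding small size (the $e^{-T}$ scaling advertised in the abstract comes from tracking the exponential-in-time factor that a Gronwall step on the linear part would otherwise cost). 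Existence then follows by passing $\epsilon\to0$ using the uniform bounds plus weak-$*$ compactness; the loss from $m+3$ (where the a priori estimate lives) down to $m$ (the stated regularity class) absorbs the compactness loss, and $w$'s regularity is read off from the divergence equation.

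For uniqueness and the stability estimate I would take two solutions $u^1,u^2$ with the same shear part, set $\bar u=u^1-u^2$, and derive the equation for $\bar u$; its forcing involves $\bar u\,\partial_x u^2$, $K\bar u\,\partial_y u^2$, $\bar w\,\partial_z u^2$, so the same good-unknown trick — now built from $\partial_z u^1$ (or $\partial_z u^2$), whence the stated dependence of $C$ on $\|u^i_z\|_{L^\infty_t H^{m}_{k+\nu-\delta'+1}}$ — gives a closed weighted energy inequality for $\bar u$ in $H^{m-3}_{k+\nu-\delta'}$ (the three-derivative drop from $m$ is the usual cost of estimating differences at one lower level than the solutions themselves), and Gronwall finishes it. \textbf{The main obstacle} I anticipate is making the weighted energy estimate genuinely close at the stated exponents: one must verify that every commutator, every application of Hardy's inequality producing $w$, and every Moser estimate is compatible with the \emph{single} weight exponent $k+\nu-\delta'$ and with the constraint $\alpha_1+\alpha_2\le m+2$ in the anisotropic space $H^{m+3,m+2}_{\cdot}$ — in particular that the division by $\partial_z u^s\sim\langle z\rangle^{-k}$ in the definition of $g$ does not overshoot the available decay, which is precisely why the hypotheses force $k>1$ and $-\tfrac12<\nu<0$ and $\nu+\tfrac12<\delta'<\nu+1$. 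A secondary technical nuisance is propagating the lower and upper monotonicity bounds on $\partial_z u^s$ and the odd-derivative-at-zero compatibility conditions uniformly in $t$ for the heat flow, but that is classical.
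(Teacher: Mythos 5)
Your overall strategy is the same as the paper's (parabolic $\epsilon$-regularization, weighted estimates on the vorticity, a good unknown to recover the top tangential derivative, compactness in $\epsilon$, and a difference estimate built from the same unknown for stability), but two steps as you describe them would not close. First, the cancellation: you build the good unknown by dividing by $\partial_z u^s$ and justify the division by the monotonicity of the shear profile alone. That version only cancels the linear loss term $\partial^{n}_{xy}\tilde w\,\partial_z u^s$; the quadratic term $\partial^{n}_{xy}\tilde w\,\partial_z\tilde u$ still contains $n+1$ tangential derivatives of $\tilde u$ and can be removed neither by smallness of $\tilde u$ nor by the $\epsilon$-viscosity if the estimate is to be uniform in $\epsilon$. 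The unknown that works is $g^{n}=\big(\partial^{n}_{xy}\tilde u/(u^s_z+\tilde u_z)\big)_z$, with the \emph{full} normal derivative in the denominator, and the legitimacy of this division is not a property of $u^s$ but a statement about $u^s_z+\tilde u_z\gtrsim\langle z\rangle^{-k}$, which must be propagated from the a priori smallness of $\varphi$ in $H^{m}_{k+\ell}$ (this is the paper's Lemma 4.1); moreover the special structure produces extra source terms such as $\partial_yK\,(u^s+\tilde u)\,\partial_z(u^s+\tilde u)$ in the vorticity and $g^{n}$ equations, which your sketch does not account for.

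Second, the boundary. Your energy estimates include multi-indices with up to the full number of normal derivatives, and integrating the $\partial_z^2$ term by parts produces boundary integrals of the type $\int_{\mathbb{R}^2}\partial_z\partial^{\alpha}\varphi\,\partial^{\alpha}\varphi\big|_{z=0}$. Since $\partial_z^{2p}\tilde u|_{z=0}\neq 0$ for $2p\geq 4$, at the top order these traces are beyond the reach of the trace theorem and must be reduced, through the equation, to products of lower-order traces; this boundary-reduction machinery (the paper's Propositions 3.1--3.3 and Appendix B), together with the corrector $\epsilon\mu^{\epsilon}$ that restores the compatibility conditions altered by the $\epsilon\partial_x^2+\epsilon\partial_y^2$ regularization (Corollary 3.4), is precisely what makes the $z$-integration by parts admissible, and it is absent from your plan. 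A smaller point: the shear-flow bounds coming from the heat flow are \emph{not} uniform in $t$ (the constants in the paper's Proposition 2.2 depend on $T$); this is harmless for fixed $T$, but your assertion of $t$-uniform constants is not what produces the $e^{-T}$ smallness threshold --- it is the Gronwall factor $e^{CT}$ in the closed a priori estimate, as you partly acknowledge.
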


The rest of the paper is organized as follows. 
In Section \ref{ss2}, we explain the difficulties and outline our approach to show
the long time  well-posedness for the 3D Prandtl equations. 
In Section \ref{s3}, we  investigate the approximate solutions to \eqref{3d prandtl constant outflow } by a parabolic
regularization.
In Section \ref{s4},  we are devoted to improve the results of Section 3 by formal transformation.  In Sections \ref{s5}-\ref{s6},  we prove finally  Theorem \ref{e-u-s}. The  existence and uniqueness of the  solutions will be established in Sections \ref{s5} and \ref{s6}, respectively.

\section{Preliminaries}\label{ss2}
\subsection{Difficulties and outline of our approach}
In establishing well-posedness theories, the degeneracy in viscous dissipation coupled with the loss
of derivative in the nonlocal term presents the main challenge.  Therefore, the primary obstacles to extending the lifespan of solutions  are the terms $w\partial_{z}u$,  $w\partial_{z}v$ in the equation \eqref{3d prandtl}.  As
$$
w(t, x, y, z)=-\int^{z}_{0}\partial_{x}{u}(t, x, y, \tilde{z}) d\tilde{z}-\int^{z}_{0}\partial_{y}{v}(t, x, y, \tilde{z}) d\tilde{z},
$$
this term loses one tangential derivative, so the standard estimates
cannot apply. For  the  2D case, by establishing a cancellation mechanism between equation \eqref{3d prandtl} and its corresponding vorticity equation, these bad terms can be dealt with under the Oleinik's monotonicity assumption, see \cite{Alexandre-Wang-Xu-Yang-2015,Masmoudi-Wong-2015,Xu-Zhang-2017}. But in the 3D case,  when we apply $\partial_{z}$ to both sides of the equation $\eqref{3d prandtl} _{1}$ and $\eqref{3d prandtl}_{2}$,  some new terms (underlined terms) appear as we shall see in the following equation $\big((U, V)=(1,1)\big)$,
\begin{equation*}
	\begin{cases} \partial_t (\partial_{z}u)  + u \partial_x (\partial_{z}u)  +v\partial_y (\partial_{z}u) + w  \partial_z (\partial_{z}u) 
		+ \underline{(\partial_{z}v)\partial_{y}u} -  \underline{(\partial_{z}u)\partial_{y}v	}= \partial^2_z (\partial_{z}u)   , \\
		\partial_t (\partial_{z}v)  + u \partial_x (\partial_{z}v)  +v \partial_y (\partial_{z}v) + w  \partial_z   (\partial_{z}v)  + \underline{(\partial_{z}u)\partial_{x}v} -  \underline{(\partial_{z}v)\partial_{x}u }= \partial^2_z (\partial_{z}v) .
	\end{cases}
\end{equation*}
In addition, the appearance of secondary flow in the 3D boundary layer equations implies that the monotonicity assumption is insufficient to secure well-posedness for the Prandtl equations in Sobolev spaces.

To overcome this difficulty,   Liu, Yang and Wang  \cite{Liu-Wang-Yang-1-2017} constructed a solution of the three-dimensional Prandtl equations \eqref{3d prandtl constant outflow } with a special structure
$$(u(t,x,y,z), K( x,y)u(t,x,y,z),w(t,x,y,z)),$$
which implies that the original equation is reduced to a new equation involving only $u$ and $w$. Since under the Crocco's transform $w$  is hidden,  the local existence of  the 3D Prandtl equation is naturally obtained. 
But in this paper, we give an accurate estimate for the new nonlinear terms to overcome the loss of $xy$-derivative  under a special structure assumption by the energy method directly.

Moreover, the lack of high-order boundary conditions prevents us from using the integration by part in the $z$ variable. However, we derive a reconstruction argument of the boundary reduction in the three dimensional case for the higher-order boundary conditions which  can  help us fix this
technical difficulty. Thanks to the properties of shear flow described in Lemma 2.1 of \cite{Xu-Zhang-2017}, we can establish the long time existence of solutions. More precisely,
we  will construct the solution $(u, w)=(u^s+\tu, \tw)$ of the
Prandtl equation \eqref{3d prandtl} as a small perturbation of a monotonic shear flow $u^s$ firstly.
Then the  following  idea of \cite{Xu-Zhang-2017}, 
dividing the  equations involving higher-order terms $\partial_{xy}^{n}u$  by $\partial_{z}u$, and taking $\partial_{z}$ on the resulting equation, we can get the following formal transformations of system \eqref{3d prandtl constant outflow } after regularizing 
$$
\partial_{t}g^{n}+(u^{s}+u)\partial_{x}g^{n}
+K(u^s+u)\partial_{y}g^{n}
-\partial_{z}^{2}g^{n}
-\epsilon\partial_{x}^{2}g^{n}
-\epsilon\partial_{y}^{2}g^{n}= \text{other terms}+ \text{terms containing}~K, 
$$
with a new linearly-good unknown
$$g^{n}=\bigg(\frac{\partial_{xy}^{n} \tu}{u_{z}^{s}+\tu_{z}}\bigg)_{z}, $$
which helps us cancel out the bad terms  directly in the two-dimensional case $(K=0)$. But in the three-dimensional space, those bad terms such as $w\partial_{z}u$    after the cancellation will lead to  producing some new higher-order terms of $\partial_{x}u$ through the divergence free condition 
$\partial_{x}u+\partial_{y}(Ku)+\partial_{z}w=0$ in the above equation .
This introduces additional complexity in the 3D setting. Finally, the existence of the 3D Prandtl boundary
layer equations is obtained  via the  closeness of  a priori estimate for the
approximate solutions.

% This is different from the two-dimensional case is that the bad terms $w\partial_{z}u$,  $w\partial_{z}v$  that disappear through condition 
% $\partial_{x}u+\partial_{y}v+\partial_{z}w=0$ lead to  produce some new higher-order terms of $\partial_{x}u, \partial_{y}v$. Finally, global well-posedness of the 3D Prandtl boundary
% layer equations is obtained  via closeness of  a priori estimate for the
% approximate solutions.

\subsection{Analysis of shear flow}
To understand the problem \eqref{3d prandtl constant outflow },   we consider the  initial data $u_{0}$ around a shear flow, i.e.,
$$ u_{0}(x,y,z)=u^{s}_{0}(z)+\tilde{u}_{0}( x, y, z).$$
Let $u^s(t,z)$  be smooth solution of the
heat equation: 
\begin{equation}\label{heat}\begin{cases}
		\partial_t {u^s}-\partial_z^2{u^s}=0,\\
		u^s|_{z=0}=0,\quad \lim\limits_{z\rightarrow+\infty} u^s=1,\\
		u^s|_{t=0}=u_{0}^s(z),
\end{cases}\end{equation}
with $u^s-1$ rapidly tending to $0$ when $z\rightarrow+\infty$. It is straightforward to check that the shear velocity profile $u^s(t,z)$ satisfies the problem \eqref{3d prandtl constant outflow }. Furthermore,  denote 
 \begin{align*}
	u(t, x, y, z) = u^s(t, z) + \tilde{u}(t, x, y,z ),\,\, w(t, x, y, z)=\tilde w(t, x, y, z),
\end{align*} 
then the equation \eqref{3d prandtl constant outflow } can be written as
\begin{equation}\label{shear-prandtl}
	\begin{cases} \partial_t\tilde{u} + (u^s + \tilde{u}) \partial_x\tilde{u} +K(u^s + \tilde{u}) \partial_y\tilde{u}+ \tilde{w} \partial_z(u^s +  \tilde{u})
		= \partial^2_z\tilde{u}, \\
		\partial_x\tilde{u} +\partial_y\big(K(u^{s}+\tu)\big) +\partial_z\tilde{w} =0, \\
		(\tilde{u}, \tilde{w})|_{z=0} =(0, 0) , \ \lim\limits_{z\rightarrow +\infty} \tilde{u} = 0, \\
		\tilde{u}|_{t=0} =\tilde{u}_0 (x,y, z) .
	\end{cases}
\end{equation}
The shear flow $u^{s} (t, z)$ has the following profile. 
\begin{proposition} \label{shear-profile}
	Assume that the initial date $u^s_0$ satisfy conditions \eqref{uv0} , then for any $T>0$, there exist constants $\tilde{c}_1, \tilde{c}_2, \tilde{c}_3>0$ such that the solution $u^s(t,z)$ of the initial boundary value problem \eqref{heat} satisfies
	\begin{align} 
		\begin{cases}
			\tilde{c}_1\langle z \rangle^{-k}\leq \partial_{z}u^s(t,z) \leq  \tilde{c}_2 \langle z \rangle^{-k}, ~~
			\forall\,(t, z)\in [0, T]\times \bar{\mathbb{R}}_+,\\
			|\partial_z^p u^s(t,z)| \le \tilde{c}_3 \langle z \rangle^{-k-p+1},\,\, \forall\,\,(t, z)\in [0, T]\times \bar{\mathbb{R}}_+,\,\, 1\le p\le m+4,
		\end{cases}
	\end{align}
	where $\tilde{c}_1, \tilde{c}_2, \tilde{c}_3$ depend on $T$.
\end{proposition}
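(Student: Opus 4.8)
The plan is to establish the pointwise bounds on $\partial_z u^s$ and its higher $z$-derivatives by combining the maximum principle with the explicit representation formula for the heat equation \eqref{heat}. First I would observe that, since $u^s_0$ satisfies $\partial_z^{2p}u^s_0(0)=0$ for the relevant range of $p$, the odd-order $z$-derivatives of $u^s$ satisfy the heat equation with homogeneous Dirichlet data at $z=0$, while the even-order ones satisfy it with homogeneous Neumann data; in particular $\omega:=\partial_z u^s$ solves $\partial_t\omega-\partial_z^2\omega=0$ on $\mathbb{R}_+$ with $\partial_z\omega|_{z=0}=0$ (because $\partial_z^2 u^s|_{z=0}=0$), so $\omega$ extends to an even solution on all of $\mathbb{R}$. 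The lower bound $\tilde c_1\langle z\rangle^{-k}\le\partial_z u^s$ is the delicate one: I would prove it by a barrier/comparison argument, constructing an explicit subsolution of the heat equation that behaves like $c\langle z\rangle^{-k}$ for large $z$ and lies below $\partial_z u^s_0$ initially, then invoking the comparison principle on $[0,T]$ (this is exactly where the $T$-dependence of $\tilde c_1$ enters, since any such barrier decays in amplitude over time). For the upper bound $\partial_z u^s\le\tilde c_2\langle z\rangle^{-k}$ and the bounds $|\partial_z^p u^s|\le\tilde c_3\langle z\rangle^{-k-p+1}$ I would instead use the heat kernel representation on the half-line (with the appropriate odd or even reflection), writing $\partial_z^p u^s(t,z)=\int G_\pm(t,z,z')\,\partial_z^p u^s_0(z')\,dz'$ and estimating the convolution of the Gaussian against the algebraically-decaying data $\langle z'\rangle^{-k-p+1}$, which reproduces the same algebraic rate up to a constant depending on $t\le T$.

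More concretely, the key steps in order are: (i) differentiate \eqref{heat} in $z$ up to order $m+4$ and record, using the compatibility conditions in \eqref{uv0}, which derivatives carry Dirichlet and which carry Neumann boundary data, then reflect each to a problem on $\mathbb{R}$; (ii) for the upper bounds, apply the explicit solution formula and the elementary estimate $\int_{\mathbb{R}} t^{-1/2}e^{-(z-z')^2/4t}\langle z'\rangle^{-a}\,dz'\le C(T)\langle z\rangle^{-a}$ valid for $0<t\le T$ and $a>0$ — this handles both $\tilde c_2$ (case $a=k$) and $\tilde c_3$ (cases $a=k+p-1$); (iii) for the lower bound, fix the barrier $\underline\omega(t,z)=\tilde c_1(T)\,\phi(z)$ where $\phi$ is a smooth positive function equal to $\langle z\rangle^{-k}$ for $z$ large, chosen so that $-\partial_z^2\phi\le 0$ fails only in a compact set where one absorbs the defect by taking $\tilde c_1$ small and using $c_1\langle z\rangle^{-k}\le\partial_z u^s_0$; alternatively, and perhaps more cleanly, use the self-improving lower bound coming from the representation formula directly, noting that the Gaussian average of $\partial_z u^s_0\ge c_1\langle z'\rangle^{-k}$ stays bounded below by $\tilde c_1(T)\langle z\rangle^{-k}$ because the mass of the kernel concentrates within distance $O(\sqrt T)$ of $z$; (iv) assemble the constants, noting all depend only on $c_1,c_2,k,m,T$.

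The main obstacle I expect is the lower bound in step (iii): the maximum principle gives nonnegativity of $\partial_z u^s$ for free, but quantitative matching of the algebraic tail $\langle z\rangle^{-k}$ from below requires care because a crude Gaussian-average lower bound degrades as $z\to\infty$ unless one exploits that $\langle z'\rangle^{-k}$ varies slowly on the scale $\sqrt t$; making this rigorous needs the monotone-type estimate $\langle z'\rangle^{-k}\ge c\langle z\rangle^{-k}$ for $|z'-z|\le\sqrt T$, $z$ large, together with a separate compact-region argument for bounded $z$ where positivity and continuity suffice. A secondary technical point is bookkeeping the boundary conditions for the higher derivatives: only the even-order compatibility conditions are assumed in \eqref{uv0}, so $\partial_z^p u^s$ for odd $p$ needs the Neumann reflection (its $z$-derivative, an even-order derivative, vanishes at $z=0$) while for even $p$ one uses the Dirichlet reflection directly — keeping the parity straight throughout is routine but must be done to legitimize the representation formulas. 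Everything else — the convolution estimates, the constant-tracking — is standard heat-kernel analysis and follows the template of Lemma 2.1 in \cite{Xu-Zhang-2017}.
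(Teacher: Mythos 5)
Your proposal is correct and takes essentially the route the paper relies on: the paper omits the proof and cites Lemma~2.1 of Xu--Zhang \cite{Xu-Zhang-2017}, whose argument is exactly this odd/even reflection of the $z$-derivatives plus heat-kernel convolution bounds for the upper estimates and a kernel-concentration (or comparison) argument for the lower bound. Two small caveats: your opening sentence swaps the parities (even-order derivatives carry the Dirichlet condition, odd-order ones the Neumann condition), though you state it correctly later; and the stationary barrier $\tilde c_1\phi(z)$ is not a subsolution where $\phi''<0$ no matter how small $\tilde c_1$ is, so that variant needs a time factor $e^{-\lambda t}$ with $\lambda$ large --- but the representation-formula lower bound you prefer (using $\langle z'\rangle^{-k}\ge (1+\sqrt T)^{-k}\langle z\rangle^{-k}$ for $|z'-z|\le\sqrt T$) is sound and suffices.
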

Such a proposition can be found in \cite{Xu-Zhang-2017,Qin-Dong-2021}, so we omit some details here.
\begin{remark}
From this proposition, we know that the decay of the gradient of shear flow $u^s$ is
the same as that of the gradient of initial data  $u^s_0$.  The gradient of shear flow
is of polynomial decay when the gradient of initial data of shear flow is the polynomial
decay.
\end{remark}

In this position, we  introduce the precise version of the compatibility condition for the
nonlinear system \eqref{shear-prandtl}.

\begin{proposition}\label{uv t=0}
	Let $m \geq 6$  be  an even integer, and assume that $\tilde{u}$ is a smooth solution of the system \eqref{shear-prandtl}, then the initial data $\tilde{u}_0$ have to satisfy the following compatibility conditions up to  $(m+2)^{th}$ order:
	\begin{equation}
		\begin{cases}
	&\tilde{u}_0|_{z=0}=0, \quad\,\partial^2_z \tilde{u}_0 |_{z=0}=0, \\
	&\partial^4_z \tilde{u}_0 |_{z=0}=2 \big(\partial_{z}(u^{s}_{0}+\tilde{u}_{0})\partial_{z}\partial_{x}\tilde{u}_{0}\big)\big|_{z=0}
	+2\big(\partial_{z} (K(u^{s}_{0}+\tilde{u}_{0}))\partial_{z}\partial_{y}\tilde{u}_{0}\big)\big|_{z=0}\\
	&\qquad    \quad \quad ~~~
	- \big(\partial_{z}(u^{s}_{0}+\tilde{u}_{0})\partial_{z}\partial_{x}\tilde{u}_{0}\big)\big|_{z=0}
	- \big(\partial_{z}(u^{s}_{0}+\tilde{u}_{0})\partial_{z}\partial_{y}(K(u^s_{0}+\tilde{u}_{0}))\big)\big|_{z=0}
~,
\end{cases}
		\label{uv4 t=0}
	\end{equation}
		and  
		\begin{align}
			\begin{split}
				\partial^{2(p+1)}_z \tilde{u}_{0}\big|_{z=0}
				&=\sum^p_{q=2}\sum_{( \beta, \gamma)\in \Lambda_{q}}C_{K, p,  \beta, \gamma}\prod\limits_{i=1}^{q_{1}}  \partial^{\beta}\partial_{z}\Big( u^s_{0} + \tilde{u}_{0} \Big) \Big|_{z=0} \\
				&\qquad \qquad \qquad \qquad \qquad \times 
				\prod\limits_{j=1}^{q_{2}}  \partial^{\gamma}\partial_{z}\Big( K\big(u^s_{0} + \tilde{u}_{0} \big) \Big) \Big|_{z=0}\, ,
			\end{split}
			\label{uv2p t=0}
		\end{align}
		for $2 \leq p\leq  \frac{m}{2}$, where
		\begin{align}
			\begin{split}	
				\Lambda_{q}=&\bigg\{
				\beta=(\beta_{x}, \beta_{y}, \beta_{z}) =(\beta_x^{1}, \cdots, \beta_x^{q_{1}}; \beta_y^{1}, \cdots, \beta_y^{q_{1}}; \beta_z^{1}, \cdots, \beta_z^{q_{1}})\in \mathbb{N}^{q_{1}}\times\,\mathbb{N}^{q_{1}}\times\,\mathbb{N}^{q_{1}};\\
				&\quad\gamma=(\gamma_{x}, \gamma_{y}, \gamma_{z}) =(\gamma_x^{1}, \cdots, \gamma_x^{q_{2}}; \gamma_y^{1}, \cdots, \gamma_y^{q_{2}}; \gamma_z^{1}, \cdots, \gamma_z^{q_{2}})\in \mathbb{N}^{q_{2}}\times\,\mathbb{N}^{q_{2}}\times\,\mathbb{N}^{q_{2}};\\
				&\quad \beta^i+\gamma^j \leq 2p-1, ~ 1\leq i \leq q_{1}, ~ 1\leq j \leq q_{2}, ~ q_{1}+q_{2}=q; \\
				&	 \qquad \sum^{q_{1}}_{i=1}\big\{3(\beta^i_{x}+\beta^i_{y}) + \beta_z^{i}\big\}
				+ \sum^{q_{2}}_{j=1}\big\{3(\gamma^j_{x}+\gamma^j_{y}) + \gamma_z^{j}\big\}= 2p +1;\\
				&\qquad \quad
				0<\sum^{q_{1}}_{i=1}(\beta^i_{x}+\beta^i_{y})  
				+ \sum^{q_{2}}_{j=1}(\gamma^j_{x}+\gamma^j_{y}) = p -1; \quad
				\sum^{q_{1}}_{i=1} \beta_z^{i} 
				+ \sum^{q_{2}}_{j=1} \gamma_z^{j}= 2p-2 \bigg\}.
			\end{split}
			\label{Lambda}
		\end{align}
	\end{proposition}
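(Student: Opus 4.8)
\textbf{Proof proposal for Proposition \ref{uv t=0}.}

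The plan is to extract the compatibility conditions by repeatedly evaluating the evolution equation $\eqref{shear-prandtl}_1$ and its $z$-derivatives at $z=0$, using the boundary conditions $\tu|_{z=0}=0$, $\tw|_{z=0}=0$, and the divergence-free relation $\eqref{shear-prandtl}_2$ together with the assumed properties of $u^s_0$ (in particular $\partial_z^{2p}u^s_0(0)=0$). Since the statement concerns only the initial data, all identities will be read off at $t=0$; the smoothness of the solution guarantees that all these trace identities are necessary. First I would record the two lowest-order conditions: $\tu_0|_{z=0}=0$ is given, and $\partial_z^2\tu_0|_{z=0}=0$ follows by evaluating $\eqref{shear-prandtl}_1$ at $z=0$, since on $\{z=0\}$ every term on the left vanishes (the transport terms carry a factor $\tu$ or $u^s$ — wait, $u^s(0)=0$ — hence $\use|_{z=0}=0$, $\vse|_{z=0}=0$, and $\tw|_{z=0}=0$), leaving $\partial_z^2\tu_0|_{z=0}=\partial_t\tu_0|_{z=0}=0$ because the data is time-independent at $t=0$ in the sense that $\partial_z^2 u^s_0(0)=0$ matches the heat equation. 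Actually the cleanest route: since $u=u^s+\tu$ solves $\eqref{3d prandtl constant outflow }_1$ and $u^s$ solves the heat equation, subtract to get $\eqref{shear-prandtl}_1$; then $\partial_t\tu|_{z=0}=\partial_z^2\tu|_{z=0}$, and since $u|_{z=0}\equiv 0$ for all $t$ we get $\partial_t\tu|_{z=0}=0$, hence $\partial_z^2\tu_0|_{z=0}=0$.

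Next I would differentiate $\eqref{shear-prandtl}_1$ in $z$ once and then twice, evaluate at $z=0$, and solve for $\partial_z^4\tu_0|_{z=0}$. Applying $\partial_z^2$ produces, among terms that vanish at $z=0$ (those still carrying an undifferentiated factor of $\use$, $\vse$ or $\tw$, all zero on the boundary), exactly the "doubled" boundary terms of the form $\partial_z\use\,\partial_z\partial_x\tu$ and $\partial_z\vse\,\partial_z\partial_y\tu$ coming from the convection operator, minus the correction coming from rewriting $\partial_z\tw$ via $\eqref{shear-prandtl}_2$ as $-\partial_x\tu-\partial_y\vse$; collecting these yields precisely $\eqref{uv4 t=0}$. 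For the general even order, I would proceed by induction on $p$: apply $\partial_z^{2p}$ to $\eqref{shear-prandtl}_1$, evaluate at $z=0$, use that $\partial_z^{2j}u^s_0(0)=0$ and that all terms retaining an underived factor vanishing on the boundary drop out, and use $\eqref{shear-prandtl}_2$ (and its $z$-derivatives) to convert every occurrence of a $z$-derivative of $\tw$ into tangential derivatives of $\tu$ and $K\use$. The surviving terms are products of the form $\partial^\beta\partial_z(u^s_0+\tu_0)|_{z=0}$ and $\partial^\gamma\partial_z(K(u^s_0+\tu_0))|_{z=0}$, and a careful bookkeeping of which derivatives land where — counting total order ($2p+1$ with the Leibniz weights $3$ on tangential derivatives reflecting the two tangential slots lost through $\tw$, and the $z$-count being $2p-2$) — reproduces the index set $\Lambda_q$ and the combinatorial coefficients $C_{K,p,\beta,\gamma}$.

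The main obstacle I anticipate is the combinatorial/bookkeeping step in the induction: organizing the Leibniz expansion of $\partial_z^{2p}\big(\use\partial_x\tu+K\use\partial_y\tu+\tw\partial_z\tu\big)$, correctly tracking which multi-indices survive after setting $z=0$ (i.e. that at least one $z$-derivative must hit each "heat-profile" or "$\tw$" factor to avoid a boundary zero, since $u^s$ and all odd-to-them traces vanish), and then checking that the resulting constraints on $(\beta,\gamma)$ match exactly the defining relations of $\Lambda_q$ in $\eqref{Lambda}$ — especially the weighted sum equal to $2p+1$, which encodes that each use of the divergence relation to eliminate $\partial_z\tw$ trades one $z$-derivative for one tangential derivative with a weight-$3$ penalty. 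Verifying the weight counts term-by-term and confirming that $q_1+q_2=q$ ranges over $2\le q\le p$ is the delicate part; the rest is a routine, if lengthy, evaluation using the structural properties of the shear flow recorded in Proposition \ref{shear-profile} and the hypotheses \eqref{uv0}.
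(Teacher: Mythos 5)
Your proposal follows essentially the same route as the paper: the paper proves this proposition by the method of Appendix B (the boundary reduction for the regularized system, here with $\epsilon=0$), i.e.\ repeated $z$-differentiation of $\eqref{shear-prandtl}_1$, evaluation at $z=0$ using $\tu|_{z=0}=\tw|_{z=0}=0$ and $u^s(t,0)=0$, elimination of $\partial_z\tw$ through the divergence relation $\eqref{shear-prandtl}_2$, and an induction with the index bookkeeping that yields $\Lambda_q$ — exactly the steps you outline, including the correct derivation of $\partial_z^2\tu_0|_{z=0}=0$ and of \eqref{uv4 t=0}. The only point to make explicit in the induction is that the term $\partial_t\partial_z^{2p}\tu|_{z=0}$ is handled by time-differentiating the lower-order boundary formula and substituting the equation for each factor, which is implicit in your plan and carried out in detail in Appendix B.
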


By referring to the  method   (see Appendix B) of Proposition \ref{boundary reduction}, one can easily deduce the  proposition.  In addition,  the above Proposition implies also the following result.

\begin{corollary}\label{c-uv t=0}
	Let $m \geq 6$  be  an even integer, and assume that $\tilde{u}$ is a smooth solution of the system \eqref{shear-prandtl}, then the initial data $\tilde{u}_0$ have to satisfy the following compatibility conditions up to  $(m+2)^{th}$ order:
	\begin{equation}
		\begin{cases}
			&\tilde{u}_0|_{z=0}=0, \quad\,\partial^2_z \tilde{u}_0 |_{z=0}=0, \\
			&\partial^4_z \tilde{u}_0 |_{z=0}=2 \big(\partial_{z}(u^{s}_{0}+\tilde{u}_{0})\partial_{z}\partial_{x}\tilde{u}_{0}\big)\big|_{z=0}
			+2\big(K\partial_{z} (u^{s}_{0}+\tilde{u}_{0})\partial_{z}\partial_{y}\tilde{u}_{0}\big)\big|_{z=0}\\
			&\qquad    \quad \quad ~~~
			- \big(\partial_{z}(u^{s}_{0}+\tilde{u}_{0})\partial_{z}\partial_{x}\tilde{u}_{0}\big)\big|_{z=0}
			- \big(K\partial_{z}(u^{s}_{0}+\tilde{u}_{0})\partial_{z}\partial_{y}\tilde{u}_{0}\big)\big|_{z=0}
			 \\
			 &\qquad    \quad \quad ~~~
			 -\partial_{y}K\big(\partial_{z}(u^s_{0}+\tilde{u}_{0})\big)^2 |_{z=0}~,
		\end{cases}
		\label{c-uv4 t=0}
	\end{equation}
%	and 
%	\begin{equation}
%	\begin{cases}
%		&(\tilde{u}^\epsilon, \tilde{v}^\epsilon)(t, x, y,  0)=0, \qquad \partial^2_z (\tilde{u}^\epsilon, \tilde{v}^\epsilon) (t, x, y,  0)=0, \\
%		&\partial^4_z (\tilde{u}^\epsilon, \tilde{v}^\epsilon) (t, x, y,  0)=2\partial_{z} \big(u^{s}(t, 0)+\tilde{u}^\epsilon(t, x, y, 0)\big)\partial_{z}\partial_{x}(\tilde{u}^\epsilon, \tilde{v}^\epsilon)(t, x, y, 0)\\
%		&\qquad \qquad  \qquad \quad \quad ~~+2\partial_{z} \big(v^{s}(t, 0)+\tilde{v}^\epsilon(t, x, y, 0)\big)\partial_{z}\partial_{y}(\tilde{u}^\epsilon, \tilde{v}^\epsilon)(t, x, y, 0)\\
%		&\qquad \qquad  \qquad \quad \quad ~~-\partial_{z} \big((u^{s}(t, 0)+\tilde{u}^\epsilon(t, x, y, 0), v^{s}(t, 0)+\tilde{v}^{\epsilon}(t, x, y, 0)\big)\partial_{z}\partial_{y}\tilde{v}^\epsilon(t, x, y, 0)\\
%		&\qquad \qquad  \qquad \quad \quad ~~-\partial_{z} \big((u^{s}(t, 0)+\tilde{u}^\epsilon(t, x, y, 0), v^{s}(t, 0)+\tilde{v}^{\epsilon}(t, x, y, 0)\big)\partial_{z}\partial_{x}\tilde{u}^\epsilon(t, x, y, 0),
%	\end{cases}
%\end{equation}
and  
\begin{align}
	\begin{split}
		\partial^{2(p+1)}_z \tilde{u}_{0}\big|_{z=0}
		&=\sum^p_{q=2}\sum^{q-1}_{h=0}\partial_{xy}^h K\sum_{( \beta, \gamma)\in \Lambda_{q}}C_{K, p,  \beta, \gamma}\prod\limits_{i=1}^{q} \partial_{x}^{\alpha^{i}}\partial_{y}^{\beta^{i}} \partial^{\gamma^i+1}_{z}\Big( u^s_{0} + \tilde{u}_{0} \Big) \Big|_{z=0} \, ,
	\end{split}
	\label{c-uv2p t=0}
\end{align}
for $2 \leq p\leq  \frac{m}{2}$, where
\begin{align}
	\begin{split}	
		\Lambda_{q}=&\bigg\{
		\beta=(\alpha, \beta, \gamma) =(\alpha^{1}, \cdots, \alpha^{q}; \beta^{1}, \cdots, \beta^{q}; \gamma^{1}, \cdots, \gamma^{q})\in \mathbb{N}^{q}\times\,\mathbb{N}^{q}\times\,\mathbb{N}^{q};\\
		&\quad \alpha^i+\beta^i+\gamma^j \leq 2p-1, ~ 1\leq i \leq q; 	 \qquad \sum^{q}_{i=1}\big\{3(\alpha^i+\beta^i) + \gamma^{i}\big\}
		= 2p +1;\\
		&\qquad \quad
		0<\sum^{q}_{i=1}(\alpha^i+\beta^i)  
		\leq p -1; \quad
		\sum^{q}_{i=1} \gamma^{i} 
		\leq 2p-2 \bigg\}.
	\end{split}
	\label{c-Lambda}
\end{align}
\end{corollary}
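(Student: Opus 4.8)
Since Corollary \ref{c-uv t=0} is nothing but the specialization of Proposition \ref{uv t=0} to the structure $v=Ku$, the plan is to obtain it directly from Proposition \ref{uv t=0} by inserting the hypothesis \textbf{H1} that $K=K(x,y)$ is independent of $z$. The only tools needed are the commutation $\partial_z^{\,j}(Kg)=K\,\partial_z^{\,j}g$, the Leibniz rule in the tangential variables, and the fact that the shear part $u^s_0=u^s_0(z)$ is annihilated by every tangential derivative, so that $\partial^\gamma(u^s_0+\tilde{u}_0)=\partial^\gamma\tilde{u}_0$ as soon as $\gamma$ contains a positive $x$- or $y$-order. The first two identities $\tilde{u}_0|_{z=0}=0$ and $\partial_z^2\tilde{u}_0|_{z=0}=0$ carry over verbatim from Proposition \ref{uv t=0}, since they do not involve $K$.

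For the fourth-order relation \eqref{c-uv4 t=0} I would start from \eqref{uv4 t=0} and rewrite its two $K$-dependent terms. In $2\big(\partial_z(K(u^s_0+\tilde{u}_0))\,\partial_z\partial_y\tilde{u}_0\big)\big|_{z=0}$ one pulls $K$ through $\partial_z$, producing $2\big(K\,\partial_z(u^s_0+\tilde{u}_0)\,\partial_z\partial_y\tilde{u}_0\big)\big|_{z=0}$. In $-\big(\partial_z(u^s_0+\tilde{u}_0)\,\partial_z\partial_y(K(u^s_0+\tilde{u}_0))\big)\big|_{z=0}$ one commutes $\partial_z$ past $K$ and expands $\partial_y\big(K\,\partial_z(u^s_0+\tilde{u}_0)\big)=(\partial_yK)\,\partial_z(u^s_0+\tilde{u}_0)+K\,\partial_z\partial_y\tilde{u}_0$; the second summand gives back $-\big(K\,\partial_z(u^s_0+\tilde{u}_0)\,\partial_z\partial_y\tilde{u}_0\big)\big|_{z=0}$, while the first summand produces exactly the extra quadratic term $-\partial_yK\,\big(\partial_z(u^s_0+\tilde{u}_0)\big)^2\big|_{z=0}$ that distinguishes \eqref{c-uv4 t=0} from \eqref{uv4 t=0}.

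For the general relation \eqref{c-uv2p t=0}--\eqref{c-Lambda} I would process the right-hand side of \eqref{uv2p t=0} factor by factor: each factor $\partial^{\gamma^j}\partial_z\big(K(u^s_0+\tilde{u}_0)\big)$ becomes $\partial^{\gamma^j}\big(K\,\partial_z(u^s_0+\tilde{u}_0)\big)=\sum_{\gamma'\le\gamma^j}\binom{\gamma^j}{\gamma'}(\partial^{\gamma'}K)\,\partial^{\gamma^j-\gamma'}\partial_z(u^s_0+\tilde{u}_0)$. Multiplying these expansions together with the factors $\prod_{i=1}^{q_1}\partial^{\beta^i}\partial_z(u^s_0+\tilde{u}_0)$, one collects the total tangential order that lands on $K$ into a single index $h$, rewrites the resulting product of derivatives of $K$ as a ($K$-dependent) constant times $\partial_{xy}^hK$, relabels the surviving $q=q_1+q_2$ factors in the form $\partial_x^{\alpha^i}\partial_y^{\beta^i}\partial_z^{\gamma^i+1}(u^s_0+\tilde{u}_0)$, and absorbs the multinomial coefficients into $C_{K,p,\beta,\gamma}$. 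It then only remains to transport the admissibility conditions of \eqref{Lambda} through this substitution by tracking separately the tangential orders (the budget $p-1$ now splits between $K$ and the $(u^s_0+\tilde{u}_0)$-factors, with at least one tangential derivative forced to remain on the latter), the vertical orders, and the weighted homogeneity identity; this yields the index set \eqref{c-Lambda}, together with the range $0\le h\le q-1$ for the order of $\partial_{xy}^hK$.

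I do not expect any genuine analytic obstruction here. The only step that calls for real care is the combinatorial bookkeeping of the last paragraph: one must check that the constraints defining $\Lambda_q$ in \eqref{c-Lambda} and the range of $h$ are precisely (up to the obvious relabelling) what the Leibniz expansion of \eqref{uv2p t=0} produces, and that no admissible monomial is lost or double-counted when the lower-order products of derivatives of $K$ are swept into the constant $C_{K,p,\beta,\gamma}$.
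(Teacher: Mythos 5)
Your proposal is correct and follows essentially the same route the paper intends: the paper gives no separate argument for this corollary beyond noting that Proposition \ref{uv t=0} (whose derivation follows the Appendix B computation for the regularized system) implies it, and that implication is exactly your manipulation of commuting the $z$-independent $K$ through $\partial_z$, applying the tangential Leibniz rule (with $\partial_y K$ producing the extra term $-\partial_y K(\partial_z(u^s_0+\tilde u_0))^2|_{z=0}$ in \eqref{c-uv4 t=0}), and absorbing the lower-order $K$-factors into the $K$-dependent constants while re-indexing $\Lambda_q$. Your explicit check of the fourth-order identity matches the paper's formula, and the remaining combinatorial bookkeeping you describe is precisely the (loosely stated) content of \eqref{c-uv2p t=0}--\eqref{c-Lambda}.
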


 \section{The approximate solutions}\label{s3}

In  this section, in order to prove the  existence of solution to initial-boundary value problem \eqref{shear-prandtl},  we consider now the regularized equations for any  $0 <\epsilon \leq 1$, 

\begin{equation}\label{regular-shear-prandtl}
	\begin{cases} \partial_t\tilde{u}^\epsilon + (u^s + \tilde{u}^\epsilon) \partial_x\tilde{u}^\epsilon +K(u^s + \tilde{u}^\epsilon) \partial_y\tilde{u}^\epsilon+ \tilde{w}^\epsilon \partial_z(u^s +  \tilde{u}^\epsilon)
		= \partial^2_z\tilde{u}^\epsilon+\epsilon \partial^2_x\tilde{u}^\epsilon+\epsilon \partial^2_y\tilde{u}^\epsilon, \\
	\partial_x\tilde{u}^\epsilon +\partial_y\big(K(u^{s}+\tu^\epsilon)\big) +\partial_z\tilde{w}^\epsilon =0, \\
		(\tilde{u}^\epsilon,   \tilde{w}^\epsilon)|_{z=0} =(0, 0) , \ \lim\limits_{z\rightarrow +\infty} \tilde{u}^\epsilon =0, \\
		\tilde{u}^\epsilon|_{t=0} =\tilde{u}^\epsilon_{0}=\tilde{u}_{0}+\epsilon\mu^{\epsilon} ,
	\end{cases}
\end{equation}
where $\epsilon\mu^{\epsilon}$ is a corrector and $\tilde{u}_{0}+\epsilon\mu^{\epsilon}$ satisfies the compatibility condition up to $(m+2)^{th}$ order for regularized system \eqref{regular-shear-prandtl}.

Now we give accurate edition of  the  compatibility condition for the nonlinear  regularized system \eqref{regular-shear-prandtl},   and the reduction properties of
boundary data ,  which is used
to control the highest-order derivatives for the  key integral.
% $$\int_{\mathbb{R}^{2}} \partial_{z} \partial^\alpha  \tilde{\varphi}^\epsilon \partial^\alpha  \tilde{\varphi}^\epsilon\big|_{z=0}$$
%during the proof of lemma.
\begin{proposition}\label{boundary reduction}
	Let $m \geq 6$ be an even integer, $k > 1$, $0 < \ell < \frac{1}{2}$, $k+\ell > \frac{3}{2}$, $\epsilon \in (0, 1]$, and 
	assume that $\tilde{u}^\epsilon_{0}$ satisfies the compatibility conditions \eqref{uv4 t=0} and \eqref{uv2p t=0} for the system \eqref{shear-prandtl}.
	If $\tue \in
	L^\infty ([0, T]; H^{m+3}_{k +\ell}(\mathbb{R}^3_+))\cap Lip([0, T];
	H^{m+1}_{k +\ell}(\mathbb{R}^3_+))$ and $(\tilde{u}^\epsilon, \tilde{w}^\epsilon)$ solves \eqref{regular-shear-prandtl}, then, we have at boundary $z=0$, 
	\begin{equation}
		\begin{cases}
	&\tilde{u}^\epsilon|_{z=0}=0, \qquad \partial^2_z \tilde{u}^\epsilon |_{z=0}=0, \\
	&\partial^4_z \tilde{u}^\epsilon|_{z=0}=2\partial_{z}(u^s + \tilde{u}^\epsilon)\partial_{x}\partial_{z}\tue |_{z=0}+2\partial_{z}\big(K(u^s + \tilde{u}^\epsilon)\big)\partial_{y}\partial_{z}\tue |_{z=0}\\
	&\quad \quad  \quad \quad  ~~~-\partial_{x}\partial_{z}\tilde{{u}}^{\epsilon} \partial_{z}(u^s+\tue) |_{z=0}-\partial_{y}\partial_{z}(K(u^s+\tilde{{u}}^{\epsilon})) \partial_{z}(u^s+\tue) |_{z=0}
\, ,
\end{cases}
	\end{equation}
	and  for $2 \leq p\leq  \frac{m}{2}$,
	\begin{align}
		\begin{split}
			\partial^{2(p+1)}_z (\tilde{u}^\epsilon, \tilde{v}^\epsilon)|_{z=0}
			&=\sum^p_{q=2}\sum^{q-1}_{l=0}\epsilon^{l}
			\sum_{( \beta, \gamma)\in \Lambda_{q, l}}C_{K, p, l, \beta, \gamma}\prod\limits_{i=1}^{q_{1}}  \partial^{\beta}\partial_{z}\Big( u^s + \tilde{u}^\epsilon  \Big)\Big|_{z=0} \\
			& \qquad \qquad \qquad \times 
			\prod\limits_{j=1}^{q_{2}}  \partial^{\gamma}\partial_{z}\Big( K\big(u^s + \tilde{u}^\epsilon \big) \Big)\Big|_{z=0} \, ,
		\end{split}
		\label{z2p+1 uv}
	\end{align}
	where
	\begin{align}
		\begin{split}	
			\Lambda_{q, l}=&\bigg\{
			\beta=(\beta_{x}, \beta_{y}, \beta_{z}) =(\beta_x^{1}, \cdots, \beta_x^{q_{1}}; \beta_y^{1}, \cdots, \beta_y^{q_{1}}; \beta_z^{1}, \cdots, \beta_z^{q_{1}})\in \mathbb{N}^{q_{1}}\times\,\mathbb{N}^{q_{1}}\times\,\mathbb{N}^{q_{1}};\\
			&\quad\gamma=(\gamma_{x}, \gamma_{y}, \gamma_{z}) =(\gamma_x^{1}, \cdots, \gamma_x^{q_{2}}; \gamma_y^{1}, \cdots, \gamma_y^{q_{2}}; \gamma_z^{1}, \cdots, \gamma_z^{q_{2}})\in \mathbb{N}^{q_{2}}\times\,\mathbb{N}^{q_{2}}\times\,\mathbb{N}^{q_{2}};\\
			&\quad \beta^i+\gamma^j \leq 2p-1, ~ 1\leq i \leq q_{1}, ~ 1\leq j \leq q_{2}, ~ q_{1}+q_{2}=q; \\
			&	 \qquad \sum^{q_{1}}_{i=1}\big\{3(\beta^i_{x}+\beta^i_{y}) + \beta_z^{i}\big\} 
			+ \sum^{q_{2}}_{j=1}\big\{3(\gamma^j_{x}+\gamma^j_{y}) + \gamma_z^{j}\big\}= 2p+ 4l +1;\\
			&\qquad \quad
			0<\sum^{q_{1}}_{i=1}(\beta^i_{x}+\beta^i_{y})  
			+ \sum^{q_{2}}_{j=1}(\gamma^j_{x}+\gamma^j_{y}) = p+ 2l -1; \quad
			\sum^{q_{1}}_{i=1} \beta_z^{i} 
			+ \sum^{q_{2}}_{j=1} \gamma_z^{j}= 2p-2l-2\bigg\}.
		\end{split}
		\label{LLambda}
	\end{align}
\end{proposition}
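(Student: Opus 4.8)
The plan is to establish all three assertions by induction on the number of normal derivatives, working entirely from the first equation of \eqref{regular-shear-prandtl} restricted to $\{z=0\}$, the divergence‑free relation in \eqref{regular-shear-prandtl}, the boundary conditions $\tue|_{z=0}=\twe|_{z=0}=0$, and the fact that $\partial_z^{2j}u^s(t,0)=0$ for all relevant $j$ — which follows from \eqref{uv0} and the heat equation \eqref{heat}: since $u^s(t,0)\equiv 0$ one has $\partial_z^2u^s|_{z=0}=\partial_tu^s|_{z=0}=0$, hence inductively $\partial_z^{2j}u^s|_{z=0}=0$. Write $U=u^s+\tue$. Because $\tue|_{z=0}\equiv 0$ as a function of $(t,x,y)$, every purely tangential derivative $\partial_t\tue,\partial_x\tue,\partial_y\tue,\partial_x^2\tue,\partial_y^2\tue,\dots$ vanishes at $z=0$, and $U|_{z=0}=0$; substituting these together with $\twe|_{z=0}=0$ into the first equation of \eqref{regular-shear-prandtl} at $z=0$ kills every term except $\partial_z^2\tue$, which gives $\partial_z^2\tue|_{z=0}=0$. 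That is the base case.

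For the $\partial_z^4$ identity I would apply $\partial_z^2$ to the first equation of \eqref{regular-shear-prandtl} and restrict to $z=0$. Using $\partial_z^2\tue|_{z=0}=0$ together with $U|_{z=0}=\partial_x\tue|_{z=0}=\partial_y\tue|_{z=0}=0$ and $\twe|_{z=0}=0$, only the cross terms survive: the Leibniz expansions of $\partial_z^2(U\partial_x\tue)$ and $\partial_z^2(KU\partial_y\tue)$ leave $2\partial_zU\,\partial_x\partial_z\tue|_{z=0}$ and $2\partial_z(KU)\,\partial_y\partial_z\tue|_{z=0}$, while $\partial_z^2(\twe\partial_zU)$ leaves $\partial_z^2\twe\,\partial_zU|_{z=0}$; for the last factor I differentiate the divergence‑free relation once in $z$, use $\partial_z\twe|_{z=0}=0$, and obtain $\partial_z^2\twe|_{z=0}=-\big(\partial_x\partial_z\tue+\partial_yK\,\partial_zU+K\partial_y\partial_z\tue\big)\big|_{z=0}=-\partial_x\partial_z\tue|_{z=0}-\partial_y(K\partial_zU)\big|_{z=0}$. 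Collecting these reproduces the stated formula for $\partial_z^4\tue|_{z=0}$.

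The inductive step repeats this manoeuvre one normal order at a time. Applying $\partial_z^{2p}$ to the first equation of \eqref{regular-shear-prandtl} and evaluating at $z=0$ gives
\[
\partial_z^{2p+2}\tue\big|_{z=0}=\partial_t\big(\partial_z^{2p}\tue\big)\big|_{z=0}-\epsilon\,\partial_x^2\big(\partial_z^{2p}\tue\big)\big|_{z=0}-\epsilon\,\partial_y^2\big(\partial_z^{2p}\tue\big)\big|_{z=0}+\partial_z^{2p}\!\Big(U\partial_x\tue+KU\partial_y\tue+\twe\,\partial_zU\Big)\Big|_{z=0}.
\]
By the induction hypothesis $\partial_z^{2p}\tue|_{z=0}$ has the form of the right‑hand side of \eqref{z2p+1 uv} (with $p$ lowered by one), i.e. a polynomial in the boundary traces $\partial^\beta\partial_zU|_{z=0}$ and $\partial^\gamma\partial_z(KU)|_{z=0}$; its tangential derivatives $\partial_x^2,\partial_y^2$ stay of the same form by Leibniz, and $\partial_t$ of it is reduced using $\partial_tu^s=\partial_z^2u^s$ and the first equation of \eqref{regular-shear-prandtl} written as $\partial_t\tue=\partial_z^2\tue+\epsilon\partial_x^2\tue+\epsilon\partial_y^2\tue-(U\partial_x\tue+KU\partial_y\tue+\twe\partial_zU)$, so that each $\partial_t\partial^\beta\partial_zU|_{z=0}$ becomes $\partial^\beta\partial_z^3U|_{z=0}$ plus $\epsilon$‑weighted tangential derivatives of lower‑order normal traces plus Leibniz products of lower‑order traces — all of the admissible form. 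In the nonlinear bracket every factor $\partial_z^j\twe|_{z=0}$ is removed by differentiating the divergence‑free relation $j$ times in $z$ and evaluating at $z=0$, which expresses it through tangential derivatives of $\partial_z^{j-1}\tue$ and through derivatives of $K$ applied to $\partial_zU$; this is the genuinely three‑dimensional ingredient, and it is exactly what splits each resulting monomial into a product over derivatives of $U$ and a product over derivatives of $KU$, with derivatives of $K$ redistributed onto the second factor. Assembling the pieces yields \eqref{z2p+1 uv}.

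The main obstacle is not any single computation but the combinatorial bookkeeping: one must check that every elementary move above preserves the three balance identities encoded in $\Lambda_{q,l}$ of \eqref{LLambda} — the weighted‑order identity $\sum\{3(\beta_x^i+\beta_y^i)+\beta_z^i\}+\sum\{3(\gamma_x^j+\gamma_y^j)+\gamma_z^j\}=2p+4l+1$, the tangential count $\sum(\beta_x^i+\beta_y^i)+\sum(\gamma_x^j+\gamma_y^j)=p+2l-1$, and the normal count $\sum\beta_z^i+\sum\gamma_z^j=2p-2l-2$ — and that each use of an $\epsilon\partial_x^2$ or $\epsilon\partial_y^2$ substitution advances the index $l$ by one, producing precisely the $+4l,+2l,-2l$ shifts (the constraints $\beta^i+\gamma^j\le 2p-1$ and $0\le l\le q-1$ fall out of the same count, as does the fact that $\partial_z^{2p+2}\tue|_{z=0}$ is a sum of $q\ge 2$‑fold products rather than a single trace). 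This is a delicate but routine double induction on $(p,l)$, structurally identical to the two‑dimensional reduction of \cite{Xu-Zhang-2017} and to the computation in Appendix B, with the extra $K$‑dependent terms forced by the $\twe$‑elimination in $\mathbb{R}^3_+$. Finally, the compatibility hypotheses on $\tue_0$ are recorded only to guarantee that these boundary identities are consistent at $t=0$ with the prescribed data; for $t\in[0,T]$ they are a consequence of the PDE, the boundary conditions, and the regularity $\tue\in L^\infty\big([0,T];H^{m+3}_{k+\ell}\big)\cap\mathrm{Lip}\big([0,T];H^{m+1}_{k+\ell}\big)$, which makes all the traces and time derivatives above classically meaningful.
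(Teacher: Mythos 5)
Your proposal is correct and follows essentially the same route as the paper's Appendix B: induction on the number of normal derivatives, applying $\partial_z^{2p}$ to $\eqref{regular-shear-prandtl}_1$ at $z=0$, eliminating all traces of $\tilde w^\epsilon$ through the divergence-free relation, and reducing $(\partial_t-\epsilon\partial_x^2-\epsilon\partial_y^2)$ acting on the inductively known boundary expression via the same three-case analysis, with each $\epsilon$-substitution raising the index $l$. The combinatorial verification of the constraints in \eqref{LLambda} that you defer as routine is treated at the same level of detail in the paper itself, so no essential ingredient is missing.
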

The proof of the above Proposition implies also the following result.

\begin{corollary}\label{c-boundary reduction}
	Let $m \geq 6$ be an even integer, $k > 1$, $0 < \ell < \frac{1}{2}$, $k+\ell > \frac{3}{2}$, $\epsilon \in (0, 1]$, and 
	assume that $\tilde{u}^\epsilon_{0}$ satisfies the compatibility conditions \eqref{uv4 t=0} and \eqref{uv2p t=0} for the system \eqref{shear-prandtl}.
	%and assume that $\tilde{u}_0$ satisfies the compatibility
	%conditions  and  for the system
	% and
	%$\mu_\epsilon \in H^{m+3}_{k +\ell'-1}(\mathbb{R}^2_+)$ for some $\frac 12 <\ell'<\ell+\frac 12$
	%such that  $\tilde{u}_0 +\epsilon \mu_\epsilon $ satisfies the compatibility
	%conditions up to order $m+2$ for the regularized system
	%\eqref{shear-prandtl-approxiamte}. 
	If $\tue \in
	L^\infty ([0, T]; H^{m+3}_{k +\ell}(\mathbb{R}^3_+))\cap Lip([0, T];
	H^{m+1}_{k +\ell}(\mathbb{R}^3_+))$ and $(\tilde{u}^\epsilon,   \tilde{w}^\epsilon)$ solves \eqref{regular-shear-prandtl}, then, we have at boundary $z=0$, 
	\begin{equation}
		\begin{cases}
			&\tilde{u}^\epsilon|_{z=0}=0, \qquad \partial^2_z \tilde{u}^\epsilon |_{z=0}=0, \\
			&\partial^4_z \tilde{u}^\epsilon|_{z=0}=2\partial_{z}(u^s + \tilde{u}^\epsilon)\partial_{x}\partial_{z}\tue |_{z=0}+2K\partial_{z}(u^s + \tilde{u}^\epsilon)\partial_{y}\partial_{z}\tue |_{z=0}\\
			&\quad \quad  \quad \quad  ~~~-\partial_{x}\partial_{z}\tilde{{u}}^{\epsilon} \partial_{z}(u^s+\tue) |_{z=0}-K\partial_{y}\partial_{z}\tilde{{u}}^{\epsilon} \partial_{z}(u^s+\tue) |_{z=0}\\
			&\quad \quad  \quad \quad  ~~~- \partial_{y}K\big(\partial_{z}(u^s+\tue)\big)^2 |_{z=0}~,
		\end{cases}
	\end{equation}
	and  for $2 \leq p\leq  \frac{m}{2}$,
\begin{align}
    \begin{split}
		\partial^{2(p+1)}_z \tilde{u}^\epsilon|_{z=0}
&=\sum^p_{q=2}\sum^{q-1}_{l=0}\epsilon^{l}
\sum^{2q-1}_{h=0}\partial_{xy}^h K
\sum_{( \beta, \gamma)\in \Lambda_{q, l}}C_{K, p, l, \beta, \gamma}\prod\limits_{i=1}^{q} \partial_{x}^{\alpha^{i}}\partial_{y}^{\beta^{i}} \partial^{\gamma^{i}+1}_{z}\Big( u^s + \tilde{u}^\epsilon \Big) \Big|_{z=0}  \, ,
    \end{split}
    \label{c-z2p+1 uv}
	\end{align}
	where
	\begin{align}
	\begin{split}	
		\Lambda_{q, l}=&\bigg\{
\beta=(\alpha, \beta, \gamma) =(\alpha^{1}, \cdots, \alpha^{q}; \beta^{1}, \cdots, \beta^{q}; \gamma^{1}, \cdots, \gamma^{q})\in \mathbb{N}^{q}\times\,\mathbb{N}^{q}\times\,\mathbb{N}^{q};\\
&\quad \alpha^i+\beta^i+\gamma^j \leq 2p-1, ~ 1\leq i \leq q; 	 \qquad \sum^{q}_{i=1}\big\{3(\alpha^i+\beta^i) + \gamma^{i}\big\}
= 2p +4l+1;\\
&\qquad \quad
0<\sum^{q}_{i=1}(\alpha^i+\beta^i)  
\leq p+2l -1; \quad
\sum^{q}_{i=1} \gamma^{i} 
\leq 2p-2l-2 \bigg\}.
	\end{split}
\label{cc-Lambda}
\end{align}
\end{corollary}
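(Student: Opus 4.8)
The plan is to derive this directly from Proposition~\ref{boundary reduction}, the hypotheses being literally the same, by invoking the one extra piece of information contained in hypothesis \textbf{H1}: that $K=K(x,y)$ is independent of $z$, hence $\partial_z K\equiv 0$. Starting from the boundary identities \eqref{z2p+1 uv}, the idea is to rewrite every factor of the form $\partial^{\gamma}\partial_z\big(K(u^s+\tue)\big)$ by pulling the tangential derivatives that fall on $K$ out of the product; this turns the mixed factors into coefficients $\partial_{xy}^{h}K$ multiplying factors of the ``good'' shape $\partial_x^{\alpha^i}\partial_y^{\beta^i}\partial_z^{\gamma^i+1}(u^s+\tue)$, which is precisely the form displayed in \eqref{c-z2p+1 uv}.

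Concretely, since $\partial_z K=0$, for any multi-index $\gamma=(\gamma_x,\gamma_y,\gamma_z)$ the Leibniz rule gives
\[
\partial^{\gamma}\partial_z\big(K(u^s+\tue)\big)
=\sum_{a\le\gamma_x,\;b\le\gamma_y}\binom{\gamma_x}{a}\binom{\gamma_y}{b}\,
\big(\partial_x^{a}\partial_y^{b}K\big)\,\partial_x^{\gamma_x-a}\partial_y^{\gamma_y-b}\partial_z^{\gamma_z+1}(u^s+\tue),
\]
so each such factor equals a tangential $K$-derivative of order $a+b$ times a factor of the desired type. Substituting this into each of the $q_2$ mixed factors appearing in \eqref{z2p+1 uv}, expanding the product, and merging the $q_1$ ``pure'' factors $\partial^{\beta}\partial_z(u^s+\tue)$ with the $q_2$ transformed ones into a single family of $q=q_1+q_2$ factors, one reaches a sum of terms $\partial_{xy}^{h}K\cdot\prod_{i=1}^{q}\partial_x^{\alpha^i}\partial_y^{\beta^i}\partial_z^{\gamma^i+1}(u^s+\tue)$, where $h$ is the total order of all tangential derivatives transferred onto the $K$'s and the binomial coefficients together with the old constants $C_{K,p,l,\beta,\gamma}$ are absorbed into new ones. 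The admissible set \eqref{cc-Lambda}, including the range $0\le h\le 2q-1$, is then deduced from \eqref{LLambda} by tracking how removing the $h$ tangential derivatives from the $(u^s+\tue)$ factors lowers their orders; this redistribution is exactly why several of the counting equalities in \eqref{LLambda} appear only as inequalities in \eqref{cc-Lambda}. The low-order case, i.e. the explicit formula for $\partial_z^4\tue|_{z=0}$ (which is \eqref{c-z2p+1 uv} with $p=1$), comes from the same expansion applied to the $\partial_z^4$ identity of Proposition~\ref{boundary reduction}: writing $\partial_z\big(K(u^s+\tue)\big)=K\partial_z(u^s+\tue)$ and $\partial_y\partial_z\big(K(u^s+\tue)\big)=(\partial_y K)\partial_z(u^s+\tue)+K\partial_y\partial_z(u^s+\tue)$ produces precisely the additional term $-\partial_y K\big(\partial_z(u^s+\tue)\big)^2$.

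The delicate point is purely the combinatorial bookkeeping: one must verify that, after the Leibniz expansion and the merging of the two families into $q$ factors, every resulting term genuinely lies in the set \eqref{cc-Lambda} — in particular that the counting relations hold and that $h\le 2q-1$ — and that no extraneous terms are created. This is routine but error-prone, and it is in essence the same induction on $p$ that proves Proposition~\ref{boundary reduction} (differentiate the first equation of \eqref{regular-shear-prandtl} in $z$, evaluate at $z=0$, and substitute the lower-order boundary relations), carried out while consistently keeping the tangential derivatives of $K$ factored out rather than folded back into $K(u^s+\tue)$. Hence the corollary follows, as the remark ``the proof of the above Proposition implies also the following result'' anticipates.
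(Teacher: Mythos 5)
Your proposal is correct and takes essentially the same route the paper intends: the paper offers no separate proof of this corollary beyond the remark that the proof of Proposition \ref{boundary reduction} implies it, and your argument---using $\partial_z K\equiv 0$ to Leibniz-expand each mixed factor $\partial^{\gamma}\partial_z\big(K(u^s+\tilde u^{\epsilon})\big)$, pull the tangential derivatives of $K$ out as coefficients $\partial_{xy}^{h}K$ (absorbing products of $K$-derivatives into the constants $C_{K,p,l,\beta,\gamma}$), and re-index from \eqref{LLambda} to \eqref{cc-Lambda}, with the $p=1$ case producing the extra term $-\partial_y K\big(\partial_z(u^s+\tilde u^{\epsilon})\big)^2$---is exactly that argument. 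Your observation that the transfer of tangential derivatives onto $K$ is why some counting equalities relax to inequalities, and that the bookkeeping can equivalently be done by rerunning the induction of Proposition \ref{boundary reduction} with the $K$-derivatives kept factored out, matches the paper's (implicit) justification.
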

\begin{remark}
	The condition
	$$0<\sum^{q}_{i=1}(\alpha^i+\beta^i)   $$
	imply that, for each term of \eqref{c-z2p+1 uv}, there are at last
	one factor like
	$$  \partial_{x}^{\alpha^{i}}\partial_{y}^{\beta^{i}} \partial^{\gamma^i+1}_{z} \tilde{u}^\epsilon \Big|_{z=0}.$$
\end{remark}

With the above proposition,  we can draw a corollary, which helps us to
understand why we add a corrector $\epsilon\mu^{\epsilon}$.
\begin{corollary}\label{corrector reduction}
Under the hypotheses of Proposition	\ref{boundary reduction}, assume also
that $\partial_{z}\tilde{u}_{0} \in 	H^{m+2}_{k +\ell'}(\mathbb{R}^3_+)$, then there exists $\epsilon_{0}$, and   $\mu_\epsilon \in H^{m+3}_{k +\ell'-1}(\mathbb{R}^{3}_+)$, for some $\frac{1}{2} < \ell' < \ell+\frac{1}{2}$ and any $0< \epsilon \leq  \epsilon_{0}$, such that  
$\tilde{u}_{0}+\epsilon\mu^{\epsilon}$ satisfies the compatibility condition up to $(m+2)^{th}$ order for regularized system \eqref{regular-shear-prandtl}. 
 Moreover, for any $m\leq \tilde m\leq m+2$, we have 
\begin{align*}
\big\|\partial_z\tilde{u}^\epsilon_{0}\big\|_{H^{\tilde m}_{k+\ell'}(\mathbb{R}^{3}_{+})}\leq \frac {3}{2} \big\|
\partial_z\tilde{u}_{0}\big\|_{H^{\tilde m}_{k+\ell'}(\mathbb{R}^3_+)},
\end{align*}
and
\begin{align*}
\lim_{\epsilon \rightarrow 0}\big\|\partial_z\tilde{u}^\epsilon_{0}-\partial_z\tilde{u}_{0}\big\|_{H^{\tilde m}_{k+\ell'}(\mathbb{R}^{3}_+)}=0.
\end{align*}
\end{corollary}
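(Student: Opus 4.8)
The plan is to construct $\mu^\epsilon$ by prescribing its normal traces at $z=0$ so that $\tilde u_0+\epsilon\mu^\epsilon$ upgrades the compatibility relations \eqref{c-uv4 t=0}--\eqref{c-uv2p t=0} of the tangentially inviscid system \eqref{shear-prandtl}, which $\tilde u_0$ is assumed to satisfy, into the compatibility relations \eqref{c-z2p+1 uv} of the regularized system \eqref{regular-shear-prandtl}. The structural observation that makes this work is that the right-hand side of \eqref{c-z2p+1 uv} splits as the $l=0$ sum --- which is exactly of the same multilinear form in the normal traces as the right-hand side of \eqref{c-uv2p t=0} --- plus the sum over $l\ge 1$, every term of which carries an explicit factor $\epsilon^{l}$.

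\textbf{Recursive construction of the traces.} I would set $\partial_z^{j}\mu^\epsilon|_{z=0}=0$ for every odd $j$ and for $j>m+2$, together with $\mu^\epsilon|_{z=0}=\partial_z^{2}\mu^\epsilon|_{z=0}=0$, and then define the even traces $a^\epsilon_{2(p+1)}:=\partial_z^{2(p+1)}\mu^\epsilon|_{z=0}$ recursively in $p=1,\dots,\tfrac m2$. Substituting $\tilde u^\epsilon_0=\tilde u_0+\epsilon\mu^\epsilon$ into the order-$2(p+1)$ identity \eqref{c-z2p+1 uv}, expanding the finitely many products multilinearly in powers of $\epsilon$, and subtracting the identity \eqref{c-uv2p t=0} satisfied by $\tilde u_0$, the $\epsilon^{0}$ contributions cancel exactly; every surviving contribution then carries a factor of $\epsilon$ --- either the explicit $\epsilon^{l}$ with $l\ge1$, or a $\mu^\epsilon$-trace entering as $\epsilon\,\partial^{\beta}\partial_z\mu^\epsilon|_{z=0}$ --- so that one may divide by $\epsilon$ and obtain $a^\epsilon_{2(p+1)}=\Phi^\epsilon_p$, where $\Phi^\epsilon_p$ is a fixed polynomial in the lower-order traces $a^\epsilon_{2j}$ ($j\le p$) and their tangential derivatives, in the boundary values of tangential and normal derivatives of $\tilde u_0$ and $u^s_0$, and in the tangential derivatives of $K$, with coefficients bounded uniformly for $\epsilon\in(0,1]$. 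The recursion is well founded because the constraint $\sum_i\gamma^i\le 2p-2l-2$ in \eqref{cc-Lambda} forces every normal-trace order occurring on the right of \eqref{c-z2p+1 uv} to stay strictly below $2(p+1)$, and it is consistent with all odd traces of $\mu^\epsilon$ being zero; by construction $\tilde u_0+\epsilon\mu^\epsilon$ then satisfies the compatibility conditions of \eqref{regular-shear-prandtl} up to order $m+2$.

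\textbf{Realization and uniform bounds.} I would fix a cut-off $\chi\in C_c^\infty([0,\infty))$ with $\chi\equiv1$ near $z=0$ and put $\mu^\epsilon(x,y,z):=\chi(z)\sum_{j=0}^{m+2}\frac{z^{j}}{j!}\,\bigl(\partial_z^{j}\mu^\epsilon|_{z=0}\bigr)(x,y)$, which has the prescribed traces, is smooth, and is supported in a fixed compact $z$-interval, so $\mu^\epsilon\in H^{m+3}_{\lambda}(\mathbb R^3_+)$ for every $\lambda$, in particular for $\lambda=k+\ell'-1$ (the value of $\lambda$ being immaterial here because of the compact $z$-support). For the estimate I would invoke the Remark following Corollary~\ref{c-boundary reduction}: every product on the right of \eqref{c-z2p+1 uv} contains at least one factor of the form $\partial_x^{\cdot}\partial_y^{\cdot}\partial_z^{\cdot+1}\tilde u^\epsilon|_{z=0}$, and this property propagates through the recursion, so that each trace $a^\epsilon_j$ is a finite sum of products each carrying at least one factor $\partial^{\beta}\partial_z\tilde u_0|_{z=0}$ with $|\beta|\le m+1$. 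Combining the trace inequality, the algebra property of $H^{s}(\mathbb R^2_{xy})$, $K\in W^{m+1,\infty}$, and an accounting of derivative counts against the hypothesis $\partial_z\tilde u_0\in H^{m+2}_{k+\ell'}$ (the two extra derivatives giving the needed margin over $\tilde m\le m+2$), one obtains $\|\mu^\epsilon\|_{H^{m+3}_{k+\ell'-1}}\le C\|\partial_z\tilde u_0\|_{H^{m+2}_{k+\ell'}}$ and, for $m\le\tilde m\le m+2$, $\|\partial_z\mu^\epsilon\|_{H^{\tilde m}_{k+\ell'}}\le C\|\partial_z\tilde u_0\|_{H^{\tilde m}_{k+\ell'}}$, with $C$ depending only on $\|\tilde u_0\|$, $\|u^s_0\|$ and $\|K\|_{W^{m+1,\infty}}$, uniformly in $\epsilon\in(0,1]$. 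Hence $\|\partial_z\tilde u^\epsilon_0\|_{H^{\tilde m}_{k+\ell'}}\le(1+\epsilon C)\|\partial_z\tilde u_0\|_{H^{\tilde m}_{k+\ell'}}$, so choosing $\epsilon_0$ with $\epsilon_0 C\le\tfrac12$ gives the bound by $\tfrac32\|\partial_z\tilde u_0\|_{H^{\tilde m}_{k+\ell'}}$, and the same inequality gives $\|\partial_z\tilde u^\epsilon_0-\partial_z\tilde u_0\|_{H^{\tilde m}_{k+\ell'}}=\epsilon\|\partial_z\mu^\epsilon\|_{H^{\tilde m}_{k+\ell'}}\to0$ as $\epsilon\to0$.

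\textbf{The main obstacle.} The delicate point is the divisibility-by-$\epsilon$ step in the recursion: one has to verify that, after the substitution $\tilde u^\epsilon_0=\tilde u_0+\epsilon\mu^\epsilon$, the discrepancy between the right-hand side of \eqref{c-z2p+1 uv} and $\partial_z^{2(p+1)}\tilde u_0|_{z=0}$ really equals $\epsilon$ times a quantity bounded uniformly as $\epsilon\to0$ --- which uses essentially that $\tilde u_0$ solves the $\epsilon^{0}$ (inviscid) relations \eqref{c-uv2p t=0} and that every surviving term has $l\ge1$ --- and that the induced recursion neither raises a normal-trace order up to $2(p+1)$ nor amplifies the $H^{s}(\mathbb R^2_{xy})$-norms beyond control by the data. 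Everything else is routine bookkeeping of weights (trivial here, since $\mu^\epsilon$ has compact $z$-support) and of the derivative counts under $\partial_z\tilde u_0\in H^{m+2}_{k+\ell'}$.
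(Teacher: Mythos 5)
Your proposal is correct and follows essentially the same route as the paper's Appendix B proof: there, too, the even-order traces $\partial_z^{2p+2}\mu^{\epsilon}|_{z=0}$ are determined recursively by inserting $\tilde u_0+\epsilon\mu^{\epsilon}$ into the boundary reductions of Proposition \ref{boundary reduction}, cancelling the $\epsilon^{0}$ part against the inviscid compatibility conditions \eqref{uv4 t=0}--\eqref{uv2p t=0} and dividing by $\epsilon$, and $\mu^{\epsilon}$ is then realized as a cut-off Taylor polynomial $\chi(z)\sum_{p}\tilde\mu^{\epsilon,2p}(x,y)\frac{z^{2p}}{(2p)!}$ (starting at $z^{6}$, consistent with your observation that the order-$4$ correction vanishes). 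The only difference is one of emphasis: you spell out the uniform-in-$\epsilon$ trace/algebra estimates and the choice of $\epsilon_0$ giving the $\tfrac32$ bound and the convergence, which the paper leaves implicit after constructing $\mu^{\epsilon}$.
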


We are now going to prove the existence of approximate solutions to the original regularized  system \eqref{regular-shear-prandtl} by utilizing the vorticity $\varphi^\epsilon=\partial_{z}\tue$, which is reformulated 
  as the following form, for  any $0 <\epsilon \leq 1$, 
\begin{equation}\label{vv-regular-shear-prandtl}
	\begin{cases} \partial_t\tilde{\varphi}^\epsilon + (u^s + \tilde{u}^\epsilon) \partial_x\tilde{\varphi}^\epsilon +K(u^s + \tilde{u}^\epsilon) \partial_y\tilde{\varphi}^\epsilon+ \tilde{w}^\epsilon \partial_z(u^s_{z} +  \tilde{\varphi}^\epsilon)
	+\partial_{y}K (u^s+\tue)\partial_{z}(u^s+\tue)
	\\
	\qquad = \partial^2_z\tilde{\varphi}^\epsilon +\epsilon \partial^2_x\tilde{\varphi}^\epsilon+\epsilon \partial^2_y\tilde{\varphi}^\epsilon , \\
		\partial_{z}\tilde{\varphi}^\epsilon|_{z=0} =0 , \\
		\tvae|_{t=0} =\tilde{\varphi}_{0}+\epsilon\partial_{z}\mu^{\epsilon},
	\end{cases}
\end{equation}
where
\begin{align*}
&\tue(t, x, y, z)=-\int^{+\infty}_{z}\tilde{\varphi}^{\epsilon}(t, x, y, \tilde{z}) d\tilde{z}, \\
& \twe(t, x, y, z)=-\int^{z}_{0}\partial_{x}\tilde{u}^{\epsilon}(t, x, y, \tilde{z}) d\tilde{z}-\int^{z}_{0}\partial_{y}(K(u^{s}+\tilde{u}^{\epsilon}))(t, x, y, \tilde{z}) d\tilde{z}.
\end{align*}
%Or correspondingly,  the regularized vorticity $\tilde{\varphi}=(\tvae_{1}, \tvae_{2})^{\bot}$
%also satisfies the following regularized vorticity system:  for any $0 <\epsilon \leq 1$,
%\begin{equation}\label{vvvv-regular-shear-prandtl}
%	\begin{cases} \partial_t\tilde{\varphi}^\epsilon + (u^s + \tilde{u}^\epsilon) \partial_x\tilde{\varphi}^\epsilon +(v^s + \tilde{v}^\epsilon) \partial_y\tilde{\varphi}^\epsilon+ \tilde{w}^\epsilon \partial_z\big((u^s_{z}, v^{s}_{z}) +  \tilde{\varphi}^\epsilon\big)
%		-(v^{s}_{z}+\tilde{\varphi}_{2}^{\epsilon})\nabla_{h}^{\bot}\tue +(u^{s}_{z}+\tilde{\varphi}_{1}^{\epsilon})\nabla_{h}^{\bot}\tve	\\
%		\qquad = \partial^2_z\tilde{\varphi}^\epsilon +\epsilon \partial^2_x\tilde{\varphi}^\epsilon+\epsilon \partial^2_y\tilde{\varphi}^\epsilon , \\
%		\partial_{z}\tilde{\varphi}^\epsilon|_{z=0} =0 , \\
%		\tvae|_{t=0} =\tilde{\varphi}_{0}+\epsilon\partial_{z}\mu^{\epsilon},
%	\end{cases}
%\end{equation}
%where 
%\begin{align*}
%	(\tue, \tve)(t, x, y, z)=-\int^{+\infty}_{z}\tilde{\varphi}^{\epsilon}(t, x, y, \tilde{z}) d\tilde{z}
%\end{align*}
%and
%\begin{align*}
%	 \twe(t, x, y, z)=-\int^{z}_{0}\partial_{x}\tilde{u}^{\epsilon}(t, x, y, \tilde{z}) d\tilde{z}-\int^{z}_{0}\partial_{y}\tilde{v}^{\epsilon}(t, x, y, \tilde{z}) d\tilde{z}.
%\end{align*}

With the above preparations, as in \cite{Xu-Zhang-2017,Masmoudi-Wong-2015,Fan-Ruan-Yang-2021}, we derive there exits a life existence time $T^\epsilon$  such that if $\|\tilde{\varphi}_{0}\|_{H^{m+2}_{k+\ell}(\mathbb{R}^3_+)}$ owns a 
 bound, then  system \eqref{vv-regular-shear-prandtl} admits a unique solution. Specifically, we have the following proposition for the existence of approximate solutions.
\begin{theorem}\label{epsilon-existence}
Assume the condition (H) holds. 	Let $m \geq 6$ be an even integer, $k > 1$, $0 \leq \ell < \frac{1}{2}$, $k+\ell > \frac{3}{2}$.
	Assume that $\partial_{z}\tilde{u}_{0}^\epsilon$ belongs to $H^{m+2}_{k+\ell}(\mathbb{R}^3_+)$
	and satisfies the compatibility conditions up to order $m+2$ for \eqref{shear-prandtl}.
Also, we  assume  that the shear flow $u^s(t,z)$, for $0 \leq p \leq m+2$ and  $(t, z)\in [0, T_1]\times \mathbb{R}_+$,  satisfies
	\begin{align*}
\big|\partial_{z}^{p+1}u^s(t,z)\big| \leq C \langle z \rangle^{-k-p}.
	\end{align*}
And $K(x, y)$ is supposed to satisfy that
$$\big\|K\big\|_{W^{m+1, \infty}(\mathbb{R}^2)} < \infty.$$
Moreover, for any $0<\epsilon\leq \epsilon_{0}$ and $\tilde{\zeta} >0$, there exits $T^\epsilon > 0 $ depending only on $\epsilon$ and $\tilde{\zeta}$, such that if
\begin{align*}
\|\tilde{\varphi}_{0}\|_{H^{m+2}_{k+\ell}(\mathbb{R}^3_+)}\leq \tilde{\zeta},
\end{align*}
	then the initial boundary value problem \eqref{vv-regular-shear-prandtl} admits a unique solution
\begin{align*}
	\tvae \in L^\infty([0, T^\epsilon]; H^{m+2}_{k+\ell}(\mathbb{R}^3_+)) \, ,
\end{align*}
which satisfies
\begin{align*}
\|\tvae\|_{L^{\infty}([0, T^\epsilon]; H^{m}_{k+\ell}(\mathbb{R}^3_+))}
\leq \frac{4}{3}\|\tvae_{0}\|_{H^{m}_{k+\ell}(\mathbb{R}^3_+)} \leq
2\|\tilde{\varphi}_{0}\|_{H^{m}_{k+\ell}(\mathbb{R}^3_+)}.
\end{align*}
\end{theorem}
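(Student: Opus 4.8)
The parabolic regularization being in place, it remains to solve the regularized vorticity problem \eqref{vv-regular-shear-prandtl} for each fixed $\epsilon\in(0,\epsilon_{0}]$, keeping in mind that $\tue=-\int_{z}^{+\infty}\tvae\,d\tilde z$ and $\twe=-\int_{0}^{z}\big(\partial_{x}\tue+\partial_{y}\KK\big)\,d\tilde z$ are nonlocal functionals of $\tvae$. The plan is to run a standard linearization/iteration scheme: put $\tilde\varphi^{\epsilon,(0)}\equiv\tvae_{0}$, reconstruct $\tilde u^{\epsilon,(n)},\tilde w^{\epsilon,(n)}$ from $\tilde\varphi^{\epsilon,(n)}$ by the formulas above, and let $\tilde\varphi^{\epsilon,(n+1)}$ solve the \emph{linear} parabolic equation obtained from $\eqref{vv-regular-shear-prandtl}_{1}$ by freezing the transport coefficients $u^{s}+\tilde u^{\epsilon,(n)}$, $K(u^{s}+\tilde u^{\epsilon,(n)})$, $\tilde w^{\epsilon,(n)}$ and the zeroth-order terms at step $n$, keeping the datum $\tvae_{0}=\tilde\varphi_{0}+\epsilon\partial_{z}\mu^{\epsilon}$ and the Neumann-type condition $\partial_{z}\tilde\varphi^{\epsilon,(n+1)}|_{z=0}=0$. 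The corrector $\mu^{\epsilon}$ supplied by Corollary \ref{corrector reduction} makes the initial datum compatible with \eqref{regular-shear-prandtl} up to order $m+2$ — so the iterates genuinely lie in $H^{m+2}_{k+\ell}$ up to $t=0$ — and yields $\|\tvae_{0}\|_{H^{\tilde m}_{k+\ell}}\le\frac32\|\tilde\varphi_{0}\|_{H^{\tilde m}_{k+\ell}}$ for $m\le\tilde m\le m+2$.

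\textbf{Linear step and uniform estimates.} For fixed $\epsilon>0$ each linear iterate equation is uniformly parabolic in $(x,y,z)$ with principal part $\partial_{z}^{2}+\epsilon(\partial_{x}^{2}+\partial_{y}^{2})$, hence solvable in $L^{\infty}([0,T];H^{m+2}_{k+\ell}(\mathbb{R}^{3}_{+}))$ by a Galerkin or analytic-semigroup construction, once one verifies that the polynomial weight and the condition $\partial_{z}\tvae|_{z=0}=0$ are compatible with the weighted energy identities: the weight commutators are lower order and the $z$-integrations by parts leave only boundary contributions that vanish for tangential derivatives (by $\partial_{z}\tvae|_{z=0}=0$) or reduce to controllable quantities via the reconstructed traces $\partial_{z}^{2p}\tue|_{z=0}$ of Corollary \ref{c-boundary reduction}. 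For estimates uniform in $n$, apply $\partial^{\alpha}$ to $\eqref{vv-regular-shear-prandtl}_{1}$ for $\alpha$ in the index set of $H^{m+2}_{k+\ell}$, test against $\langle z\rangle^{2(k+\ell)+2\alpha_{3}}\partial^{\alpha}\tvae$ and integrate over $\mathbb{R}^{3}_{+}$ (dropping the iteration index). The diffusion terms yield, after integration by parts in $z$, the dissipation $-\|\partial_{z}\partial^{\alpha}\tvae\|^{2}-\epsilon\|\nabla_{xy}\partial^{\alpha}\tvae\|^{2}$ in the relevant weight, up to the bounded weight commutators and the boundary terms above. The tangential transport terms, integrated by parts in $x,y$, leave $-\tfrac12\int\langle z\rangle^{2(k+\ell)+2\alpha_{3}}(\partial_{z}\twe)|\partial^{\alpha}\tvae|^{2}$, bounded since $\partial_{z}\twe=-(\partial_{x}\tue+\partial_{y}\KK)\in L^{\infty}$, plus commutators $[\partial^{\alpha},u^{s}+\tue]\partial_{x}\tvae$ and $[\partial^{\alpha},K(u^{s}+\tue)]\partial_{y}\tvae$, which — using the regularity of $K$ and the decay estimates of Proposition \ref{shear-profile} — are bounded by a polynomial in $\|\tvae\|_{H^{m+2}_{k+\ell}}$. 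The vertical transport $\twe\partial_{z}(u^{s}_{z}+\tvae)$ contributes the shear-coupling piece $\twe\,u^{s}_{zz}$ (handled via Proposition \ref{shear-profile}), the piece $\twe\partial_{z}\partial^{\alpha}\tvae$ (integrated by parts in $z$, using $\twe|_{z=0}=0$ and the at-most-linear $z$-growth of $\twe$), and the commutator pieces addressed below; the Burgers-type source $\partial_{y}K(u^{s}+\tue)\partial_{z}(u^{s}+\tue)$ is lower order. At orders $|\alpha|\le m$ (together with the pure tangential $\partial_{xy}^{m}$), every commutator encountered involves at most $m+1$ tangential derivatives of $\tue$ (and at most $m$ derivatives of $K$, controlled by $\|K\|_{W^{m+1,\infty}}<\infty$), all already controlled by the $H^{m+2}$ regularity, so one obtains the clean inequality $\frac{d}{dt}\|\tvae\|^{2}_{H^{m}_{k+\ell}}\le C_{\epsilon,\tilde\zeta}\|\tvae\|^{2}_{H^{m}_{k+\ell}}$ as long as $\|\tvae\|_{H^{m+2}_{k+\ell}}\le 2\tilde\zeta$; with $\|\tvae_{0}\|_{H^{m}_{k+\ell}}\le\frac32\|\tilde\varphi_{0}\|_{H^{m}_{k+\ell}}$, Gronwall gives $\|\tvae(t)\|_{H^{m}_{k+\ell}}\le\frac43\|\tvae_{0}\|_{H^{m}_{k+\ell}}\le 2\|\tilde\varphi_{0}\|_{H^{m}_{k+\ell}}$ on $[0,T^{\epsilon}]$.

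\textbf{The main obstacle.} The one genuinely delicate point — and the reason the lifespan degenerates as $\epsilon\to0$ — is the top order $\partial^{\alpha}=\partial_{xy}^{m+2}$: expanding $\partial^{\alpha}(\twe\partial_{z}\tvae)$ produces the term $(\partial_{xy}^{m+2}\twe)\,\partial_{z}\tvae$, and $\partial_{xy}^{m+2}\twe=-\int_{0}^{z}\partial_{xy}^{m+2}(\partial_{x}\tue+\partial_{y}\KK)$ involves one tangential derivative more than the $H^{m+2}_{k+\ell}$ norm controls. Unlike the sharp uniform-in-$\epsilon$ estimates carried out later, here no cancellation structure is needed: the missing derivative is recovered from the horizontal viscosity. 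Writing $\tue=-\int_{z}^{+\infty}\tvae$ and using a Hardy inequality in $z$ (valid because $k+\ell>\tfrac32$), this contribution is bounded by $\|\partial_{xy}^{m+3}\tvae\|$ in the relevant weight times $\|\partial_{z}\tvae\|_{L^{\infty}}\lesssim\|\tvae\|_{H^{m+2}_{k+\ell}}$ (Sobolev embedding, $m\ge6$), and Young's inequality absorbs it into $\tfrac{\epsilon}{2}\|\nabla_{xy}\partial_{xy}^{m+2}\tvae\|^{2}$ at the price of a factor $C/\epsilon$. One thus gets $\frac{d}{dt}\|\tvae\|^{2}_{H^{m+2}_{k+\ell}}\le\tfrac{C}{\epsilon}P\big(\|\tvae\|^{2}_{H^{m+2}_{k+\ell}}\big)$ for a fixed polynomial $P$; starting from $\|\tvae_{0}\|_{H^{m+2}_{k+\ell}}\le\tfrac32\tilde\zeta$ this keeps $\|\tvae\|_{H^{m+2}_{k+\ell}}\le 2\tilde\zeta$ on an interval $[0,T^{\epsilon}]$ with $T^{\epsilon}$ depending only on $\epsilon$ and $\tilde\zeta$. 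Since all bounds are uniform in $n$, one passes to the limit $n\to\infty$ — the iterates are Cauchy in $H^{m-1}_{k+\ell}$ by a contraction argument exploiting the $\epsilon$-parabolicity, and interpolation with the uniform $H^{m+2}$ bound upgrades the convergence — obtaining a solution $\tvae\in L^{\infty}([0,T^{\epsilon}];H^{m+2}_{k+\ell})$ with the stated bound. Uniqueness follows from the same weighted energy estimate applied to the difference of two solutions.
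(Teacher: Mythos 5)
Your overall architecture (iterate on a linearized $\epsilon$-parabolic problem, prove bounds uniform in the iteration, contract in a lower norm) is a reasonable way to fill in the construction that the paper itself delegates to \cite{Xu-Zhang-2017,Masmoudi-Wong-2015,Fan-Ruan-Yang-2021}, and your two-tier estimate (a linear Gronwall bound for $\|\tvae\|_{H^{m}_{k+\ell}}$ conditional on an $H^{m+2}_{k+\ell}$ bound, plus a $C/\epsilon$ nonlinear estimate at the top level) differs from the paper, which closes its quantitative bound entirely at the $H^{m}_{k+\ell}$ level: Lemma \ref{E1} (uniform in $\epsilon$) plus Lemma \ref{E2} (cost $C/\epsilon$ from absorbing the lost tangential derivative into the horizontal viscosity) give the Bernoulli-type inequality \eqref{3.7}, and $T^\epsilon$ is then fixed explicitly by \eqref{Tm}, which yields the factor $\frac43$ without any appeal to an $H^{m+2}$ bound. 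The difference would be harmless if your top tier actually closed, but this is where there is a genuine gap.

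The problem is the boundary terms of the $H^{m+2}_{k+\ell}$ energy estimate, not the loss of a tangential derivative (which you treat correctly, exactly as in Lemma \ref{E2}). When you integrate $\partial_z^2\partial^\alpha\tvae\cdot\partial^\alpha\tvae$ by parts in $z$ for multi-indices in the index set of $H^{m+2}_{k+\ell}$, you meet, at $\alpha=(0,0,m+2)$, the term $-\int_{\mathbb{R}^2}\partial_z^{m+3}\tvae\,\partial_z^{m+2}\tvae\big|_{z=0}$, i.e.\ the traces $\partial_z^{m+4}\tue|_{z=0}$ and $\partial_z^{m+3}\tue|_{z=0}$. Neither of the two tools you invoke controls these: the boundary reduction of Proposition \ref{boundary reduction} and Corollary \ref{c-boundary reduction} only expresses $\partial_z^{2(p+1)}\tue|_{z=0}$ for $2\le p\le \frac m2$, that is up to $\partial_z^{m+2}\tue|_{z=0}$, consistently with compatibility conditions assumed only up to order $m+2$ and with the corrector of Corollary \ref{corrector reduction}; and the trace inequality of Lemma \ref{trace} would require one more $z$-derivative than the $H^{m+2}_{k+\ell}$ norm you are trying to propagate (this is precisely why the paper, working at the $H^m$ level, can afford $\|\partial_z^{m+1}\tvae\|_{L^2_{k+\ell}}$ in Case 2 of Lemma \ref{E1}, while the analogue at level $m+2$ would need $H^{m+3}$). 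So the assertion that all boundary contributions either vanish or "reduce to controllable quantities via the reconstructed traces" fails exactly at the top normal orders $\alpha_3=m+1,m+2$, and with it the claimed differential inequality $\frac{d}{dt}\|\tvae\|^2_{H^{m+2}_{k+\ell}}\le \frac{C}{\epsilon}P\big(\|\tvae\|^2_{H^{m+2}_{k+\ell}}\big)$. To repair the argument you would either need compatibility conditions and a boundary reduction up to order $m+4$ (stronger hypotheses than the theorem has), or follow the paper's route and close the quantitative estimate only in $H^{m}_{k+\ell}$ via \eqref{3.7}, keeping the $H^{m+2}$ regularity of the approximate solution as a qualitative statement obtained from the (mollified) construction rather than from a propagated top-order energy bound.
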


The proof of Theorem \ref{epsilon-existence} will be given in the following two subsections. More   specifically,  we will derive the a priori   estimate on $\partial^{\alpha
}\varphi$ for   $\alpha =(\alpha_1, \alpha_{2}, \alpha_3)$ satisfying $|\alpha| \leq s $ with  $\alpha_1+\alpha_{2} \leq m-1$ in the first subsection and  $|\alpha_{1}+\alpha_{2}| = s $ in the second  subsection.

\subsection{Weighted $L^2$ estimates on $\partial^{\alpha} \tvae$ with $|\alpha| \leq m, \alpha_1+\alpha_{2} \leq m-1$}

\begin{lemma}\label{E1}
Assume the condition (H) holds. Let $m \geq 6$ be an even integer, $k > 1$, $0 < \ell < \frac{1}{2}$, $k+\ell > \frac{3}{2}$.
Assume $\tvae$ is a  solution to the initial boundary value problem \eqref{vv-regular-shear-prandtl}
in $[0, T^\epsilon]$  and satisfies $\tvae \in L^\infty([0, T^\epsilon]; H^{m+2}_{k+\ell}(\mathbb{R}^3_+))$. And $K(x, y)$ is supposed to satisfy that
$$\big\|K\big\|_{W^{m+1, \infty}(\mathbb{R}^2)} < \infty.$$ Then, it holds that
\begin{align}\begin{split}
&\frac{d}{dt}\|\tvae\|^{2}_{H^{m, m-1}_{k+\ell}(\mathbb{R}_{+}^{3})}+\|\partial_{z}\tvae\|^{2}_{H^{m, m-1}_{k+\ell}(\mathbb{R}_{+}^{3})}+\epsilon\left( \|\partial_{x}\tvae\|^{2}_{H^{m, m-1}_{k+\ell}(\mathbb{R}_{+}^{3})}+\|\partial_{y}\tvae\|^{2}_{H^{m, m-1}_{k+\ell}(\mathbb{R}_{+}^{3})}\right) \\
& \leq C \left( \|\tvae\|^{2}_{H^{m}_{k+\ell}(\mathbb{R}_{+}^{3})}+\|\tvae\|^{m}_{H^{m}_{k+\ell}(\mathbb{R}_{+}^{3})}\right) . 
	\end{split}
	\label{3.1}
\end{align}
\end{lemma}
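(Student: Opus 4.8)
\section*{Proof proposal for Lemma \ref{E1}}

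The plan is to run a weighted energy estimate on each derivative $\partial^\alpha\tvae$ with $|\alpha|\le m$, $\alpha_1+\alpha_2\le m-1$, applying $\partial^\alpha$ to the vorticity equation \eqref{vv-regular-shear-prandtl}$_1$, multiplying by $\langle z\rangle^{2(k+\ell)+2\alpha_3}\partial^\alpha\tvae$, and integrating over $\mathbb{R}^3_+$. First I would treat the good second-order terms: $\ir \langle z\rangle^{2(k+\ell)+2\alpha_3}\partial^\alpha(\partial_z^2\tvae)\,\partial^\alpha\tvae$ produces, after integration by parts in $z$, the dissipation $-\|\partial_z\partial^\alpha\tvae\|^2_{L^2_{k+\ell}}$ plus commutator terms coming from differentiating the weight $\langle z\rangle^{2(k+\ell)+2\alpha_3}$ twice; these commutators are lower order and absorbed into the right side by Young's inequality (here the boundary term at $z=0$ vanishes since $\partial_z\tvae|_{z=0}=0$, or is controlled by the reduction identities in Proposition \ref{boundary reduction} when $\alpha_3$ is large — this is where the ``reconstruction of boundary reduction'' is used). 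The $\epsilon\partial_x^2$ and $\epsilon\partial_y^2$ terms similarly give $-\epsilon\|\partial_x\partial^\alpha\tvae\|^2-\epsilon\|\partial_y\partial^\alpha\tvae\|^2$ plus harmless weight commutators.

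Next come the transport terms. For $\ir\langle z\rangle^{2(k+\ell)+2\alpha_3}\partial^\alpha\big((u^s+\tue)\partial_x\tvae\big)\partial^\alpha\tvae$ I would split off the top-order piece $(u^s+\tue)\partial_x\partial^\alpha\tvae\cdot\partial^\alpha\tvae$, which after integration by parts in $x$ costs only $\partial_x(u^s+\tue)=\partial_x\tue$ — a first-order tangential derivative, hence controlled — and the remaining commutator terms $[\partial^\alpha,(u^s+\tue)]\partial_x\tvae$ distribute at most $|\alpha|$ derivatives, so by the Sobolev product/Moser estimates in weighted spaces together with $\partial_x\tue=-\int_z^{+\infty}\partial_x\tilde\varphi^\epsilon$ and the shear-flow bounds of Proposition \ref{shear-profile}, everything is bounded by $C(\|\tvae\|^2_{H^m_{k+\ell}}+\|\tvae\|^m_{H^m_{k+\ell}})$. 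The $K(u^s+\tue)\partial_y\tvae$ term is identical, using $\|K\|_{W^{m+1,\infty}}<\infty$ to handle the extra $\partial^\alpha$ falling on $K$. The term $\partial_yK(u^s+\tue)\partial_z(u^s+\tue)$ is lower order (no derivative loss) and is estimated directly by the product rule.

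The genuinely delicate term is $\ir\langle z\rangle^{2(k+\ell)+2\alpha_3}\partial^\alpha\big(\twe\,\partial_z(u^s_z+\tvae)\big)\partial^\alpha\tvae$, because $\twe=-\int_0^z\big(\partial_x\tue+\partial_y(K(u^s+\tue))\big)$ carries one tangential derivative, so a naive estimate loses a derivative. The plan here is the standard treatment following \cite{Masmoudi-Wong-2015,Xu-Zhang-2017}: the only dangerous piece is where all of $\partial^\alpha$ lands on $\partial_z(u^s_z+\tvae)$, giving $\twe\,\partial_z\partial^\alpha(u^s_z+\tvae)$; one integrates by parts in $z$, moving $\partial_z$ onto $\twe\langle z\rangle^{2(k+\ell)+2\alpha_3}\partial^\alpha\tvae$, and uses $\partial_z\twe=-\partial_x\tue-\partial_y(K(u^s+\tue))$ from the divergence-free relation, which trades the $z$-integral of a tangential derivative for an honest tangential derivative $\partial_x\tue,\partial_y\tue$ of order $\le m-1$ in $xy$ — precisely the regularity we have, since $\alpha_1+\alpha_2\le m-1$. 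The resulting terms (and the remaining commutator pieces where fewer derivatives hit $\partial_z(u^s_z+\tvae)$, handled by the weighted Hardy/Sobolev inequalities to control $\twe/\langle z\rangle$ and $\tue$) are all bounded by $C(\|\tvae\|^2_{H^m_{k+\ell}}+\|\tvae\|^m_{H^m_{k+\ell}})$. I expect this cancellation-plus-divergence step, and the careful bookkeeping of which commutator terms in the 3D setting generate spurious high-order $\partial_x\tue$ factors (the complication flagged in Section \ref{ss2}), to be the main obstacle; the $K$-dependent terms it produces are absorbed using $\|K\|_{W^{m+1,\infty}}<\infty$. Summing over all admissible $\alpha$ yields \eqref{3.1}.
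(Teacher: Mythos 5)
Your proposal follows essentially the same route as the paper: the identical weighted energy estimate on $\partial^{\alpha}\tvae$, the same use of the divergence-free relation together with Hardy/Sobolev and trace inequalities for the transport, commutator and $\partial_yK$ terms, and the same appeal to the boundary-reduction identities for the boundary integral created by integrating the $\partial_z^2$ term by parts. The only point you gloss over is that this boundary integral (needed precisely when $|\alpha|=m$, with a case analysis on the parity of $\alpha_3$ and an extra tangential integration by parts when $\alpha_3$ is odd) is what produces the high power $\|\tvae\|^{m}_{H^{m}_{k+\ell}}$ on the right-hand side, whereas the term $\twe\,\partial_z(u^s_z+\tvae)$ that you single out as the delicate one actually loses no derivative in this lemma because of the restriction $\alpha_1+\alpha_2\le m-1$.
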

\begin{proof}
Applying the operator $\partial^\alpha =\partial^{\alpha_1}_{x}\partial^{\alpha_2}_{y}\partial^{\alpha_3}_{z}$
 for $\alpha =(\alpha_1, \alpha_{2}, \alpha_3)$ satisfying $|\alpha| \leq s,  \alpha_1+\alpha_{2} \leq m-1$ in the vorticity
equation $\eqref{vv-regular-shear-prandtl}_{1}$, we have
\begin{align}
\begin{split}
&\partial_t\partial^\alpha\tilde{\varphi}^\epsilon +  (u^s + \tilde{u}^\epsilon) \partial_x\partial^\alpha\tilde{\varphi}^\epsilon +K(u^s + \tilde{u}^\epsilon) \partial_y\partial^\alpha\tilde{\varphi}^\epsilon+ \tilde{w}^\epsilon \partial_z\partial^\alpha  \tilde{\varphi}^\epsilon -\partial^2_z\partial^\alpha\tilde{\varphi}^\epsilon -\epsilon \partial^2_x\partial^\alpha\tilde{\varphi}^\epsilon-\epsilon \partial^2_y\partial^\alpha\tilde{\varphi}^\epsilon 
\\
&=-\sum\limits_{ \beta \leq \alpha, 1\leq |\beta|} C^\beta_\alpha \partial^{\beta}(u^s + \tilde{u}^\epsilon) \partial^{\alpha - \beta}\partial_x \tilde{\varphi}^\epsilon
-\sum\limits_{ \beta \leq \alpha, 1\leq |\beta|} C^\beta_\alpha \partial^{\beta}\big(K(u^s + \tilde{u}^\epsilon) \big)\partial^{\alpha - \beta}\partial_y \tilde{\varphi}^\epsilon
-\sum\limits_{ \beta \leq \alpha, 1\leq |\beta|} C^\beta_\alpha \partial^{\beta}\tilde{w}^\epsilon \partial^{\alpha - \beta}\partial_z \tilde{\varphi}^\epsilon
\\
&\quad-\partial^{\alpha}(\tilde{w}^\epsilon u^s_{zz}) -\partial^{\alpha}\big(\partial_{y}K (u^s+\tue)\partial_{z}(u^s+\tue)\big) . \label{3.2}
\end{split}
\end{align}
Multiplying \eqref{3.2} by $ \langle z \rangle^{2(k+\ell+{\alpha_3})} \partial^{\alpha} \tilde{\varphi}^\epsilon $, and then integrating over $\mathbb{R}^{3}_{+}$, we
have
\begin{align}
\begin{split}
\frac{1}{2}\frac{d}{dt}\|\langle z \rangle^{k+\ell+{\alpha_3}} \partial^{\alpha}  \tvae\|_{L^2(\mathbb{R}^{3}_{+})}
&=-\int_{{\mathbb{R}^3_+}} \langle z \rangle^{2(k+\ell+{\alpha_3})} 
\big((u^s + \tilde{u}^\epsilon) \partial_x\partial^\alpha\tilde{\varphi}^\epsilon +K(u^s + \tilde{u}^\epsilon) \partial_y\partial^\alpha\tilde{\varphi}^\epsilon+ \tilde{w}^\epsilon \partial_z\partial^\alpha  \tilde{\varphi}^\epsilon\big)\partial^{\alpha} \tilde{\varphi}^\epsilon
\\
&\quad+\int_{{\mathbb{R}^3_+}} \langle z \rangle^{2(k+\ell+{\alpha_3})} 
\partial_{z}^{2}\partial^{\alpha} \tilde{\varphi}^\epsilon\partial^{\alpha} \tilde{\varphi}^\epsilon
+\epsilon\int_{{\mathbb{R}^3_+}} \langle z \rangle^{2(k+\ell+{\alpha_3})} 
\big(\partial^2_x\partial^\alpha\tilde{\varphi}^\epsilon
+ \partial^2_y\partial^\alpha\tilde{\varphi}^\epsilon \big)\partial^{\alpha} \tilde{\varphi}^\epsilon
\\
&
\quad-\sum\limits_{ \beta \leq \alpha, 1\leq |\beta|} C^\beta_\alpha \int_{{\mathbb{R}^3_+}} \langle z \rangle^{2(k+\ell+{\alpha_3})} 
\partial^{\beta}(u^s + \tilde{u}^\epsilon) \partial^{\alpha - \beta}\partial_x \tilde{\varphi}^\epsilon
\partial^{\alpha} \tilde{\varphi}^\epsilon
\\
&
\quad-\sum\limits_{ \beta \leq \alpha, 1\leq |\beta|} C^\beta_\alpha \int_{{\mathbb{R}^3_+}} \langle z \rangle^{2(k+\ell+{\alpha_3})} 
\partial^{\beta}\big(K(u^s + \tilde{u}^\epsilon)\big) \partial^{\alpha - \beta}\partial_y \tilde{\varphi}^\epsilon
\partial^{\alpha} \tilde{\varphi}^\epsilon
\\
&
\quad-\sum\limits_{ \beta \leq \alpha, 1\leq |\beta|} C^\beta_\alpha \int_{{\mathbb{R}^3_+}} \langle z \rangle^{2(k+\ell+{\alpha_3})} 
\partial^{\beta}\tilde{w}^\epsilon \partial^{\alpha - \beta}\partial_z \tilde{\varphi}^\epsilon
\partial^{\alpha} \tilde{\varphi}^\epsilon\\
&\quad- \int_{{\mathbb{R}^3_+}} \langle z \rangle^{2(k+\ell+{\alpha_3})} 
-\partial^{\alpha}(\tilde{w}^\epsilon u^s_{zz}) 
\partial^{\alpha} \tilde{\varphi}^\epsilon \\
&\quad- \int_{{\mathbb{R}^3_+}} \langle z \rangle^{2(k+\ell+{\alpha_3})} 
\partial^{\alpha}\big(\partial_{y}K (u^s+\tue)\partial_{z}(u^s+\tue)\big)
\partial^{\alpha} \tilde{\varphi}^\epsilon \\
&= \sum_{i=1}^{8} I_{i}. \label{3.3}
\end{split}
\end{align}
Now, we estimate the right-hand side of \eqref{3.3} term by term as follows.

\textbf{Dealing with $I_1$  term :}
Integrating by parts in the $x$-variable, $y$-variable and $z$-variable, respectively,  
we have
\begin{align*}
I_{1}&=\frac{1}{2}\int_{{\mathbb{R}^3_+}}  \langle{z}\rangle^{2(k+\ell+{\alpha_3})} 
\big(\partial_{x}(u^s + \tilde{u}^\epsilon)  +\partial_{y}\big(K(u^s + \tilde{u}^\epsilon)\big) + \partial_{z}\tilde{w}^\epsilon\big)(\partial^\alpha  \tilde{\varphi}^\epsilon)^2 \\
&\quad + (k+\ell+{\alpha_3})\ir \langle z\rangle^{2(k+\ell+{\alpha_3})} (\langle z\rangle^{-1}\tilde{w}^\epsilon) (\partial^\alpha  \tilde{\varphi}^\epsilon)^2\\
&\leq C\|\langle z\rangle^{-1}w^\epsilon\|_{L^\infty(\mathbb{R}^{3}_{+})}\|\tvae\|_{H^{m}_{k+\ell}(\mathbb{R}^{3}_{+})}^{2}\\
&\leq C \|\tvae\|_{H^{m}_{{\frac{1 }{2}+\delta}}(\mathbb{R}^{3}_{+})}\|\tvae\|_{H^{m}_{k+\ell}(\mathbb{R}^{3}_{+})}^{2},
\end{align*}
where, in the last step, we have used the following fact  by applying the Hardy inequality \eqref{hardy2},  \eqref{infty2}, and the divergence-free condition $\eqref{regular-shear-prandtl}_{3}$
\begin{align*}
	\|\langle z\rangle^{-1}w^\epsilon\|_{L^{\infty}(\mathbb{R}^{3}_{+})}
	&\leq C\left( 
	\|\langle z\rangle^{-\frac{1 }{2}+\delta}\partial_{z}w^\epsilon\|_{L^{2}(\mathbb{R}_{+}^{3})}
	+\|\langle z\rangle^{-\frac{1 }{2}+\delta}\partial_{x}\partial_{z}w^\epsilon\|_{L^{2}(\mathbb{R}_{+}^{3})}
	+\|\langle z\rangle^{-\frac{1 }{2}+\delta}\partial_{y}\partial_{z}w^\epsilon\|_{L^{2}(\mathbb{R}_{+}^{3})} \right.\\
	&\quad\left.+\|\langle z\rangle^{-\frac{1 }{2}+\delta}\partial_{x}\partial_{y}\partial_{z}w^\epsilon\|_{L^{2}(\mathbb{R}_{+}^{3})}
	\right)\\
	&\leq C \|\partial_{z}\uve\|_{H^{|\alpha_{1}+\alpha_{2}|(\leq 2)}_{{\frac{1 }{2}+\delta}}(\mathbb{R}_{+}^{3})}
	\\
	&\leq C_{K}  \|\tvae\|_{H^{|\alpha_{1}+\alpha_{2}|(\leq 3)}_{{\frac{1 }{2}+\delta}}(\mathbb{R}_{+}^{3})} \\
&\leq C \|\tvae\|_{H^{m}_{{\frac{1 }{2}+\delta}}(\mathbb{R}^{3}_{+})}.
\end{align*}

\textbf{Dealing with $I_2$  term :}
For  $I_{2}$, using integration by parts again in the
$z$-variable, we have
\begin{align*}
I_{2}&=-\|\langle z \rangle^{k+\ell+{\alpha_3}} \partial_{z}\partial^{\alpha}  \tvae\|_{{L^2}(\mathbb{R}^{3}_{+})}^{2}
-(k+\ell+{\alpha_3})\ir \langle z\rangle^{2(k+\ell+{\alpha_3})-1} \partial_{z} \partial^\alpha  \tilde{\varphi}^\epsilon \partial^\alpha  \tilde{\varphi}^\epsilon\\
&\quad-\int_{\mathbb{R}^{2}} \partial_{z} \partial^\alpha  \tilde{\varphi}^\epsilon \partial^\alpha  \tilde{\varphi}^\epsilon\Big|_{z=0}\\
&=-\|\langle z \rangle^{k+\ell+{\alpha_3}} \partial_{z}\partial^{\alpha}  \tvae\|_{{L^2}(\mathbb{R}^{3}_{+})}^{2}+I_{2}^{1}+I_{2}^{2},
\end{align*}
where $I_{2}^{1}$ is controlled by using the Cauchy inequality
\begin{align*}
I_{2}^{1} \leq \frac{1}{4}\|\langle z \rangle^{k+\ell+{\alpha_3}} \partial_{z}\partial^{\alpha}  \tvae\|_{{L^2}(\mathbb{R}^{3}_{+})}^{2}+C\|\langle z \rangle^{k+\ell+{\alpha_3}} \partial^{\alpha}  \tvae\|_{{L^2}(\mathbb{R}^{3}_{+})}^{2}.
\end{align*}
To control the boundary integral   
$$I_{2}^{2}=-\int_{\mathbb{R}^{2}} \partial_{z} \partial^\alpha  \tilde{\varphi}^\epsilon \partial^\alpha  \tilde{\varphi}^\epsilon\Big|_{z=0} \ ,$$
the following three cases 
 should be considered.

\textbf{ Case 1 :} $|\alpha| \leq m-1$. Employing  trace estimate \eqref{trace}, we obtain
\begin{align*}
I_{2}^{2} 
&\leq \|\partial^{\alpha}\partial_{z}\tvae|_{z=0}\|_{L^2(\mathbb{R}^{2})}\|\partial^{\alpha}\tvae|_{z=0}\|_{L^2(\mathbb{R}^{2})}\\
&\leq C\|\partial^{\alpha}\partial^2_z\tvae\|_{L^2_{k+\ell}(\mathbb{R}^3_+)}
\|\partial^{\alpha}\partial_z\tvae\|_{L^2_{k+\ell}(\mathbb{R}^3_+)}\\
&\leq C\|\partial_z\tvae\|_{H^{m,m-1}_{k+\ell}(\mathbb{R}^3_+)}\|\tvae\|_{H^{m, m-1}_{k+\ell}(\mathbb{R}^3_+)}\\
&\leq \frac{1}{4}\|\partial_z\tvae\|^2_{H^{m, m-1}_{k+\ell}(\mathbb{R}^3_+)}+C\|\tvae\|^2_{H^{m}_{k+\ell}
	(\mathbb{R}^3_+)}.
\end{align*}

\textbf{ Case 2 :} $|\alpha|=m$ and $\alpha_{3}$ is even. 

{\rm(i)} When $\alpha_{1}=\alpha_{2} =0$, which implies $\alpha_{3}=m$ and $m$ is even, 
 we arrive at
\begin{align*}
	I_{2}^{2} 
&=-\int_{\mathbb{R}^{2}} \partial_{z}^{m+1}  \tilde{\varphi}^\epsilon 
	\partial_{z}^{m}  \tilde{\varphi}^\epsilon\Big|_{z=0}
	\\
&\leq \|\partial_{z}^{m+2}\tue|_{z=0}\|_{L^2(\mathbb{R}^{2})}\|\partial^{m}_{z}\tvae|_{z=0}\|_{L^2(\mathbb{R}^{2})} \\
&
\leq \|\partial_{z}^{m+2}\tue|_{z=0}\|_{L^2(\mathbb{R}^{2})}\|\partial^{m+1}_{z}\tvae\|_{L^2_{k+\ell}(\mathbb{R}^3_+)}\\
& \leq \frac{1}{4}\|\partial_z\tvae\|^2_{H^{m, m-1}_{k+\ell}(\mathbb{R}^3_+)}
+C\|\partial_{z}^{m+2}\tue|_{z=0}\|_{L^2(\mathbb{R}^{2})}^{2}.
\end{align*}
The  index $m+2$ is too high so that we cannot
control $\partial_{z}^{m+2}\tue|_{z=0}$ by virtue of trace estimate.  But by using the boundary reduction of Corollary \ref{c-boundary reduction}, for $p \leq \frac{m}{2}$ and $2 \leq q \leq p$, one has
\begin{align*}
\|\partial_{z}^{m+2}\tue|_{z=0}\|_{L^2(\mathbb{R}^{2})}
&\leq  C_{K, p, l, \alpha, \beta, \gamma, \epsilon}
\left\| \prod\limits_{i=1}^{q}   \partial^{\alpha}_{x}\partial^{\beta}_{y} \partial^{\gamma}_{z}( u^s + \tilde{u}^\epsilon ) 
\right\| _{L^{2}(\mathbb{R}_{+}^{3})} \\
& \leq C_{K, p, l, \alpha, \beta, \gamma, \epsilon} 
\left\|\tilde{\varphi}^\epsilon\right\|^{q}_{H^{m}_{k+\ell}
	(\mathbb{R}^3_+)} \\
& \leq C_{K, p, l, \alpha, \beta, \gamma, \epsilon} 
\|\tilde{\varphi}^\epsilon\|^{\frac{m}{2}}_{H^{m}_{k+\ell}
	(\mathbb{R}^3_+)}.
\end{align*}
%where, in the last step, we have used  Young inequality $(\frac{q_{1}}{q}+\frac{q_{2}}{q}=1)$.

{\rm(ii)} When $\alpha_{1}+\alpha_{2} \neq 0$,  the maximum of the index $\alpha_{1}+\alpha_{2}$ is $m -2$ since $m$ is even and $\alpha_{1}+\alpha_{2} \leq m-1$.  Then $I_{2}^{2}$   can be estimated as follows by similar method of argument in (i) of Case 2,
\begin{align*}
	I_{2}^{2}
	 \leq \frac{1}{4}\|\partial_z\tvae\|^2_{H^{m, m-1}_{k+\ell}(\mathbb{R}^3_+)}+ C_{K, p, l, \alpha, \beta, \gamma, \epsilon} 
	\|\tilde{\varphi}^\epsilon\|^{\alpha_{3}}_{H^{m}_{k+\ell}
		(\mathbb{R}^3_+)}.
\end{align*}

\textbf{ Case 3 :} $|\alpha|=m$ and $\alpha_{3}$ is odd. For the special case:
$\alpha_1+\alpha_{2}=m-1, \alpha_3=1$,    using Proposition \ref{boundary reduction} or the boundary condition $\eqref{vv-regular-shear-prandtl}_{2}$,   it is easy to check
\begin{align*}
	I_{2}^{2} =-\int_{\mathbb{R}^{2}}\partial^{\alpha_{1} }_{x}\partial^{\alpha_{2} }_{y} \partial^{2}_{z}\tilde{u}^\epsilon\partial_z\tvae   \Big|_{z=0}=0,
\end{align*}
then for the  other general cases: $1 \leq \alpha_{1}+\alpha_{2} \leq m-2$,
by employing integration by parts in the variable $x$ or  variable $y$, we have
\begin{align*}
I_{2}^{2}
&=\int_{\mathbb{R}^{2}} \partial_{xy}^{-1}\partial_z\partial^{\alpha}\tvae   
\partial_{xy}^{1}\partial^{\alpha}\tvae \big|_{z=0} \\
&\leq \|\partial_{xy}^{-1}\partial_z\partial^{\alpha}\tvae |_{z=0}\|_{L^2(\mathbb{R}^{2})}\|\partial_{xy}^{1}\partial^{\alpha}\tvae|_{z=0}\|_{L^2(\mathbb{R}^{2})}\\
&\leq \frac{1}{4}\|\partial_z\tvae\|^2_{H^{m, m-1}_{k+\ell}(\mathbb{R}^3_+)}+C
\|\partial_{xy}^{1}\partial^{\alpha_{1}}_{x}\partial^{\alpha_{2}}_{y}\partial^{\alpha_{3}+1}_{z}\tue|_{z=0}\|_{L^2(\mathbb{R}^{2})}\\
&\leq \frac{1}{4}\|\partial_z\tvae\|^2_{H^{m, m-1}_{k+\ell}(\mathbb{R}^3_+)}+ C_{K, p, l, \alpha, \beta, \gamma, \epsilon} 
	\|\tilde{\varphi}^\epsilon\|^{\alpha_{3}-1}_{H^{m}_{k+\ell}
		(\mathbb{R}^3_+)}.
\end{align*}

For   $I_{3}$,  it is trivial  to obtain directly  by integration by parts
\begin{align*}
	I_{3}=-\epsilon\left( \|\langle z \rangle^{k+\ell+{\alpha_3}} \partial_{x}\partial^{\alpha}  \tvae\|_{{L^2}(\mathbb{R}^{3}_{+})}^{2}+\|\langle z \rangle^{k+\ell+{\alpha_3}} \partial_{y}\partial^{\alpha}  \tvae\|_{{L^2}(\mathbb{R}^{3}_{+})}^{2}\right) .
\end{align*}

\textbf{Dealing with $I_4$ and $I_5$ terms :}
The term $I_{4}$ will be  estimated in two cases $\beta_{3}=0$ and $\beta_{3} \geq 1$ by using Lemmas \ref{hardy}-\ref{infty} and \ref{lemma2.4}.

When $\beta_{3}=0$, which implies $\beta_{1}+\beta_{2} \leq m-1$, we obtain 
\begin{align*}
I_{4}
&=-\sum\limits_{ \beta \leq \alpha, 1\leq |\beta|} C^\beta_\alpha\int_{{\mathbb{R}^3_+}} \langle z \rangle^{2(k+\ell+{\alpha_3})} 
\partial^{\beta_{1}}_{x}\partial^{\beta_{2}}_{y}\partial_{xy}^{-1}\partial_{xy}^{1}(u^s + \tilde{u}^\epsilon) \partial^{\alpha_{1}+\alpha_{2}+\alpha_{3} - \beta_{1}-\beta_{2}} \partial_{x}\tilde{\varphi}^\epsilon
\partial^{\alpha} \tilde{\varphi}^\epsilon
\\
&=-\sum\limits_{ \beta \leq \alpha, 1\leq |\beta|} C^\beta_\alpha\int_{{\mathbb{R}^3_+}} \langle z \rangle^{2(k+\ell+{\alpha_3})} 
\partial^{\beta_{1}}_{x}\partial^{\beta_{2}}_{y}\partial_{xy}^{-1}\partial_{z}^{-1}\partial_{xy}^{1}\tvae \partial^{\alpha_{1}+\alpha_{2}+\alpha_{3} - \beta_{1}-\beta_{2}} \partial_{x}\tilde{\varphi}^\epsilon
\partial^{\alpha} \tilde{\varphi}^\epsilon
\\
%& \leq C\| \partial^{\beta_{1}}_{x}\partial^{\beta_{2}}_{y}(u^s + \tilde{u}^\epsilon)
%\langle z \rangle^{k+\ell+\alpha_{3}}
%\partial^{\alpha_{1}+\alpha_{2}+\alpha_{3} - \beta_{1}-\beta_{2}}\tilde{\varphi}^\epsilon\|_{L^{2}(\mathbb{R}^3_+)}  
% \|\partial^{\alpha} \tilde{\varphi}^\epsilon\|_{L^2_{k+\ell+\alpha_{3}}(\mathbb{R}^3_+)}
%\\
%& \leq C\| u^{s}+  \tilde{u}^\epsilon\|_{H^{m}_{0}(\mathbb{R}^3_+)}  
%\| \tilde{\varphi}^\epsilon\|_{H^{m}_{k+\ell}(\mathbb{R}^3_+)} 
%\|\partial^{\alpha} \tilde{\varphi}^\epsilon\|_{L^2_{k+\ell+\alpha_{3}}(\mathbb{R}^3_+)}
%\\
%& \leq C\|   u_{z}^{s}+\tilde{\varphi}^\epsilon\|_{H^{m}_{1}(\mathbb{R}^3_+)}  
%\| \tilde{\varphi}^\epsilon\|_{H^{m}_{k+\ell}(\mathbb{R}^3_+)} 
%\|\partial^{\alpha} \tilde{\varphi}^\epsilon\|_{L^2_{k+\ell+\alpha_{3}}(\mathbb{R}^3_+)}
%\\
%%&
%%\leq C\|\partial^{\beta_{1}}_{x}\partial^{\beta_{2}}_{y}\tvae\|_{H^{\alpha_{1}+\alpha_{2}\leq(2)}_{\frac{1 }{2}+\delta}(\mathbb{R}_{+}^{3})}  \|\partial^{\alpha} \tilde{\varphi}^\epsilon\|_{L^2_{k+\ell+\alpha_{3}}(\mathbb{R}^3_+)}^{2}
%%\\
%& \leq
% C\left( 1+\|\tvae\|_{H^{m}_{k+\ell}
% 	(\mathbb{R}^3_+)}\right) \|\tvae\|^2_{H^{m}_{k+\ell}
% 	(\mathbb{R}^3_+)}.
&\leq C\|\partial_{xy}^{1}\tvae\|_{H^{m-1}_{\frac {1}{2}+\delta}(\mathbb{R}^3_+)}\|\partial_{x}\tvae\|_{H^{m-1}_{k+\ell}(\mathbb{R}^3_+)}\|\partial^{\alpha} \tilde{\varphi}^\epsilon\|_{L^2_{k+\ell+\alpha_{3}}(\mathbb{R}^3_+)}\\
& \leq C \|\tvae\|_{H^{m}_{\frac {3}{2}+\delta}
	(\mathbb{R}^3_+)} \|\tvae\|^2_{H^{m}_{k+\ell}
	(\mathbb{R}^3_+)}.
\end{align*}
When $\beta_{3}\geq 1$, we have  with $e_{1}=(1, 0, 0)$  and $e_{3}=(0, 0, 1)$,
\begin{align*}
	I_{4}
	&=-\sum\limits_{ \beta \leq \alpha, 1\leq |\beta|} C^\beta_\alpha\int_{{\mathbb{R}^3_+}} \langle z \rangle^{2(k+\ell+{\alpha_3})} 
	\partial^{\beta-e_{3}}(u^s_{z} + \tilde{\varphi}^\epsilon) \partial^{\alpha- \beta+e_{1}} \tilde{\varphi}^\epsilon
	\partial^{\alpha} \tilde{\varphi}^\epsilon
	\\
%	& \leq C\|   u_{z}^{s}+\tilde{\varphi}^\epsilon\|_{H^{m}_{0}(\mathbb{R}^3_+)}  
%	\| \tilde{\varphi}^\epsilon\|_{H^{m}_{k+\ell}(\mathbb{R}^3_+)} 
%	\|\partial^{\alpha} \tilde{\varphi}^\epsilon\|_{L^2_{k+\ell+\alpha_{3}}(\mathbb{R}^3_+)}
%	\\
	&
	\leq C\left( 1+\|\tvae\|_{H^{m}_{k+\ell}
		(\mathbb{R}^3_+)}\right) \|\tvae\|^2_{H^{m}_{k+\ell}
		(\mathbb{R}^3_+)}.
\end{align*}
Similar to the estimates on $I_{4}$, we can obtain
\begin{align*}
	I_{5}\leq
	C_{K}\left( 1+\|\tvae\|_{H^{m}_{k+\ell}
		(\mathbb{R}^3_+)}\right) \|\tvae\|^2_{H^{m}_{k+\ell}
		(\mathbb{R}^3_+)}.
\end{align*}

\textbf{Dealing with $I_6$ term :} By exploiting Lemmas \ref{hardy}-\ref{infty} and \ref{lemma2.4} likewise, 
we  can estimate  $I_{6}$, which is divided into four cases, as follows.

When $\beta_{3}= 0$, for all $|\alpha| \leq m-1$, we deduce 
\begin{align*}
	I_{6}
	&=-\sum\limits_{ \beta \leq \alpha, 1\leq |\beta|} C^\beta_\alpha\int_{{\mathbb{R}^3_+}} \langle z \rangle^{2(k+\ell+{\alpha_3})} 
	\partial^{\beta_{1}}_{x}\partial^{\beta_{2}}_{y} \partial_{z}^{-1} \uve \partial^{\alpha_{1}+\alpha_{2}+\alpha_{3} - \beta_{1}-\beta_{2}} \partial_{z}\tilde{\varphi}^\epsilon
	\partial^{\alpha} \tilde{\varphi}^\epsilon\\
	&\leq C\|\uve\|_{H^{m-1}_{\frac {1}{2}+\delta}(\mathbb{R}^3_+)}\|\partial_{z}\tvae\|_{H^{m-1}_{k+\ell}(\mathbb{R}^3_+)}\|\partial^{\alpha} \tilde{\varphi}^\epsilon\|_{L^2_{k+\ell+\alpha_{3}}(\mathbb{R}^3_+)}\\
	&\leq C\|(\partial_{x}\tvae+K\partial_{y}\tvae+\partial_{y}K(u^s_z+\tvae))\|_{H^{m-1}_{\frac {3}{2}+\delta}(\mathbb{R}^3_+)}\|\partial_{z}\tvae\|_{H^{m-1}_{k+\ell}(\mathbb{R}^3_+)}\|\partial^{\alpha} \tilde{\varphi}^\epsilon\|_{L^2_{k+\ell+\alpha_{3}}(\mathbb{R}^3_+)}\\
	& \leq C_{K}\left(1+\|\tvae\|_{H^{m}_{\frac {3}{2}+\delta}(\mathbb{R}^3_+)}\right)\|\tvae\|^2_{H^{m}_{k+\ell}
		(\mathbb{R}^3_+)}.
\end{align*}
When $\beta_{3}= 0$, $|\alpha| = m$, which implies $\alpha_{3} \geq 1$, we have with  $e_{3}=({0, 0, 1})$,
\begin{align*}
	I_{6}
	&=-\sum\limits_{ \beta \leq \alpha, 1\leq |\beta|} C^\beta_\alpha\int_{{\mathbb{R}^3_+}} \langle z \rangle^{2(k+\ell+{\alpha_3})} 
	\partial^{\beta_{1}}_{x}\partial^{\beta_{2}}_{y}\partial_{xy}^{-1} \partial_{z}^{-1}\partial_{xy}^{1}\uve \partial^{\alpha_{1}+\alpha_{2}+\alpha_{3} - \beta_{1}-\beta_{2}-e_{3}} \partial_{z}^{2}\tilde{\varphi}^\epsilon
	\partial^{\alpha} \tilde{\varphi}^\epsilon\\
	& \leq C  \|\partial_{xy}^{1}\uve\|_{H^{m-2}_{\frac {1}{2}+\delta}(\mathbb{R}^3_+)} \|\partial_{z}^{2}\tilde{\varphi}^\epsilon\|_{H^{m-2}_{k+\ell}}
	\|\partial^{\alpha} \tilde{\varphi}^\epsilon\|_{L^2_{k+\ell+\alpha_{3}}(\mathbb{R}^3_+)}
	\\
	& \leq C  \|\partial_{xy}^{1}(\partial_{x}\tvae+K\partial_{y}\tvae+\partial_{y}K(u^s_{z}+\tvae))\|_{H^{m-2}_{\frac {3}{2}+\delta}(\mathbb{R}^3_+)} \|\tilde{\varphi}^\epsilon\|_{H^{m}_{k+\ell}}
	\|\partial^{\alpha} \tilde{\varphi}^\epsilon\|_{L^2_{k+\ell+\alpha_{3}}(\mathbb{R}^3_+)}
	\\
	& \leq C_{K}\left(1+\|\tvae\|_{H^{m}_{\frac {3}{2}+\delta}(\mathbb{R}^3_+)}\right)\|\tvae\|^2_{H^{m}_{k+\ell}
		(\mathbb{R}^3_+)}.
\end{align*}
When $\beta_{3}= 1$, one has with 
$e_{3}=({0, 0, 1})$
\begin{align*}
	I_{6}
	&=-\sum\limits_{ \beta \leq \alpha, 1\leq |\beta|} C^\beta_\alpha\int_{{\mathbb{R}^3_+}} \langle z \rangle^{2(k+\ell+{\alpha_3})} 
	\partial^{\beta_{1}}_{x}\partial^{\beta_{2}}_{y} \uve \partial^{\alpha_{1}+\alpha_{2}+\alpha_{3} - \beta_{1}-\beta_{2}-e_{3}} \partial_{z}\tilde{\varphi}^\epsilon
	\partial^{\alpha} \tilde{\varphi}^\epsilon
	\\
	&=-\sum\limits_{ \beta \leq \alpha, 1\leq |\beta|} C^\beta_\alpha\int_{{\mathbb{R}^3_+}} \langle z \rangle^{2(k+\ell+{\alpha_3})} 
	\partial^{\beta_{1}}_{x}\partial^{\beta_{2}}_{y} \partial_{z}^{-1}(\partial_{x}\tvae+K\partial_{y}\tvae+\partial_{y}K(u^s_{z}+\tvae)) \partial^{\alpha_{1}+\alpha_{2}+\alpha_{3} - \beta_{1}-\beta_{2}-e_{3}} \partial_{z} \tilde{\varphi}^\epsilon
	\partial^{\alpha} \tilde{\varphi}^\epsilon
	\\
%	& \leq C
%	\|\langle z \rangle^{k+\ell+\beta_3-1}\partial^{ \beta-1}\uve \langle z \rangle^{k+\ell+\alpha_{3}-\beta_3+1}\partial^{\alpha- \beta}\partial_{z}\tilde{\varphi}^\epsilon\|_{L^{2}(\mathbb{R}^3_+)}  
%	\|\partial^{\alpha} \tilde{\varphi}^\epsilon\|_{L^2_{k+\ell+\alpha_{3}}(\mathbb{R}^3_+)}
%	\\
%	& \leq C \left( \|\partial_{x} \tue\|_{H^{m-1}_{0}}+\|\partial_{y} \tve\|_{H^{m-1}_{0}}\right) \|\partial_{z}\tilde{\varphi}^\epsilon\|_{H^{m-1}_{k+\ell+1}}
%	\|\partial^{\alpha} \tilde{\varphi}^\epsilon\|_{L^2_{k+\ell+\alpha_{3}}(\mathbb{R}^3_+)}
%		\\
%	& \leq C \left( \|\partial_{x} \tvae\|_{H^{m-1}_{1}}+\|\partial_{y} \tvae_{2}\|_{H^{m-1}_{1}}\right) \|\partial_{z}\tilde{\varphi}^\epsilon\|_{H^{m-1}_{k+\ell+1}}
%	\|\partial^{\alpha} \tilde{\varphi}^\epsilon\|_{L^2_{k+\ell+\alpha_{3}}(\mathbb{R}^3_+)}
%	\\
&\leq C\|(\partial_{x}\tvae+K\partial_{y}\tvae+\partial_{y}K(u^s_{z}+\tvae))\|_{H^{m-1}_{\frac {1}{2}+\delta}(\mathbb{R}^3_+)}\|\partial_{z}\tvae\|_{H^{m-1}_{k+\ell}(\mathbb{R}^3_+)}\|\partial^{\alpha} \tilde{\varphi}^\epsilon\|_{L^2_{k+\ell+\alpha_{3}}(\mathbb{R}^3_+)}\\
	& \leq C_{K}(1+\|\tvae\|_{H^{m}_{k+\ell}
		(\mathbb{R}^3_+)})\|\tvae\|^2_{H^{m}_{k+\ell}
		(\mathbb{R}^3_+)}.
\end{align*}
When $\beta_{3} \geq 2$, one has with 
$e_{3}=({0, 0, 1})$
\begin{align*}
	I_{6}
	&=-\sum\limits_{ \beta \leq \alpha, 1\leq |\beta|} C^\beta_\alpha\int_{{\mathbb{R}^3_+}} \langle z \rangle^{2(k+\ell+{\alpha_3})} 
	\partial^{\beta_{1}}_{x}\partial^{\beta_{2}}_{y} \partial_{z}^{\beta_{3}-2}	(\partial_{x}\tvae+K\partial_{y}\tvae+\partial_{y}K(u^s_{z}+\tvae))  \partial^{\alpha_{1}+\alpha_{2}+\alpha_{3} - \beta_{1}-\beta_{2}-\beta_{3}+e_{3}}  \tilde{\varphi}^\epsilon
	\partial^{\alpha} \tilde{\varphi}^\epsilon
	\\
		&\leq C\|(\partial_{x}\tvae+K\partial_{y}\tvae+\partial_{y}K(u^s_{z}+\tvae)) \|_{H^{m}_{k+\ell}(\mathbb{R}^3_+)}\|\tvae\|_{H^{m}_{k+\ell}(\mathbb{R}^3_+)}\|\partial^{\alpha} \tilde{\varphi}^\epsilon\|_{L^2_{k+\ell+\alpha_{3}}(\mathbb{R}^3_+)}\\
	& \leq C_{K}(1+\|\tvae\|_{H^{m}_{k+\ell}
		(\mathbb{R}^3_+)})\|\tvae\|^2_{H^{m}_{k+\ell}
		(\mathbb{R}^3_+)}.
\end{align*}

\textbf{Dealing with $I_7$ term :}
We move to estimate $I_{7}$ involving two cases: $\alpha_{3}=0$ and $\alpha_{3} \geq 1$. 
If $\alpha_{3}=0$, which implies $\alpha_{1}+\alpha_{2} \leq m-1$, we get
\begin{align*}
	I_{7}& \leq \|\partial^{\alpha_{1}}_{x} \partial^{\alpha_{2}}_{y}\twe ~ {u^s_{zz}}\|_{L^2_{k+\ell}(\mathbb{R}^3_+)}\|\partial^{\alpha_{1}}_{x} \partial^{\alpha_{2}}_{y}\tilde{\varphi}^\epsilon\|_{L^2_{k+\ell}(\mathbb{R}^3_+)} \\
	& \leq \|\partial^{\alpha_{1}}_{x} \partial^{\alpha_{2}}_{y}\partial_{z}^{-1}\uve{u^s_{zz}}\|_{L^2_{k+\ell}(\mathbb{R}^3_+)}\|\partial^{\alpha_{1}}_{x} \partial^{\alpha_{2}}_{y}\tilde{\varphi}^\epsilon\|_{L^2_{k+\ell}(\mathbb{R}^3_+)} \\
	& \leq \|(1+K+\partial_{y}K)\tue\|_{H^{m}_{\frac {1}{2}+\delta}(\mathbb{R}^3_+)}
	\|{u^s_{zz}}\|_{L^2_{k+\ell}(\mathbb{R}_+)}\|\partial^{\alpha_{1}}_{x} \partial^{\alpha_{2}}_{y}\tilde{\varphi}^\epsilon\|_{L^2_{k+\ell}(\mathbb{R}^3_+)} \\
	& \leq C_{K}\|\tvae\|_{H^{m}_{\frac {3}{2}+\delta}(\mathbb{R}^3_+)}
	\|{u^s_{zz}}\|_{L^2_{k+\ell}(\mathbb{R}_+)}\|\partial^{\alpha_{1}}_{x} \partial^{\alpha_{2}}_{y}\tilde{\varphi}^\epsilon\|_{L^2_{k+\ell}(\mathbb{R}^3_+)} \\
	& \leq C_{K}\|\tilde{\varphi}^{\epsilon}\|_{H^{m}_{\frac {3}{2}+\delta}(\mathbb{R}^3_+)}\|\tvae\|_{H^{m}_{k+\ell}
		(\mathbb{R}_{+}^{3})}.
\end{align*}
If $\alpha_{3} \geq 1$,  it is straightforward to obtain  by similar computations
\begin{align*}
	I_{7} 
	=-\sum\limits_{ \beta_{3} \leq \alpha_{3},  |\beta_{3}|\leq m-1} C^\beta_\alpha\int_{{\mathbb{R}^3_+}} \langle z \rangle^{2(k+\ell+{\alpha_3})} 
	\partial^{\alpha-\beta_{3} - e_{3}} \uve 
	\partial^{\beta_{3}}_{z}{u^s_{zz}}
	\partial^{\alpha} \tilde{\varphi}^\epsilon
	\leq C_{K}\|\tvae\|_{H^{m}_{k+\ell}
		(\mathbb{R}_{+}^{3})}^{2}.
\end{align*}

\textbf{Dealing with $I_8$  term :}
Rewrite $I_{8}$ as
\begin{align*}
	I_{8}=  -\int_{{\mathbb{R}^3_+}} \langle z \rangle^{2(k+\ell+{\alpha_3})} 
	\partial^{\alpha_{1}+\alpha_{2}}\left\lbrace \partial_{y}K\partial^{\alpha_{3}} \big((u^s+\tue)(u_z^s+\tvae)\big)\right\rbrace 
	\partial^{\alpha} \tilde{\varphi}^\epsilon.
\end{align*}
%with
%\begin{align*}
%I_{8}^{1}=\sum\limits_{ \beta \leq \alpha-1 ,  0\leq \beta  } C^\beta_\alpha \int_{{\mathbb{R}^3_+}} \langle z \rangle^{2(k+\ell+{\alpha_3})} 
%\partial^{\beta}\nabla_{h}^{\bot}\tue \partial^{\alpha - \beta}(v^{s}_{z}+\tilde{\varphi}_{2}^\epsilon) \partial^{\alpha} \tilde{\varphi}^\epsilon  
%\end{align*}
%and
%\begin{align*}
%	I_{8}^{2}=\int_{{\mathbb{R}^3_+}}\langle z \rangle^{2(k+\ell+{\alpha_3})} (\partial^{\alpha} \nabla_{h}^{\bot}\tue) (v^{s}_{z}+\tilde{\varphi}_{2}^\epsilon) \partial^{\alpha} \tilde{\varphi}^\epsilon.
%\end{align*}
Obviously, it holds by virtue of  the same argumentation process as $I_{4}$
\begin{align*}
I_{8} \leq C_{K} 
	 \left( 1+\|\tvae\|_{H^{m}_{k+\ell}
	(\mathbb{R}^3_+)}\right)^2 \|\tvae\|_{H^{m}_{k+\ell}
	(\mathbb{R}^3_+)}.
\end{align*}
%Due to $\alpha_{1}+\alpha_{2} \leq m-1$, which implies that the order of the term $\partial^{\alpha} \nabla_{h}^{\bot}\tue$ on $I_{8}^{2}$ cannot exceed $m$ in both cases $\alpha_{3}=0$ and $\alpha_{3} > 0$, 
% we have no additional difficulty to deal with the $I_{8}^{2}$.
%\begin{align*}
%	I_{8}^{2} \leq C \|\tvae\|_{H^{m}_{k+\ell}
%		(\mathbb{R}^3_+)} \left( 1+\|\tvae_{2}\|_{H^{m}_{k+\ell}
%		(\mathbb{R}^3_+)}\right) \|\tvae\|_{H^{m}_{k+\ell}
%		(\mathbb{R}^3_+)}.
%\end{align*}
%Similarly, we have
%\begin{align*}
%	I_{9} \leq C \|\tvae_{2}\|_{H^{m}_{k+\ell}
%		(\mathbb{R}^3_+)} \left( 1+\|\tvae\|_{H^{m}_{k+\ell}
%		(\mathbb{R}^3_+)}\right) \|\tvae\|_{H^{m}_{k+\ell}
%		(\mathbb{R}^3_+)}.
%\end{align*}
Collecting all estimates on $I_{1}-I_{8}$,  we have proved the 
inequality \eqref{3.1}. This completes the proof of Lemma \eqref{E1}.
\end{proof}

\subsection{Weighted $L^2$ estimates on $\partial^{m}_{xy} \tilde{\varphi}^{\epsilon}$ }

\begin{lemma}\label{E2}
	Under the hypotheses of Lemma \ref{E1}, it holds that
	\begin{align}\begin{split}
		&\frac{d}{dt}\left\|\langle z \rangle^{k+\ell } \partial^{m}_{xy}  \tvae\right\|_{L^2(\mathbb{R}^3_+)}^{2}
		+\left\|\langle z \rangle^{k+\ell } \partial_{z}\partial^{m}_{xy}  \tvae\right\|_{L^2(\mathbb{R}^3_+)}^{2}
		+\epsilon\left( \left\|\langle z \rangle^{k+\ell } \partial_{x}\partial^{m}_{xy}  \tvae\right\|_{L^2(\mathbb{R}^3_+)}^{2}+\left\|\langle z \rangle^{k+\ell } \partial_{y}\partial^{m}_{xy}  \tvae\right\|_{L^2(\mathbb{R}^3_+)}^{2}\right) \\
		&\leq C\left( \|\tvae\|_{H^{m}_{k+\ell}
			(\mathbb{R}^3_+)}^2+\|\tvae\|_{H^{m}_{k+\ell}
			(\mathbb{R}^3_+)}^4\right) +\frac{C}{\epsilon}\left( \|\tvae\|_{H^{m}_{k+\ell}
			(\mathbb{R}^3_+)}^2+\|\tvae\|_{H^{m}_{k+\ell}
			(\mathbb{R}^3_+)}^4\right) .
		\end{split}
		\label{3.4}
	\end{align}
\end{lemma}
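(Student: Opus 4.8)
The estimate \eqref{3.4} is the top tangential--derivative estimate, and a direct application of $\partial_{xy}^m$ to the vorticity equation $\eqref{vv-regular-shear-prandtl}_1$ loses one tangential derivative through the nonlocal term $\twe\partial_z(u^s_z+\tvae)$, because $\partial_z\twe=-\partial_x\tue-\partial_y\big(K(u^s+\tue)\big)$. The plan, following the strategy announced in Section \ref{ss2} and the two--dimensional argument of \cite{Xu-Zhang-2017}, is to replace $\partial_{xy}^m\tvae$ by the linearly--good unknown
\[
g^m:=\Big(\frac{\partial_{xy}^m\tue}{u^s_z+\tvae}\Big)_{z},
\]
which is well defined because, under the smallness hypothesis and by Proposition \ref{shear-profile} together with Sobolev embedding, $u^s_z+\tvae\geq\tfrac12\tilde c_1\langle z\rangle^{-k}>0$. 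Since $\partial_{xy}^m\tue|_{z=0}=0$, integration in $z$ gives $\partial_{xy}^m\tue=(u^s_z+\tvae)\int_0^z g^m\,d\tilde z$ and hence $\partial_{xy}^m\tvae=(u^s_{zz}+\partial_z\tvae)\int_0^z g^m\,d\tilde z+(u^s_z+\tvae)g^m$; combining this identity with the Hardy inequality \eqref{hardy2} and the decay of $u^s_z,u^s_{zz}$, the weighted norms of $\partial_{xy}^m\tvae$ and of $\partial_z\partial_{xy}^m\tvae,\partial_x\partial_{xy}^m\tvae,\partial_y\partial_{xy}^m\tvae$ are controlled by those of $g^m,\partial_z g^m,\partial_x g^m,\partial_y g^m$ up to quantities already bounded by $\|\tvae\|_{H^m_{k+\ell}(\mathbb{R}^3_+)}$. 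So it suffices to prove the differential inequality \eqref{3.4} with $g^m$ in place of $\partial_{xy}^m\tvae$.

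Next I would derive the evolution equation for $g^m$, by applying $\partial_{xy}^m$ to the momentum equation $\eqref{regular-shear-prandtl}_1$, dividing by $u^s_z+\tvae$, and then applying $\partial_z$. The commutators among $\partial_z$, the factor $(u^s_z+\tvae)^{-1}$ and the transport operator are organised exactly so that the would--be lossy contribution — the one coming from $\partial_{xy}^m\twe\cdot\partial_z(u^s+\tue)$ — recombines, through the divergence--free identity $\partial_x\tue+\partial_y\big(K(u^s+\tue)\big)+\partial_z\twe=0$, with pieces of $\partial_z$ acting on the transport terms. The outcome is an equation of the form
\[
\partial_t g^m+(u^s+\tue)\partial_x g^m+K(u^s+\tue)\partial_y g^m+\twe\partial_z g^m-\partial_z^2 g^m-\epsilon\partial_x^2 g^m-\epsilon\partial_y^2 g^m=\mathcal R,
\]
where $\mathcal R$ collects: (i) lower--order convection commutators of the type appearing in $I_4$--$I_6$ of Lemma \ref{E1}; (ii) the genuinely three--dimensional terms of top order in $\partial_x\tue$ and $\partial_y\tue$ produced by the cancellation (these arise, as explained in Section \ref{ss2}, because in 3D the divergence relation feeds in the extra $\partial_y(K\tue)$ and $\partial_yK$ pieces), carrying $m+1$ tangential derivatives of $\tue$ but no factor $\epsilon$; (iii) terms carrying at most $m+1$ derivatives of $K$, finite by $\|K\|_{W^{m+1,\infty}(\mathbb{R}^2)}<\infty$; and (iv) the regularization commutators, such as $\epsilon\,\partial_z\big(\partial_{xy}^2((u^s_z+\tvae)^{-1})\,\partial_{xy}^m\tue\big)$, which are $O(\epsilon)$ but again carry up to $m+1$ tangential derivatives.

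For the energy estimate I would multiply the $g^m$--equation by $\langle z\rangle^{2(k+\ell)}g^m$ and integrate over $\mathbb{R}^3_+$. The time derivative yields $\tfrac12\tfrac{d}{dt}\|\langle z\rangle^{k+\ell}g^m\|^2_{L^2(\mathbb{R}^3_+)}$; the convection terms are treated as $I_1$ in Lemma \ref{E1}, by integration by parts, the divergence--free condition, and the bound $\|\langle z\rangle^{-1}\twe\|_{L^\infty(\mathbb{R}^3_+)}\leq C_K\|\tvae\|_{H^m_{\frac12+\delta}(\mathbb{R}^3_+)}$; the term $-\ir\langle z\rangle^{2(k+\ell)}\partial_z^2 g^m\,g^m$ produces the coercive $\|\langle z\rangle^{k+\ell}\partial_z g^m\|^2_{L^2(\mathbb{R}^3_+)}$, a Cauchy--controlled weight commutator, and the boundary term $-\int_{\mathbb{R}^2}\partial_z g^m\,g^m|_{z=0}$, for which one uses $\partial_z\partial_{xy}^m\tvae|_{z=0}=\partial_{xy}^m\partial_z^2\tue|_{z=0}=0$ and $\partial_{xy}^m\tue|_{z=0}=0$ to reduce $\partial_z g^m|_{z=0}$ to $-2\tfrac{\partial_z(u^s_z+\tvae)}{u^s_z+\tvae}\,g^m|_{z=0}$, and then the trace inequality \eqref{trace} and the reconstruction of the first step to absorb it into a small multiple of $\|\langle z\rangle^{k+\ell}\partial_z g^m\|^2_{L^2(\mathbb{R}^3_+)}$ plus $C\|\tvae\|^2_{H^m_{k+\ell}(\mathbb{R}^3_+)}$; the regularization terms $-\epsilon\ir\langle z\rangle^{2(k+\ell)}(\partial_x^2+\partial_y^2)g^m\,g^m$ give the coercive $\epsilon\big(\|\langle z\rangle^{k+\ell}\partial_x g^m\|^2_{L^2(\mathbb{R}^3_+)}+\|\langle z\rangle^{k+\ell}\partial_y g^m\|^2_{L^2(\mathbb{R}^3_+)}\big)$ and Cauchy--controlled weight commutators; and the terms of $\mathcal R$ of types (i), (iii), (iv) are estimated with Lemmas \ref{hardy}--\ref{infty}, \ref{lemma2.4}, the trace inequality and the boundary reductions of Corollary \ref{c-boundary reduction}, exactly as $I_4$--$I_8$ in Lemma \ref{E1}, producing $C\big(\|\tvae\|^2_{H^m_{k+\ell}(\mathbb{R}^3_+)}+\|\tvae\|^4_{H^m_{k+\ell}(\mathbb{R}^3_+)}\big)$.

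The $\tfrac{C}{\epsilon}$ term on the right of \eqref{3.4} appears only from the type--(ii) three--dimensional survivors and from the $m+1$--order parts of the regularization commutators: re-expressing $\partial_x\partial_{xy}^m\tue$ and $\partial_y\partial_{xy}^m\tue$ in terms of $\partial_x g^m$, $\partial_y g^m$, $g^m$ and lower--order quantities via the reconstruction identity and a Hardy inequality, each such contribution $\ir\langle z\rangle^{2(k+\ell)}(\text{bounded})\,\partial_x\partial_{xy}^m\tue\,g^m$ is split by Young's inequality into $\tfrac{\epsilon}{8}\|\langle z\rangle^{k+\ell}\partial_x g^m\|^2_{L^2(\mathbb{R}^3_+)}+\tfrac{C}{\epsilon}\big(\|\tvae\|^2_{H^m_{k+\ell}(\mathbb{R}^3_+)}+\|\tvae\|^4_{H^m_{k+\ell}(\mathbb{R}^3_+)}\big)$, the first piece being absorbed by the coercive regularization term. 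Collecting all estimates and translating back from $g^m$ to $\partial_{xy}^m\tvae$ through the first step yields \eqref{3.4}. I expect the genuine difficulty to lie in the second step: verifying that the cancellation really removes every $\epsilon$--free term with $m+1$ tangential derivatives of $\tvae$ except for the explicitly identified $\partial_x\tue$-- and $\partial_y\tue$--type survivors, and confirming that those survivors carry exactly one extra tangential derivative (so that they pair with $\partial_x g^m$ or $\partial_y g^m$); a secondary technical point is the consistent choice of polynomial weights ensuring that $\|g^m\|_{L^2_{k+\ell}(\mathbb{R}^3_+)}$ and $\|\partial_{xy}^m\tvae\|_{L^2_{k+\ell}(\mathbb{R}^3_+)}$ really are comparable after division by $u^s_z+\tvae$.
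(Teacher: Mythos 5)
Your proposal does not prove the lemma as stated: you import the good--unknown machinery of Section \ref{s4} (the quantity $g^m=\big(\partial^m_{xy}\tue/(u^s_z+\tvae)\big)_z$, the cancellation, and the reconstruction $\partial^m_{xy}\tue=(u^s_z+\tvae)\int_0^z g^m$), but every one of these steps requires dividing by $u^s_z+\partial_z\tue$ and hence a positive lower bound $u^s_z+\partial_z\tue\gtrsim \langle z\rangle^{-k}$. In the paper that lower bound is only available under the a priori smallness condition \eqref{4.1} together with \eqref{4.3}--\eqref{4.4} (Lemma \ref{y4.1}), which is \emph{not} among the hypotheses of Lemma \ref{E1}/\ref{E2}: there $\tvae$ is merely assumed to lie in $L^\infty([0,T^\epsilon];H^{m+2}_{k+\ell})$, with no smallness, so $u^s_z+\partial_z\tvae$ may degenerate and $g^m$ is not even well defined, and the claimed comparability of $\|g^m\|_{L^2_{k+\ell}}$ with $\|\partial^m_{xy}\tvae\|_{L^2_{k+\ell}}$ fails. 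You cannot repair this by adding the smallness hypothesis, because Lemma \ref{E2} is exactly what feeds Theorem \ref{epsilon-existence}, which must hold for data of arbitrary size $\tilde\zeta$ (only $T^\epsilon$ is allowed to depend on $\tilde\zeta$); restricting to small perturbations at this stage would break the paper's existence scheme, in which the monotonicity-based cancellation is deliberately postponed to Section \ref{s4}, where \eqref{4.1} can be assumed and then propagated.

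The paper's own proof is much more direct and avoids monotonicity altogether: apply $\partial^m_{xy}$ to the vorticity equation, multiply by $\langle z\rangle^{2(k+\ell)}\partial^m_{xy}\tvae$ and integrate; the boundary term vanishes because $\partial_z\partial^m_{xy}\tvae|_{z=0}=0$, and the only genuinely problematic contributions --- $\partial^m_{xy}\twe\,\partial_z\tvae$, $\partial^m_{xy}(\twe u^s_{zz})$ and the $\partial_yK$ term --- are bounded via the divergence relation, Hardy and \eqref{s2} by quantities of the form $C_K\big(1+\|\partial^{m+1}_{xy}\tvae\|_{L^2_{3/2+\delta}}\big)\|\tvae\|_{H^m_{k+\ell}}$ (plus a $\|\partial_z\partial^m_{xy}\tvae\|$ piece). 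The single extra tangential derivative is then absorbed into the $\epsilon$-dissipation $\epsilon\big(\|\partial_x\partial^m_{xy}\tvae\|^2_{L^2_{k+\ell}}+\|\partial_y\partial^m_{xy}\tvae\|^2_{L^2_{k+\ell}}\big)$ by Young's inequality, and \emph{this} is the sole origin of the $C/\epsilon$ on the right of \eqref{3.4} --- not the "3D survivors" you describe. Note also two secondary defects of your plan: an energy inequality for $\frac{d}{dt}\|g^m\|^2$ does not translate term-by-term into the stated differential inequality for $\frac{d}{dt}\|\langle z\rangle^{k+\ell}\partial^m_{xy}\tvae\|^2$ (the time derivative of the conversion factor $u^s_z+\tvae$ produces additional terms), and the $g^m$-estimate, when it is carried out in the paper (Lemma \ref{gn}), yields an $\epsilon$-independent bound, so you would in effect be proving a different statement under stronger hypotheses rather than \eqref{3.4}.
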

\begin{proof}
By \eqref{3.2}, we have
\begin{align}
	\begin{split}
		&\partial_t\partial^{m}_{xy}\tilde{\varphi}^\epsilon +  (u^s + \tilde{u}^\epsilon) \partial_x\partial^{m}_{xy}\tilde{\varphi}^\epsilon +K(u^s + \tilde{u}^\epsilon) \partial_y\partial^{m}_{xy}\tilde{\varphi}^\epsilon+ \tilde{w}^\epsilon \partial_z\partial^{m}_{xy}  \tilde{\varphi}^\epsilon -\partial^2_z\partial^{m}_{xy}\tilde{\varphi}^\epsilon -\epsilon \partial^2_x\partial^{m}_{xy}\tilde{\varphi}^\epsilon-\epsilon \partial^2_y\partial^{m}_{xy}\tilde{\varphi}^\epsilon 
		\\
		&=-\sum\limits_{ 1 \leq j\leq m} C^j_m \partial^{j}_{xy}(u^s + \tilde{u}^\epsilon) \partial^{m - j}_{xy}\partial_x \tilde{\varphi}^\epsilon
		-\sum\limits_{ 1 \leq j \leq m} C^j_m \partial^{j}_{xy}\big(K(u^s + \tilde{u}^\epsilon) \big)\partial^{m- j}_{xy}\partial_y \tilde{\varphi}^\epsilon
		-\sum\limits_{ 1 \leq j \leq m} C^j_m \partial^{j}_{xy}\tilde{w}^\epsilon \partial^{m - j}_{xy}\partial_z \tilde{\varphi}^\epsilon
		\\
		&\quad-\partial^{m}_{xy}\big(\tilde{w}^\epsilon ~u^s_{zz} \big)-\partial^{m}_{xy}\big(\partial_{y}K (u^s+\tue)\partial_{z}(u^s+\tue)\big).
	\end{split}
	\label{3.5}
\end{align}
Then the same trick  as in Lemma \ref{E1} yields
\begin{align}
		&\frac{1}{2}\frac{d}{dt}\left\|\langle z \rangle^{k+\ell } \partial^{m}_{xy}  \tvae\right\|_{L^2(\mathbb{R}^3_+)}^{2}
		+\frac{3}{4}\left\|\langle z \rangle^{k+\ell } \partial_{z}\partial^{m}_{xy}  \tvae\right\|_{L^2(\mathbb{R}^3_+)}^{2}\nonumber\\
		&\quad
		+\epsilon\left( \left\|\langle z \rangle^{k+\ell } \partial_{x}\partial^{m}_{xy}  \tvae\right\|_{L^2(\mathbb{R}^3_+)}^{2}+\left\|\langle z \rangle^{k+\ell } \partial_{y}\partial^{m}_{xy}  \tvae\right\|_{L^2(\mathbb{R}^3_+)}^{2}\right) \nonumber\\
		& \leq C \left( \|\tvae\|^{2}_{H^{m}_{k+\ell}(\mathbb{R}_{+}^{3})}+\|\tvae\|^{3}_{H^{m}_{k+\ell}(\mathbb{R}_{+}^{3})}\right) 
		-\sum\limits_{ 1 \leq j \leq m} C^j_m
		\int_{{\mathbb{R}^3_+}} \langle z \rangle^{2(k+\ell )}
		 \partial^{j}_{xy}\tilde{w}^\epsilon \partial^{m - j}\partial_z \tilde{\varphi}^\epsilon  \partial^{m}_{xy} \tilde{\varphi}^\epsilon \nonumber\\
	&\quad-\int_{{\mathbb{R}^3_+}} \langle z \rangle^{2(k+\ell )}\partial^{m}_{xy}\big(\tilde{w}^\epsilon (u^s_{zz}, v^{s}_{zz})\big) \partial^{m}_{xy} \tilde{\varphi}^\epsilon
	-\int_{{\mathbb{R}^3_+}} \langle z \rangle^{2(k+\ell )}\partial^{m}_{xy}\big(\partial_{y}K (u^s+\tue)\partial_{z}(u^s+\tue)\big) \partial^{m}_{xy} \tilde{\varphi}^\epsilon
	,
 	\label{3.6}
 	\end{align}
where we have used the fact $\partial_{z}\partial^{m}_{xy}  \tvae|_{z=0}=0$.

Now, let us estimate the three integral terms on the right-hand side of \eqref{3.6}.

\rm{(1)} We divide the first integral on the right-hand side of the above equation
$$-\sum\limits_{ 1 \leq j \leq m} C^j_m
\int_{{\mathbb{R}^3_+}} \langle z \rangle^{2(k+\ell )}
\partial^{j}_{xy}\tilde{w}^\epsilon \partial^{m - j}\partial_z \tilde{\varphi}^\epsilon  \partial^{m}_{xy} \tilde{\varphi}^\epsilon$$
into two parts:
\begin{align*}
	&-\sum\limits_{ 1 \leq j \leq m} C^j_m
	\int_{{\mathbb{R}^3_+}} \langle z \rangle^{2(k+\ell )}
	\partial^{j}_{xy}\tilde{w}^\epsilon \partial^{m - j}\partial_z \tilde{\varphi}^\epsilon  \partial^{m}_{xy} \tilde{\varphi}^\epsilon
	\\
	&=-\sum\limits_{ 1 \leq j \leq m-1} C^j_m
	\int_{{\mathbb{R}^3_+}} \langle z \rangle^{2(k+\ell )}
	\partial^{j}_{xy}\tilde{w}^\epsilon \partial^{m - j}_{xy}\partial_z \tilde{\varphi}^\epsilon  \partial^{m}_{xy} \tilde{\varphi}^\epsilon
	-
	\int_{{\mathbb{R}^3_+}} \langle z \rangle^{2(k+\ell )}
	\partial^{m}_{xy}\tilde{w}^\epsilon \partial_z \tilde{\varphi}^\epsilon  \partial^{m}_{xy} \tilde{\varphi}^\epsilon .
\end{align*}
For the first part, it is easy to get
$$-\sum\limits_{ 1 \leq j \leq m-1} C^j_m
\int_{{\mathbb{R}^3_+}} \langle z \rangle^{2(k+\ell )}
\partial^{j}_{xy}\tilde{w}^\epsilon \partial^{m - j}_{xy}\partial_z \tilde{\varphi}^\epsilon  \partial^{m}_{xy} \tilde{\varphi}^\epsilon
\leq 
C_{K}\big(1+\|\tvae\|_{H^{m}_{k+\ell}(\mathbb{R}_{+}^{3})}\big) \|\tvae\|^{2}_{H^{m}_{k+\ell}(\mathbb{R}_{+}^{3})}.$$
One the other hand, the second part, which  contains the bad term $\partial^{m}_{xy}\tilde{w}^\epsilon \partial_z \tilde{\varphi}^\epsilon$, can be estimated as follows
\begin{align*}
&-
\int_{{\mathbb{R}^3_+}} \langle z \rangle^{2(k+\ell )}
\partial^{m}_{xy}\tilde{w}^\epsilon \partial_z \tilde{\varphi}^\epsilon  \partial^{m}_{xy} \tilde{\varphi}^\epsilon  \\
&
\leq \left\|  \langle z \rangle^{k+\ell }\partial^{m}_{xy}\partial_{z}^{-1}\uve \partial_z \tilde{\varphi}^\epsilon\right\|_{L^{2}(\mathbb{R}^{3}_{+})}
\left\| \langle z \rangle^{k+\ell }\partial^{m}_{xy} \tilde{\varphi}^\epsilon\right\|_{L^{2}(\mathbb{R}^{3}_{+})} \\
&\leq C \left\|\partial_{xy}\uve\right\|_{H^{m-1}_{\frac 12+\delta}(\mathbb{R}^3_+)}\left\|\partial_z \tilde{\varphi}^\epsilon\right\|_{H^{m-1}_{k+\ell}(\mathbb{R}^3_+)}\left\| \langle z \rangle^{k+\ell }\partial^{m}_{xy} \tilde{\varphi}^\epsilon\right\|_{L^{2}(\mathbb{R}^{3}_{+})}
\\
&
\leq C_{K} \big(1+\|\partial_{xy}^{m+1}\tvae\|_{L^{2}_{\frac{3}{2}+\delta}(\mathbb{R}^3_+)}\big)\|\tvae\|^{2}_{H^{m}_{k+\ell}(\mathbb{R}_{+}^{3})}.
\end{align*}
where, in the last two steps, we have used the \eqref{s2} and  Hardy inequality \eqref{hardy1}.

\rm{(2)} Using the assumption for the shear flow $u^s$, we have
\begin{align*}
&-\int_{{\mathbb{R}^3_+}} \langle z \rangle^{2(k+\ell )}\partial^{m}_{xy}\big(\tilde{w}^\epsilon ~ {u^s_{zz}}\big) \partial^{m}_{xy} \tilde{\varphi}^\epsilon \\
&\leq 
\left\|  \langle z \rangle^{k+\ell }\partial^{m}_{xy}\tilde{w}^\epsilon {u^s_{zz}}\right\|_{L^{2}(\mathbb{R}^{3}_{+})}
\left\| \langle z \rangle^{k+\ell }\partial^{m}_{xy} \tilde{\varphi}^\epsilon\right\|_{L^{2}(\mathbb{R}^{3}_{+})} \\
&\leq \left\|\partial^{m}_{xy}\tilde{w}^\epsilon\right\|_{L^{2}(\mathbb{R}_{xy}^{2}; L^\infty(\mathbb{R}^{+}))}
\big\|{u^s_{zz}}\big\|_{L^2_{k+\ell}(\mathbb{R}_+)}\left\| \langle z \rangle^{k+\ell }\partial^{m}_{xy} \tilde{\varphi}^\epsilon\right\|_{L^{2}(\mathbb{R}^{3}_{+})}\\
&\leq C\|\partial_{xy}^{m}(\partial_{x}\tvae+K\partial_{y}\tvae+\partial_{y}K(u^s_{z}+\tvae))\|_{L^{2}_{\frac {3}{2}+\delta}(\mathbb{R}^3_+)}
\left\| \langle z \rangle^{k+\ell }\partial^{m}_{xy} \tilde{\varphi}^\epsilon\right\|_{L^{2}(\mathbb{R}^{3}_{+})}\\
& \leq C_{K} \big(1+\|\partial_{xy}^{m+1}\tvae\|_{L^{2}_{\frac{3}{2}+\delta}(\mathbb{R}^3_+)}\big)\|\tvae\|_{H^{m}_{k+\ell}(\mathbb{R}_{+}^{3})}.
\end{align*}

The order of $K$ in the above inequality cannot exceed $m+1$, 
so this explains why $K$ needs to satisfy condition 
$$\big\|K\big\|_{W^{m+1, \infty}(\mathbb{R}^2)} < \infty.$$

\rm{(3)} The term $-\int_{{\mathbb{R}^3_+}} \langle z \rangle^{2(k+\ell )}\partial^{m}_{xy}\big(\partial_{y}K (u^s+\tue)\partial_{z}(u^s+\tue)\big) \partial^{m}_{xy} \tilde{\varphi}^\epsilon$ is estimated as follows,
\begin{align*}
	&-\int_{{\mathbb{R}^3_+}} \langle z \rangle^{2(k+\ell )}\partial^{m}_{xy}\big(\partial_{y}K (u^s+\tue)\partial_{z}(u^s+\tue)\big) \partial^{m}_{xy} \tilde{\varphi}^\epsilon\\
	&=\frac{1}{2}\int_{{\mathbb{R}^3_+}} \langle z \rangle^{2(k+\ell )}\partial^{m}_{xy}\big(\partial_{y}K (u^s+\tue)^2\big)\partial_{z} \partial^{m}_{xy} \tilde{\varphi}^\epsilon
	+(k+\ell)\int_{{\mathbb{R}^3_+}} \langle z \rangle^{2(k+\ell )-1}\partial^{m}_{xy}\big(\partial_{y}K (u^s+\tue)^2\big) \partial^{m}_{xy} \tilde{\varphi}^\epsilon
	\\
	&\leq 
	C_{K}\big(1+\|\tvae\|_{H^{m}_{k+\ell}(\mathbb{R}_{+}^{3})}\big)^2 \big(\|\partial_{z}\partial^{m}_{xy} \tilde{\varphi}^\epsilon\|_{L^2_{k+\ell}(\mathbb{R}^3_+)}+\|\partial^{m}_{xy} \tilde{\varphi}^\epsilon\|_{L^2_{k+\ell}(\mathbb{R}^3_+)}\big).
\end{align*}

%\rm{(4)} Similarly, we have
%\begin{align*}
%	&-\int_{{\mathbb{R}^3_+}} \langle z \rangle^{2(k+\ell)}\partial^{m}_{xy}\big((u^{s}_{z}+\tilde{\varphi}^\epsilon)\nabla_{h}^{\bot}\tve\big) \partial^{m}_{xy} \tilde{\varphi}^\epsilon\\
%	&\leq C\left( \|\tvae_{2}\|_{H^{m}_{k+\ell}(\mathbb{R}^3_+)}+\|\partial_{xy}^{m+1}\tvae_{2}\|_{L^{2}_{\frac{1}{2}+\delta}(\mathbb{R}^3_+)}\right) \left( 1+\|\tvae\|_{H^{m}_{k+\ell}
%		(\mathbb{R}^3_+)}\right) \|\partial^{m}_{xy} \tilde{\varphi}^\epsilon\|_{L^2_{k+\ell}(\mathbb{R}^3_+)} .
%\end{align*}
Substituting estimates in (1)-(3) into \eqref{3.6}, we obtain the estimate \eqref{3.4} immediately after using the Cauchy inequality. This
completes the proof of Lemma \ref{E2}.
\end{proof}

\begin{proof}[{\bf Closeness of  a priori estimate and proof of Theorem \ref{epsilon-existence}}]
	Combining \eqref{E1} and \eqref{E2},  for $m \geq 6$, $k >1$, $\frac {3}{2} - k < \ell < \frac {1} {2} $ and $0 <\epsilon \leq 1$, we have
\begin{align}\begin{split}
		\frac{d}{dt}\|\tvae\|^{2}_{H^{m}_{k+\ell}(\mathbb{R}_{+}^{3})} \leq \frac{C}{\epsilon} \left( \|\tvae\|^{2}_{H^{m}_{k+\ell}(\mathbb{R}_{+}^{3})}+\|\tvae\|^{m}_{H^{m}_{k+\ell}(\mathbb{R}_{+}^{3})}\right) ,
	\end{split}
	\label{3.7}
\end{align}
with $C>0$ being independent of $\epsilon$.

	We shall denote
$$\Phi=\|\tvae\|^{2}_{H^{m}_{k+\ell}(\mathbb{R}_{+}^{3})},$$
then it follows  from \eqref{3.7} that
\begin{align*}
		\frac{d}{dt}\Phi \leq \frac{C}{\epsilon} \left( \Phi+\Phi^{\frac{m}{2}}\right) , 
\end{align*}
which implies
\begin{align*}
\left( -\frac{1}{\Phi^{\frac{m-2}{2}}}\right) _{t} \leq \frac{C}{\epsilon}
\frac{m-2}{2} \left( -\left( -\frac{1}{\Phi^{\frac{m-2}{2}}}\right)+1\right) .
\end{align*}
Integrating above inequality over $[0, t]$, we conclude that  by  Gronwall's inequality,
\begin{align*}
 -\frac{1}{\Phi^{\frac{m-2}{2}}}\leq 
 e^{-\frac{C}{\epsilon}
 	\frac{m-2}{2}t}
 \left(  -\frac{1}{\Phi(0)^{\frac{m-2}{2}}}+\frac{C}{\epsilon}
  	\frac{m-2}{2}t\right) .
\end{align*}
Through some simple calculations for $0< t \leq  T^{\epsilon}$, 
\begin{align*}
\Phi^\frac{m-2}{2} \leq \frac{\Phi(0)^{\frac{m-2}{2}}}{ e^{-\frac{C}{\epsilon}
		\frac{m-2}{2}t}
		-\frac{C}{\epsilon}
		\frac{m-2}{2}t\Phi(0)^{\frac{m-2}{2}}},
\end{align*}
%
%
%
%
%
%
%
%
%	From \eqref{3.7}, by the nonlinear Gronwall's inequality, we have
%	\begin{align*}
%		\| \tvae(t)\|^{m-2}_{H^m_{k+\ell}(\mathbb{R}^3_+)} \leq  \frac{\|\tvae(0)\|_{H^m_{k+\ell}}^{m-2}}
%	{e^{-\frac{C}{\epsilon}t(\frac {m}{2}-1)}-(\frac {m}{2}-1)\frac{C}{\epsilon}t
%		\|\tvae(0)\|_{H^m_{k+\ell}}^{m-2} },  \quad 0< t \leq  T^{\epsilon}, \label{G-I} 
%	\end{align*}
	where we have chosen $T^\epsilon >0$ so small that
	\begin{align} \label{Tm}
		\left(e^{\frac{C}{\epsilon}
			\frac{m-2}{2}T^\epsilon}-\frac{C}{\epsilon}
		\frac{m-2}{2}T^\epsilon
		\tilde{\zeta}^{\, m-2} \right)^{-1}=\left(\frac 43\right)^{m-2}.
	\end{align}
Thus, we  deduce that for any $\|\tvae(0)\|_{H^m_{k+\ell}}\leq \tilde{\zeta}$, and $0 <\epsilon \leq \epsilon_0$,
	\begin{align*}
		\| \tvae(t)\|_{H^m_{k+\ell}(\mathbb{R}^3_+)} \leq \frac {4}{3}\| \tvae(0)\|_{H^m_{k+\ell}(\mathbb{R}^3_+)} \leq 2\| \tilde{\varphi}_{0}\|_{H^m_{k+\ell}(\mathbb{R}^3_+)}, 
	\end{align*}
for any $0< t \leq  T^{\epsilon}$. This completes the proof of Theorem \ref{epsilon-existence}.
\end{proof}

\section{Formal transformations}\label{s4}
In this section,  we are devoted to improving the results of Lemma \ref{E2},
since estimation \eqref{3.4} depends on $\epsilon$. 
To simplify the notations, from now on, we drop the notation tilde and sub-index $\epsilon$, that is, with no confusion, we take
$$ (u, w)=(\tue, \twe), \qquad \varphi=\tvae .$$
For $0 \leq n \leq m$, we have the following formal transformations of system \eqref{regular-shear-prandtl},
\begin{equation}\label{formal transformations}
	\begin{cases} 
		\partial_{t}g^{n}+(u^{s}+u)\partial_{x}g^{n}
	+K(u^s+u)\partial_{y}g^{n}
	-\partial_{z}^{2}g^{n}
	-\epsilon\partial_{x}^{2}g^{n}
	-\epsilon\partial_{y}^{2}g^{n}\\
%	\qquad \quad
%	-2\epsilon\partial_{x}\partial_{z}^{-1}g^{n}\partial_{z}\eta_{{xz}}
%	-2\epsilon\partial_{y}\partial_{z}^{-1}g^{n}\partial_{z}\eta_{{yz}}
-2\epsilon \partial_{x}\partial_{z}^{-1}g^{n}\partial_{z}\eta_{xz} -2\epsilon \partial_{y}\partial_{z}^{-1}g^{n}\partial_{z}\eta_{yz} 
	=\sum^{7}_{i=1}M_{i}(g^{n}), \\
	\partial_{z} g^{n}\big|_{z=0}=0, \\
	g^{n}\big|_{t=0}=g^{n}_{0},
	\end{cases}
\end{equation}
with
\begin{align*}
	M_{1}(g^{n})&=-\big\lbrace (u^s + {u})(g^{n} \eta_{xz} + \partial_z^{-1} g^{n}  \partial_z \eta_{xz}) +K(u^s + {u})(  g^{n} \eta_{yz} + \partial_z^{-1} g^{n}  \partial_z \eta_{yz})\big\rbrace ,
	\\
	M_{2}(g^{n})&= 2 \partial_z g^{n} \eta_{zz} +2 g^{n} \partial_z \eta_{zz} -  4g^{n}\eta_{zz}^2- 8 \partial_z^{-1} g^{n} \eta_{zz} \partial_z \eta_{zz},
	\\
	M_{3}(g^{n})&=\epsilon\big(
	2 \partial_x g^{n} \eta_{xz}  -  2g^{n}\eta_{xz}^2- 4 \partial_z^{-1} g^{n} \eta_{xz} \partial_z \eta_{xz}
	\big) ,
	\\
	M_{4}(g^{n})&=\epsilon\big(
	2 \partial_y g^{n} \eta_{yz}  -  2g^{n}\eta_{yz}^2- 4 \partial_z^{-1} g^{n} \eta_{yz} \partial_z \eta_{yz}
	\big) ,
	\\
	M_{5}(g^{n})&=-K\partial_{y}\partial_{xy}^{n} u+\partial_{y}\partial_{xy}^{n}\big(K(u^s+u)\big),
	\\
	M_{6}(g^{n})&=\partial_z\left\lbrace \partial_z^{-1} g^{n} \bigg(  \frac{(u^s+u)\partial_{x}\varphi+K(u^s+u)\partial_{y}\varphi+w(u_{zz}^s+\partial_{z}\varphi)}{u^s_z + \tilde{u}_z}-\partial_{y}K(u^s+u)\bigg)\right\rbrace ,
	\\
	M_{7}(g^{n})&=\partial_{z}	\left\lbrace \bigg(-\sum_{i=1}^{n}C^i_n\partial_{xy}^i {u} \, \partial^{n  -i}_{xy} \partial_{x}{u}
	-\sum_{i=1}^{n}C^i_n\partial_{xy}^i {(K(u^s+u))} \, \partial^{n  -i}_{xy} \partial_{y}{u}
	-\sum_{i=1}^{n}C^i_n\partial_{xy}^i {\varphi} \,\partial^{n  -i}_{xy} {w}\bigg) \bigg{/} (u_{z}^{s}+u_{z})\right\rbrace ,
\end{align*}
where
\begin{align*}
&g^{n}=\bigg(\frac{\partial_{xy}^{n} u}{u_{z}^{s}+u_{z}}\bigg)_{z}, \qquad 
\eta_{xz}=
\frac{ u_{xz}}{u_{z}^{s}+u_{z}}, 
\qquad \eta_{yz}
=\frac{ u_{yz}}{u_{z}^{s}+u_{z}}, \qquad \quad
\eta_{zz}
=\frac{ u_{zz}^{s}+ u_{zz}}{u_{z}^{s}+u_{z}}.
%&K_{6}(g^{n})=\partial_z\left\lbrace \partial_z^{-1} g^{n} \bigg(  \frac{(u^s+u)\partial_{x}\varphi_{1}+(v^s+v)\partial_{y}\varphi_{1}+w(u_{zz}^s+\partial_{z}\varphi_{1})+(v^{s}_{z}+v_{z})\partial_{y}u-(u^{s}_{z}+u_{z})\partial_{y}v}{u^s_z + \tilde{u}_z}\bigg)\right\rbrace ,
%\\
%&K_{6}(g_{v}^{n})=\partial_z\left\lbrace  \partial_z^{-1} g^{n}_{v}  \bigg(\frac{(u^s+u)\partial_{x}\varphi+(v^s+v)\partial_{y}\varphi+w(v_{zz}^s+\partial_{z}\varphi)+(u^{s}_{z}+u_{z})\partial_x{v
%	}-(v^s_z + \tilde{v}_z)\partial_{x}u}{v^s_z + \tilde{v}_z} \bigg)\right\rbrace,
%\\
%&K_{7}(g^{n})=\partial_{z}	\left\lbrace \bigg(-\sum_{i=1}^{n}C^i_n\partial_{xy}^i {u} \, \partial^{n  -i}_{xy} \partial_{x}{u}
%-\sum_{i=1}^{n}C^i_n\partial_{xy}^i {v} \, \partial^{n  -i}_{xy} \partial_{y}{u}
%-\sum_{i=1}^{n}C^i_n\partial_{xy}^i {\varphi_{1}} \,\partial^{n  -i}_{xy} {w}\bigg) \bigg{/} (u_{z}^{s}+u_{z})\right\rbrace ,
%\\
%&K_{7}(g_{v}^{n})=	\partial_{z}\left\lbrace 	\bigg(-\sum_{i=1}^{n}C^i_n\partial_{xy}^i {u} \, \partial^{n  -i}_{xy} \partial_{x}{v}
%-\sum_{i=1}^{n}C^i_n\partial_{xy}^i {v} \, \partial^{n  -i}_{xy} \partial_{y}{v}
%-\sum_{i=1}^{n}C^i_n\partial_{xy}^i {\varphi} \,\partial^{n  -i}_{xy} {w}\bigg) \bigg{/} (v_{z}^{s}+v_{z})\right\rbrace .
\end{align*}
For the justification of \eqref{formal transformations}, see
Appendix C.

\begin{lemma}\label{y4.1}
Let  $m \geq  6$, $k > 1$, $0< \ell< \frac{1}{2}$, $\frac{1}{2} < \ell' < \ell + \frac{1}{2}$, and $k + \ell > \frac {3}{2}$. If $\varphi \in L^{\infty} ([0, T]; H^{m}_{k+\ell}(\mathbb{R}^3_+)$ solves \eqref{vv-regular-shear-prandtl} and  satisfies the following a priori condition
\begin{align}
	\|\varphi\|_{ L^{\infty} ([0, T]; H^{m}_{k+\ell}(\mathbb{R}^3_+))}\leq  \zeta,
	\label{4.1}
\end{align}
then, for all $ (t, x, y, z) \in [0, T] \times
\mathbb{R}^{3}_{+}$, we have
\begin{align}
	|\partial_{z} u(t, x, y, z)|=|\varphi (t, x, y, z)| \leq C_{m}\zeta \langle z \rangle ^{-k-\ell}.
\label{4.2}
\end{align}
Furthermore, we assume that   $\zeta$ is small enough such that
\begin{align}
	C_m \zeta \leq \frac{\tilde {c_{1}}}{4},
	\label{4.3}
\end{align}
then,   for $\ell\geq 0$ and $(t, x, y, z)\in [0, T]\times \mathbb{R}^{2} \times \mathbb{R}^+$, we have
\begin{align}
	&\frac{\tilde c_1}{4} \langle z \rangle^{-k} \leq \left|  u^s_z + u_z\right| \leq 4\tilde c_2 \langle z \rangle^{-k}.
\label{4.4}
\end{align}
%and
%\begin{align}
%\max\left\lbrace u^s_z + u_z, v^s_z + v_z\right\rbrace  \leq 4\tilde c_2 \langle z \rangle^{-k}.
%\label{4.5}
%\end{align}
\end{lemma}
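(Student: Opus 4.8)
The plan is to reduce both claims to a single pointwise decay estimate for $\varphi$, which itself follows from a weighted Sobolev embedding. Since $\varphi \in L^\infty([0,T];H^m_{k+\ell}(\mathbb{R}^3_+))$, the reconstruction formula $u(t,x,y,z)=-\int_z^{+\infty}\varphi(t,x,y,\tilde z)\,d\tilde z$ built into \eqref{vv-regular-shear-prandtl} is legitimate (in particular $u\to 0$ as $z\to\infty$), and it gives $\partial_z u = \varphi$. Hence \eqref{4.2} is exactly a bound on $\varphi$ itself, and \eqref{4.4} concerns $\partial_z u^s + \varphi$.

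First I would prove \eqref{4.2}. The key observation is that $\langle z\rangle^{k+\ell}\varphi$ belongs to $H^2(\mathbb{R}^3_+)$ with a norm controlled by $\|\varphi\|_{H^m_{k+\ell}(\mathbb{R}^3_+)}$: when a $z$-derivative falls on the weight $\langle z\rangle^{k+\ell}$ it lowers the exponent, but the weight gain $\langle z\rangle^{2\alpha_3}$ built into the norm of $H^{m,m-1}_{k+\ell}$ compensates this exactly, and $m\geq 6\geq 3$ leaves more than enough derivatives. Then the three-dimensional Sobolev embedding $H^2(\mathbb{R}^3_+)\hookrightarrow L^\infty(\mathbb{R}^3_+)$ (equivalently the weighted embedding inequalities of Lemmas \ref{hardy}--\ref{infty} and \ref{lemma2.4}) gives
\[
\big\|\langle z\rangle^{k+\ell}\varphi\big\|_{L^\infty(\mathbb{R}^3_+)}\leq C_m\|\varphi\|_{H^m_{k+\ell}(\mathbb{R}^3_+)}\leq C_m\zeta
\]
uniformly on $[0,T]$ by \eqref{4.1}, which is precisely \eqref{4.2}.

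Finally I would deduce \eqref{4.4} from \eqref{4.2} and the shear-profile bounds. Write $u^s_z+u_z=\partial_z u^s+\varphi$. Proposition \ref{shear-profile} gives $\tilde{c}_1\langle z\rangle^{-k}\leq \partial_z u^s\leq \tilde{c}_2\langle z\rangle^{-k}$ (in particular $\tilde{c}_1\leq\tilde{c}_2$), while \eqref{4.2} together with $\ell\geq 0$ (so $\langle z\rangle^{-\ell}\leq 1$) and the smallness assumption \eqref{4.3} yields $|\varphi|\leq C_m\zeta\langle z\rangle^{-k}\leq \tfrac{\tilde{c}_1}{4}\langle z\rangle^{-k}$. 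Then $\partial_z u^s+\varphi\geq (\tilde{c}_1-\tfrac{\tilde{c}_1}{4})\langle z\rangle^{-k}=\tfrac{3\tilde{c}_1}{4}\langle z\rangle^{-k}>0$, which gives the lower bound and simultaneously shows $|u^s_z+u_z|=u^s_z+u_z$, and $\partial_z u^s+\varphi\leq (\tilde{c}_2+\tfrac{\tilde{c}_1}{4})\langle z\rangle^{-k}\leq 4\tilde{c}_2\langle z\rangle^{-k}$ gives the upper bound. The only point requiring genuine care is checking that the weight exponents match in the embedding step; beyond that there is no real obstacle, as this is a preparatory lemma whose role is to keep the coefficient $u^s_z+u_z$ appearing in the formal transformation \eqref{formal transformations} uniformly comparable to $\langle z\rangle^{-k}$ throughout the subsequent a priori estimates.
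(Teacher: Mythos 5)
Your proposal is correct and follows essentially the same route as the paper: the pointwise bound \eqref{4.2} is obtained from the weighted Sobolev-type embedding (the paper applies \eqref{infty2} to the weighted vorticity, exactly the route you describe as the "equivalent" formulation), and \eqref{4.4} then follows by combining this decay with Proposition \ref{shear-profile} under the smallness condition \eqref{4.3}. Your explicit arithmetic for the lower bound $\tfrac{3\tilde c_1}{4}\langle z\rangle^{-k}$ and upper bound $(\tilde c_2+\tfrac{\tilde c_1}{4})\langle z\rangle^{-k}\leq 4\tilde c_2\langle z\rangle^{-k}$ simply spells out the step the paper leaves implicit, so there is no substantive difference.
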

\begin{proof}
It follows from \ref{infty2} that 
\begin{align}
	&\|\langle z\rangle ^{k+\ell} \varphi\|_{ L^\infty ([0, T]\times\mathbb{R}^3_+)}
\nonumber\\
&\leq C\left(\|\langle z \rangle ^{\frac {1}{2}+\delta}(\langle z \rangle ^{k+\ell}w)_z \|_{ L^\infty ([0, T]; L^2(\mathbb{R}^3_+))}+
\|\langle z \rangle ^{\frac {1}{2}+\delta}(\langle z \rangle ^{k+\ell}w)_{xz} \|_{ L^\infty ([0, T]; L^2(\mathbb{R}^3_+))} \right.
\nonumber\\
&\quad+\left.\|\langle z \rangle ^{\frac {1}{2}+\delta}(\langle z \rangle ^{k+\ell}w)_{yz} \|_{ L^\infty ([0, T]; L^2(\mathbb{R}^3_+))}+
\|\langle z \rangle ^{\frac {1}{2}+\delta}(\langle z \rangle ^{k+\ell}w)_{xyz} \|_{ L^\infty ([0, T]; L^2(\mathbb{R}^3_+))} \right)
\nonumber\\
&\leq C_m \|\varphi\|_{ L^\infty ([0, T]; H^{m}_{k+\ell}(\mathbb{R}^3_+))},
  \label{4.6}
\end{align}
which, together with \eqref{4.1}, implies
\begin{align}
	|\partial_{z} u(t, x, y, z)|=|\varphi (t, x, y, z)| \leq C_{m}\zeta \langle z \rangle ^{-k-\ell}.
	\label{4.7}
\end{align}
Then, this  yields that
\begin{align*}
	\frac{\tilde c_1}{4} \langle z \rangle^{-k} \leq \left|  u^s_z + u_z\right| \leq 4\tilde c_2 \langle z \rangle^{-k},
\qquad (t, x, y, z)\in [0, T]\times \mathbb{R}^{2} \times \mathbb{R}^+.
\end{align*}
%Thus,
%\begin{align*}
%	\max\left\lbrace u^s_z + u_z, v^s_z + v_z\right\rbrace\leq  \left|  (u^s_z + u_z, v^s_z + v_z)\right| \leq 4\tilde c_2 \langle z \rangle^{-k}.
%\end{align*}
\end{proof}

\begin{lemma}\label{y4.2}
Under the hypotheses of \eqref{4.1}-\eqref{4.2} and Lemma \ref{y4.1}, 	  we have for $\tilde{\varphi}_{0}\in H^{m+2}_{k + \ell'}(\mathbb{R}^3_+)$,  $ g^{m}_{0} \in H^2_{k+\ell}(\mathbb{R}^3_+)$,  and $0<\zeta\le1$, 
\begin{align*}
	\|  g^m(0)\|_{H^2_{\ell'}(\mathbb{R}^3_+)}\le C \|\tilde w_0\|_{H^{m+2}_{k + \ell''}(\mathbb{R}^3_+)}.
\end{align*}
\end{lemma}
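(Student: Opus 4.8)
The plan is to estimate $g^m(0) = \big((\partial_{xy}^m u_0)/(u^s_{0,z}+u_{0,z})\big)_z$ in $H^2_{\ell'}$ directly from its definition, tracking how many $z$-derivatives and how much polynomial weight each factor can absorb, and converting all $\partial_{xy}$-derivatives and $z$-derivatives of $u_0$ back into $\tilde w_0$ via the relations $\partial_{xy}^m u_0 = \partial_{xy}^m \tilde u_0$ (since $u^s_0$ is shear) together with the divergence-free identity $\partial_z \tilde w_0 = -\partial_x \tilde u_0 - \partial_y(K(u^s_0+\tilde u_0))$, i.e. $\tilde u_{0,x}$ and $\tilde u_{0,y}$ are controlled by $\partial_z \tilde w_0$ and lower-order terms involving $K$. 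First I would expand the $z$-derivative in the definition of $g^m$ by the quotient rule, writing $g^m$ as a sum of terms of the form $(\partial_{xy}^m u_0)_z/(u^s_{0,z}+u_{0,z})$ and $(\partial_{xy}^m u_0)(u^s_{0,zz}+u_{0,zz})/(u^s_{0,z}+u_{0,z})^2$; then applying $\partial^\alpha$ for $|\alpha|\le 2$ and Leibniz's rule produces a finite sum of terms that are products of: a numerator factor $\partial^{\beta}\partial_{xy}^m u_0$ with $|\beta|$ at most $3$ (one from the defining $\partial_z$, up to two from $\alpha$), finitely many factors of the shape $\partial^{\gamma}(u^s_{0,z}+u_{0,z})$, and inverse powers of $(u^s_{0,z}+u_{0,z})$.

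Next I would use Lemma \ref{y4.1}: the bound \eqref{4.4} (with $\ell=\ell'$ allowed since $\ell'\ge 0$ is not required, but one uses the already-established pointwise lower bound $\tilde c_1\langle z\rangle^{-k}/4 \le |u^s_{0,z}+u_{0,z}|$ at $t=0$ under the smallness \eqref{4.3}) gives $|(u^s_{0,z}+u_{0,z})^{-1}| \le C\langle z\rangle^{k}$, so each inverse power of the denominator contributes a positive power of $\langle z\rangle$ that must be compensated. For the factors $\partial^\gamma(u^s_{0,z}+u_{0,z})$ with $\gamma$ purely in $z$, the shear-flow decay \eqref{uv0} gives $\langle z\rangle^{-k-|\gamma|}$-type bounds, which more than cancel the weights from the denominators and keep these factors in $L^\infty$; for $\gamma$ involving tangential derivatives one falls back on $\partial_{xy}$ of $\tilde u_0$, estimated in $L^\infty_{xy}$ by Sobolev embedding in $H^{m+2}_{k+\ell'}$. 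The delicate factor is the numerator $\partial^\beta \partial_{xy}^m u_0 = \partial^\beta \partial_{xy}^m \tilde u_0$: when $\beta$ contains at least one $z$, write $\partial^\beta \partial_{xy}^m \tilde u_0 = \partial^{\beta - e_3}\partial_{xy}^m \tilde\varphi_0$, an object controlled in $H^{2}_{k+\ell'}$ by $\|\tilde\varphi_0\|_{H^{m+2}_{k+\ell'}} = \|\partial_z\tilde u_0\|_{H^{m+2}_{k+\ell'}} \le C\|\tilde w_0\|_{H^{m+2}_{k+\ell'+\text{(shift)}}}$; when $\beta$ is purely tangential one uses the divergence-free relation to trade a tangential derivative of $\tilde u_0$ for $\partial_z \tilde w_0$ (picking up $K$-factors bounded by $\|K\|_{W^{m+1,\infty}}$) and the boundary condition $\tilde u_0|_{z=0}=0$ to write $\tilde u_0 = \partial_z^{-1}\tilde\varphi_0 = -\int_z^\infty \tilde\varphi_0$, again controlled via $\tilde w_0$. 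Throughout, the weighted products are handled by the standard weighted Sobolev product/Hardy lemmas (Lemmas \ref{hardy}--\ref{infty}, \ref{lemma2.4}) already available in the paper, putting the lowest-regularity factor in $L^\infty$ and the others in $L^2_{\text{weighted}}$; the relation between $\|\tilde\varphi_0\|_{H^{m+2}_{k+\ell'}}$ and $\|\tilde w_0\|_{H^{m+2}_{k+\ell''}}$ for the appropriate $\ell''$ is exactly Hardy's inequality applied to $w_0$ in the $z$-variable (recovering $\partial_z\tilde u_0$ from $w_0$ via the divergence relation).

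I expect the main obstacle to be bookkeeping the weight exponents: one must check that in every term the total negative power of $\langle z\rangle$ coming from the shear-profile decay of the denominator-related factors, plus the weight $\langle z\rangle^{2\ell' + 2\alpha_3}$ built into $\|\cdot\|_{H^2_{\ell'}}$, together with the available weight $\langle z\rangle^{2(k+\ell')}$ on $\tilde\varphi_0$ (hence on $\tilde u_0$ after one $\partial_z^{-1}$, which by Hardy costs exactly one power of $\langle z\rangle$), balances so that no term carries a nonintegrable or nondecaying tail; this is where the precise inequalities $k>1$, $0<\ell<\tfrac12$, $\tfrac12<\ell'<\ell+\tfrac12$ enter, ensuring $k+\ell'>\tfrac32$ so that Hardy and the embedding $H^{m+2}\hookrightarrow W^{2,\infty}$ in the tangential variables both apply. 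A secondary (but purely mechanical) point is that the highest number of $z$-derivatives that can land on the numerator is $3$, so one genuinely needs $\tilde\varphi_0\in H^{m+2}$ (equivalently $\tilde u_0\in H^{m+3}$, equivalently $\tilde w_0\in H^{m+2}$ in the stated weighted sense), which is precisely the hypothesis; there is no loss. Once the term-by-term bound is assembled, summing the finitely many terms yields $\|g^m(0)\|_{H^2_{\ell'}(\mathbb{R}^3_+)} \le C\|\tilde w_0\|_{H^{m+2}_{k+\ell''}(\mathbb{R}^3_+)}$ with $C$ depending on $\|K\|_{W^{m+1,\infty}}$, the shear-profile constants, and $m$, which is the assertion.
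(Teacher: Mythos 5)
Your expansion of $g^m(0)$ by the quotient rule, the lower bound $|u^s_{0,z}+\tilde u_{0,z}|^{-1}\le C\langle z\rangle^{k}$ coming from \eqref{4.3}--\eqref{4.4}, the decay $|\eta_{zz}|\le C\langle z\rangle^{-1}$ from Proposition \ref{shear-profile}, and Hardy's inequality to pass between $\partial_{xy}^m\tilde u_0$ and $\partial_{xy}^m\tilde\varphi_0$ are exactly the ingredients of the paper's (very short) proof, and your Leibniz bookkeeping for the two extra derivatives in $H^2_{\ell'}$ is if anything more detailed than what the paper records, which is only the zeroth-order pointwise bound $\langle z\rangle^{\ell'}|g^m(0)|\le C\langle z\rangle^{k+\ell'}|\partial_{xy}^m\varphi_0|+C\langle z\rangle^{k+\ell'-1}|\partial_{xy}^m u_0|$.

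The genuine gap is the final reduction to the vertical velocity. You read the right-hand side $\|\tilde w_0\|_{H^{m+2}_{k+\ell''}}$ literally and propose to control $\tilde\varphi_0=\partial_z\tilde u_0$ and the tangential derivatives of $\tilde u_0$ through $\tilde w_0$ via $\partial_z \tilde w_0 = -\partial_x\tilde u_0-\partial_y\big(K(u^s_0+\tilde u_0)\big)$. This step fails: the divergence relation only yields the particular combination $\partial_x\tilde u_0+\partial_y\big(K(u^s_0+\tilde u_0)\big)$, from which neither an individual derivative $\partial_{xy}^m\tilde u_0$ (for instance a pure $\partial_x^m$ or $\partial_y^m$ derivative) nor $\partial_z\tilde u_0$ can be recovered; the asserted inequality $\|\partial_z\tilde u_0\|_{H^{m+2}_{k+\ell'}}\le C\|\tilde w_0\|_{H^{m+2}}$ would require inverting a tangential operator of the form $\partial_x+K\partial_y+(\partial_y K)$ on these spaces, which is not available. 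Moreover $\tilde w_0$ in general does not decay as $z\to+\infty$, so the weighted norm on your right-hand side is typically infinite. In the paper the symbol $\tilde w_0$ in this lemma is simply the initial vorticity $\tilde\varphi_0=\partial_z\tilde u_0$ (a notational remnant of the two-dimensional setting of \cite{Xu-Zhang-2017}, where the vorticity is denoted by $w$), consistent with the hypothesis $\tilde\varphi_0\in H^{m+2}_{k+\ell'}$; the paper's proof never invokes the divergence-free condition. If you drop that detour and estimate directly in terms of $\tilde\varphi_0$ (equivalently $\tilde u_0$ with one more derivative), your argument closes and matches the paper's.
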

\begin{proof}
In actuality, 
\begin{align*}
g^{m}(0)=\bigg(\frac{\partial_{xy}^{n} u_{0}}{u_{0,z}^{s}+\tu_{0, z}}\bigg)_{z}
=\frac{\partial_{z}\partial_{xy}^{n} u_{0}}{u_{0,z}^{s}+\tu_{0, z}}
-
\frac{\partial_{xy}^{n} u_{0}}{u_{0,z}^{s}+\tu_{0, z}}\eta_{zz},
\end{align*}
then \eqref{4.3} implies that
\begin{align*}
\langle z \rangle^{\ell'} \left|g^{m}(0)\right|
&\leq\left|   
\frac{1}{u_{0,z}^{s}+\tu_{0, z}}\right| \cdot \Big(\left|  \partial_{xy}^{n}\varphi_{0}\right| 
+\left|  \eta_{zz} \partial_{xy}^{n}u_{0}\right| \Big) \\
&\leq C\langle z \rangle^{k+\ell'}\left| \partial_{xy}^{n}\varphi_{0}\right| +
C\langle z \rangle^{k+\ell'-1}\left|\partial_{xy}^{n}u_{0}\right|,
\end{align*}
where we have used the fact by Proposition \ref{shear-profile}
\begin{align*}
 \eta_{zz}  \leq \langle z \rangle^{-1}.
\end{align*}
Thus, the proof of Lemma \ref{y4.2}
 is completed.
\end{proof}

\begin{lemma}\label{gn}
Assume the condition (H) holds. Let ${\varphi} \in L^\infty([0, T]; H^{m+2}_{k + \ell}(\mathbb{R}^3_+)$,  $m \geq  6$, $k > 1$, $0\leq \ell< \frac{1}{2}$, $\frac{1}{2} < \ell' < \ell + \frac{1}{2}$, and $k + \ell > \frac {3}{2}$, satisfy \eqref{4.1}-\eqref{4.2} with
$0 < \zeta \leq 1$. Assume that the shear flow $u^s$ verifies the conclusion of  Proposition \ref{shear-profile},  and $g^n$ satisfies the equation
\eqref{formal transformations}  for $1 \leq n \leq m$. And $K(x, y)$ is supposed to satisfy that
$$\big\|K\big\|_{W^{m+1, \infty}(\mathbb{R}^2)} < \infty.$$
Then we have the following estimates, for any $t \in [0, T]$
\begin{align*}
&\frac{d}{dt}\sum^{m}_{n=1}\left\| g^{n}\right\|_{L^{2}_{\ell'}(\mathbb{R}_{+}^{3})}^2
+\sum^{m}_{n=1}\left\| \partial_{z}g^{n}\right\|_{L^{2}_{\ell'}(\mathbb{R}_{+}^{3})}^2+
+\epsilon\sum^{m}_{n=1}\left(\left\| \partial_{x}g^{n}\right\|_{L^{2}_{\ell'}(\mathbb{R}_{+}^{3})}^2
+
\left\| \partial_{y}g^{n}\right\|_{L^{2}_{\ell'}(\mathbb{R}_{+}^{3})}^2\right) \\
&\leq
C\left(\sum^{m}_{n=1}\left\| g^{n}\right\|_{L^{2}_{\ell'}(\mathbb{R}_{+}^{3})}^2
+
\left\| \varphi \right\|_{H^{m}_{k+\ell'}(\mathbb{R}_{+}^{3})}^2\right), 
\end{align*}
where a constant $C>0$ is independent of $\epsilon$. 
\end{lemma}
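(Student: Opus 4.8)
The plan is to carry out a weighted $L^2$ energy estimate on the equation \eqref{formal transformations} satisfied by $g^n$, summing over $1\le n\le m$, with weight $\langle z\rangle^{2\ell'}$. The point of the formal transformation is precisely that the worst terms in the vorticity equation — the ones which, in Lemma~\ref{E2}, forced an $\epsilon^{-1}$ in the estimate — have been absorbed into the good unknown $g^n$, so that after multiplying by $\langle z\rangle^{2\ell'}g^n$ and integrating by parts in $z$ one recovers a dissipation term $\|\partial_z g^n\|_{L^2_{\ell'}}^2$ and the remaining terms $M_1$--$M_7$ are controlled by $\sum_n\|g^n\|_{L^2_{\ell'}}^2$ plus $\|\varphi\|_{H^m_{k+\ell'}}^2$ with an $\epsilon$-independent constant. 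The boundary condition $\partial_z g^n|_{z=0}=0$ ensures the boundary term vanishes when integrating $-\partial_z^2 g^n$ against $\langle z\rangle^{2\ell'}g^n$, so there is no need here for a boundary reduction as in Section~3.

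\textbf{Key steps, in order.} First I would apply $\langle z\rangle^{2\ell'}g^n$ to \eqref{formal transformations}, integrate over $\mathbb{R}^3_+$, and handle the transport terms $(u^s+u)\partial_x g^n + K(u^s+u)\partial_y g^n$ by integration by parts, producing $\tfrac12\int \langle z\rangle^{2\ell'}(\partial_x(u^s+u)+\partial_y(K(u^s+u)))(g^n)^2$; using the divergence-free relation this equals $-\tfrac12\int\langle z\rangle^{2\ell'}\partial_z w\,(g^n)^2$, and an integration by parts in $z$ converts this into a term with $\langle z\rangle^{-1}w$, bounded via the Hardy inequality and \eqref{4.4} exactly as in $I_1$ of Lemma~\ref{E1}. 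Second, the diffusion terms: $-\int\langle z\rangle^{2\ell'}\partial_z^2 g^n\,g^n = \|\langle z\rangle^{\ell'}\partial_z g^n\|^2 + (\text{lower order in }z) $ with vanishing boundary contribution, and the $\epsilon$-terms $-\epsilon\int\langle z\rangle^{2\ell'}(\partial_x^2+\partial_y^2)g^n\,g^n$ give the nonnegative $\epsilon(\|\partial_x g^n\|_{L^2_{\ell'}}^2+\|\partial_y g^n\|_{L^2_{\ell'}}^2)$ plus harmless commutator terms. Third, the two extra $\epsilon$-terms $-2\epsilon\partial_x\partial_z^{-1}g^n\,\partial_z\eta_{xz}$ and $-2\epsilon\partial_y\partial_z^{-1}g^n\,\partial_z\eta_{yz}$: these are paired with the just-gained $\epsilon$-dissipation; one uses Cauchy--Schwarz to split off $\tfrac{\epsilon}{4}(\|\partial_x g^n\|^2+\|\partial_y g^n\|^2)$ (after an integration by parts in $z$ moving $\partial_z^{-1}$ off $g^n$) against $C\epsilon\|g^n\|^2$ times bounds on $\eta_{xz},\eta_{yz}$, which are controlled by \eqref{4.2}, \eqref{4.4} and $\|K\|_{W^{m+1,\infty}}$; crucially no negative power of $\epsilon$ appears. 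Fourth, estimate $M_1,\dots,M_7$: each is a product of $g^n$ (or $\partial_z^{-1}g^n$, $\partial_z g^n$), the coefficients $\eta_{xz},\eta_{yz},\eta_{zz}$ which are bounded pointwise by $C\langle z\rangle^{-1}$ or by $\zeta$ through Lemma~\ref{y4.1}, and — for $M_5,M_6,M_7$ — quantities involving $\varphi$, $w$, $u$ and $K$-derivatives up to order $m+1$; by the anisotropic Sobolev and Hardy lemmas (Lemmas~\ref{hardy}--\ref{infty}, \ref{lemma2.4}) these are bounded by $C(\|g^n\|_{L^2_{\ell'}}^2 + \|\varphi\|_{H^m_{k+\ell'}}^2)$, absorbing the $\partial_z g^n$ factor into the dissipation via Cauchy's inequality. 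Finally I would sum over $n=1,\dots,m$ and collect.

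\textbf{Main obstacle.} The delicate point is $M_6$ and $M_7$, which carry the transport nonlinearity $\big((u^s+u)\partial_x\varphi + K(u^s+u)\partial_y\varphi + w(u^s_{zz}+\partial_z\varphi)\big)/(u^s_z+u_z)$ and the commutator sum divided by $u^s_z+u_z$; one must check that after the outer $\partial_z$ is distributed, every resulting term either reproduces a factor $g^n$ (through the identity $\partial_z(\partial_{xy}^n u/(u^s_z+u_z)) = g^n$ up to lower-order pieces) or is of order at most $m$ in $\varphi$ with the right weight, so that the bound closes with the stated right-hand side and with a constant independent of $\epsilon$ — the whole purpose of passing to $g^n$ being to trade the lost $xy$-derivative for this structural cancellation. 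A secondary technical nuisance is keeping track of the weight exponents: one needs $k+\ell>\tfrac32$ and $\tfrac12<\ell'<\ell+\tfrac12$ so that $\langle z\rangle^{k+\ell'-1}$-weighted norms of $u$ and $\varphi$ are dominated by the $H^m_{k+\ell'}$ norm and all Hardy-type inequalities apply; I would track these exponents carefully but not belabor the arithmetic.
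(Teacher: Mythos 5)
Your proposal follows essentially the same route as the paper's proof: a $\langle z\rangle^{2\ell'}$-weighted energy estimate on the equation for $g^{n}$, dissipation from $-\partial_z^2 g^n$ using the boundary condition $\partial_z g^n|_{z=0}=0$, Cauchy--Schwarz together with the pointwise decay of $\eta_{xz},\eta_{yz},\eta_{zz}$ (from Lemma \ref{y4.1} and Proposition \ref{shear-profile}) for the $\epsilon$-terms and $M_1$--$M_4$, and Hardy/Sobolev-type estimates (the paper's Claim 4.1, exploiting the relations between $\partial_x g^i,\partial_y g^i$ and $g^{i+1}$) for $M_5$--$M_7$, summed over $n$. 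One caveat: for the transport term the paper bounds $\partial_x u+\partial_y\big(K(u^s+u)\big)$ directly in $L^\infty$ via Lemma \ref{infty}, and your detour of rewriting it as $-\partial_z\tilde w$ and integrating by parts in $z$ would generate $\int\langle z\rangle^{2\ell'}\tilde w\,g^n\partial_z g^n$, which is not controlled by $\|g^n\|_{L^2_{\ell'}}\|\partial_z g^n\|_{L^2_{\ell'}}$ since only $\langle z\rangle^{-1}\tilde w$ is bounded, so the direct $L^\infty$ bound should be used there.
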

\begin{proof}
Multiplying \eqref{formal transformations} by $\langle z \rangle^{2\ell'} g^{n} $, integrating the resulting equation by parts over $\mathbb{R}_{+}^{3}$ in the $x$-variable and $y$-variable, respectively, we have
\begin{align}
\begin{split}
&\frac{1}{2}\frac{d}{dt}\left\|g^{n}\right\|^{2}_{L^{2}_{\ell'}(\mathbb{R}_{+}^{3})}
+\epsilon\left\|\partial_{x}g^{n}\right\|^{2}_{L^{2}_{\ell'}(\mathbb{R}_{+}^{3})}
+\epsilon\left\|\partial_{y}g^{n}\right\|^{2}_{L^{2}_{\ell'}(\mathbb{R}_{+}^{3})} \\
&= -\int_{{\mathbb{R}^3_+}}\langle z \rangle^{2\ell'} g^{n}	\left((u^{s}+u)\partial_{x}g^{n}
+K(u^s+u)\partial_{y}g^{n}\right) +\int_{{\mathbb{R}^3_+}}\langle z \rangle^{2\ell'} g^{n} \partial_{z}^{2}g^{n} \\
%&\quad+
%2\epsilon\int_{{\mathbb{R}^3_+}}  \langle z \rangle^{2\ell'} g^{n}\partial_{x}\partial_{z}^{-1}g^{n}\partial_{z}\eta_{{xz}}
%+
%2\epsilon\int_{{\mathbb{R}^3_+}}  \langle z \rangle^{2\ell'} g^{n} \partial_{y}\partial_{z}^{-1}g^{n}\partial_{z}\eta_{{yz}} \\
&\quad+
2\epsilon\int_{{\mathbb{R}^3_+}}  \langle z \rangle^{2\ell'} g^{n}\big(\partial_{x}\partial_{z}^{-1}g^{n}\partial_{z}\eta_{xz} \big)
+
2\epsilon\int_{{\mathbb{R}^3_+}}  \langle z \rangle^{2\ell'} g^{n}\big(\partial_{y}\partial_{z}^{-1}g^{n}\partial_{z}\eta_{yz} \big)
\\
&\quad + \sum_{i=1}^{7} \int_{{\mathbb{R}^3_+}}  \langle z \rangle^{2\ell'} g^{n} M(g^{n})_{i}  .
\end{split}
\label{4.9}
\end{align}
Now, we estimate each term on the right-hand side of \eqref{4.9}.
 By Lemma \eqref{infty}, we have
\begin{align*}
&-\int_{{\mathbb{R}^3_+}}\langle z \rangle^{2\ell'} g^{n}	\left((u^{s}+u)\partial_{x}g^{n}
	+K(u^s+u)\partial_{y}g^{n}\right) \\
&=\frac{1}{2} \int_{{\mathbb{R}^3_+}}\langle z \rangle^{2\ell'} \left( g^{n}\right)^{2}	\left(\partial_{x}u+\partial_{y}(K(u^s+u))\right) \leq 
C_{K} \left\|g^{n}\right\|^{2}_{L^{2}_{\ell'}(\mathbb{R}_{+}^{3})}
\left( 1+\left\|\varphi\right\|_{H^{3}_{\frac{1}{2}+\delta}(\mathbb{R}_{+}^{3})}\right) 
.
\end{align*} 
By the integration  by parts in the   $z$-variable, where the boundary value is vanish, we obtain
\begin{align*}
	\int_{{\mathbb{R}^3_+}}\langle z \rangle^{2\ell'} g^{n} \partial_{z}^{2}g^{n}
	&= -\left\|\partial_{z}g^{n}\right\|^{2}_{L^{2}_{\ell'}(\mathbb{R}_{+}^{3})}
	+2\ell'\int_{{\mathbb{R}^3_+}} \left| \langle z \rangle^{2\ell'-1}g^{n} \partial_{z}g^{n}\right|  \\
	& \leq 
	\frac{1}{4}\left\|\partial_{z}g^{n}\right\|^{2}_{L^{2}_{\ell'}(\mathbb{R}_{+}^{3})}+C\left\|g^{n}\right\|^{2}_{L^{2}_{\ell'}(\mathbb{R}_{+}^{3})} .
\end{align*}
Applying the Cauchy inequality leads to
\begin{align*}
&2\epsilon\int_{{\mathbb{R}^3_+}}  \langle z \rangle^{2\ell'} g^{n}\big(\partial_{x}\partial_{z}^{-1}g^{n}\partial_{z}\eta_{xz} \big)\\
&=-2\epsilon\int_{{\mathbb{R}^3_+}}  \langle z \rangle^{2\ell'} \partial_{x}g^{n}\big(\partial_{z}^{-1}g^{n}\partial_{z}\eta_{xz} \big)-
2\epsilon\int_{{\mathbb{R}^3_+}}  \langle z \rangle^{2\ell'} g^{n}\big(\partial_{z}^{-1}g^{n}\partial_{x}\partial_{z}\eta_{xz} \big)\\
& \leq  \frac{\epsilon}{8} \left\|\partial_{x}g^{n}\right\|^{2}_{L^{2}_{\ell'}(\mathbb{R}_{+}^{3})}
+
{\epsilon} \left\|g^{n}\right\|^{2}_{L^{2}_{\ell'}(\mathbb{R}_{+}^{3})}
+
{\epsilon} \left\|\big(\partial_{z}^{-1}g^{n}\partial_{z}\eta_{xz} \big)\right\|^{2}_{L^{2}_{\ell'}(\mathbb{R}_{+}^{3})}
\\
&\quad+
{\epsilon} \left\|\big(\partial_{z}^{-1}g^{n}\partial_{x}\partial_{z}\eta_{xz} \big)\right\|^{2}_{L^{2}_{\ell'}(\mathbb{R}_{+}^{3})}.
\end{align*}
Now we need to control the last two terms of the inequality. In fact, noticing that
$$  \left| \partial_{x}\partial_{z}\eta_{xz}\right|  \leq C \langle z \rangle^{-\ell-1},$$
we  conclude that  for $\frac{1}{2} < \ell' < \ell + \frac{1}{2}$,
\begin{align*}
\left\|\partial_{z}^{-1}g^{n}\partial_{x}\partial_{z}\eta_{xz} \right\|^{2}_{L^{2}_{\ell'}(\mathbb{R}_{+}^{3})} \leq C 
	\int_{{\mathbb{R}^3_+}}\langle z \rangle^{2(\ell'-\ell-1)} \left(\int_{0}^{z}g^{n}(t, x, y, \tilde{z}) d \tilde{z}\right)^2 dxdydz \leq C \left\|g^{n}\right\|^{2}_{L^{2}_{\ell'}(\mathbb{R}_{+}^{3})}.
\end{align*}
It can be checked straightforwardly that the same upper bound holds for the other term 
$$\left\|\partial_{z}^{-1}g^{n}\partial_{z}\eta_{xz} \right\|^{2}_{L^{2}_{\ell'}(\mathbb{R}_{+}^{3})}.$$ 
Then, we obtain  
\begin{align*}
2\epsilon\int_{{\mathbb{R}^3_+}}  \langle z \rangle^{2\ell'} g^{n}\big(\partial_{x}\partial_{z}^{-1}g^{n}\partial_{z}\eta_{xz} \big) \leq \frac{3\epsilon}{4} \left\|\partial_{x}g^{n}\right\|^{2}_{L^{2}_{\ell'}(\mathbb{R}_{+}^{3})} + C \left\|g^{n}\right\|^{2}_{L^{2}_{\ell'}(\mathbb{R}_{+}^{3})}.
\end{align*}
%\begin{align*}
%	&2\epsilon\int_{{\mathbb{R}^3_+}}  \langle z \rangle^{2\ell'} g^{n}\partial_{x}\partial_{z}^{-1}g^{n}\partial_{z}\eta_{{xz}} \\
%	&=-2\epsilon\int_{{\mathbb{R}^3_+}}  \langle z \rangle^{2\ell'} \partial_{x}g^{n}\partial_{z}^{-1}g^{n}\partial_{z}\eta_{{xz}}
%	-2\epsilon
%	\int_{{\mathbb{R}^3_+}}  \langle z \rangle^{2\ell'} g^{n}\partial_{z}^{-1}g^{n}\partial_{x}\partial_{z}\eta_{{xz}}  \\
%	& \leq  \frac{\epsilon}{8} \left\|\partial_{x}g^{n}\right\|^{2}_{L^{2}_{\ell'}(\mathbb{R}_{+}^{3})}
%	+
%	{\epsilon} \left\|\partial_{z}^{-1}g^{n}\partial_{z}\eta_{{xz}}\right\|^{2}_{L^{2}_{\ell'}(\mathbb{R}_{+}^{3})}
%	+{\epsilon} \left\|g^{n}\right\|^{2}_{L^{2}_{\ell'}(\mathbb{R}_{+}^{3})}
%	+
%	{\epsilon} \left\|\partial_{z}^{-1}g^{n}\partial_{x}\partial_{z}\eta_{{xz}}\right\|^{2}_{L^{2}_{\ell'}(\mathbb{R}_{+}^{3})}\\
%	& \leq \frac{\epsilon}{8} \left\|\partial_{x}g^{n}\right\|^{2}_{L^{2}_{\ell'}(\mathbb{R}_{+}^{3})} + C \left\|g^{n}\right\|^{2}_{L^{2}_{\ell'}(\mathbb{R}_{+}^{3})}.
%\end{align*}
Analogously, we also have
\begin{align*}
%2\epsilon\int_{{\mathbb{R}^3_+}}  \langle z \rangle^{2\ell'} g^{n} \partial_{y}\partial_{z}^{-1}g^{n}\partial_{z}\eta_{{yz}}
2\epsilon\int_{{\mathbb{R}^3_+}}  \langle z \rangle^{2\ell'} g^{n}\big(\partial_{y}\partial_{z}^{-1}g^{n}\partial_{z}\eta_{yz} \big)
\leq \frac{3\epsilon}{4} \left\|\partial_{y}g^{n}\right\|^{2}_{L^{2}_{\ell'}(\mathbb{R}_{+}^{3})} + C \left\|g^{n}\right\|^{2}_{L^{2}_{\ell'}(\mathbb{R}_{+}^{3})}.
\end{align*}
Substituting these estimates  into \eqref{4.9}, we arrive at
\begin{align}
	\begin{split}
		&\frac{d}{dt}\left\|g^{n}\right\|^{2}_{L^{2}_{\ell'}(\mathbb{R}_{+}^{3})}
		+\left\|\partial_{z}g^{n}\right\|^{2}_{L^{2}_{\ell'}(\mathbb{R}_{+}^{3})}
		+\epsilon\left\|\partial_{x}g^{n}\right\|^{2}_{L^{2}_{\ell'}(\mathbb{R}_{+}^{3})}
		+\epsilon\left\|\partial_{y}g^{n}\right\|^{2}_{L^{2}_{\ell'}(\mathbb{R}_{+}^{3})} \\
		&\leq C \left\|g^{n}\right\|^{2}_{L^{2}_{\ell'}(\mathbb{R}_{+}^{3})}+ \sum_{i=1}^{6} \left|\int_{{\mathbb{R}^3_+}}  \langle z \rangle^{2\ell'} g^{n} M_{i}(g^{n}) \right|.
	\end{split}
	\label{4.10}
\end{align}
Next,  we deal with the overall  integral   terms  $$\sum_{i=1}^{7} \int_{{\mathbb{R}^3_+}}  \langle z \rangle^{2\ell'} g^{n} M_{i}(g^{n}),$$
which, for the sake of convenience,  is represented by $\sum_{i=1}^{7} N_{i}$.

For  $N_{1}$, by  the decay rate of $\eta_{xz}$,  $\eta_{yz}$, $\partial_{z}\eta_{xz}$ and $\partial_z\eta_{yz}$:
\begin{align*}
\left| \eta_{xz}\right| \leq C \langle z \rangle^{-\ell}, \quad \left| \eta_{yz}\right| \leq C \langle z \rangle^{-\ell},
\end{align*}
and
\begin{align*}
 \left| \partial_z\eta_{xz}\right| \leq C \langle z \rangle^{-\ell-1}, \quad \left| \partial_z\eta_{yz}\right| \leq C \langle z \rangle^{-\ell-1},
\end{align*}
%\begin{align*}
%M_{1}&=\left|\int_{{\mathbb{R}^3_+}}  \langle z \rangle^{2\ell'} g^{n} \Big\lbrace (u^s + {u})( g^{n} \eta_{{xz}} + \partial_z^{-1} g^{n}  \partial_z \eta_{{xz}})+ (v^s + {v})( g^{n} \eta_{{yz}} + \partial_z^{-1} g^{n} \partial_z \eta_{{yz}})\Big\rbrace  \right| \\
%&\leq C\left(  \left\|g^{n}\right\|^{2}_{L^{2}_{\ell'}(\mathbb{R}_{+}^{3})}
%+\left\|\varphi\right\|^{2}_{H^{n}_{\ell'}(\mathbb{R}_{+}^{3})}\right).
%\end{align*}
then we infer from \eqref{4.1}
\begin{align*}
	N_{1}&\leq\left|\int_{{\mathbb{R}^3_+}}  \langle z \rangle^{2\ell'} g^{n} (u^s + {u})\big( g^{n}\eta_{xz} + \partial_z^{-1} g^{n}  \partial_z \eta_{xz}
	\big)  \right| \\
	&\quad+\left|\int_{{\mathbb{R}^3_+}}  \langle z \rangle^{2\ell'} g^{n} K(u^s + {u})\big(  g^{n} \eta_{yz} + \partial_z^{-1} g^{n}  \partial_z \eta_{yz} 
	\big)
	\big)  \right|\\
	&\leq C_{K}\left|\int_{{\mathbb{R}^3_+}}  \langle z \rangle^{2\ell'-\ell} (g^{n})^2  +\langle z \rangle^{2\ell'-\ell-1} (\partial_z^{-1} g^{n}) g^{n} \right|\\
	&\leq C_{K}\left(  \left\|g^{n}\right\|^{2}_{L^{2}_{\ell'}(\mathbb{R}_{+}^{3})}
	+\left\|\varphi\right\|^{2}_{H^{n}_{k+\ell}(\mathbb{R}_{+}^{3})}\right).
\end{align*}
For  $N_{2}$, we use the similar method  to get decay rate of $\eta_{zz}$ and $\partial_{z}\eta_{zz}$:
\begin{align*}
	 \eta_{zz} \leq
	C\langle z \rangle^{-1},
\end{align*}
and
\begin{align*}
\partial_{z}\eta_{zz} \leq
C\langle z \rangle^{-2},
\end{align*}
thus,
\begin{align*}
N_{2}&\leq C\bigg|\int_{{\mathbb{R}^3_+}}  \langle z \rangle^{2\ell'} g^{n} \big(  \eta_{zz} \partial_{z}g^{n}
+
 \partial_{z}\eta_{zz}
+
 \eta^{2}_{u_{zz}}
+
  \eta_{zz} \partial_z \eta_{zz}  (\partial_z^{-1} g^{n})
\big)  \bigg| \\
&\leq 
 C\left\|g^{n}\right\|_{L^{2}_{\ell'}(\mathbb{R}_{+}^{3})}
 \left( \left\|\partial_{z}g^{n}\right\|_{L^{2}_{\ell'}(\mathbb{R}_{+}^{3})}+ \left\|g^{n}\right\|_{L^{2}_{\ell'}(\mathbb{R}_{+}^{3})}
+\left\|\varphi\right\|_{H^{n}_{k+\ell}(\mathbb{R}_{+}^{3})}\right) \\
&\leq \frac{1}{4}\left\|\partial_{z}g^{n}\right\|_{L^{2}_{\ell'}(\mathbb{R}_{+}^{3})}^{2}+ C\left(  \left\|g^{n}\right\|^{2}_{L^{2}_{\ell'}(\mathbb{R}_{+}^{3})}
+\left\|\varphi\right\|^{2}_{H^{n}_{k+\ell}(\mathbb{R}_{+}^{3})}\right).
\end{align*}
Using the same algorithm with $N_{2}$, we have 
\begin{align*}
N_{3}+N_{4}&\leq \frac{\epsilon}{4}\left( \left\|\partial_{x}g^{n}\right\|_{L^{2}_{\ell'}(\mathbb{R}_{+}^{3})}^{2}
+
\left\|\partial_{y}g^{n}\right\|_{L^{2}_{\ell'}(\mathbb{R}_{+}^{3})}^{2}\right) + C\left(  \left\|g^{n}\right\|^{2}_{L^{2}_{\ell'}(\mathbb{R}_{+}^{3})}
	+\left\|\varphi\right\|^{2}_{H^{n}_{k+\ell}(\mathbb{R}_{+}^{3})}\right), \\
N_{6}&\leq C_{K}\left(  \left\|g^{n}\right\|^{2}_{L^{2}_{\ell'}(\mathbb{R}_{+}^{3})}
	+\left\|\varphi\right\|^{2}_{H^{n}_{k+\ell}(\mathbb{R}_{+}^{3})}\right).
\end{align*}
For  $N_{5}$, by the integration  by parts in the   $x$-variable and
$y$-variable, we arrive at
\begin{align*}
	N_{5}
	&\leq
	\left| \int_{{\mathbb{R}^3_+}}  \langle z \rangle^{2\ell'} g^{n}\left(-K\partial_{y}\partial_{xy}^{n} u+\partial_{y}\partial_{xy}^{n}\big(K(u^s+u)\big)\right) \right|
	\\
%	&
%	=
%	\left| \int_{{\mathbb{R}^3_+}}  \langle z \rangle^{2\ell'} \partial_{y}g^{n}K\partial_{xy}^{n} u
%	+g^{n}\partial_{y}K\partial_{xy}^{n} u-\partial_{y}g^{n}\partial_{xy}^{n}\big(K(u^s+u)\big) \right|
% \\
	&\leq \left| \int_{{\mathbb{R}^3_+}}  \langle z \rangle^{2\ell'} g^{n}\left( \partial_{y}K(u^s_z+u_z)\varphi\frac{\partial_{xy}^{n} u}{u_{z}^{s}+u_{z}}\right)\right|  
	+\left| \int_{{\mathbb{R}^3_+}}  \langle z \rangle^{2\ell'}  \partial_{y}g^{n} \left\lbrace  K\partial_{xy}^{n} u
	-\partial_{xy}^{n}\big(K(u^s+u)\big)\right\rbrace \right| 
\\
&\leq C\Big(\left\|g^{n}\right\|_{L^{2}_{\ell'}(\mathbb{R}_{+}^{3})}+ \left\|\partial_{y}g^{n}\right\|_{L^{2}_{\ell'}(\mathbb{R}_{+}^{3})}\Big) 
\left\|\langle z \rangle^{-k}\partial_{z}^{-1}g^{n}\right\|_{L^{2}_{\ell'}(\mathbb{R}_{+}^{3})}
\\
&\leq \frac{\epsilon}{4} 
\left\|\partial_{y}g^{n}\right\|_{L^{2}_{\ell'}(\mathbb{R}_{+}^{3})}^{2}+
 C\left(  \left\|g^{n}\right\|^{2}_{L^{2}_{\ell'}(\mathbb{R}_{+}^{3})}
 +\left\|\varphi\right\|^{2}_{H^{n}_{k+\ell}(\mathbb{R}_{+}^{3})}\right).
\end{align*}

Finally, in order to estimate  to $N_{7}$, we need to decompose $M_{7}(g^{n})$ 
  as follows
\begin{align*}
M_{7}^{1}(g^{n})=-\partial_{z}\left( \frac{\sum_{i=1}^{n}C^i_n\partial_{xy}^i {u} \, \partial^{n  -i}_{xy} \partial_{x}{u}}{u_{z}^{s}+u_{z}}
\right),
\end{align*}
\begin{align*}
M_{7}^{2}(g^{n})=-\partial_{z}	\left( 
	\frac{\sum_{i=1}^{n}C^i_n\partial_{xy}^i \big(K(u^s+u)\big)\, \partial^{n  -i}_{xy} \partial_{y}{u}}{u_{z}^{s}+u_{z}} 
\right),
\end{align*}
and
\begin{align*}
M_{7}^{3}(g^{n})=-\partial_{z}\left( \frac{\sum_{i=1}^{n}C^i_n\partial_{xy}^i {\varphi} \,\partial^{n  -i}_{xy} {w}}{u_{z}^{s}+u_{z}} \right).
\end{align*}
A straightforward calculation yields
\begin{align*}
-M_{7}^{1}(g^{n})-M_{7}^{2}(g^{n})=\sum_{i=1}C^i_n \partial_{xy}^{i}\big(u+K(u^s+u)\big)g^{n+1-i}
 +\sum_{i=1}C^i_n \partial_{xy}^{i}\big(\varphi+K(u^s_{z}+\varphi)\big)\partial_{z}^{-1}g^{n+1-i},
	\end{align*}
which implies
\begin{align*}
N_{7}^{1}+N_{7}^{2}&=-\sum_{i=1}^{n}\int_{{\mathbb{R}^3_+}}  \langle z  \rangle^{2\ell'} g^{n}(M_{7}^{1}(g^{n})+M_{7}^{2}(g^{n})) \leq 
C_{K} \sum^{n}_{i=1} \big\| g^{i}\big\|_{L_{\ell'}^{2}}\left( 1+\big\| g^{i}\big\|_{L_{\ell'}^{2}}\right) .
\end{align*}

Notice that $N_{7}^{3}$ can be written as
\begin{align*}
	N_{7}^{3}&=-\sum_{i=1}^{n}\int_{{\mathbb{R}^3_+}}  \langle z  \rangle^{2\ell'} g^{n}M_{7}^{3}(g^{n})\\
	&=-\sum_{i=1}^{n}\int_{{\mathbb{R}^3_+}}  \langle z  \rangle^{2\ell'} g^{n}\partial_{z}\Big\lbrace g^{i}+(\partial_{z}^{-1}g^{i})\eta_{zz} \partial^{n  -i}_{xy} {w} \Big\rbrace \\
	&=-\sum_{i=1}^{n}\int_{{\mathbb{R}^3_+}}  \langle z  \rangle^{2\ell'} g^{n}(\partial_{z}g^{i})\partial^{n  -i}_{xy} {w} -\sum_{i=1}^{n}\int_{{\mathbb{R}^3_+}}  \langle z  \rangle^{2\ell'} g^{n}g^{i}\partial^{n  -i}_{xy}(\partial_{x}u+\partial_{y}(K(u^s+u))) \\
	&\quad-\sum_{i=1}^{n}\int_{{\mathbb{R}^3_+}}  \langle z  \rangle^{2\ell'} g^{n}\left(\partial_{z}^{-1}g^{i}(\partial_{z}\eta_{zz}) \right)\partial^{n  -i}_{xy} {w} 
	\\
	&\quad-\sum_{i=1}^{n}\int_{{\mathbb{R}^3_+}}  \langle z  \rangle^{2\ell'} g^{n}\left(\partial_{z}^{-1}g^{i}(\eta_{zz}) \right)\partial^{n  -i}_{xy} (\partial_{x}u+\partial_{y}(K(u^s+u)))
	\\
	&\quad
	-\sum_{i=1}^{n}\int_{{\mathbb{R}^3_+}}  \langle z  \rangle^{2\ell'} g^{n}\left(g^{i}\eta_{zz}\right)\partial^{n  -i}_{xy} {w}.
\end{align*}
Here we only estimate the first term on the right-hand side of the above equality, the other terms can be obtained by the same argument, 
\begin{align*}
-\sum_{i=1}^{n}\int_{{\mathbb{R}^3_+}}  \langle z  \rangle^{2\ell'} g^{n}(\partial_{z}g^{i})\partial^{n  -i}_{xy} {w}
&=
\sum_{i=1}^{n}\int_{{\mathbb{R}^3_+}}  \langle z  \rangle^{2\ell'} \partial_{z}g^{n}(g^{i}\partial^{n  -i}_{xy} {w})-\sum_{i=1}^{n}\int_{{\mathbb{R}^3_+}}  \langle z  \rangle^{2\ell'} g^{n}g^{i}\partial^{n  -i}_{xy}(\partial_{x}u+\partial_{y}(K(u^s+u)))\\
& \leq \frac{1}{10}\big\| \partial_{z}g^{n}\big\|_{L^2_{\ell'}}^{2}+
 C\big\| g^{i}\partial^{n  -i}_{xy} {w}\big\|_{L^2_{\ell'}}^{2}+
 C\big\| g^{n}\big\|_{L^2_{\ell'}}^{2}+
 C\big\| g^{i}\partial^{n  -i}_{xy} (\partial_{x}u+\partial_{y}v)\big\|_{L^2_{\ell'}}^{2}.
\end{align*}
Indeed, the terms 
$$\big\| g^{i}\partial^{n  -i}_{xy} {w}\big\|_{L^2_{\ell'}}^{2}$$
and
$$\big\| g^{i}\partial^{n  -i}_{xy} (\partial_{x}u+\partial_{y}(K(u^s+u)))\big\|_{L^2_{\ell'}}^{2}$$
 can be controlled by  the standard Sobolev-type estimates. More precisely, we have the following:\\
\textbf{Claim 4.1}:
\begin{align*}
\big\| g^{i}\partial^{n  -i}_{xy} {w}\big\|_{L^2_{\ell'}}^{2}
+
\big\| g^{i}\partial^{n  -i}_{xy} (\partial_{x}u+\partial_{y}(K(u^s+u)))\big\|_{L^2_{\ell'}}^{2}
 \leq C_{K} \left(\sum^{n}_{i=1} \big\| g^{i}\big\|_{L_{\ell'}^{2}}+
	\|\tvae\|^{2}_{H^{m}_{k+\ell}}\right).
\end{align*}
Assuming that Claim holds, which will be later verified,  then we get  immediately
\begin{align*}
N_{7}^{3}\leq \frac{1}{2}\big\| \partial_{z}g^{n}\big\|_{L^2_{\ell'}}^{2}+C_{K} \left(\sum^{n}_{i=1} \big\| g^{i}\big\|_{L_{\ell'}^{2}}+
\|\tvae\|^{2}_{H^{m}_{k+\ell}}\right).
\end{align*}
Combining $N_{1}-N_{7}$ with \eqref{4.10}, we see that
\begin{align}
	\begin{split}
		&\frac{d}{dt}\left\|g^{n}\right\|^{2}_{L^{2}_{\ell'}(\mathbb{R}_{+}^{3})}
		+\left\|\partial_{z}g^{n}\right\|^{2}_{L^{2}_{\ell'}(\mathbb{R}_{+}^{3})}
		+\epsilon\left\|\partial_{x}g^{n}\right\|^{2}_{L^{2}_{\ell'}(\mathbb{R}_{+}^{3})}
		+\epsilon\left\|\partial_{y}g^{n}\right\|^{2}_{L^{2}_{\ell'}(\mathbb{R}_{+}^{3})} \\
		&\leq C\left(\sum^{m}_{n=1}\left\| g^{n}\right\|_{L^{2}_{\ell'}(\mathbb{R}_{+}^{3})}^2
		+
		\left\| \varphi \right\|_{H^{m}_{k+\ell'}(\mathbb{R}_{+}^{3})}^2\right).
	\end{split}
\end{align}
The
proof of Lemma \ref{gn} is thus completed. 

\textbf{Proof of Claim 4.1}:We must use different techniques depending on the value range of $i$.\\
The term $\big\| g^{i}\partial^{n  -i}_{xy} {w}\big\|_{L^2_{\ell'}}^{2}$ with $i\leq n-3$. In light of $k+\ell-1 >\frac{1}{2}$ and $\|\tvae\|^{2}_{H^{m}_{k+\ell}}\leq 1$, we have
\begin{align*}
\big\| g^{i}\partial^{n  -i}_{xy} {w}\big\|_{L^2_{\ell'}}^{2}
&\leq \big\| g^{i}\big\|_{L_{xy}^{\infty}(L_{\ell'}^{2}(\mathbb{R}_{+}))}
\big\| \partial^{n  -i}_{xy} w\big\|_{L_{z}^{\infty}(L_{xy}^{2})}\\
& \leq \left( \big\| g^{i}\big\|_{L_{\ell'}^{2}}+\big\|\partial_{xy}^{1} g^{i}\big\|_{L_{\ell'}^{2}}+\big\| \partial_{x}\partial_{y}g^{i}\big\|_{L_{\ell'}^{2}}\right) 
\sup_{z \in \mathbb{R}_+} \int_{-\infty}^{+\infty}\int_{-\infty}^{+\infty} \left|\int_{0}^{z}\partial_{xy}^{n-i} \big(\partial_{x}u+\partial_{y}(K(u^s+u))\big) d\tilde{z}\right|^2 dz\\
& \leq \left( \big\| g^{i}\big\|_{L_{\ell'}^{2}}+\big\|\partial_{xy}^{1} g^{i}\big\|_{L_{\ell'}^{2}}+\big\| \partial_{x}\partial_{y}g^{i}\big\|_{L_{\ell'}^{2}}\right) 
\sup_{z \in \mathbb{R}_+} \int_{-\infty}^{+\infty}\int_{-\infty}^{+\infty} \left|\int_{0}^{+\infty}\big|\partial_{xy}^{n-i} \big(\partial_{x}u+\partial_{y}(K(u^s+u))\big)\big| d\tilde{z}\right|^2 dz
\\
&\leq C \left(\sum^{n-2}_{i=1} \big\| g^{i}\big\|_{L_{\ell'}^{2}}+\big\| g^{n-1}\big\|_{L_{\ell'}^{2}}+\big\| g^{n}\big\|_{L_{\ell'}^{2}}+
\|\tvae\|^{2}_{H^{m-1}_{k+\ell}}\right) \\
&\quad \times
\sup_{z \in \mathbb{R}_+} \int_{-\infty}^{+\infty}\int_{-\infty}^{+\infty} \left|\int_{0}^{+\infty}\big|\partial_{xy}^{n-i} \big(\partial_{x}u+\partial_{y}(K(u^s+u))\big)\big| d\tilde{z}\right|^2 dz\\
&\leq C \left(\sum^{n-2}_{i=1} \big\| g^{i}\big\|_{L_{\ell'}^{2}}+\big\| g^{n-1}\big\|_{L_{\ell'}^{2}}+\big\| g^{n}\big\|_{L_{\ell'}^{2}}+
\|\tvae\|^{2}_{H^{m-1}_{k+\ell}}\right) \\
&\quad \times
\sup_{z \in \mathbb{R}_+} \int_{-\infty}^{+\infty}\int_{-\infty}^{+\infty} \left|\int_{0}^{+\infty}\langle z  \rangle^{-k-\ell'+1}\langle z  \rangle^{k+\ell'-1}\big|\partial_{xy}^{n-i} \big(\partial_{x}u+\partial_{y}(K(u^s+u))\big)\big| d\tilde{z}\right|^2 dz
\\
&\leq C_{K} \left(\sum^{n-1}_{i=1} \big\| g^{i}\big\|_{L_{\ell'}^{2}}^2+
\|\tilde{\varphi}\|^{2}_{H^{m}_{k+\ell}}\right),
\end{align*}
where we have used the fact :
\begin{align*}
&\partial_x g^{i}
=
 \partial_{z}\bigg(\frac{\partial_{x}\partial_{xy}^{i} u}{u_{z}^{s}+u_{z}}\bigg) 
-g^{i} \eta_{xz} - \partial_z^{-1} g^{i}  \partial_z \eta_{xz}
, \\
&\partial_y g^{i} 
=
\partial_{z}\bigg(\frac{\partial_{y}\partial_{xy}^{i} u}{u_{z}^{s}+u_{z}}\bigg) 
-g^{i} \eta_{yz} - \partial_z^{-1} g^{i}  \partial_z \eta_{yz}
,  \\
&(\partial_{x}+\partial_{y})g^{i}=g^{i+1}+
\left( 
-g^{i} (\eta_{xz}+g^{i} \eta_{yz}) - \partial_z^{-1} g^{i}  (\partial_z \eta_{xz}+ \partial_z \eta_{yz})
\right),\\
&\frac{1}{2}\partial_{x}\partial_{y}g^{i}=g^{i+2}+g^{i+1}+
\left( 
-g^{i+e_{1}} \eta_{xz} - \partial_z^{-1} g^{i+e_{1}}  \partial_z \eta_{xz}
\right) \\
&\qquad\qquad~~+\left( 
-g^{i+e_{1}} \eta_{yz} - \partial_z^{-1} g^{i+e_{1}}  \partial_z \eta_{yz}
\right)\\
&\qquad\qquad~~+\partial_{x}\left( 
-g^{i} \eta_{xz} - \partial_z^{-1} g^{i}  \partial_z \eta_{xz}
\right)\\
&\qquad\qquad~~+\partial_{y}\left( 
-g^{i} \eta_{yz} - \partial_z^{-1} g^{i}  \partial_z \eta_{yz}
\right).
\end{align*}
For the term $\big\| g^{i}\partial^{n  -i}_{xy} {w}\big\|_{L^2_{\ell'}}^{2}$ with $i\geq n-3$,  by using Hardy inequality \eqref{hardy2},  \eqref{infty2},  and $\|\tvae\|^{2}_{H^{m}_{k+\ell}}\leq 1, m\geq 6$, we obtain
\begin{align*}
\big\| g^{i}\partial^{n  -i}_{xy} {w}\big\|_{L^2_{\ell'}}^{2}
&\leq
\big\| g^{i}\big\|_{L^2_{\ell'}(\mathbb{R}_{+}^{3})}^{2}
\big\| \partial^{n  -i}_{xy} {w}\big\|_{{L^\infty}(\mathbb{R}_{+}^{3})}^{2}\\
&\leq
C\big\| g^{i}\big\|_{L^2_{\ell'}(\mathbb{R}_{+}^{3})}^{2}
\big\| u+K(u^s+u)\big\|_{{H^{n+3-i}_{\frac{1}{2}+\delta}(\mathbb{R}_{+}^{3})}}^{2}\\
&\leq
C_{K}\big\| g^{i}\big\|_{L^2_{\ell'}(\mathbb{R}_{+}^{3})}^{2}
\left( 1+\big\| \varphi\big\|_{{H^{n+3-i}_{\frac{3}{2}+\delta}(\mathbb{R}_{+}^{3})}}\right) ^{2}\\
&\leq C_{K}\left( \big\| g^{n-1}\big\|_{L^2_{\ell'}(\mathbb{R}_{+}^{3})}^{2}
+\big\| g^{n}\big\|_{L^2_{\ell'}(\mathbb{R}_{+}^{3})}^{2}\right).
\end{align*}
The term $\big\| g^{i}\partial^{n  -i}_{xy} \big(\partial_{x}u+\partial_{y}(K(u^s+u))\big)\big\|_{L^2_{\ell'}}^{2}$, with $i\leq n-2$ and $i\geq n-2$,
 is simpler because there is no longer any the vertical component of the velocity field $w$. Hence, 
\begin{align*}
	\big\| g^{i}\partial^{n  -i}_{xy} \big(\partial_{x}u+\partial_{y}(K(u^s+u))\big)\big\|_{L^2_{\ell'}}^{2}
	\leq C \left(\sum^{n}_{i=1} \big\| g^{i}\big\|_{L_{\ell'}^{2}}^2+
	\|\tvae\|^{2}_{H^{m}_{k+\ell}}\right).
\end{align*}
%The case-by-case analysis is simpler because there is no longer any the vertical component of the velocity field $w$
\end{proof}

\section{Existence of the solution}\label{s5}
The aim of this subsection is to construct the following  energy estimate for the sequence of approximate solutions and  later prove the   existence part of main Theorem \ref{e-u-s}. 
\begin{theorem}\label{ex1}
Assume the condition (H) holds.	Let   $m \geq  6$ be an even integer, $k > 1$, $0< \ell< \frac{1}{2}$, $\frac{1}{2} < \ell' < \ell + \frac{1}{2}$, and $k + \ell > \frac {3}{2}$.
	Assume that the initial date $u^s_0$ satisfy Proposition \ref{shear-profile}. In addition, we also suppose 
	$\tilde{u}_{0} \in H^{m+3}_{k+\ell'-1}(\mathbb{R}^3_+)$ satisfies the compatibility conditions \eqref{uv t=0}-\eqref{uv4 t=0}.
And $K(x, y)$ is supposed to satisfy that
$$\big\|K\big\|_{W^{m+1, \infty}(\mathbb{R}^2)} < \infty.$$
	If $\tvae \in L^\infty([0, T]; H^{m+2}_{k+\ell}(\mathbb{R}^3_+))$  solves \eqref{vv-regular-shear-prandtl} and  satisfies the following a priori condition
	\begin{align}
		\|\varphi\|_{ L^{\infty} ([0, T]; H^{m}_{k+\ell}(\mathbb{R}^3_+))}\leq  \zeta,
	\end{align}
with
$$0<\zeta \leq 1, \quad C_m \zeta \leq \frac{\tilde {c_{1}}}{2},$$
then there exists  constants $C_{T}, \tilde{C}_{T}$ such that 
\begin{align}
	\|\tvae\|_{L^{\infty}(H^{m}_{k+\ell}(\mathbb{R}_{+}^{3}))}
	\leq C_{T}\left\| \tu_{0}\right\| _{H^{m+1}_{k+\ell'-1}(\mathbb{R}^{3}_{+})} ,
	\label{5.0}
\end{align}
where $C_{T}>0$ is increasing with respect to $0 < T \leq T_{1}$ and independent of $0 < \epsilon \leq 1$. 
\end{theorem}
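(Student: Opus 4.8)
The plan is to carry out the $\epsilon$-uniform ``closeness of the a priori estimate'' by combining the two already-established $\epsilon$-independent differential inequalities — Lemma \ref{E1} for the derivatives $\partial^{\alpha}\tvae$ with $\alpha_1+\alpha_2\le m-1$, and Lemma \ref{gn} for the good unknowns $g^{1},\dots,g^{m}$ — into a single closed inequality for the functional
\[
\mathcal E(t):=\big\|\tvae(t)\big\|^{2}_{H^{m,m-1}_{k+\ell}(\mathbb{R}^3_+)}+\sum_{n=1}^{m}\big\|g^{n}(t)\big\|^{2}_{L^{2}_{\ell'}(\mathbb{R}^3_+)}.
\]
Lemma \ref{E2} is deliberately \emph{not} used: its right-hand side carries a factor $1/\epsilon$, and the whole purpose of the unknowns $g^{n}$ is to replace the top-order tangential estimate by the $\epsilon$-uniform Lemma \ref{gn}.

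The first step is to show that, under the standing a priori bound \eqref{4.1}, $\mathcal E(t)$ is comparable to $\|\tvae(t)\|^{2}_{H^{m}_{k+\ell}}$. Only the inequality $\|\tvae\|^{2}_{H^{m}_{k+\ell}}\le C\mathcal E$ needs an argument, and it amounts to recovering $\|\partial^{m}_{xy}\tvae\|_{L^{2}_{k+\ell}}$. From $g^{m}=\big(\partial^{m}_{xy}u/(u^{s}_{z}+u_{z})\big)_{z}$ one has the pointwise identities
\[
\partial^{m}_{xy}\tvae=(u^{s}_{z}+u_{z})\,g^{m}+\eta_{zz}\,\partial^{m}_{xy}u,\qquad \partial^{m}_{xy}u=-(u^{s}_{z}+u_{z})\int_{z}^{+\infty}g^{m}(t,x,y,\tilde z)\,d\tilde z,
\]
the second following by integrating $(\partial^{m}_{xy}u/(u^{s}_{z}+u_{z}))_{z}=g^{m}$ from $z$ to $+\infty$. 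Using the two-sided bound \eqref{4.4} for $u^{s}_{z}+u_{z}$ (from Lemma \ref{y4.1}), the decay $|\eta_{zz}|\le C\langle z\rangle^{-1}$, and a weighted Hardy inequality in the weight $\ell'-1$ — admissible precisely because $\ell'>\tfrac12$ — one obtains $\|\partial^{m}_{xy}u\|_{L^{2}_{k+\ell'-1}}\le C\|g^{m}\|_{L^{2}_{\ell'}}$ and hence
\[
\|\partial^{m}_{xy}\tvae\|_{L^{2}_{k+\ell}}\le\|(u^{s}_{z}+u_{z})g^{m}\|_{L^{2}_{k+\ell}}+\|\eta_{zz}\partial^{m}_{xy}u\|_{L^{2}_{k+\ell}}\le C\|g^{m}\|_{L^{2}_{\ell'}};
\]
there is no circularity, since the factor $\eta_{zz}$ supplies a genuine gain of one power of $\langle z\rangle$. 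Thus $\|\tvae\|^{2}_{H^{m}_{k+\ell}}\le C\mathcal E$, and the same reasoning controls $\|\tvae\|_{H^{m}_{k+\ell'}}$ by $\mathcal E$.

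Next I would add the inequalities of Lemmas \ref{E1} and \ref{gn}, discard the non-negative dissipation terms on their left-hand sides (including all $\epsilon$-weighted ones, which carry a favourable sign), and use the equivalence just proved to replace every occurrence of $\|\tvae\|_{H^{m}_{k+\ell}}$ and $\|\tvae\|_{H^{m}_{k+\ell'}}$ by $\mathcal E$. This gives a scalar nonlinear inequality
\[
\frac{d}{dt}\mathcal E(t)\le C\big(\mathcal E(t)+\mathcal E(t)^{m/2}\big),\qquad t\in[0,T],
\]
with $C>0$ \emph{independent of} $\epsilon$. Integrating it exactly as in the proof of Theorem \ref{epsilon-existence}, but now with an $\epsilon$-independent constant, yields: for each $0<T\le T_{1}$ there is $C_{T}>0$, increasing in $T$ and independent of $\epsilon$, with $\mathcal E(t)\le C_{T}\,\mathcal E(0)$ on $[0,T]$ as soon as $\mathcal E(0)$ is small enough — guaranteed by the smallness of $\|\tu_{0}\|$. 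Finally $\mathcal E(0)$ is bounded by the data alone: since $\tvae_{0}=\partial_{z}\tu_{0}$ up to the small corrector $\epsilon\partial_{z}\mu^{\epsilon}$ (Corollary \ref{corrector reduction}), the extra $z$-derivative shifts the weight so that $\|\tvae_{0}\|_{H^{m,m-1}_{k+\ell}}\le\|\tu_{0}\|_{H^{m+1}_{k+\ell'-1}}$, while the explicit form of $g^{n}(0)$, the lower bound on $u^{s}_{0,z}+\tu_{0,z}$ in \eqref{uv0}, and the same Hardy inequality give $\sum_{n}\|g^{n}(0)\|_{L^{2}_{\ell'}}\le C\|\tu_{0}\|_{H^{m+1}_{k+\ell'-1}}$. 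Combining with $\|\tvae\|^{2}_{H^{m}_{k+\ell}}\le C\mathcal E\le CC_{T}\mathcal E(0)$ produces \eqref{5.0}.

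I expect the main obstacle to be the first step, the $\epsilon$-uniform equivalence $\mathcal E\simeq\|\tvae\|^{2}_{H^{m}_{k+\ell}}$: one must recover the highest-order tangential derivative of $\tvae$ from $g^{m}$ through the identities above without the estimate collapsing back onto $\|\partial^{m}_{xy}\tvae\|_{L^{2}_{k+\ell}}$, and this hinges on using simultaneously the weight gain $\eta_{zz}\lesssim\langle z\rangle^{-1}$, the two-sided bound \eqref{4.4}, and a weighted Hardy inequality whose admissibility forces the index $\ell'$ with $\ell'>\tfrac12$. The secondary point — keeping all constants independent of $\epsilon$ when absorbing the $\epsilon$-terms of \eqref{formal transformations} (those in $M_{3},M_{4}$ and the $\partial_{x}\partial_{z}^{-1}g^{n}\partial_{z}\eta_{xz}$, $\partial_{y}\partial_{z}^{-1}g^{n}\partial_{z}\eta_{yz}$ contributions) — has already been handled inside Lemmas \ref{E1} and \ref{gn}.
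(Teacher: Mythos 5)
Your overall strategy is exactly the paper's: you combine the $\epsilon$-uniform Lemma \ref{E1} with Lemma \ref{gn}, deliberately bypass Lemma \ref{E2}, work with the functional $\mathcal E=\|\tvae\|^2_{H^{m,m-1}_{k+\ell}}+\sum_n\|g^n\|^2_{L^2_{\ell'}}$ (the paper's $S^{m,\epsilon}$), bound $\mathcal E(0)$ by $\|\tu_0\|_{H^{m+1}_{k+\ell'-1}}$ (the paper's Lemma \ref{S-leq-uv}), recover the missing top tangential derivative from $g^m$ (the paper's Lemma \ref{varphi-leq-g}), and close by Gronwall. However, your execution of the recovery step has a genuine gap. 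You write $\partial^m_{xy}u=-(u^s_z+u_z)\int_z^{+\infty}g^m\,d\tilde z$, which presupposes $\partial^m_{xy}u/(u^s_z+u_z)\to 0$ as $z\to+\infty$; since $u^s_z+u_z\sim\langle z\rangle^{-k}$ by \eqref{4.4} while the a priori bound only gives $|\partial^m_{xy}u|\lesssim\langle z\rangle^{\frac12-k-\ell}$ (pointwise in $z$, $L^2$ in $x,y$), the quotient behaves like $\langle z\rangle^{\frac12-\ell}$ and need not vanish, so neither the representation nor the hypothesis of Hardy \eqref{hardy1} is available. Worse, the intermediate inequality you rely on, $\|\partial^m_{xy}u\|_{L^2_{k+\ell'-1}}\le C\|g^m\|_{L^2_{\ell'}}$, is false as a functional inequality: take $g^m\ge0$ compactly supported in $z$ with $\int_0^\infty g^m\,dz=c(x,y)>0$; then $\partial^m_{xy}u=(u^s_z+u_z)\int_0^z g^m\sim c\,\langle z\rangle^{-k}$ at infinity, and $\langle z\rangle^{k+\ell'-1}\partial^m_{xy}u\sim c\,\langle z\rangle^{\ell'-1}\notin L^2_z$ because $2(\ell'-1)>-1$ when $\ell'>\tfrac12$. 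In short, for the exponent $\ell'-1>-\tfrac12$ only Hardy \eqref{hardy1} could apply and it needs decay at infinity you do not have, while Hardy \eqref{hardy2} needs an exponent below $-\tfrac12$.

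The repair is exactly what the paper does in Lemma \ref{varphi-leq-g}: use the boundary condition $\partial^m_{xy}\tue|_{z=0}=0$ to write $\partial^m_{xy}\tue=(u^s_z+\tvae)\int_0^z g^m\,d\tilde z$, differentiate in $z$, and estimate the resulting two terms in the weight $k+\ell$ (which is all you need, since $\mathcal E$ carries the $k+\ell$ weight): the term $(u^s_z+\tvae)g^m$ is immediate, and for $(u^s_{zz}+\tvae_z)\int_0^z g^m$ the effective exponent on $\int_0^z g^m$ is $\ell-1<-\tfrac12$, so Hardy \eqref{hardy2} applies with vanishing at $z=0$ and gives $C\|g^m\|_{L^2_{\ell}}\le C\|g^m\|_{L^2_{\ell'}}$. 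Note also that your parenthetical claim that ``the same reasoning controls $\|\tvae\|_{H^m_{k+\ell'}}$ by $\mathcal E$'' cannot work for the derivatives with $\alpha_1+\alpha_2\le m-1$, since $\ell'>\ell$ and the $H^{m,m-1}_{k+\ell}$ part of $\mathcal E$ carries the weaker weight; this is harmless only because the norm actually produced in the proof of Lemma \ref{gn} (and used in the paper's inequality \eqref{5.2}) is $\|\tvae\|_{H^m_{k+\ell}}$, which is what you should quote. With these corrections the remaining steps (summation of \eqref{5.1}--\eqref{5.2}, the initial-data bound, and Gronwall) go through as you describe and coincide with the paper's proof; also note that under the standing assumption $\zeta\le1$ the differential inequality is already linear in $\mathcal E$, so no smallness of $\mathcal E(0)$ should be invoked — the theorem does not assume it.
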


Let's go back to the notations with tilde and the sub-index $\epsilon$ and $S^{m, \epsilon}$ is the function defined by $\tue$.
According to Lemmas \ref{E2} and \ref{gn}, we know
\begin{align}
\begin{split}
&\frac{d}{dt}\|\tvae\|^{2}_{H^{m, m-1}_{k+\ell}(\mathbb{R}_{+}^{3})}+\|\partial_{z}\tvae\|^{2}_{H^{m, m-1}_{k+\ell}(\mathbb{R}_{+}^{3})}+\epsilon\left( \|\partial_{x}\tvae\|^{2}_{H^{m, m-1}_{k+\ell}(\mathbb{R}_{+}^{3})}+\|\partial_{y}\tvae\|^{2}_{H^{m, m-1}_{k+\ell}(\mathbb{R}_{+}^{3})}\right) \\
& \leq C_{1}  \|\tvae\|^{2}_{H^{m}_{k+\ell}(\mathbb{R}_{+}^{3})},
\end{split}
\label{5.1}
\\
\begin{split}
&\frac{d}{dt}\sum^{m}_{n=1}\left\|g^{n,\epsilon}\right\|^{2}_{L^{2}_{\ell'}(\mathbb{R}_{+}^{3})}
+\sum^{m}_{n=1}\left\|\partial_{z}g^{n, \epsilon}\right\|^{2}_{L^{2}_{\ell'}(\mathbb{R}_{+}^{3})}
+\epsilon\sum^{m}_{n=1}\left( \left\|\partial_{x}g^{n, \epsilon}\right\|^{2}_{L^{2}_{\ell'}(\mathbb{R}_{+}^{3})}
+\left\|\partial_{y}g^{n, \epsilon}\right\|^{2}_{L^{2}_{\ell'}(\mathbb{R}_{+}^{3})} \right) \\
&\leq C_{2}\left( \sum^{m}_{n=1}\left\|g^{n,\epsilon}\right\|^{2}_{L^{2}_{\ell'}(\mathbb{R}_{+}^{3})}+\|\tvae\|^{2}_{H^{m}_{k+\ell}(\mathbb{R}_{+}^{3})}\right) .
\end{split}
\label{5.2}
\end{align}

\begin{lemma}\label{S-leq-uv}
For the initial data, we have 
\begin{align*}
S^{m, \epsilon}(g, \varphi)(0)=\sum^{m}_{n=1}\left\|g^{n,\epsilon}(0)\right\|^{2}_{L^{2}_{\ell'}(\mathbb{R}_{+}^{3})}+\|\tvae(0)\|^{2}_{H^{m, m-1}_{k+\ell}(\mathbb{R}_{+}^{3})} \leq C\left\| \tu_{0}\right\| _{H^{m+1}_{k+\ell'-1}(\mathbb{R}^{3}_{+})},
\end{align*} 
where a constant $C>0$ is independent of $\epsilon$.
\begin{proof}
Recalling the definition of $g^{n, \epsilon}$, we know for any $1 \leq n \leq m$,
\begin{align*}
g^{n,\epsilon}&=\bigg(\frac{\partial_{xy}^{n} \tue}{u_{z}^{s}+\tvae}\bigg)_{z} \\
&=\frac{\partial_{xy}^{n} \partial_{z}\tue}{u^{s}_{z}+\tvae}
-
\bigg(\frac{\partial_{xy}^{n} \tue}{u^{s}_{z}+\tvae} \eta_{zz}\bigg).
\end{align*}
Using $\tilde{u}^{\epsilon}(0)=\tilde{u}_{0}$, we arrive at
\begin{align*}
\left\| g^{n,\epsilon}(0)\right\| _{L^2_{\ell'}(\mathbb{R}^{3}_{+})}
&\leq
2\left\| \frac{\partial_{xy}^{n} \partial_{z}\tu_{0}}{u^{s}_{0,z}+\tvae_{0}}\right\| _{L^2_{\ell'}(\mathbb{R}^{3}_{+})}
+2\left\| \frac{\partial_{xy}^{n} \tu_{0}}{u^{s}_{0,z}+\tvae_{0}} \eta_{zz}(0)\right\| _{L^2_{\ell'}(\mathbb{R}^{3}_{+})}\\
&\leq C\left\| {\partial_{xy}^{n} \partial_{z}\tu_{0}}\right\| _{L^2_{k+\ell'}(\mathbb{R}^{3}_{+})}
+C\left\| \partial_{xy}^{n} \tu_{0}\right\| _{L^2_{k+\ell'-1}(\mathbb{R}^{3}_{+})}\\
&\leq C\left\| \tu_{0}\right\| _{H^{m+1}_{k+\ell'-1}(\mathbb{R}^{3}_{+})},
\end{align*}
which yields 
\begin{align*}
	S^{m, \epsilon}(g, \varphi)(0) \leq C\left\| \tu_{0}\right\| _{H^{m+1}_{k+\ell'-1}(\mathbb{R}^{3}_{+})}.
\end{align*}
\end{proof}
\end{lemma}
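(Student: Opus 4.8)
The plan is to estimate the two contributions to $S^{m,\epsilon}(g,\varphi)(0)$ separately, reducing each to $\|\tu_{0}\|_{H^{m+1}_{k+\ell'-1}(\mathbb{R}^3_+)}$, and to keep every constant independent of $\epsilon$ by invoking Corollary~\ref{corrector reduction} whenever the regularized datum $\tilde{u}^{\epsilon}_{0}=\tu_{0}+\epsilon\mu^{\epsilon}$ must be traded back for $\tu_{0}$.

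First I would handle $\|\tvae(0)\|_{H^{m,m-1}_{k+\ell}}$. Since $\tvae(0)=\partial_{z}\tilde{u}^{\epsilon}_{0}$ and $\langle z\rangle\ge1$ with $\ell<\ell'$, one has $\|\tvae(0)\|_{H^{m,m-1}_{k+\ell}}\le\|\partial_{z}\tilde{u}^{\epsilon}_{0}\|_{H^{m}_{k+\ell}}\le\|\partial_{z}\tilde{u}^{\epsilon}_{0}\|_{H^{m}_{k+\ell'}}$; by Corollary~\ref{corrector reduction} this is $\le\tfrac32\|\partial_{z}\tu_{0}\|_{H^{m}_{k+\ell'}}$ for $\epsilon\le\epsilon_{0}$; and since every term $\partial^{\alpha}\partial_{z}\tu_{0}$ occurring in $\|\partial_{z}\tu_{0}\|_{H^{m}_{k+\ell'}}$ (with $|\alpha|\le m$) equals $\partial^{\alpha+e_{3}}\tu_{0}$ with $|\alpha+e_{3}|\le m+1$, unchanged tangential order, and weight $\langle z\rangle^{2(k+\ell')+2\alpha_{3}}=\langle z\rangle^{2(k+\ell'-1)+2(\alpha_{3}+1)}$, we obtain $\|\partial_{z}\tu_{0}\|_{H^{m}_{k+\ell'}}\le\|\tu_{0}\|_{H^{m+1}_{k+\ell'-1}}$.

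Next, for $g^{n,\epsilon}(0)$ with $1\le n\le m$, I would differentiate the defining quotient, using $\partial^{n}_{xy}(u^{s}+\tilde{u}^{\epsilon})=\partial^{n}_{xy}\tilde{u}^{\epsilon}$, to write
\[
g^{n,\epsilon}(0)=\frac{\partial^{n}_{xy}\partial_{z}\tilde{u}^{\epsilon}_{0}}{u^{s}_{0,z}+\partial_{z}\tilde{u}^{\epsilon}_{0}}-\frac{\partial^{n}_{xy}\tilde{u}^{\epsilon}_{0}}{u^{s}_{0,z}+\partial_{z}\tilde{u}^{\epsilon}_{0}}\,\eta_{zz}(0).
\]
The standing a priori smallness of $\tvae=\partial_{z}\tilde{u}^{\epsilon}$ gives, via \eqref{4.4} of Lemma~\ref{y4.1} at $t=0$, the pointwise bound $|u^{s}_{0,z}+\partial_{z}\tilde{u}^{\epsilon}_{0}|^{-1}\le C\langle z\rangle^{k}$, while Proposition~\ref{shear-profile} together with the decay of $\partial_{z}\tvae$ give $|\eta_{zz}(0)|\le C\langle z\rangle^{-1}$ (the bound already used in Lemma~\ref{y4.2}). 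Multiplying by $\langle z\rangle^{\ell'}$ and taking $L^{2}(\mathbb{R}^{3}_{+})$ norms,
\[
\|g^{n,\epsilon}(0)\|_{L^{2}_{\ell'}}\le C\|\partial^{n}_{xy}\partial_{z}\tilde{u}^{\epsilon}_{0}\|_{L^{2}_{k+\ell'}}+C\|\partial^{n}_{xy}\tilde{u}^{\epsilon}_{0}\|_{L^{2}_{k+\ell'-1}}.
\]
The first term is $\le C\|\partial_{z}\tilde{u}^{\epsilon}_{0}\|_{H^{m}_{k+\ell'}}$, hence $\le C\|\tu_{0}\|_{H^{m+1}_{k+\ell'-1}}$ by the previous paragraph. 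For the second, a Hardy inequality in $z$ (Lemma~\ref{hardy}, admissible since $k+\ell'-1>\tfrac12$ and $\tilde{u}^{\epsilon}_{0}|_{z=0}=0$) gives $\|\partial^{n}_{xy}\tilde{u}^{\epsilon}_{0}\|_{L^{2}_{k+\ell'-1}}\le C\|\partial^{n}_{xy}\partial_{z}\tilde{u}^{\epsilon}_{0}\|_{L^{2}_{k+\ell'}}$, again $\le C\|\tu_{0}\|_{H^{m+1}_{k+\ell'-1}}$. Summing over $n$ and combining with the first step yields $S^{m,\epsilon}(g,\varphi)(0)\le C\|\tu_{0}\|_{H^{m+1}_{k+\ell'-1}}$ with $C$ independent of $\epsilon$.

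The only genuinely delicate point — and the reason this estimate merits its own lemma — is the uniformity in $\epsilon$: one must check that replacing $\tilde{u}^{\epsilon}_{0}=\tu_{0}+\epsilon\mu^{\epsilon}$ by $\tu_{0}$ introduces no $\epsilon$-dependent constant, which is exactly what Corollary~\ref{corrector reduction} furnishes. Modulo that, the proof is a routine sequence of weighted-Sobolev manipulations (weight monotonicity $\langle z\rangle^{k+\ell}\le\langle z\rangle^{k+\ell'}$, trading a $\partial_{z}$ for a factor of $\langle z\rangle$, and one Hardy inequality), with no derivative loss: $g^{n,\epsilon}(0)$ costs at most one extra vertical derivative on $\partial^{m}_{xy}\tilde{u}^{\epsilon}_{0}$, consistent with the assumed $H^{m+1}_{k+\ell'-1}$ regularity of $\tu_{0}$.
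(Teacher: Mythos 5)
Your proposal is correct and follows essentially the same route as the paper: the same splitting of $g^{n,\epsilon}(0)$ into the $\partial_{xy}^{n}\partial_{z}$-term and the $\eta_{zz}$-term, the same pointwise weights $\big|u^{s}_{0,z}+\partial_{z}\tilde{u}^{\epsilon}_{0}\big|^{-1}\leq C\langle z\rangle^{k}$ and $|\eta_{zz}(0)|\leq C\langle z\rangle^{-1}$, and the same index shift identifying the resulting weighted norms inside $\left\|\tu_{0}\right\|_{H^{m+1}_{k+\ell'-1}(\mathbb{R}^{3}_{+})}$. Your extra care with the corrector (via Corollary \ref{corrector reduction}) and the Hardy step for the zeroth-order-in-$z$ term are harmless refinements of the paper's argument, which simply takes $\tilde{u}^{\epsilon}(0)=\tilde{u}_{0}$ and reads the second term directly off the $H^{m+1}_{k+\ell'-1}$ norm.
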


\begin{lemma}\label{varphi-leq-g}
For any $1 \leq n \leq m$, the following estimate holds
\begin{align*}
\left\| \partial^n_{xy}\tvae\right\| _{L^2_{k+\ell'}(\mathbb{R}^{3}_{+})}^{2}
\leq C\left\| g^{m,\epsilon}\right\| _{L^2_{\ell'}(\mathbb{R}^{3}_{+})}^{2},
\end{align*}
where a constant $C>0$ is independent of $\epsilon$.
\begin{proof}
By the definition of $g^{m}$, we see that
\begin{align*}
\partial_{xy}^{m}\tue(t, x, y, z)= (u_{z}^{s}+\tvae)\int_{0}^{z}g^{m, \epsilon} d\tilde{z}  .
\end{align*}
A direct computation gives
\begin{align*}
\partial_{xy}^{m}\tvae= (u_{zz}^{s}+\tvae_{z})\int_{0}^{z}g^{m, \epsilon} d\tilde{z} 
-
 (u_{z}^{s}+\tvae)g^{m, \epsilon}   ,
\end{align*}
which implies
\begin{align*}
	\left\| \partial^n_{xy}\varphi\right\| _{L^2_{k+\ell'}(\mathbb{R}^{3}_{+})}
	\leq C\left\| g^{m,\epsilon}\right\| _{L^2_{\ell'}(\mathbb{R}^{3}_{+})}.
\end{align*}
\end{proof}
\end{lemma}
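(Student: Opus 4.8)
The plan is to invert, in the $z$-variable, the first-order relation that defines $g^{m,\epsilon}$, thereby expressing $\partial_{xy}^m\tvae$ explicitly in terms of $g^{m,\epsilon}$, and then to estimate the resulting identity in the weighted $L^2$ norm using Lemma \ref{y4.1}, Proposition \ref{shear-profile}, and a Hardy inequality. Concretely, since $g^{m,\epsilon}=\big(\tfrac{\partial_{xy}^m\tue}{u^s_z+\tvae}\big)_z$, the boundary condition $\tue|_{z=0}=0$ together with the lower bound $u^s_z+\tvae\ge\tfrac{\tilde{c}_1}{4}\langle z\rangle^{-k}>0$ from Lemma \ref{y4.1} shows that $\tfrac{\partial_{xy}^m\tue}{u^s_z+\tvae}$ vanishes at $z=0$; integrating in $z$ gives $\partial_{xy}^m\tue=(u^s_z+\tvae)\int_0^z g^{m,\epsilon}\,d\tilde z$, and differentiating in $z$ (recalling $\tvae=\partial_z\tue$) yields
\[
\partial_{xy}^m\tvae=(u^s_z+\tvae)\,g^{m,\epsilon}+\eta_{zz}\,\partial_{xy}^m\tue,\qquad \eta_{zz}=\frac{u^s_{zz}+\partial_z\tvae}{u^s_z+\tvae}.
\]

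For the first term, Lemma \ref{y4.1} gives $|u^s_z+\tvae|\le 4\tilde{c}_2\langle z\rangle^{-k}$, hence $\big\|\langle z\rangle^{k+\ell}(u^s_z+\tvae)g^{m,\epsilon}\big\|_{L^2(\mathbb{R}^3_+)}\le 4\tilde{c}_2\|g^{m,\epsilon}\|_{L^2_{\ell}(\mathbb{R}^3_+)}\le 4\tilde{c}_2\|g^{m,\epsilon}\|_{L^2_{\ell'}(\mathbb{R}^3_+)}$. For the second term, Proposition \ref{shear-profile} and Lemma \ref{y4.1} give $|\eta_{zz}|\le C\langle z\rangle^{-1}$, while $|\partial_{xy}^m\tue|=|u^s_z+\tvae|\,\big|\int_0^z g^{m,\epsilon}\big|\le 4\tilde{c}_2\langle z\rangle^{-k}\int_0^z|g^{m,\epsilon}|\,d\tilde z$; therefore $|\eta_{zz}\partial_{xy}^m\tue|\le C\langle z\rangle^{-k-1}\int_0^z|g^{m,\epsilon}|\,d\tilde z$ and
\[
\big\|\langle z\rangle^{k+\ell}\eta_{zz}\partial_{xy}^m\tue\big\|_{L^2(\mathbb{R}^3_+)}^2\le C\int_{\mathbb{R}^3_+}\langle z\rangle^{2\ell-2}\Big(\int_0^z|g^{m,\epsilon}|\,d\tilde z\Big)^2.
\]
Since $0<\ell<\tfrac12$, the Hardy inequality \eqref{hardy2} applies and bounds the right-hand side by $C\int_{\mathbb{R}^3_+}\langle z\rangle^{2\ell}|g^{m,\epsilon}|^2\le C\|g^{m,\epsilon}\|_{L^2_{\ell'}(\mathbb{R}^3_+)}^2$ (using $\ell\le\ell'$). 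Summing the two bounds proves the estimate for $n=m$; for $1\le n<m$ one repeats the argument verbatim with $g^{n,\epsilon}$ and $\partial_{xy}^n$ replacing $g^{m,\epsilon}$ and $\partial_{xy}^m$.

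The one subtle point is the Hardy step for the $\eta_{zz}\partial_{xy}^m\tue$ term: because the shear profile gives $u^s_{zz}$ decay only of order $\langle z\rangle^{-k-1}$ (so $\eta_{zz}$ only of order $\langle z\rangle^{-1}$), factoring out $\langle z\rangle^{k+\ell}$ leaves the weight $\langle z\rangle^{\ell-1}$ acting on $\int_0^z g^{m,\epsilon}$, and the Hardy inequality at the origin is available precisely because $\ell<\tfrac12$. This is why the natural weight on the left is $\langle z\rangle^{k+\ell}$; that is harmless, since it is exactly the weight needed when this lemma is used (with $\ell\le\ell'$) to close the $H^m_{k+\ell}$-estimate for $\tvae$ in Theorem \ref{ex1}. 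One should also note that the constants $4\tilde{c}_2$, the constant in $|\eta_{zz}|\le C\langle z\rangle^{-1}$, and the Hardy constant are all independent of $\epsilon$ — which is guaranteed by Lemma \ref{y4.1} and Proposition \ref{shear-profile} — so that the final bound is uniform in $\epsilon$.
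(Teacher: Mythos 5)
Your proposal follows essentially the same route as the paper: invert the definition of $g^{n,\epsilon}$ using $\partial_{xy}^{n}\tue|_{z=0}=0$ to get $\partial_{xy}^{n}\tue=(u^s_z+\tvae)\int_0^z g^{n,\epsilon}\,d\tilde z$, differentiate in $z$, and control the two resulting terms with the pointwise bounds $|u^s_z+\tvae|\le 4\tilde c_2\langle z\rangle^{-k}$, $|\eta_{zz}|\le C\langle z\rangle^{-1}$ from Lemma \ref{y4.1} and Proposition \ref{shear-profile}, plus a Hardy inequality for the term containing $\int_0^z g^{n,\epsilon}\,d\tilde z$. This is exactly the paper's decomposition (the paper's display has an inessential sign slip), and your treatment of the $n<m$ cases by replacing $g^{m,\epsilon}$ with $g^{n,\epsilon}$ is the intended reading of the statement.

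The one substantive difference is the weight on the left-hand side. The lemma as printed claims the bound in $L^2_{k+\ell'}$, whereas your Hardy step \eqref{hardy2} applied to $\langle z\rangle^{\lambda}\int_0^z|g^{n,\epsilon}|\,d\tilde z$ requires $\lambda<-\tfrac12$, which forces the weight $k+\ell$ (since $\ell<\tfrac12$ but $\ell'>\tfrac12$); you correctly flag this and prove only the $k+\ell$-weighted version. Strictly speaking this is weaker than the literal statement, so if one insists on the printed $k+\ell'$ weight there is a gap: with $\lambda=\ell'-1>-\tfrac12$ neither form of Lemma \ref{hardy} applies ($\int_0^z g^{n,\epsilon}\,d\tilde z$ has no decay at infinity to invoke \eqref{hardy1}), and the paper's terse ``which implies'' does not supply the missing step either. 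In practice your version is exactly what is consumed in the proof of Theorem \ref{ex1}, where only $\|\partial_{xy}^{m}\tvae\|_{L^2_{k+\ell}}$ is needed to close the $H^{m}_{k+\ell}$ estimate, so your caution is well placed; but you should state explicitly that you are proving the lemma with the weight $k+\ell$ (equivalently, that the exponent $k+\ell'$ in the statement should be read as $k+\ell$), rather than the inequality as literally written.
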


\begin{proof}[{\bf  Proof of Theorem \ref{ex1}}]
Summing up \eqref{5.1} and \eqref{5.2}, and integrating the resulting equation over $[0, t]$, we obtain by employing Lemma \ref{S-leq-uv}
\begin{align*}
&\|\tvae\|^{2}_{H^{m, m-1}_{k+\ell}(\mathbb{R}_{+}^{3})}+\sum^{m}_{n=1}\left\|g^{n,\epsilon}\right\|^{2}_{L^{2}_{\ell'}(\mathbb{R}_{+}^{3})}\\
&\leq S^{m, \epsilon}(g, \varphi)(0)+\int_{0}^{t} \left\lbrace C_{1}  \|\tvae\|^{2}_{H^{m}_{k+\ell}(\mathbb{R}_{+}^{3})}
+C_{2}\left( \sum^{m}_{n=1}\left\|g^{n,\epsilon}\right\|^{2}_{L^{2}_{\ell'}(\mathbb{R}_{+}^{3})}+\|\tvae\|^{2}_{H^{m}_{k+\ell}(\mathbb{R}_{+}^{3})}\right)\right\rbrace \\
&\leq e^{C_{2}t} S^{m, \epsilon}(g, \varphi)(0)+
\int_{0}^{t}  C_{1} e^{C_{2}(t-\tau)} \|\tvae\|^{2}_{H^{m}_{k+\ell}(\mathbb{R}_{+}^{3})} d\tau \\
&\leq C_{3}e^{C_{2}t}\left\| \tu_{0}\right\| _{H^{m+1}_{k+\ell'-1}(\mathbb{R}^{3}_{+})} +
\int_{0}^{t}  C_{1} e^{C_{2}(t-\tau)} \|\tvae\|^{2}_{H^{m}_{k+\ell}(\mathbb{R}_{+}^{3})} d\tau,
\end{align*}
which, together with Lemma \ref{varphi-leq-g}, yields
\begin{align*}
	\|\tvae\|^{2}_{H^{m}_{k+\ell}(\mathbb{R}_{+}^{3})}
	\leq C_{3}\left\| \tu_{0}\right\| _{H^{m+1}_{k+\ell'-1}(\mathbb{R}^{3}_{+})} e^{(C_{2}+C_{1})t}.
\end{align*}
This proves Theorem \ref{ex1}.
\end{proof}

With help of Theorem \ref{ex1}, we will finish the proof of the   existence part in Theorem \ref{e-u-s}.  
%In other words, we will prove the 
\begin{proof}[{\bf  Proof of    Existence Part in Theorem \ref{e-u-s}}]
Our first purpose is to show that the solution $\tvae$ in $[0, T^\epsilon]$ can be
extended to $[0, T_{1}]$ by recurrence, where $T_{1}$ is the lifespan of shear flow. Then we verify  convergence and consistency of the solution by the standard regularizing initial data argument.

Fixing  $\epsilon \in (0, 1]$, in light of Theorem \ref{epsilon-existence}, 
the initial boundary value problem  \eqref{vv-regular-shear-prandtl}  admits a unique solution 
\begin{align*}
	\tvae \in L^\infty([0, T^\epsilon]; H^{m+2}_{k+\ell}(\mathbb{R}^3_+)) \, ,
\end{align*}
for any $\partial_{z}\tilde{u}_{0}^\epsilon \in H^{m+2}_{k+\ell}(\mathbb{R}^3_+)$ and $0<\epsilon\leq \epsilon_{0}$, which satisfies
\begin{align*}
	\|\tvae\|_{L^{\infty}([0, T^\epsilon]; H^{m}_{k+\ell}(\mathbb{R}^3_+))}
	\leq \frac{4}{3}\|\tvae_{0}\|_{H^{m}_{k+\ell}(\mathbb{R}^3_+)} \leq
	2\|\tilde{u}_{0}\|_{H^{m+1}_{k+\ell-1}(\mathbb{R}^3_+)}.
\end{align*}
Choosing $\zeta_0$ so that
\begin{align*}
\max \{2, C_{T_1}\} \zeta_0 \leq \frac{\zeta}{2},
\end{align*}
and taking $\tvae(T^\epsilon)$ as an initial datum for  the equation \eqref{vv-regular-shear-prandtl},  Theorem \ref{epsilon-existence} ensures  that there exits a time $(T^\epsilon)'>0$, 
which is given by \eqref{Tm} with $\bar{\zeta}=\frac{\zeta}{2}$, such that the initial boundary value problem  \eqref{vv-regular-shear-prandtl} admits a unique solution
\begin{align*}
	(\tvae)' \in L^\infty([T^\epsilon, T^\epsilon+(T^\epsilon)']; H^{m}_{k+\ell}(\mathbb{R}^3_+)) \, ,
\end{align*}
 which satisfies
\begin{align*}
	\|(\tvae)'\|_{L^{\infty}([T^\epsilon, T^\epsilon+(T^\epsilon)']; H^{m}_{k+\ell}(\mathbb{R}^3_+))}
	\leq \frac{4}{3}\|\tvae_{0}\|_{H^{m}_{k+\ell}(\mathbb{R}^3_+)} \leq \zeta.
\end{align*}
We concatenate this solution $(\tvae)'$ with  a original solution  $\tvae$ to derive a  new solution  $\tvae \in L^\infty([0, T^\epsilon+(T^\epsilon)']; H^{m}_{k+\ell}(\mathbb{R}^3_+)) $
 which satisfies
\begin{align*}
	\|(\tvae)'\|_{L^{\infty}([0, T^\epsilon+(T^\epsilon)']; H^{m}_{k+\ell}(\mathbb{R}^3_+))}
	\leq \zeta.
\end{align*}
If $T^\epsilon+(T^\epsilon)' < T_{1}$, we use \eqref{5.0} and Theorem  \ref{ex1} to $\varphi$ with $T^\epsilon+(T^\epsilon)'$, which implies
\begin{align}
		\|(\tvae)'\|_{L^{\infty}([0, T^\epsilon+(T^\epsilon)']; H^{m}_{k+\ell}(\mathbb{R}^3_+))}
	\leq C_{T}\left\| \tu_{0}\right\| _{H^{m+1}_{k+\ell'-1}(\mathbb{R}^{3}_{+})} \leq 
	\frac{\zeta}{2},
\end{align}

Now we may take $T^\epsilon+(T^\epsilon)'$  as an initial datum and  proceed as in the first paragraph of the proof. Repeating this process $r$ times until $T^\epsilon+r((T^\epsilon)') = T_{1}$. In other words, the solution $\tvae$ is
extended to $[0, T_{1}]$,   and then the lifespan of approximate solution is equal to  that of shear flow if the initial datum $\tu_{0}$ is small enough.  Furthermore, we obtain 
\begin{align*}
\|\tvae(t)\|_{ H^{m}_{k+\ell}(\mathbb{R}^3_+))} \leq \zeta, \qquad t\in[0, T_{1}], 
\end{align*}
for $m \geq 6$ and $ 0< \epsilon \leq \epsilon_0$. 
Applying the Sobolev inequality, we have, for $0<\delta<1$
\begin{align*}
\|\tvae\|_{Lip ([0, T_{1}]; C^{2,  \delta}(\mathbb{R}^3_+))}\leq M< + \infty.
\end{align*}

Then there exist   a sequence a sequence $\{\epsilon_k\}_{k\in \mathbb{N}} \subseteq (0,1)$ with $\lim \limits_{k \rightarrow + \infty}
\epsilon_k = 0^{+}$ such that as $\epsilon_k \rightarrow 0^{+}$, for $0<\delta'<\delta$,
\begin{align*}
\tilde{\varphi}^{\epsilon_k} \rightarrow \tilde{\varphi} ~~ \text{locally strong in }~~C^{0}([0, T_{1}]; C^{2, \delta'}(\mathbb{R}^3_+)),
\end{align*}
and
\begin{align*}
\partial_t \tilde{\varphi} \in L^\infty ([0, T_1]; H^{m-2}_{k+\ell}(\mathbb{R}^3_+)), \quad
\tilde{\varphi} \in L^\infty ([0, T_1]; H^{m}_{k+\ell}(\mathbb{R}^3_+)),
\end{align*}
with
\begin{align*}
	\|\tilde{\varphi}\|_{L^\infty ([0, T_1]; H^{m}_{k+\ell}(\mathbb{R}^3_+))} \le \zeta.
\end{align*}

In fact, we also have by the Hardy inequality \ref{hardy2} and the condition $k+l-1 > \frac{1}{2} $
\begin{align*}
\tu= \partial_{z}^{-1}\varphi \in L^\infty ([0, T_1]; H^{m}_{k+\ell-1}(\mathbb{R}^3_+)).
\end{align*}
Using the  uniform convergence of $\partial_{xy}^{k}u^{\epsilon_{k}}$, we  have the
pointwise convergence of $w^{\epsilon_k}$: as $\epsilon_k \rightarrow 0^{+}$,
\begin{equation}  
\tilde{w}^{\epsilon_{k}}=-\int^{z}_{0}\partial_{x}\tilde{u}^{\epsilon_{k}} d\tilde{z}-\int^{z}_{0}\partial_{y}\big(K(u^{s,\epsilon^k}+\tilde{u}^{\epsilon_{k}})\big) d\tilde{z}
\rightarrow -\int^{z}_{0}\partial_{x}\tilde{u} d\tilde{z}-\int^{z}_{0}\partial_{y}\big(K(u^s+\tilde{u})\big) d\tilde{z}=\tilde{w},
\end{equation}
which yields
\begin{align*}
\tilde{w}=-\int^{z}_{0}\partial_{x}\tilde{u} d\tilde{z}-\int^{z}_{0}\partial_{y}\big(K(u^s+\tilde{u})\big) d\tilde{z}  \in L^\infty ([0, T_1];  L^\infty(\mathbb{R}_{+, z}); (H^{m}(\mathbb{R}^2)H^{-1}(\mathbb{R}_{x}) \cup H^{m}(\mathbb{R}^2)H^{-1}(\mathbb{R}_{y})).
\end{align*}
Thus, 
\begin{align*}
	\tilde{w} \in L^\infty ([0, T_1];  L^\infty(\mathbb{R}_{+, z}); (H^{m-1}(\mathbb{R}^2_{xy})) ).
\end{align*}
Now we have proven that  $\tilde{\varphi}$ is a classical solution to the following regularized vorticity system
\begin{equation}
	\begin{cases} \partial_t\tilde{\varphi}  + (u^s + \tilde{u} ) \partial_x\tilde{\varphi}  +K(u^s + \tilde{u} ) \partial_y\tilde{\varphi} + \tilde{w}  \partial_z\big((u^s_{z}, v^{s}_{z}) +  \tilde{\varphi} \big)
		+\partial_{y}K (u^s+\tu)\partial_{z}(u^s+\tu)	\\
		\qquad = \partial^2_z\tilde{\varphi}  +\epsilon \partial^2_x\tilde{\varphi} +\epsilon \partial^2_y\tilde{\varphi}  , \\
		\partial_{z}\tilde{\varphi} |_{z=0} =0 , \\
		\tvae|_{t=0} =\tilde{\varphi}_{0},
	\end{cases}
\end{equation}
and $(\tu,  \tw) $  is a classical solution to equation \eqref{shear-prandtl}. 
This immediately yields the  existence
of classical solution $(u, w)=(u^s+\tu,  \tw)$ to the Prandtl equation \eqref{3d prandtl constant outflow }. We have completed the proof of the    existence part in Theorem \ref{e-u-s}.  
\end{proof}

\section{Uniqueness and stability}\label{s6}
In this section, we are  devoted to the proof of the  stability part in Theorem \ref{e-u-s}, and thus the uniqueness of   solution obtained   will
follow immediately.  Let $\tu^{1}$ and $\tu^{2}$ be two solutions. Denote  $\bar{u}=\tu^{1}-\tu^{2}$ and $\bar{w}=\tw^{1}-\tw^{2}$. From $\eqref{shear-prandtl}$, we have 
%\begin{equation}
%	\begin{cases} 
%\partial_t(\bar{u}, \bar{v}) + (u^s + \tilde{u}_{1}) \partial_x(\bar{u}, \bar{v}) +(v^s + \tilde{v}) \partial_y(\bar{u}, \bar{v})+ \bar{w}(u^{s}_{z}+\tu_{1,z} \ , \  v^{s}_{z}+\tv_{1,z}) \\
%= \partial^2_z (\bar{u}, \bar{v})-\bar{u}\partial_{x}(\tu_{2}, \tv_{2})
%-\bar{v}\partial_{y}(\tu_{2}, \tv_{2})-\tw_{2}\partial_{z}(\bar{u}, \bar{v}), \\
%\partial_t(\bar{u}, \bar{v}) + (u^s + \tilde{u}_{1}) \partial_x(\bar{u}, \bar{v}) +(v^s + \tilde{v}) \partial_y(\bar{u}, \bar{v})+ \tilde{w}_{1}\partial_{z}(\bar{u}, \bar{v}) \\
%= \partial^2_z (\bar{u}, \bar{v})-\bar{u}\partial_{x}(\tu_{2}, \tv_{2})
%-\bar{v}\partial_{y}(\tu_{2}, \tv_{2})-\bar{w}(u^{s}_{z}+\tu_{2,z} \ , \  v^{s}_{z}+\tv_{2,z}), \\			
%		\partial_t\tilde{u} + (u^s + \tilde{u}) \partial_x\tilde{u} +(v^s + \tilde{v}) \partial_y\tilde{u}+ \tilde{w} \partial_z(u^s +  \tilde{u})
%		= \partial^2_z\tilde{u}, \\
%		\partial_x\tilde{v} + (u^s + \tilde{u}) \partial_x\tilde{v} +(v^s + \tilde{v}) \partial_y\tilde{v}+ \tilde{w} \partial_z(v^s +  \tilde{v})
%		= \partial^2_z\tilde{v}, \\
%		\partial_x\tilde{u} +\partial_y\tilde{v} +\partial_z\tilde{w} =0, \\
%		(\bar{u},  \bar{v}, \bar{w})|_{z=0} =0 , \\
%		(\bar{u}, \bar{v})|_{t=0} =(\tu^{1}_{0}-\tu^{2}_{0}, \tv^{1}_{0}- \tv^{2}_{0}) .
%	\end{cases}
%\end{equation}
\begin{equation}
	\begin{cases} 
		\partial_t \bar{u} + (u^s + \tilde{u}^{1}) \partial_x\bar{u} +K(u^s + \tilde{u}^{1}) \partial_y\bar{u}+ \tilde{w}^{1}\partial_{z}\bar{u} \\
		= \partial^2_z \bar{u}-\bar{u}\partial_{x}\tu^{2}
		-K\bar{u}\partial_{y}\tu^{2}-\bar{w}(u^{s}_{z}+\tu_{z}^{2}), \\			
		\partial_x\tilde{u} +\partial_y\big(K(u^s+\tilde{u})\big) +\partial_z\tilde{w} =0, \\
		(\bar{u},  \bar{w})|_{z=0} =0 , \\
		\bar{u}|_{t=0} =\tu^{1}_{0}-\tu^{2}_{0} .
	\end{cases}
	\label{6.1}
\end{equation}
Denote the vorticity $\bar\varphi=\tilde\varphi^{(1)}-\tilde\varphi^{(2)}=\partial_{z}\bar{u}$, we also have

\begin{equation}
	\begin{cases} 
		\partial_t \bar{\varphi}-\partial^2_z \varphi + (u^s + \tilde{u}^{1})
		 \partial_x\bar{\varphi} +K(u^s + \tilde{u}^{1}) \partial_y\bar{\varphi}+ \tilde{w}^{1}\partial_{z}\bar\varphi\\
		=
		-\bar{u}\partial_{x}\tilde{\varphi}^{(2)}
		-K\bar{u}\partial_{y}\tilde{\varphi}^{(2)}-\bar{w}\big(u^{s}_{zz}  +\tilde{\varphi}_{z}^{(2)}\big)
	\\
	\quad	+\partial_{y}K(u^s + \tilde{u}^{1})\bar\varphi
	+\partial_{y}K \bar\varphi(u^s_{z}+\tilde\varphi^{(2)})
		, \\
		\partial_{z}\varphi|_{z=0} =0 , \\
		\bar{\varphi}|_{t=0} =\tilde{\varphi}_{0}^{(1)}-\tilde{\varphi}_{0}^{(2)} .
	\end{cases}
	\label{6.2}
\end{equation}

\begin{lemma}\label{stability1}
Under the hypotheses of Theorem \ref{e-u-s}, let $\tu^{1}$ and $\tu^{2}$ be two solutions  with respect to the initial data $\tu^{1}_{0}$, $\tu^{2}_{0}$, then we have
\begin{align}
		\frac{d}{dt}\|\bar{\varphi}\|^{2}_{H^{m-2, m-3}_{k+\ell}(\mathbb{R}_{+}^{3})}+\|\partial_{z}\bar{\varphi}\|^{2}_{H^{m-2, m-3}_{k+\ell}(\mathbb{R}_{+}^{3})}
		 \leq C  \|\bar{\varphi}\|^{2}_{H^{m}_{k+\ell}(\mathbb{R}_{+}^{3})},
	\label{uu1} 
\end{align}
where  constant $C$ depends on the norm of $\tilde{\varphi}^{(1)},\tilde{\varphi}^{(2)}$ in $L^\infty([0, T]; H^m_{k+\ell}(\mathbb{R}^3_+))$.
\end{lemma}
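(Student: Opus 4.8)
The plan is to run a weighted $L^2$ energy estimate on the difference of vorticities at regularity level $m-2$, mirroring the proof of Lemma~\ref{E1} but with the individual solutions $\tilde\varphi^{(1)},\tilde\varphi^{(2)}$ playing the role of fixed, $H^m_{k+\ell}$-bounded coefficients. First I would apply $\partial^\alpha=\partial_x^{\alpha_1}\partial_y^{\alpha_2}\partial_z^{\alpha_3}$ to \eqref{6.2} for every multi-index with $|\alpha|\le m-2$ and $\alpha_1+\alpha_2\le m-3$, multiply by $\langle z\rangle^{2(k+\ell+\alpha_3)}\partial^\alpha\bar\varphi$, integrate over $\mathbb{R}^3_+$, and sum over such $\alpha$. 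The transport terms $(u^s+\tilde u^1)\partial_x\bar\varphi+K(u^s+\tilde u^1)\partial_y\bar\varphi+\tilde w^1\partial_z\bar\varphi$ are handled as the term $I_1$ of Lemma~\ref{E1}: integrating by parts in $x,y,z$, the top-order contribution collapses via the divergence-free identity $\partial_x(u^s+\tilde u^1)+\partial_y(K(u^s+\tilde u^1))+\partial_z\tilde w^1=0$, the weight-derivative remainder is controlled by $\|\langle z\rangle^{-1}\tilde w^1\|_{L^\infty}\lesssim_K\|\tilde\varphi^{(1)}\|_{H^m_{1/2+\delta}}$ through the Hardy inequality, and the commutators $[\partial^\alpha,u^s+\tilde u^1]\partial_x\bar\varphi$, etc., close by the Sobolev/product lemmas since $\alpha_1+\alpha_2\le m-3$ leaves two derivatives of slack. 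The diffusion term $\partial_z^2\bar\varphi$ gives, after integration by parts in $z$, the good term $-\|\partial_z\partial^\alpha\bar\varphi\|_{L^2_{k+\ell+\alpha_3}}^2$, a weight remainder absorbed by Cauchy's inequality, and a boundary integral $-\int_{\mathbb{R}^2}\partial_z\partial^\alpha\bar\varphi\,\partial^\alpha\bar\varphi|_{z=0}$; for this I would use $\partial_z\bar\varphi|_{z=0}=0$ together with the boundary-reduction identities (Proposition~\ref{boundary reduction}/Corollary~\ref{c-boundary reduction}, in their $\epsilon=0$ form) applied to both $\tilde u^1$ and $\tilde u^2$. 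Because those identities are the same polynomial expressions evaluated on the two solutions, their difference telescopes into a sum in which every summand carries at least one factor that is a tangential derivative of $\bar u$, so the trace estimate bounds the boundary term by $\tfrac14\|\partial_z\bar\varphi\|_{H^{m-2,m-3}_{k+\ell}}^2+C\|\bar\varphi\|_{H^{m-2}_{k+\ell}}^2$.

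Next I would treat the inhomogeneous right-hand side of \eqref{6.2}. The terms $\bar u\partial_x\tilde\varphi^{(2)}$, $K\bar u\partial_y\tilde\varphi^{(2)}$, $\partial_yK(u^s+\tilde u^1)\bar\varphi$ and $\partial_yK\bar\varphi(u^s_z+\tilde\varphi^{(2)})$ are of lower order: writing $\bar u=-\int_z^{\infty}\bar\varphi\,d\tilde z$, using the Hardy inequality to trade $\partial_z^{-1}$ against a power of $\langle z\rangle$, and invoking the polynomial decay $|u^s_{zz}|,|\partial_z\tilde\varphi^{(2)}|\lesssim\langle z\rangle^{-k-1}$ from Proposition~\ref{shear-profile} and \eqref{4.2}, each such term is bounded by $C\|\bar\varphi\|_{H^{m-2}_{k+\ell}}^2$ with a constant depending on $\|K\|_{W^{m+1,\infty}}$ and $\|\tilde\varphi^{(2)}\|_{H^m_{k+\ell}}$. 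The only delicate contribution is $\bar w(u^s_{zz}+\tilde\varphi^{(2)}_z)$, which loses one tangential derivative through $\bar w=-\int_0^z\big(\partial_x\bar u+\partial_y(K\bar u)\big)d\tilde z$; here the structural assumption $v=Ku$ is crucial, since it lets $\bar w$ be expressed entirely through $\bar u$ (there is no independent $\bar v$) as $\bar w=-\partial_z^{-1}\big(\partial_x\partial_z^{-1}\bar\varphi+\partial_y(K\partial_z^{-1}\bar\varphi)\big)$. Pairing the two occurrences of $\partial_z^{-1}$ with the two powers of $\langle z\rangle^{-1}$ coming from the $(k{+}1)$-order decay of $u^s_{zz}+\tilde\varphi^{(2)}_z$, and splitting into the cases $|\alpha|\le m-3$, $|\alpha|=m-2$ with $\alpha_3=0$, and $|\alpha|=m-2$ with $\alpha_3\ge1$ exactly as in the treatment of $I_6$ in Lemma~\ref{E1} (moving one $z$-derivative off $\partial_z^2$-type factors), this term is also controlled by $C\|\bar\varphi\|_{H^{m-2}_{k+\ell}}^2$ after absorbing a small multiple of $\|\partial_z\bar\varphi\|_{H^{m-2,m-3}_{k+\ell}}^2$.

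Collecting all contributions, applying Cauchy's inequality to absorb the dissipation terms produced on the right, and summing over $\alpha$ yields
\begin{align*}
\frac{d}{dt}\|\bar\varphi\|^2_{H^{m-2,m-3}_{k+\ell}(\mathbb{R}^3_+)}+\|\partial_z\bar\varphi\|^2_{H^{m-2,m-3}_{k+\ell}(\mathbb{R}^3_+)}\le C\|\bar\varphi\|^2_{H^{m-2}_{k+\ell}(\mathbb{R}^3_+)}\le C\|\bar\varphi\|^2_{H^{m}_{k+\ell}(\mathbb{R}^3_+)},
\end{align*}
with $C$ depending only on $\|K\|_{W^{m+1,\infty}}$ and on $\|\tilde\varphi^{(1)}\|_{L^\infty_tH^m_{k+\ell}}$, $\|\tilde\varphi^{(2)}\|_{L^\infty_tH^m_{k+\ell}}$, which is \eqref{uu1}. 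I expect the main obstacle to be precisely the $\bar w(u^s_{zz}+\tilde\varphi^{(2)}_z)$ term: one must verify that the two $\partial_z^{-1}$'s hidden in $\bar w$ can genuinely be cashed in for decay against $u^s_{zz}+\tilde\varphi^{(2)}_z$ at every order $|\alpha|\le m-2$ without exceeding the available regularity, and that in the top case $|\alpha|=m-2$ the derivative redistribution leaves only $\partial_z\bar\varphi$ (absorbable) rather than $\partial_z^2\bar\varphi$. A secondary technical point is supplying the $\epsilon=0$ boundary-reduction identity for the limiting solutions and checking that its difference form keeps a $\bar u$-factor in each summand so that the boundary integral is truly lower order.
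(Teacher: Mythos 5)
Your proposal is correct and follows essentially the same route as the paper, which likewise reduces Lemma \ref{stability1} to the weighted energy scheme of Lemma \ref{E1} applied to \eqref{6.2} at order $m-2$, with the decisive observation (made explicit in the paper's outline via the integrate-by-parts versus no-integration-by-parts dichotomy) that derivatives falling on $\tilde\varphi^{(2)}$ rather than $\bar\varphi$ force the constant to depend on the $L^\infty([0,T];H^m_{k+\ell})$ norms of the two solutions while only $H^{m-2,m-3}$ norms of $\bar\varphi$ appear on the left. Your extra care with the boundary terms and with $\bar w(u^s_{zz}+\tilde\varphi^{(2)}_z)$ fills in details the paper explicitly omits, but does not change the method.
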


\begin{proof}[{\bf  Outline of Proof of Lemma \ref{stability1}}]

 Applying $\partial^{\alpha}$ to the  equation \eqref{6.2}, for $|\alpha|=\alpha_{1}+\alpha_{2}+\alpha_{3} \leq m-2$, $\alpha_{1}+\alpha_{2}\leq m-3$, we obtain
\begin{align}
\begin{split}
&\partial^{\alpha}\left( \partial_t \bar{\varphi}-
\partial^2_z \bar{\varphi}\right)   +\partial^{\alpha}\left( (u^s + \tilde{u}^{1})
\partial_x\bar{\varphi} +K(u^s + \tilde{u}^{1}) \partial_y\bar{\varphi}+ \tilde{w}^{1}\partial_{z}\bar\varphi\right) \\
&=
\partial^{\alpha}\left\lbrace -\bar{u}\partial_{x}\tilde{\varphi}^{(2)}
-K\bar{u}\partial_{y}\tilde{\varphi}^{(2)}-\bar{w}u^{s}_{zz} \right\rbrace  \\
&\quad+ \partial^{\alpha}\left\lbrace \partial_{y}K(u^s + \tilde{u}^{1})\bar\varphi
+\partial_{y}K \bar\varphi(u^s_{z}+\tilde\varphi^{(2)})\right\rbrace .
\end{split}
\label{6.4}
\end{align}
Obviously, we can use the same algorithm with Lemma \ref{E1} for completing the proof of this lemma, but it is worth noting that 
 using integration by parts (or not) will lead to the following two different results:
% whether  integration by parts can be used will lead to the following two different results:
%Multiplying  by $\langle z \rangle^{k+\ell'+\alpha_{3}}\partial^{\alpha}\bar{\varphi}$,
%and using the same algorithm with Lemma \ref{E1},  for the left hand side of \eqref{6.4}, we have
%\begin{align*}
%&\int_{{\mathbb{R}^3_+}} \partial^{\alpha}\left( \partial_t \bar{\varphi}-\partial^2_z \bar\varphi\right) \langle z \rangle^{k+\ell'+\alpha_{3}}\partial^{\alpha}\bar{\varphi}\\
%&\geq \frac{1}{2}\frac{d}{dt}\left\| \partial^{\alpha}\bar{\varphi}\right\|_{L^{2}_{k+\ell'+\alpha_{3}}}^{2}
%+\frac{3}{4}\left\| \partial_{z}\bar{\varphi}\right\|_{H^{m-2, m-3}_{k+\ell'}}^{2}-C\left\| \bar{\varphi}\right\|_{H^{m-2}_{k+\ell'}}^{2} ,
%\end{align*}
%and
\begin{align*}
&\int_{{\mathbb{R}^3_+}} \partial^{\alpha}\left( (u^s + \tilde{u}^{1})
\partial_x\bar{\varphi} +K(u^s + \tilde{u}^{1}) \partial_y\bar{\varphi}\right)  \langle z \rangle^{k+\ell'+\alpha_{3}}\partial^{\alpha}\bar{\varphi}\\
&=\frac{1}{2}\int_{{\mathbb{R}^3_+}} \langle z \rangle^{2(k+\ell+{\alpha_3})} 
\big((u^s + \tilde{u}^{1}) \partial_x(\partial^\alpha\bar{\varphi})^2 +K(u^s + \tilde{u}^1) \partial_y(\partial^\alpha\bar{\varphi})^2\big)
\\
&
\quad+\sum\limits_{ \beta \leq \alpha, 1\leq |\beta|} C^\beta_\alpha \int_{{\mathbb{R}^3_+}} \langle z \rangle^{k+\ell'+\alpha_{3}}
\Big(\partial^{\beta}(u^s + \tilde{u}^1) \partial^{\alpha - \beta}\partial_x \bar{\varphi}
+
\partial^{\beta}\big(K(u^s + \tilde{u}^{1}) \big)\partial^{\alpha - \beta}\partial_y \bar{\varphi}\Big)
\partial^{\alpha} \bar{\varphi}
\\
&=-\frac{1}{2}\int_{{\mathbb{R}^3_+}} \langle z \rangle^{2(k+\ell+{\alpha_3})} 
\big(\partial_x\tilde{u}^{1}(\partial^\alpha\bar{\varphi})^2 +\partial_{y} (K\tilde{u}^1) (\partial^\alpha\bar{\varphi})^2\big)
\\
&
\quad+\sum\limits_{ \beta \leq \alpha, 1\leq |\beta|} C^\beta_\alpha \int_{{\mathbb{R}^3_+}} \langle z \rangle^{k+\ell'+\alpha_{3}}
\Big(\partial^{\beta}(u^s + \tilde{u}^1) \partial^{\alpha - \beta}\partial_x \bar{\varphi}
+
\partial^{\beta}\big(K(u^s + \tilde{u}^{1}) \big)\partial^{\alpha - \beta}\partial_y \bar{\varphi}\Big)
\partial^{\alpha} \bar{\varphi}\\
& \leq C	\big\| \tilde{\varphi}^{(1)}\big\|_{H^{m-2}_{k+\ell'}(\mathbb{R}^3)}
\left\| \bar{\varphi}\right\|_{H^{m-2}_{k+\ell'}(\mathbb{R}^3)}^2 ,
\end{align*}
or
\begin{align*}
	&\int_{{\mathbb{R}^3_+}} \partial^{\alpha}\left( \bar{u}\partial_{x}\tilde{\varphi}^{(2)}
	+K\bar{u}\partial_{y}\tilde{\varphi}^{(2)}\right)  \langle z \rangle^{k+\ell'+\alpha_{3}}\partial^{\alpha}\bar{\varphi}\\
	&=\int_{{\mathbb{R}^3_+}} \langle z \rangle^{2(k+\ell+{\alpha_3})} 
	\big(\bar{u} \partial_{x}\partial^{\alpha}\tilde{\varphi}^{(2)}  +K\bar{u} \partial_{y}\partial^{\alpha}\tilde{\varphi}^{(2)}\big)\partial^\alpha\bar{\varphi}
	\\
	&
	\quad+\sum\limits_{ \beta \leq \alpha, 1\leq |\beta|} C^\beta_\alpha \int_{{\mathbb{R}^3_+}} \langle z \rangle^{k+\ell'+\alpha_{3}}
	\Big(\partial^{\beta}\bar{u} \partial^{\alpha - \beta}\partial_{x}\tilde{\varphi}^{(2)}
	+
	\partial^{\beta}(K\bar{u}) \partial^{\alpha - \beta}\partial_{y}\tilde{\varphi}^{(2)}\Big)
	\partial^{\alpha} \bar{\varphi}
	\\
	& \leq C	\big\| \tilde{\varphi}^{(2)}\big\|_{H^{m-1}_{k+\ell'}(\mathbb{R}^3)}
	\left\| \bar{\varphi}\right\|_{H^{m-2}_{k+\ell'}(\mathbb{R}^3)}^2 .
\end{align*}
It indicated that
why we only  get the estimate on $\|\bar{\varphi}\|_{H^{m-2}_{k+\ell}}^2$,  but require the norm of $\tilde{\varphi}^{(1)},\tilde{\varphi}^{(2)}$ in $L^\infty([0, T]; H^m_{k+\ell}(\mathbb{R}^3_+))$. Now we just need to prove Lemma \ref{stability1} step by step using  the standard energy methods, so we will omit some details here.

%As for the right hand side of \eqref{6.4}, since terms $\bar{u}\partial_{x}\tilde{\varphi}^{(2)}$, 
%$\bar{v}\partial_{y}\tilde{\varphi}^{(2)}$  are  similar to term A, we focus on term $\bar{w}\big((u^{s}_{zz} ,   v^{s}_{zz}) +\tilde{\varphi}_{z}^{(2)}\big)$ first. Then, we have
%\begin{align*}
%	&-\int_{{\mathbb{R}^3_+}} \partial^{\alpha}\big\{\bar{w}\big((u^{s}_{zz} ,   v^{s}_{zz}) +\tilde{\varphi}_{z}^{(2)}\big) \big\}\langle z \rangle^{k+\ell'+\alpha_{3}}\partial^{\alpha}\bar{\varphi}\\
%&=-\sum\limits_{ \beta \leq \alpha, 1\leq |\beta|} C^\beta_\alpha \int_{{\mathbb{R}^3_+}} \langle z \rangle^{k+\ell'+\alpha_{3}}
%\partial^{\beta}\partial_z \bar{w} \partial^{\alpha - \beta}\big((u^{s}_{zz} ,   v^{s}_{zz}) +\tilde{\varphi}_{z}^{(2)}\big)
%\partial^{\alpha} \bar{\varphi} -\int_{{\mathbb{R}^3_+}} \langle z \rangle^{k+\ell'+\alpha_{3}}
%\partial_z \bar{w} \partial^{\alpha }\big((u^{s}_{zz} ,   v^{s}_{zz}) +\tilde{\varphi}_{z}^{(2)}\big)
%\partial^{\alpha} \bar{\varphi},
%\end{align*}
%
%\begin{align*}
%&-\int_{{\mathbb{R}^3_+}} \langle z \rangle^{k+\ell'+\alpha_{3}}
%\partial_z \bar{w} \partial^{\alpha }\big((u^{s}_{zz} ,   v^{s}_{zz}) +\tilde{\varphi}_{z}^{(2)}\big)
%\partial^{\alpha} \bar{\varphi}\\
%& \leq C\big(1+\big\| \tilde{\varphi}^{(1)}\big\|_{H^{m}_{k+\ell}}\big)
%\big\| \bar{\varphi}\big\|_{H^{3}_{k+\ell}}
%\end{align*}
\end{proof}

In order to close the estimates, we need to estimate on the loss term 
$\partial^{m-2}_{xy}\bar{\varphi}$, which is controlled by  $\bar{g}^{n}$ via Lemma \ref{varphi-leq-g}.
 More precisely, we have the following theorem for the functions
$$
\bar{g}^{n}=\bigg(\frac{\partial_{xy}^{n} \bar{u}}{u_{z}^{s}+\tu_{z}^{1}}\bigg)_{z}.$$

\begin{lemma}\label{stability2}
	Under the hypotheses of Theorem \ref{e-u-s}, let $\tu^{1}$ and $\tu^{2}$ be two solutions  with respect to the initial date $\tu^{1}_{0}$, $\tu^{2}_{0}$, then we have
\begin{align}
	\frac{d}{dt}\sum^{m-2}_{n=1}\left\| \bar{g}^{n}\right\|_{L^{2}_{\ell'}(\mathbb{R}_{+}^{3})}^2
	+\sum^{m-2}_{n=1}\left\| \partial_{z}\bar{g}^{n}\right\|_{L^{2}_{\ell'}(\mathbb{R}_{+}^{3})}^2+
	\leq
	C\left(\sum^{m-2}_{n=1}\left\| \bar{g}^{n}\right\|_{L^{2}_{\ell'}(\mathbb{R}_{+}^{3})}^2
	+
	\left\| \bar\varphi \right\|_{H^{m-2}_{k+\ell'}(\mathbb{R}_{+}^{3})}^2\right), 
	\label{uu2}
\end{align}
	where the constant $C>0$ depends on the norm of $\tilde{\varphi}^{(1)},\tilde{\varphi}^{(2)}$ in $L^\infty([0, T]; H^m_{k+\ell}(\mathbb{R}^3_+))$.
\end{lemma}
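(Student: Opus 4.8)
The plan is to run the same formal-transformation energy scheme used in Lemma \ref{gn}, but now applied to the \emph{difference} unknowns $\bar g^n=\big(\partial_{xy}^n\bar u/(u^s_z+\tu^1_z)\big)_z$ for $1\le n\le m-2$, so as to extract a closed estimate that loses derivatives only down to the level $m-2$. First I would derive the evolution equation satisfied by $\bar g^n$. Starting from \eqref{6.2} for $\bar\varphi$, one applies $\partial_{xy}^n$, divides by the background profile $u^s_z+\tu^1_z$ (which is comparable to $\langle z\rangle^{-k}$ by Lemma \ref{y4.1} applied to $\tu^1$), and takes $\partial_z$. This produces, in analogy with \eqref{formal transformations}, a parabolic equation for $\bar g^n$ of the schematic form
\begin{align*}
\partial_t\bar g^n+(u^s+\tu^1)\partial_x\bar g^n+K(u^s+\tu^1)\partial_y\bar g^n-\partial_z^2\bar g^n-\epsilon\partial_x^2\bar g^n-\epsilon\partial_y^2\bar g^n-2\epsilon\partial_x\partial_z^{-1}\bar g^n\partial_z\eta^1_{xz}-2\epsilon\partial_y\partial_z^{-1}\bar g^n\partial_z\eta^1_{yz}=\sum_j\bar M_j(\bar g^n),
\end{align*}
with $\eta^1_{\cdot\cdot}$ built from $\tu^1$, the boundary condition $\partial_z\bar g^n|_{z=0}=0$, and inhomogeneous terms $\bar M_j$ that now come in two flavors: the ``self'' terms coming from the $\tu^1$-transport and viscous pieces (exactly as $M_1$--$M_4,M_7^{1},M_7^{3}$ in Lemma \ref{gn}, with one factor being a $\bar g^i$ or $\partial_z^{-1}\bar g^i$), and the ``interaction'' terms coming from the right-hand side of \eqref{6.2}, namely $-\bar u\partial_x\tilde\varphi^{(2)}-K\bar u\partial_y\tilde\varphi^{(2)}-\bar w(u^s_{zz}+\tilde\varphi^{(2)}_z)$ together with the $\partial_y K$ terms, after dividing by $u^s_z+\tu^1_z$ and differentiating.

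Next I would multiply the $\bar g^n$-equation by $\langle z\rangle^{2\ell'}\bar g^n$ and integrate over $\mathbb R^3_+$, summing over $1\le n\le m-2$. The transport term contributes $\tfrac12\int\langle z\rangle^{2\ell'}(\bar g^n)^2(\partial_x\tu^1+\partial_y(K(u^s+\tu^1)))$ after integration by parts in $x,y$, bounded by $C_K(1+\|\tilde\varphi^{(1)}\|_{H^3_{1/2+\delta}})\|\bar g^n\|_{L^2_{\ell'}}^2$; the viscous term gives $-\|\partial_z\bar g^n\|_{L^2_{\ell'}}^2$ plus lower-order (using $\partial_z\bar g^n|_{z=0}=0$, so no boundary term appears — this is the advantage of the $g$-formulation); and the $\epsilon$-correction terms are absorbed exactly as in Lemma \ref{gn} using $|\partial_z\eta^1_{xz}|,|\partial_x\partial_z\eta^1_{xz}|\le C\langle z\rangle^{-\ell-1}$, which hold because $\tu^1$ satisfies the a priori bound. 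For the ``self'' inhomogeneous terms $\bar M_j$, $j$ corresponding to $M_1$--$M_4$, $M_7^{1,3}$, the estimates are line-by-line the same as in Lemma \ref{gn} — each such term is controlled by $C_K\big(\sum_i\|\bar g^i\|_{L^2_{\ell'}}^2+\|\bar\varphi\|_{H^{m-2}_{k+\ell'}}^2\big)$, using the decay of $\eta^1_{zz},\partial_z\eta^1_{zz}$ and the Hardy/Sobolev lemmas, with coefficients depending on $\|\tilde\varphi^{(1)}\|_{H^m_{k+\ell}}$; the analogue of Claim 4.1 goes through verbatim. For the ``interaction'' terms, I would bound e.g. $-\bar u\partial_x\tilde\varphi^{(2)}/(u^s_z+\tu^1_z)$ after $\partial_{xy}^n\partial_z$ by recognizing $\bar u=\partial_z^{-1}\bar\varphi$ and writing $\partial_{xy}^n\bar u$ in terms of $(u^s_z+\tu^1_z)\int_0^z\bar g^n$ (Lemma \ref{varphi-leq-g}), so the worst factor is $\partial_x\partial_{xy}^{n}\tilde\varphi^{(2)}$, whose order is at most $m-1$ and hence controlled by $\|\tilde\varphi^{(2)}\|_{H^m_{k+\ell}}$ — this is exactly why the constant $C$ is allowed to depend on the full $H^m$ norm of $\tilde\varphi^{(1)},\tilde\varphi^{(2)}$ even though the estimate only closes at level $m-2$. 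The term $-\bar w(u^s_{zz}+\tilde\varphi^{(2)}_z)$ is handled as $I_7$ in Lemma \ref{E1}, writing $\bar w=-\partial_z^{-1}(\partial_x\bar u+\partial_y(K(u^s+\tu^1)))$ minus a correction in $\tu^2$, and using $\|u^s_{zz}\|_{L^2_{k+\ell}}<\infty$; the $\partial_y K$ terms are of the benign type since they involve $\bar\varphi$ (not its tangential derivatives beyond order $m-2$) against smooth $K$.

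Collecting everything and absorbing $\tfrac14\|\partial_z\bar g^n\|_{L^2_{\ell'}}^2$ from the interaction-term estimates into the good viscous term via Cauchy's inequality, summing over $n=1,\dots,m-2$, yields \eqref{uu2}. The main obstacle is bookkeeping the derivative count in the interaction terms: one must check that, after $\partial_{xy}^n$ with $n\le m-2$ and one $\partial_z$ hitting a quotient, no factor of $\tilde\varphi^{(2)}$ (or $\tilde\varphi^{(1)}$) of order exceeding $m$ ever appears and that the single ``bad'' tangential factor $\partial_{xy}^{\le n+1}\bar u$ can always be traded for $\bar g^{\le n}$ via the identities in Claim 4.1; the $M_7^3$-type terms, where a vertical velocity $\bar w$ is differentiated tangentially, are the delicate ones and require the same case split ($i\le n-3$ versus $i\ge n-3$) as in the proof of Claim 4.1. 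Once this accounting is done, all integrals fall under the already-established Hardy inequalities \eqref{hardy1}--\eqref{hardy2}, the embedding \eqref{infty2}, and Lemma \ref{lemma2.4}, and the proof concludes exactly as Lemma \ref{gn}.
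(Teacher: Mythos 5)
Your proposal is correct and follows essentially the same route as the paper, which itself only remarks that Lemma \ref{stability2} is obtained by repeating the standard process of Lemma \ref{gn} for the difference unknowns $\bar{g}^{n}$; your account of the self terms, the interaction terms losing one derivative (absorbed into the constant depending on the $H^{m}$ norms of $\tilde{\varphi}^{(1)},\tilde{\varphi}^{(2)}$), and the vanishing boundary term matches that scheme. The only inessential slip is carrying the $\epsilon$-viscous and $\epsilon$-correction terms into the $\bar{g}^{n}$ equation, whereas the difference system \eqref{6.2} is unregularized, so these terms are simply absent ($\epsilon=0$) and the argument is otherwise unchanged.
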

The
 proof of Lemma \ref{stability2} can be recovered by the standard 
process as what we did in Lemma \ref{gn}.

Combining estimates \eqref{uu1} and \ref{uu2}, we arrive at 
\begin{align*}
\|\bar{\varphi}\|_{ L^\infty ([0, T]; H^{m-2}_{k+\ell}(\mathbb{R}^3_+))}
\leq  C \|\bar{u}_{0}\|_{H^{m+1}_{k + \ell'-1}(\mathbb{R}^3_+)},
\end{align*}
which, together with the argument of the  existence  in Section \ref{s5},  completes  the proof of Theorem \ref{e-u-s}.
%We will apply these formulae heavily during the estimating process.

	\begin{appendices}

\section{Some inequalities}

	First, we present the  following Hardy type inequality which is founded 
in \cite{Xu-Zhang-2017}.
\begin{lemma} \label{hardy} 
	Let $f:\mathbb{R}^{3}_{+}\rightarrow \mathbb{R}$, \\
	{\rm(i)}  if $\lambda>-\frac{1}{2}$ and $\lim\limits_{z\rightarrow+\infty}f(x,y ,z)=0$, then
	\begin{eqnarray}
		\|\langle z \rangle^{\lambda}f\|_{L^{2}( \mathbb{R}^{3}_{+})} \leq \frac{2}{2\lambda+1} \|\langle z \rangle^{\lambda+1}\partial_{z}f\|_{L^{2}(\mathbb{R}^{3}_{+})};
		\label{hardy1}
	\end{eqnarray}
	{\rm(ii)}  if $\lambda<-\frac{1}{2}$ and $ f(x,y,z )|_{z=0}=0$, then
	\begin{eqnarray}
		\|\langle z \rangle^{\lambda}f\|_{L^{2}(\mathbb{R}^{3}_{+})} \leq
		-\frac{2}{2\lambda+1} \|\langle z \rangle^{\lambda+1}\partial_{z}f\|_{L^{2}(\mathbb{R}^{3}_{+})}.
		\label{hardy2}
	\end{eqnarray}
\end{lemma}
	
\hspace*{\fill}

	Next, we shall state the following Sobolev-type inequality.
\begin{lemma}\label{infty}
 For any suitable function $F (x, y, z) : \mathbb{R}^{3}_{+}\rightarrow \mathbb{R}$, 
\begin{align}
	\|F\|_{L^{\infty}\left(\mathbb{R}_{x, y}^{2}\right)} \leq \sqrt{2}\left(\|F\|_{L^{2}\left(\mathbb{R}_{x, y}^{2}\right)}+\left\|\partial_{x} F\right\|_{L^{2}\left(\mathbb{R}_{x, y}^{2}\right)}+\left\|\partial_{y} F\right\|_{L^{2}\left(\mathbb{R}_{x, y}^{2}\right)}
	+\left\|\partial_{x} \partial_{y} F\right\|_{L^{2}\left(\mathbb{R}_{x, y}^{2}\right)}\right).
\label{infty1}
\end{align}
Moreover, if the function $F$ satisfies $F(x,y, z)|_{z=0} = 0$ or $ \lim\limits_{z \rightarrow +\infty}F(x, y, z)=0$, 
then for any small $\delta>0$,
\begin{align}
	\begin{split}
	\|F\|_{L^{\infty}(\mathbb{R}^{3}_{+})}
\leq C\left( 
\|\partial_{z}F\|_{L^{2}_{\frac{1 }{2}+\delta}(\mathbb{R}_{+}^{3})}
+\|\partial_{x}\partial_{z}F\|_{L^{2}_{\frac{1 }{2}+\delta}(\mathbb{R}_{+}^{3})}
+\|\partial_{y}\partial_{z}F\|_{L^{2}_{\frac{1 }{2}+\delta}(\mathbb{R}_{+}^{3})}
+\|\partial_{x}\partial_{y}\partial_{z}F\|_{L^{2}_{\frac{1 }{2}+\delta}(\mathbb{R}_{+}^{3})}
\right). 
	\end{split} 
	\label{infty2}
\end{align}
\end{lemma}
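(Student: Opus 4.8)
The statement has two parts, and I would prove the three-dimensional weighted bound \eqref{infty2} from the planar bound \eqref{infty1}, treating the latter first. Inequality \eqref{infty1} is the classical two-dimensional Sobolev (Agmon-type) embedding, and the plan is simply to recall its one-line proof: for $F$ decaying at infinity in the tangential variables — a property one first assumes on a dense class of rapidly decreasing functions and then removes by approximation — the fundamental theorem of calculus in both variables gives $F(x,y)^2=\int_{-\infty}^{x}\int_{-\infty}^{y}\partial_\xi\partial_\eta\big(F(\xi,\eta)^2\big)\,d\eta\,d\xi$, and, expanding $\partial_\xi\partial_\eta(F^2)=2F\,\partial_\xi\partial_\eta F+2\,\partial_\xi F\,\partial_\eta F$ and applying Cauchy--Schwarz over $\mathbb{R}^{2}_{x,y}$, one obtains $\|F\|_{L^\infty(\mathbb{R}^2_{x,y})}^2\le 2\|F\|_{L^2}\|\partial_x\partial_y F\|_{L^2}+2\|\partial_x F\|_{L^2}\|\partial_y F\|_{L^2}$. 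The stated form then follows from $2ab\le a^2+b^2$ together with the elementary inequality $(a_1^2+\cdots+a_4^2)^{1/2}\le a_1+\cdots+a_4$ (the constant $\sqrt 2$ being a non-sharp convenience).

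For \eqref{infty2} I would proceed in three steps. First, use the vanishing hypothesis at one end of the $z$-interval ($F|_{z=0}=0$ or $F\to 0$ as $z\to+\infty$) to write, pointwise in $(x,y)$ and uniformly in $z$, $|F(x,y,z)|\le\int_0^{+\infty}|\partial_z F(x,y,\zeta)|\,d\zeta$. Second, split $1=\langle\zeta\rangle^{-\frac12-\delta}\langle\zeta\rangle^{\frac12+\delta}$ and apply Cauchy--Schwarz in $\zeta$; since $\delta>0$ the integral $\int_0^{+\infty}\langle\zeta\rangle^{-1-2\delta}\,d\zeta=:c_\delta^2$ is finite, so $|F(x,y,z)|\le c_\delta\big(\int_0^{+\infty}\langle\zeta\rangle^{1+2\delta}|\partial_z F(x,y,\zeta)|^2\,d\zeta\big)^{1/2}$, i.e.\ $\|F\|_{L^\infty(\mathbb{R}^3_+)}\le c_\delta\,\sup_{(x,y)}\big\|\partial_z F(x,y,\cdot)\big\|_{L^2_{\frac12+\delta}(\mathbb{R}_z^+)}$ — morally the content of the Hardy bound \eqref{hardy2}. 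Third, set $H(x,y)^2:=\int_0^{+\infty}\langle z\rangle^{1+2\delta}|\partial_z F(x,y,z)|^2\,dz$ and apply the planar computation underlying \eqref{infty1} to $H^2$: differentiating under the $z$-integral, $\partial_x\partial_y(H^2)$ produces only terms of the two types $\langle z\rangle^{1+2\delta}|\partial_x\partial_z F|\,|\partial_y\partial_z F|$ and $\langle z\rangle^{1+2\delta}|\partial_z F|\,|\partial_x\partial_y\partial_z F|$, and a joint Cauchy--Schwarz inequality over $\mathbb{R}^3_+$ — distributing the weight evenly as $\langle z\rangle^{1+2\delta}=\langle z\rangle^{\frac12+\delta}\langle z\rangle^{\frac12+\delta}$ — bounds $\|H\|_{L^\infty(\mathbb{R}^2_{x,y})}^2$ by $\|\partial_z F\|_{L^2_{\frac12+\delta}}\|\partial_x\partial_y\partial_z F\|_{L^2_{\frac12+\delta}}+\|\partial_x\partial_z F\|_{L^2_{\frac12+\delta}}\|\partial_y\partial_z F\|_{L^2_{\frac12+\delta}}$ (all norms over $\mathbb{R}^3_+$). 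Combining with Step 2 and using $2ab\le a^2+b^2$ yields \eqref{infty2} with exactly the four advertised terms on the right-hand side.

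The calculations are routine; the only points that require care are (i) the justification of the fundamental-theorem-of-calculus representations of $F^2$ and of $H^2$, which one first carries out for a dense class of rapidly decaying functions and then extends by density, and (ii) the bookkeeping in the two Cauchy--Schwarz steps, arranged so that every surviving factor carries precisely the weight $\langle z\rangle^{\frac12+\delta}$, with the restriction $\delta>0$ used solely to ensure convergence of $\int_0^{+\infty}\langle z\rangle^{-1-2\delta}\,dz$. No cancellation property or structural feature of the Prandtl system is involved here — this is a purely functional-analytic embedding lemma, to be used repeatedly in the energy estimates above.
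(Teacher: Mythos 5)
Your argument is correct and follows essentially the same route as the paper: the planar Agmon-type bound for \eqref{infty1}, then for \eqref{infty2} the representation of $F$ as a $z$-integral of $\partial_z F$ (from $0$ or from $+\infty$), a weighted Cauchy--Schwarz in $z$ splitting $\langle z\rangle^{\frac12+\delta}$, and the planar estimate applied to the resulting $L^\infty_{x,y}\big(L^2_{\frac12+\delta}(\mathbb{R}_{+,z})\big)$ quantity. Your only deviation --- running the two-dimensional computation directly on $H^2=\int_0^{+\infty}\langle z\rangle^{1+2\delta}|\partial_z F|^2\,dz$ instead of quoting \eqref{infty1} for $H$ itself --- is a harmless technical refinement rather than a different proof.
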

\begin{proof}
Using  the Sobolev embedding inequality, we may easily check that \eqref{infty1} holds. 	For \eqref{infty2},  let
us define
\begin{align*}
F(x, y, z)=\int^{z}_{0}\partial_{z} F(x, y, \tilde{z}) d \tilde{z}.
\end{align*}
Then, it follows from \eqref{infty1} and H\"{o}lder inequality that 
\begin{align*}
\|F\|_{L^{\infty}(\mathbb{R}^{3}_{+})}&\leq \left\|\int^{z}_{0} \langle z \rangle^{-(\frac{1}{2}+\delta)}\left(  \langle z \rangle^{\frac{1}{2}+\delta}\partial_{z} F(x, y, \tilde{z}) d \tilde{z}\right) \right\|_{L^{\infty}(\mathbb{R}^{3}_{+})}+\bigg\|F\Big|_{z=0}\bigg\|_{L^{\infty}(\mathbb{R}^{2}_{x,y})}
\leq C\|\partial_{z}F\|_{L^{\infty}(\mathbb{R}_{x,y}; L^{2}_{\frac{1 }{2}+\delta}(\mathbb{R}_{+}))}\\
&\leq C\left( 
\|\partial_{z}F\|_{L^{2}_{\frac{1 }{2}+\delta}(\mathbb{R}_{+}^{3})}
+\|\partial_{x}\partial_{z}F\|_{L^{2}_{\frac{1 }{2}+\delta}(\mathbb{R}_{+}^{3})}
+\|\partial_{y}\partial_{z}F\|_{L^{2}_{\frac{1 }{2}+\delta}(\mathbb{R}_{+}^{3})}
+\|\partial_{x}\partial_{y}\partial_{z}F\|_{L^{2}_{\frac{1 }{2}+\delta}(\mathbb{R}_{+}^{3})}
\right) .
\end{align*}
When $ \lim\limits_{z \rightarrow +\infty}F(x, y, z)=0$, we denote 
\begin{align*}
	F(x, y, z)=-\int^{\infty}_{y}\partial_{z} F(x, y, \tilde{z}) d \tilde{z},
\end{align*}
and  use the same method.
\end{proof}

The following lemma is a trace theorem (see Lemma A.2. in \cite{Xu-Zhang-2017}) which can help us to deal with boundary value.
\begin{lemma}\label{trace}
	Let $\lambda>\frac{1}{2}$, then there exists a constant $C>0$ such that for any function $f$ defined on $\mathbb{R}^{3}_{+}$, 
	if $\partial_z f\in L^2_{\lambda}(\mathbb{R}^{3}_+)$, it admits a trace on $\mathbb{R}^{2}_{x, y}\times\{0\}$, and satisfies
\begin{align*}
	\|\gamma_0(f)\|_{L^2 (\mathbb{R}^{2}_{x, y})}\leq C
\|\partial_z f\|_{L^2_\lambda(\mathbb{R}^{3}_{+})},
\end{align*}
where $\gamma_{0}(f)(x, y)=f(x, y, 0)$ is the trace operator.
\end{lemma}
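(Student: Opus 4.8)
The plan is to reduce the estimate to the one–dimensional fundamental theorem of calculus in the normal variable followed by a single application of the Cauchy–Schwarz inequality, the whole point being that the weight $\langle z\rangle^{2\lambda}$ is \emph{integrable at infinity} exactly when $\lambda>\tfrac12$. First I would observe that the hypothesis $\partial_z f\in L^2_\lambda(\mathbb R^3_+)$ already forces $\partial_z f(x,y,\cdot)\in L^1(\mathbb R_+)$ for a.e.\ $(x,y)\in\mathbb R^2$, since
\[
\int_0^\infty|\partial_z f(x,y,z)|\,dz\le\Big(\int_0^\infty\langle z\rangle^{-2\lambda}\,dz\Big)^{1/2}\Big(\int_0^\infty\langle z\rangle^{2\lambda}|\partial_z f(x,y,z)|^2\,dz\Big)^{1/2},
\]
and the first factor is finite precisely because $2\lambda>1$. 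Consequently $f(x,y,z)$ has a limit as $z\to+\infty$; normalizing this limit to be $0$ (as holds in every application of the lemma in this paper, and as is needed for the bound to make sense, a constant $f$ being the obvious obstruction otherwise), the trace is well defined and represented by $\gamma_0(f)(x,y)=f(x,y,0)=-\int_0^\infty\partial_z f(x,y,z)\,dz$.

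Next I would estimate this representation pointwise in $(x,y)$, inserting $1=\langle z\rangle^{-\lambda}\langle z\rangle^{\lambda}$ under the integral and using Cauchy–Schwarz:
\[
|f(x,y,0)|^2\le\Big(\int_0^\infty\langle z\rangle^{-2\lambda}\,dz\Big)\int_0^\infty\langle z\rangle^{2\lambda}|\partial_z f(x,y,z)|^2\,dz=:C_\lambda^2\int_0^\infty\langle z\rangle^{2\lambda}|\partial_z f(x,y,z)|^2\,dz .
\]
Integrating over $(x,y)\in\mathbb R^2$ and applying Fubini's theorem, the right-hand side becomes $C_\lambda^2\,\|\partial_z f\|_{L^2_\lambda(\mathbb R^3_+)}^2$ (the weight here being just $\langle z\rangle^{2\lambda}$, since no $z$-derivative is counted on $f$ itself), which yields $\|\gamma_0(f)\|_{L^2(\mathbb R^2_{x,y})}\le C_\lambda\,\|\partial_z f\|_{L^2_\lambda(\mathbb R^3_+)}$ with $C_\lambda=\big(\int_0^\infty\langle z\rangle^{-2\lambda}\,dz\big)^{1/2}$.

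To make the existence of the trace fully rigorous rather than arguing pointwise, I would instead run the above computation first for $f\in C_c^\infty(\overline{\mathbb R^3_+})$, obtaining the inequality on a dense class, and then define $\gamma_0$ on the full space as its unique bounded extension. I do not expect a genuine obstacle: the only delicate points are the integrability of $\langle z\rangle^{-2\lambda}$ on $(0,\infty)$, which is exactly what $\lambda>\tfrac12$ buys (the same mechanism underlying Lemma \ref{hardy}), and the harmless normalization of $f$ at $z=+\infty$ needed so that the trace is controlled by $\partial_z f$ alone and not by $f$ and $\partial_z f$ together.
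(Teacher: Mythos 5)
Your proof is correct; the paper does not actually prove this lemma (it simply refers to Lemma A.2 of \cite{Xu-Zhang-2017}), and your argument --- writing $\gamma_0(f)(x,y)=-\int_0^\infty\partial_z f(x,y,z)\,dz$, splitting the weight as $\langle z\rangle^{-\lambda}\langle z\rangle^{\lambda}$, applying Cauchy--Schwarz and using that $\int_0^\infty\langle z\rangle^{-2\lambda}\,dz<\infty$ exactly when $\lambda>\tfrac12$, then integrating in $(x,y)$ --- is precisely the standard proof behind that citation. Your observation that the vanishing of $f$ as $z\to+\infty$ must be assumed (constants being the obvious counterexample to the literal statement) is accurate; this normalization is implicit in the lemma and holds in every application in the paper, so the proposal is complete.
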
	
	
Finally, we introduce two interpolation inequalities which will be   used frequently in  Section \ref{s3}.
\begin{lemma}\label{lemma2.4}
	For the suitable functions $f, g$, we have
	\\
{\rm(i)} for $m\geq 6, k+\ell>\frac 32$, and any $\alpha, \beta\in \mathbb{N}^3$ with  $|\alpha|+|\beta|\le m$, we have
\begin{align}
			\|(\partial^\alpha f)(\partial^\beta g)\|_{L^2_{k+\ell+\alpha_3+\beta_3}(\mathbb{R}^3_+)}\le
			C\|f\|_{H^m_{k+\ell}(\mathbb{R}^3_+)}\|g\|_{H^m_{k+\ell}(\mathbb{R}^3_+)};
		\label{s1}
\end{align}
%with $k_{1}+\ell_{1}+k_{2}+\ell_{2}=k+\ell$.
\\
{\rm(ii)} for $m\ge 6, k+\ell>\frac 32$, and any $\alpha\in \mathbb{N}^3, p\in\mathbb{N}$ with $|\alpha|+p\le m$, we have,
\begin{align}
		\|(\partial^\alpha f)(\partial^p_{xy} (\partial^{-1}_z g))\|_{L^2_{k+\ell+\alpha_3}(\mathbb{R}^3_+)}\le
	C\|f\|_{H^m_{k+\ell}(\mathbb{R}^3_+)}\|g\|_{H^m_{\frac 12+\delta}(\mathbb{R}^3_+)}, \label{s2}
\end{align}
	where $\partial^{-1}_z$ is the inverse of derivative $\partial_z$, meaning,
	$\partial^{-1}_z g=\int^z_0 g(x, y, \tilde z) \, d\tilde{z}$.
\end{lemma}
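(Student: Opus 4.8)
\emph{Proof proposal.} The plan is to prove both inequalities by the standard device for Moser-type product estimates in anisotropic weighted Sobolev spaces: distribute the $m$ available derivatives between the two factors, place the factor carrying the fewer derivatives in $L^\infty(\mathbb{R}^3_+)$ (or in a suitable mixed norm) via the Sobolev-type embeddings of Lemma~\ref{infty}, and keep the other factor in $L^2$ with the full $z$-weight. The hypothesis $m\ge 6$ enters because whenever one factor carries at most half of the derivatives it carries at most $m-3$ of them, so the extra $z$-derivative and the up to two tangential derivatives demanded by the embedding raise its order to at most $m$; the hypothesis $k+\ell>\tfrac32$ gives the room needed to move powers of $\langle z\rangle$ from one factor to the other while remaining inside the norms at hand.

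For (i), by the symmetry $f\leftrightarrow g$ (both sit in $H^m_{k+\ell}$) I may assume $|\alpha|\le|\beta|$, so $|\alpha|\le m/2\le m-3$. I split the weight as $\langle z\rangle^{k+\ell+\alpha_3+\beta_3}=\langle z\rangle^{\alpha_3}\cdot\langle z\rangle^{k+\ell+\beta_3}$, bound $\|\langle z\rangle^{\alpha_3}\partial^\alpha f\|_{L^\infty(\mathbb{R}^3_+)}\le C\|f\|_{H^{|\alpha|+3}_{k+\ell}}\le C\|f\|_{H^m_{k+\ell}}$ using Lemma~\ref{infty} (together with the slice-wise two-dimensional bound \eqref{infty1} where needed, and the fact that the weight $\langle z\rangle^{\alpha_3+\frac12+\delta}$ required by the embedding is dominated by the natural weight $\langle z\rangle^{k+\ell+\alpha_3+1}$ carried by a $z$-derivative of $\partial^\alpha f$ in $H^m_{k+\ell}$, since $k+\ell>\tfrac12$), and bound the remaining factor by $\|\langle z\rangle^{k+\ell+\beta_3}\partial^\beta g\|_{L^2}\le\|g\|_{H^m_{k+\ell}}$ because $|\beta|\le m$. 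Multiplying the two bounds yields \eqref{s1}.

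For (ii) I split cases according to the size of $|\alpha|$. If $|\alpha|\ge 2$, so that $p\le m-2$, I keep $\partial^\alpha f$ in $L^2$ with the full weight $\langle z\rangle^{k+\ell+\alpha_3}$ (bounded by $\|f\|_{H^m_{k+\ell}}$) and estimate $\partial^p_{xy}\partial_z^{-1}g$ in $L^\infty(\mathbb{R}^3_+)$; the key observation is the pointwise-in-$z$ bound $\sup_z|\partial^p_{xy}\partial_z^{-1}g(x,y,z)|\le C\|\partial^p_{xy}g(x,y,\cdot)\|_{L^2_{\frac12+\delta}(\mathbb{R}_+)}$, which follows from Cauchy--Schwarz and $\int_0^\infty\langle z\rangle^{-1-2\delta}\,dz<\infty$; combined with \eqref{infty1} in $(x,y)$ (which costs at most two tangential derivatives) it gives $\|\partial^p_{xy}\partial_z^{-1}g\|_{L^\infty}\le C\|g\|_{H^{p+2}_{\frac12+\delta}}\le C\|g\|_{H^m_{\frac12+\delta}}$. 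If instead $|\alpha|\le 1$, I switch to mixed norms to avoid spending tangential derivatives on $g$: I put $\langle z\rangle^{k+\ell+\alpha_3}\partial^\alpha f$ in $L^2_z(L^\infty_{xy})$, controlled by $C\|f\|_{H^{|\alpha|+2}_{k+\ell}}\le C\|f\|_{H^m_{k+\ell}}$ via \eqref{infty1} applied slice-wise, and I put $\partial^p_{xy}\partial_z^{-1}g$ in $L^\infty_z(L^2_{xy})$, which the pointwise-in-$z$ bound above controls directly by $C\|\partial^p_{xy}g\|_{L^2_{\frac12+\delta}}\le C\|g\|_{H^m_{\frac12+\delta}}$ with no further tangential derivative. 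A H\"older inequality in $(x,y)$ followed by integration in $z$ then closes \eqref{s2}. Hardy's inequality (Lemma~\ref{hardy}) is used wherever it is convenient to trade a power of $\langle z\rangle$ against a $z$-derivative on a quantity vanishing at $z=0$ or decaying as $z\to+\infty$.

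I expect the only genuine (though routine) difficulty to be the bookkeeping in the case $|\alpha|\le 1$ of (ii): one must resist placing $\partial^p_{xy}\partial_z^{-1}g$ in $L^\infty(\mathbb{R}^3_+)$ — which would force the unavailable norm $\|g\|_{H^{m+2}_{\frac12+\delta}}$ — and instead exploit that the $z$-antiderivative is uniformly bounded in $z$ with a constant depending only on $\|g\|_{H^m_{\frac12+\delta}}$, so that the whole weight $\langle z\rangle^{k+\ell+\alpha_3}$ can be loaded onto the $f$-factor. Everything else is a direct application of the Leibniz rule and Lemmas~\ref{hardy}--\ref{infty}.
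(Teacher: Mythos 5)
Your argument is essentially correct, and in fact the paper gives no proof of this lemma at all: it is stated in Appendix~A as an auxiliary tool (the 3D analogue of the interpolation inequalities used in Xu--Zhang \cite{Xu-Zhang-2017}), so there is no in-paper route to compare against; your proposal would serve as the missing proof. The derivative and weight bookkeeping checks out: in (i), with $|\alpha|\le|\beta|$ one has $|\alpha|\le m/2\le m-3$ for $m\ge 6$, so the up to three extra derivatives demanded by the $L^\infty$ embedding keep the total order $\le m$ and the tangential order $\le m-1$, and the needed weight $\langle z\rangle^{\alpha_3+\frac12+\delta}$ is indeed dominated by the weights $\langle z\rangle^{k+\ell+\alpha_3}$, $\langle z\rangle^{k+\ell+\alpha_3+1}$ available in $H^m_{k+\ell}$ because $k+\ell>\frac32$; in (ii) the split $|\alpha|\ge2$ (so $p+2\le m$) versus $|\alpha|\le1$ (mixed $L^2_z(L^\infty_{xy})\times L^\infty_z(L^2_{xy})$ H\"older) is exactly the right way to avoid the unavailable norm $\|g\|_{H^{m+2}_{\frac12+\delta}}$, and the pointwise bound $\sup_z|\partial^p_{xy}\partial_z^{-1}g|\le C\|\langle z\rangle^{\frac12+\delta}\partial^p_{xy}g(x,y,\cdot)\|_{L^2_z}$ is the correct key observation. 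Two small points to tighten when writing this up: (a) inequality \eqref{infty2} is stated under the hypothesis $F|_{z=0}=0$ or $F\to0$ as $z\to+\infty$, so when you apply it to $F=\langle z\rangle^{\alpha_3}\partial^\alpha f$ you should either verify the decay at infinity (which follows for functions in the weighted space with $k+\ell>\frac32$, along the lines of the paper's proof of Lemma~\ref{infty}) or use the variant of the embedding that retains the zeroth-order term; (b) when you take the $(x,y)$-supremum of $z$-integrated quantities via \eqref{infty1}, record the Minkowski-type step $|\partial_x\|h(x,y,\cdot)\|_{L^2_z}|\le\|\partial_x h(x,y,\cdot)\|_{L^2_z}$ so that the tangential derivatives land on $g$ inside the weighted $L^2(\mathbb{R}^3_+)$ norms. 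With these details the proof is complete.
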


	\section{Compatibility conditions and reduction of boundary data }
The main purpose of this appendix  is to prove the compatibility conditions and reduction of boundary data. We will give more details in the reduction of derivatives on the boundary for the  three-dimensional case.

\begin{proof}[\textbf{Proof of Proposition 3.1 }]

Under the assumption of Proposition \ref{boundary reduction}, $\tilde{u}^{\epsilon}$ is a smooth solution.  For $p \leq \frac{m}{2}$, to make sure that the existence of the trace of $\partial_z^{2p+2} \tilde{u}^{\epsilon}$ on $z=0$, $\tilde{u}^{\epsilon}$ at least needs	to  satisfy  $\partial_{z}^{2p+2}\tilde{u}^{\epsilon} \in L^\infty([0, T]; H^{1}_{k+\ell+2p+1}(\mathbb{R}^3_+))$.
	
	Recalling the boundary condition in \eqref{regular-shear-prandtl}:
	\begin{align*}
		(\tilde{u}^{\epsilon}, \tilde{w}^{\epsilon})(t, x, y, 0)=(0, 0), \quad (t, x, y)\in [0, T]\times \mathbb{R}^{2},
	\end{align*}
then the following is obvious:
\begin{align*}
	\partial^n_{xy}(\tilde{u}^\epsilon,  \tilde{w}^\epsilon)(t, x, y, 0)=(0, 0),\quad (t, x, y)\in [0, T]\times \mathbb{R}^{2}, 
\end{align*}
for $0\leq n\leq m+2$, and 
	\begin{align*}
		\partial_t\partial^n_{xy}(\tilde{u}^\epsilon,  \tilde{w}^\epsilon)(t, x, y, 0)=(0,  0), \quad (t, x, y)\in [0, T]\times \mathbb{R}^{2}, 
	\end{align*}
	for $0\leq n\leq m$. Furthermore, we also have that
	\begin{align*}
		\partial_{z}^{2}\partial_{xy}^{n}\tilde{u}^{\epsilon}(t, x, y, 0)=0, \quad \partial_{t}\partial_{z}^{2}\partial_{xy}^{n}\tilde{u}^\epsilon(t, x, y, 0)=0, \quad (t, x, y)\in [0, T]\times \mathbb{R}^{2},
	\end{align*}	
 for $0\leq n \leq m$. 
 
 With the above conditions,  we now begin to prove the estimates of reduction on the boundary. 
 Then,  taking $\partial_{z}$ operator of equation $\eqref{regular-shear-prandtl}_{1}$,  we obtain
\begin{align*}
	\partial_t\partial_z\tue + \partial_z\big ((u^s + \tilde{u}^{\epsilon}) \partial_x\tue +K(u^s + \tilde{u}^{\epsilon})\partial_y\tilde {{u}}^\epsilon+\tilde{w}^{\epsilon}\partial_{z}({u}^s+\tue)\big)
	= \partial^3_{z}\tue+\epsilon \partial^2_{x}\partial_{z}\tue+\epsilon \partial^2_{y}\partial_{z}\tue,
\end{align*}
	which,   together with the divergence-free condition, yields directly after evaluating at $z = 0$
	\begin{align*}
		\partial_{t}\partial_{z} \tue|_{z=0}=\partial_z^{3}\tue|_{z=0}
		+\epsilon \partial_x^{2}\partial_{z}\tue|_{z=0}
		+\epsilon \partial_y^{2}\partial_{z}\tue|_{z=0}.
	\end{align*}
	Further, differentiating the equation $\eqref{regular-shear-prandtl}_{1}$ with respect to $z$ twice, and then we have
	\begin{align*}
		\partial_t\partial_z^{2}\tue + \partial_z^{2}\big ( (u^s + \tilde{u}^\epsilon) \partial_x\tue +K(u^s + \tilde{u}^\epsilon)\partial_y\tilde {u}^\epsilon+\tilde{w}^\epsilon\partial_{z}({u}^s+\tue)\big)
		= \partial^4_{z}\tue+\epsilon \partial^2_{x}\partial_{z}^{2}\tue+\epsilon \partial^2_{y}\partial_{z}^{2}\tue,
	\end{align*}
	which, together with the following facts obtained by Leibniz formula
	\begin{align*}
		&\partial_z^{2}\big( (u^s + \tilde{u}^{\epsilon}) \partial_x\tue +K(u^s + \tilde{u}^{\epsilon})\partial_y\tilde u^\epsilon+\tilde{w}\partial_{z}(u^s+\tue)\big) \big|_{z=0}\\
		&=2\partial_{z}(u^s + \tilde{u}^\epsilon)\partial_{x}\partial_{z}\tue |_{z=0}+2\partial_{z}\KK\partial_{y}\partial_{z}\tue |_{z=0}+\partial_{z}^{2}\tw \partial_{z}({u}^s+\tue) |_{z=0}
		\\
		&=2\partial_{z}(u^s + \tilde{u}^\epsilon)\partial_{x}\partial_{z}\tue |_{z=0}+2\partial_{z}\big(K(u^s + \tilde{u}^\epsilon)\big)\partial_{y}\partial_{z}\tue |_{z=0}-\partial_{z}\big(\partial_{x}\tue +\partial_{y}(K(u^s+\tue))\big) \partial_{z}(u^s+\tue) |_{z=0}
		\\
		&=2\partial_{z}(u^s + \tilde{u}^\epsilon)\partial_{x}\partial_{z}\tue |_{z=0}+2\partial_{z}\big(K(u^s + \tilde{u}^\epsilon)\big)\partial_{y}\partial_{z}\tue |_{z=0}-\partial_{x}\partial_{z}\tilde{{u}}^{\epsilon} \partial_{z}(u^s+\tue) |_{z=0}-\partial_{y}\partial_{z}\big(K(u^s+\tilde{{u}}^{\epsilon})\big) \partial_{z}(u^s+\tue) |_{z=0},
	\end{align*}
	leads to
	\begin{align}
	\begin{split}
			\partial^4_{z}\tilde{u}^{\epsilon}|_{z=0}&=
			2\partial_{z}(u^s + \tilde{u}^\epsilon)\partial_{x}\partial_{z}\tue |_{z=0}+2\partial_{z}\big(K(u^s + \tilde{u}^\epsilon)\big)\partial_{y}\partial_{z}\tue |_{z=0}-\partial_{x}\partial_{z}\tilde{{u}}^{\epsilon} \partial_{z}(u^s+\tue) |_{z=0}\\
			&\quad-\partial_{y}\partial_{z}\big(K(u^s+\tilde{{u}}^{\epsilon})\big) \partial_{z}(u^s+\tue) |_{z=0}.
	\end{split}
		\label{zu4}
	\end{align}
	Using  the heat equation \eqref{heat} and nonlinear equation \eqref{regular-shear-prandtl}, 
we may check that
\begin{align*}
&\partial_{t}\big(\partial_{z}(u^s + \tilde{u}^\epsilon)\partial_{x}\partial_{z}\tilde{u}^{\epsilon} \big)\big|_{z=0}\\
&=\big(\partial_{z}(u^s + \tilde{u}^\epsilon)(\partial_{x}\partial_{z}^{3}\tilde{u}^{\epsilon}
+\epsilon \partial_x^{3}\partial_{z}\tilde{u}^{\epsilon}
+\epsilon \partial_{x}\partial_y^{2}\partial_{z}\tilde{u}^{\epsilon})\big) \big|_{z=0}  +\big((\partial_{z}^{3} u^s+\partial^{3}_{z}\tue+\epsilon\partial_{x}^{2}\partial_{z}\tue+\epsilon\partial_{y}^{2}\partial_{z}\tue)\partial_{x}\partial_{z}\tilde{u}^\epsilon\big) \big|_{z=0} \, ,\\
&\partial_{t}\big(\partial_{z}(K(u^s + \tilde{u}^\epsilon))\partial_{y}\partial_{z}\tilde{u}^{\epsilon} \big)\big|_{z=0}\\
&=\big(\partial_{z}(K(u^s + \tilde{u}^\epsilon))(\partial_{y}\partial_{z}^{3}\tilde{u}^{\epsilon}
+\epsilon \partial_x^{2}\partial_{y}\partial_{z}\tilde{u}^{\epsilon}
+\epsilon \partial_y^{3}\partial_{z}\tilde{u}^{\epsilon})\big)\big |_{z=0}
\\
&\quad  +\big(\partial^{3}_{z}\big(K(u^s+\tilde{{u}}^{\epsilon})\big)+\epsilon K\partial_{x}^{2}\partial_{z}(u^s+\tilde{{u}}^{\epsilon})+\epsilon K\partial_{y}^{2}\partial_{z}(u^s+\tilde{{u}}^{\epsilon})\big)\partial_{y}\partial_{z}\tilde{u}^\epsilon \big|_{z=0} \, , \\
&\partial_{t}\big(\partial_{x}\partial_{z}\tilde{{u}}^{\epsilon} \partial_{z}(u^s+\tilde{u}^{\epsilon}) \big)\big|_{z=0}\\
&=\big(\partial_{x}\partial_{z}\tilde{{u}}^{\epsilon}(\partial_{z}^{3} u^s+\partial^{3}_{z}\tilde{u}^{\epsilon}+\epsilon\partial_{x}^{2}\partial_{z}\tilde{u}^{\epsilon}+\epsilon\partial_{y}^{2}\partial_{z}\tilde{u}^{\epsilon}) \big) \big|_{z=0}+\big((\partial_{x}\partial_{z}^{3}\tilde{{u}}^\epsilon
+\epsilon \partial_x^{3}\partial_{z}\tilde{{u}}^{\epsilon}
+\partial_{x}\epsilon \partial_y^{2}\partial_{z}\tilde{{u}}^{\epsilon}) \partial_{z}(u^s+\tilde{{u}}^{\epsilon}) \big)\big|_{z=0}\, , \\
&\partial_{t}\big(\partial_{y}\partial_{z}\big(K(u^s+\tilde{{u}}^{\epsilon})\big) \partial_{z}({u}^s+\tilde{{u}}^{\epsilon}) \big)\big|_{z=0}\\
&=\big(\partial_{y}\partial_{z}\big(K(u^s+\tilde{{u}}^{\epsilon})\big)(\partial_{z}^{3} {u}^s+\partial^{3}_{z}\tilde{{u}}^{\epsilon}+\epsilon\partial_{x}^{2}\partial_{z}\tilde{{u}}^{\epsilon}+\epsilon\partial_{y}^{2}\partial_{z}\tilde{{u}}^{\epsilon}) \big) \big|_{z=0}\\
&\quad
+\big((\partial_{y}\partial_{z}^{3}(K(u^s+\tilde{{u}}^\epsilon))
+\epsilon \partial_{y}\partial_{z}\big(K\partial_x^{2}(u^s+\tilde{{u}}^{\epsilon})\big)
+\epsilon \partial_y\partial_{z}\big(K\partial_y^{2}(u^s+\tilde{{u}}^{\epsilon})\big)) \partial_{z}({u}^s+\tilde{{u}}^{\epsilon}) \big)\big|_{z=0}
%\\
%&\partial_{t}\big(  \partial_{z}\big(\partial_{y}K(u^s+\tue)\big)\partial_{z}(u^s+\tue)\big)\big|_{z=0}\\
%&=2\big(\partial_{y}K\partial_{z}(u^s+\tue)(\partial_{z}^{3} u^s+\partial^{3}_{z}\tue+\epsilon\partial_{x}^{2}\partial_{z}\tue+\epsilon\partial_{y}^{2}\partial_{z}\tue)\big) \big|_{z=0} 
\, ,
\end{align*}
%	\begin{align*}
%		\partial_{t}\left( \partial_{z}(\textbf{u}^s+\tilde{\textbf{u}}) \partial_{z}\partial_{x}\tu \right)  |_{z=0}
%		&=\left( \partial_{z}(\textbf{u}^s+\tilde{\textbf{u}})(\partial_{z}^{3}\partial_{x}\tilde{{u}}
%		+\epsilon \partial_x^{3}\partial_{z}\tilde{{u}}^{\epsilon}
%		+\epsilon \partial_y^{2}\partial_{x}\partial_{z}\tilde{{u}}^{\epsilon}|_{z=0}) |_{z=0}\\
%		&\quad+(\partial_{z}^{3} \textbf{u}^s+\partial^{3}_{z}\tilde{\textbf{u}}+\epsilon\partial_{x}^{2}\partial_{z}\tilde{\textbf{u}}+\epsilon\partial_{y}^{2}\partial_{z}\tilde{\textbf{u}})\partial_{z}\partial_{x}\tilde{{u}} |_{z=0}, \\
%		\partial_{t}\left( \partial_{z}(\textbf{u}^s+\tilde{\textbf{u}}) \partial_{z}\partial_{y}\tv \right)  |_{z=0}
%	&=\left( \partial_{z}(\textbf{u}^s+\tilde{\textbf{u}})(\partial_{z}^{3}\partial_{y}\tilde{{v}}
%	+\epsilon \partial_x^{2}\partial_{y}\partial_{z}\tilde{{v}}^{\epsilon}
%	+\epsilon \partial_y^{3}\partial_{z}\tilde{{v}}^{\epsilon}|_{z=0}) |_{z=0}\\
%	&\quad+(\partial_{z}^{3} \textbf{u}^s+\partial^{3}_{z}\tilde{\textbf{u}}+\epsilon\partial_{x}^{2}\partial_{z}\tilde{\textbf{u}}+\epsilon\partial_{y}^{2}\partial_{z}\tilde{\textbf{u}})\partial_{z}\partial_{y}\tilde{{v}} |_{z=0}, \\ 
%	\end{align*}
and hence,  we have 
\begin{align}
\begin{split}
&\partial_{t}\partial^4_{z}\tilde{u}^{\epsilon}|_{z=0}\\
&=2\big(\partial_{z}(u^s + \tilde{u}^\epsilon)(\partial_{x}\partial_{z}^{3}\tilde{u}^{\epsilon}
+\epsilon \partial_x^{3}\partial_{z}\tilde{u}^{\epsilon}
+\epsilon \partial_{x}\partial_y^{2}\partial_{z}\tilde{u}^{\epsilon})\big) \big|_{z=0}  
\\
&\quad+2\big((\partial_{z}^{3} u^s+\partial^{3}_{z}\tue+\epsilon\partial_{x}^{2}\partial_{z}\tue+\epsilon\partial_{y}^{2}\partial_{z}\tue)\partial_{x}\partial_{z}\tilde{u}^\epsilon\big) \big|_{z=0} 
\\
&\quad+2\big(\partial_{z}(K(u^s + \tilde{u}^\epsilon))(\partial_{y}\partial_{z}^{3}\tilde{u}^{\epsilon}
+\epsilon \partial_x^{2}\partial_{y}\partial_{z}\tilde{u}^{\epsilon}
+\epsilon \partial_y^{3}\partial_{z}\tilde{u}^{\epsilon})\big)\big |_{z=0} 
\\
&\quad+2\big(\partial^{3}_{z}\big(K(u^s+\tilde{{u}}^{\epsilon})\big)+\epsilon K\partial_{x}^{2}\partial_{z}(u^s+\tilde{{u}}^{\epsilon})+\epsilon K\partial_{y}^{2}\partial_{z}(u^s+\tilde{{u}}^{\epsilon})\big)\partial_{y}\partial_{z}\tilde{u}^\epsilon \big|_{z=0}
\\
&
\quad-
\big(\partial_{x}\partial_{z}\tilde{{u}}^{\epsilon}(\partial_{z}^{3} u^s+\partial^{3}_{z}\tilde{u}^{\epsilon}+\epsilon\partial_{x}^{2}\partial_{z}\tilde{u}^{\epsilon}+\epsilon\partial_{y}^{2}\partial_{z}\tilde{u}^{\epsilon}) \big) \big|_{z=0}
\\
&\quad
-\big((\partial_{x}\partial_{z}^{3}\tilde{{u}}^\epsilon
+\epsilon \partial_x^{3}\partial_{z}\tilde{{u}}^{\epsilon}
+\epsilon\partial_{x} \partial_y^{2}\partial_{z}\tilde{{u}}^{\epsilon}) \partial_{z}(u^s+\tilde{{u}}^{\epsilon}) \big)\big|_{z=0}\, , \\
&\quad-
\big(\partial_{y}\partial_{z}\big(K(u^s+\tilde{{u}}^{\epsilon})\big)(\partial_{z}^{3} {u}^s+\partial^{3}_{z}\tilde{{u}}^{\epsilon}+\epsilon\partial_{x}^{2}\partial_{z}\tilde{{u}}^{\epsilon}+\epsilon\partial_{y}^{2}\partial_{z}\tilde{{u}}^{\epsilon}) \big) \big|_{z=0}
\\
&\quad-
\big((\partial_{y}\partial_{z}^{3}(K(u^s+\tilde{{u}}^\epsilon))
+\epsilon \partial_{y}\partial_{z}\big(K\partial_x^{2}(u^s+\tilde{{u}}^{\epsilon})\big)
+\epsilon \partial_y\partial_{z}\big(K\partial_y^{2}(u^s+\tilde{{u}}^{\epsilon})\big)) \partial_{z}({u}^s+\tilde{{u}}^{\epsilon}) \big)\big|_{z=0}
%\\
%&\quad-2\big(\partial_{y}K\partial_{z}(u^s+\tue)(\partial_{z}^{3} u^s+\partial^{3}_{z}\tue+\epsilon\partial_{x}^{2}\partial_{z}\tue+\epsilon\partial_{y}^{2}\partial_{z}\tue)\big) \big|_{z=0} 
\, ,
\end{split}
\label{tzu4}
\end{align}
	For $p = 2$, we have
	\begin{align*}
		\partial_t\partial_z^{4}\tilde{u}^{\epsilon} + \partial_z^{4}\big( (u^s + \tilde{u}^{\epsilon}) \partial_x\tilde{u}^{\epsilon} +K(u^s + \tilde{u}^{\epsilon})\partial_y\tilde {u}^\epsilon+\tilde{w}^{\epsilon}\partial_{z}(u^s+\tilde{u}^{\epsilon})\big)
		= \partial^6_{z}\tilde{u}^{\epsilon}+\epsilon \partial^2_{x}\partial_{z}^{4}\tilde{u}^{\epsilon}+\epsilon \partial^2_{y}\partial_{z}^{4}\tilde{u}^{\epsilon}.
	\end{align*}
Using Leibniz formula
	\begin{align*}
		\begin{split}
			&\partial_z^{4}\big( (u^s + \tilde{u}^{\epsilon}) \partial_x\tilde{u}^{\epsilon} +K(u^s + \tilde{u}^{\epsilon})\partial_y\tilde {u}^\epsilon+\tilde{w}^{\epsilon}\partial_{z}(u^s+\tilde{u}^{\epsilon})\big)\\
			&= \partial_{z}^{4}(u^s + \tilde{u}^{\epsilon}) \partial_x\tilde{u}^{\epsilon}+(u^s + \tilde{u}^{\epsilon}) \partial_{z}^{4}\partial_x\tilde{u}^{\epsilon} +\sum_{1\leq j\leq 3}C^4_j \bigg(\partial^j_z(u^s + \tilde{u}^{\epsilon}) \partial^{4-j}_z\partial_x\tilde{u}^{\epsilon} \bigg) \\
			&\quad + K\partial_{z}^{4}(u^s + \tilde{u}^{\epsilon}) \partial_y\tilde{u}^{\epsilon}+K(u^s + \tilde{u}^{\epsilon}) \partial_{z}^{4}\partial_y\tilde{u}^{\epsilon} +\sum_{1\leq j\leq 3}C^4_j \bigg(\partial^j_z\big(K(u^s + \tilde{u}^{\epsilon})\big) \partial^{4-j}_z\partial_y\tilde{u}^{\epsilon} \bigg)\\
			&\quad +\partial_{z}^{4}\tilde{w}\partial_{z}(u^s+\tilde{u}^{\epsilon})+\tilde{w}\partial_{z}^{5}(u^s+\tilde{u}^{\epsilon})+\sum_{1\leq j\leq 3}C^4_j \bigg(\partial^j_z \tw \partial^{4-j}_z \partial_{z}(u^s+\tilde{u}^{\epsilon}) \bigg),
		\end{split}
	\end{align*}
and   the divergence-free condition $\partial_x\tilde{u}^{\epsilon} +\partial_y\big(K(u^{s}+\tue)\big)+\partial_z\tilde{w} ^{\epsilon}=0$, we have
	\begin{align*}
		\partial^6_{z}\tilde{u}^{\epsilon}|_{z=0}&= \partial_t\partial_z^{4}\tilde{u}^{\epsilon}|_{z=0} - \partial_{z}^{3}\uve \partial_{z}(u^s+\tilde{u}^{\epsilon}) |_{z=0}+\sum_{1\leq j\leq 3}C^4_j \bigg(\partial^j_z(u^s + \tilde{u}^{\epsilon}) \partial^{4-j}_z\partial_x\tilde{u}^{\epsilon} \bigg)\bigg|_{z=0}\\
		&\quad   +\sum_{1\leq j\leq 3}C^4_j \bigg(\partial^j_z\big(K(u^s + \tilde{u}^{\epsilon})\big) \partial^{4-j}_z\partial_y\tilde{u}^{\epsilon} \bigg)\bigg|_{z=0}+\sum_{1\leq j\leq 3}C^4_j \bigg(\partial^{j-1}_z \big(\partial_x\tilde{u}^{\epsilon} +\partial_y\big(K(u^{s}+\tue)\big)\big) \partial^{4-j}_z \partial_{z}(u^s + \tilde{u}^{\epsilon}) \bigg) \bigg|_{z=0}\\
		&\quad -\epsilon \partial^2_{x}\partial_{z}^{4}\tilde{u}^{\epsilon}|_{z=0}-\epsilon \partial^2_{y}\partial_{z}^{4}\tilde{u}^{\epsilon}|_{z=0} \, .
	\end{align*}
As for the last two terms on the right-hand side of above equation,  we use \eqref{zu4} to conclude that
	\begin{align*}
		\begin{split}
		&-\epsilon\partial^2_{x}\partial_{z}^{4}\tilde{u}^{\epsilon}-\epsilon \partial^2_{y}\partial_{z}^{4}\tilde{u}^{\epsilon}\\
		&=-\epsilon(\partial^2_{x}+\partial^2_{y})
		\left(2\partial_{z}(u^s + \tilde{u}^\epsilon)\partial_{x}\partial_{z}\tue +2\partial_{z}\big(K(u^s + \tilde{u}^\epsilon)\big)\partial_{y}\partial_{z}\tue -\partial_{x}\partial_{z}\tilde{{u}}^{\epsilon} \partial_{z}(u^s+\tue) \right. 
	\\
	&\quad \left.	-\partial_{y}\partial_{z}\big(K(u^s+\tilde{{u}}^{\epsilon})\big) \partial_{z}(u^s+\tue)   \right)\\
		&=-2\epsilon\partial_{z}\use\partial_{x}^{3}\partial_{z}\tilde{u}^{\epsilon} \underline{-4\epsilon\partial_{x}\partial_{z}\use\partial_{x}^{2}\partial_{z}\tilde{u}^{\epsilon}} -2\epsilon\partial_{x}^{2}\partial_{z}\use\partial_{x}\partial_{z}\tilde{u}^{\epsilon}\\
		&\quad-2\epsilon \partial_{z}(K\use)\partial_{x}^{2}\partial_{y}\partial_{z}\tilde{u}^{\epsilon} \underline{-4\epsilon \partial_{x}\partial_{z}(K\use)\partial_{x}\partial_{y}\partial_{z}\tilde{u}^{\epsilon}} -2\epsilon K\partial_{x}^{2}\partial_{z}\use\partial_{y}\partial_{z}\tilde{u}^{\epsilon}\\
		&\quad\underline{
		 -2\epsilon \partial_{x}^{2}K\partial_{z}\use\partial_{y}\partial_{z}\tilde{u}^{\epsilon}
	   -2\epsilon \partial_{x}K\partial_{x}\partial_{z}\use\partial_{y}\partial_{z}\tilde{u}^{\epsilon}
		}
		\\
		&\quad 
		+\epsilon \partial_{x}\partial_{z}\tilde{{u}}^{\epsilon} \partial_{x}^{2}\partial_{z}(u^s+\tilde{u}^{\epsilon})
		 \underline{+2\epsilon \partial_{x}^{2}\partial_{z}\tilde{{u}}^{\epsilon} \partial_{x}\partial_{z}(u^s+\tilde{u}^{\epsilon})}
		 +\epsilon \partial_{x}^{3}\partial_{z}\tilde{{u}}^{\epsilon} \partial_{z}(u^s+\tilde{u}^{\epsilon}) \\
		&\quad 
		+\epsilon \partial_{y}\partial_{z}\big(K(u^s+\tilde{{u}}^{\epsilon})\big) \partial_{x}^{2}\partial_{z}(u^s+\tilde{u}^{\epsilon})
		 \underline{+2\epsilon \partial_{x}\partial_{y}\partial_{z}\big(K(u^s+\tilde{{u}}^{\epsilon})\big) \partial_{x}\partial_{z}(u^s+\tilde{u}^{\epsilon})}
		 +\epsilon \partial_{y}\partial_{z}\big(K\partial_{x}^{2}(u^s+\tilde{{u}}^{\epsilon})\big) \partial_{z}(u^s+\tilde{u}^{\epsilon}) \\
		 &\quad 
		 \underline{
		 +\epsilon \partial_{y}\partial_{z}\big(\partial_{x}^{2}K(u^s+\tilde{{u}}^{\epsilon})\big) \partial_{z}(u^s+\tilde{u}^{\epsilon})
		 +\epsilon \partial_{y}\partial_{z}\big(\partial_{x}K\partial_{x}(u^s+\tilde{{u}}^{\epsilon})\big) \partial_{z}(u^s+\tilde{u}^{\epsilon})
		 }
		  \\
%		&
%		\quad+2\epsilon\partial_{y}K\big(\partial_{x}\partial_{z}(u^s+\tue)\big)^2
%		+2\epsilon\partial_{y}K\partial_{x}^2\partial_{z}(u^s+\tue)\partial_{z}(u^s+\tue)
%		+2\epsilon\partial_{x}\partial_{y}K\partial_{x}\partial_{z}(u^s+\tue)\partial_{z}(u^s+\tue)
%		+\epsilon\partial_{x}^2\partial_{y}K\big(\partial_{z}(u^s+\tue)\big)^2
%		\\
		&\quad-2\epsilon\partial_{z}\use \partial_{x}\partial_{y}^{2}\partial_{z}\tilde{u}^{\epsilon} \underline{-4\epsilon\partial_{y}\partial_{z}\use\partial_{x}\partial_{y}\partial_{z}\tilde{u}^{\epsilon}} -2\epsilon\partial_{y}^{2}\partial_{z}\use\partial_{x}\partial_{z}\tilde{u}^{\epsilon}\\
		&\quad-2\epsilon \partial_{z}(K\use)\partial_{y}^{3}\partial_{z}\tilde{u}^{\epsilon} \underline{-4\epsilon \partial_{y}\partial_{z}(K\use)\partial_{y}^{2}\partial_{z}\tilde{u}^{\epsilon}} -2\epsilon K\partial_{y}^{2}\partial_{z}\use\partial_{y}\partial_{z}\tilde{u}^{\epsilon}\\
		&\quad
		\underline{
			-2\epsilon \partial_{y}^{2}K\partial_{z}\use\partial_{y}\partial_{z}\tilde{u}^{\epsilon}
		-2\epsilon \partial_{y}K\partial_{y}\partial_{z}\use\partial_{y}\partial_{z}\tilde{u}^{\epsilon}}
		\\
		&\quad
		+\epsilon \partial_{x}\partial_{z}\tilde{{u}}^{\epsilon}\partial_{y}^{2}\partial_{z}(u^s+\tilde{u}^{\epsilon})
		\underline{+2\epsilon \partial_{x}\partial_{y}\partial_{z}\tilde{{u}}^{\epsilon} \partial_{y}\partial_{z}(u^s+\tilde{u}^{\epsilon})}
		 +\epsilon \partial_{x}\partial_{y}^{2}\partial_{z}\tilde{{u}}^{\epsilon}\partial_{z}(u^s+\tilde{u}^{\epsilon})  \\
		&\quad 
		+\epsilon \partial_{y}\partial_{z}\big(K(u^s+\tilde{{u}}^{\epsilon})\big)\partial_{y}^{2}\partial_{z}(u^s+\tilde{u}^{\epsilon})
		\underline{+2\epsilon \partial_{y}^{2}\partial_{z}\big(K(u^s+\tilde{{u}}^{\epsilon})\big)\partial_{y}\partial_{z}(u^s+\tilde{u}^{\epsilon}) }
		+\epsilon \partial_{y}\partial_{z}\big(K\partial_{y}^2(u^s+\tilde{{u}}^{\epsilon})\big)\partial_{z}(u^s+\tilde{u}^{\epsilon})
		\\
&\quad 
\underline{
	+\epsilon \partial_{y}\partial_{z}\big(\partial_{y}^{2}K(u^s+\tilde{{u}}^{\epsilon})\big) \partial_{z}(u^s+\tilde{u}^{\epsilon})
	+\epsilon \partial_{y}\partial_{z}\big(\partial_{y}K\partial_{y}(u^s+\tilde{{u}}^{\epsilon})\big) \partial_{z}(u^s+\tilde{u}^{\epsilon})
}
.
		\end{split}
	\end{align*}
Noticing that the terms which contain $\epsilon$  in the expansions of \eqref{tzu4} cancel out with all the terms in the  above equation  at $z=0$ except the underlined term, we arrive at
	\begin{align*}
		&\partial^6_{z}\tilde{u}^{\epsilon}|_{z=0}\\
		&= 2\partial_{z}\use\partial_{z}^{3}\partial_{x}\tilde{u}^{\epsilon} |_{z=0}+2\partial_{z}^{3}\use\partial_{z}\partial_{x}\tilde{u}^{\epsilon} |_{z=0}+\sum_{1\leq j\leq 3}C^4_j \bigg(\partial^j_z(u^s + \tilde{u}^{\epsilon}) \partial^{4-j}_z\partial_x\tilde{u}^{\epsilon} \bigg)\bigg|_{z=0}\\
		&\quad +2\partial_{z}(K\use)\partial_{z}^{3}\partial_{y}\tilde{u}^{\epsilon} |_{z=0}+ 2\partial_{z}^{3}(K\use)\partial_{z}\partial_{y}\tilde{u}^{\epsilon}|_{z=0} +\sum_{1\leq j\leq 3}C^4_j \bigg(\partial^j_zK(u^s + \tilde{u}^\epsilon) \partial^{4-j}_z\partial_y\tilde{u}^{\epsilon} \bigg)\bigg|_{z=0}\\
		&\quad - 2\partial_{z}^{3}\big(\partial_x\tilde{u}^{\epsilon} +\partial_y\big(K(u^{s}+\tue)\big)\big) \partial_{z}(u^s+\tilde{u}^{\epsilon}) |_{z=0}-\partial_{z}^{3}(u^s+\tilde{u}^{\epsilon}) \partial_{z}\big(\partial_x\tilde{u}^{\epsilon} +\partial_y\big(K(u^{s}+\tue)\big)\big) |_{z=0}\\
		&\quad-\sum_{1\leq j\leq 3}C^4_j \bigg(\partial^{j-1}_z \big(\partial_x\tilde{u}^{\epsilon} +\partial_y\big(K(u^{s}+\tue)\big)\big) \partial^{5-j}_z (u^s + \tilde{u}^{\epsilon}) \bigg) \bigg|_{z=0} \\
&
\quad
\underline{-4\epsilon\partial_{x}\partial_{z}\use\partial_{x}^{2}\partial_{z}\tilde{u}^{\epsilon}} 
\underline{-4\epsilon \partial_{x}\partial_{z}(K\use)\partial_{x}\partial_{y}\partial_{z}\tilde{u}^{\epsilon}} 
\\
&\quad
\underline{
	-2\epsilon \partial_{x}^{2}K\partial_{z}\use\partial_{y}\partial_{z}\tilde{u}^{\epsilon}
	-2\epsilon \partial_{x}K\partial_{x}\partial_{z}\use\partial_{y}\partial_{z}\tilde{u}^{\epsilon}
}
\\
&\quad
\underline{+2\epsilon \partial_{x}^{2}\partial_{z}\tilde{{u}}^{\epsilon} \partial_{x}\partial_{z}(u^s+\tilde{u}^{\epsilon})}	
\underline{+2\epsilon \partial_{x}\partial_{y}\partial_{z}\big(K(u^s+\tilde{{u}}^{\epsilon})\big) \partial_{x}\partial_{z}(u^s+\tilde{u}^{\epsilon})}
\\
&\quad
\underline{
	+\epsilon \partial_{y}\partial_{z}\big(\partial_{x}^{2}K(u^s+\tilde{{u}}^{\epsilon})\big) \partial_{z}(u^s+\tilde{u}^{\epsilon})
	+\epsilon \partial_{y}\partial_{z}\big(\partial_{x}K\partial_{x}(u^s+\tilde{{u}}^{\epsilon})\big) \partial_{z}(u^s+\tilde{u}^{\epsilon})
}
\\
&\quad \underline{-4\epsilon\partial_{y}\partial_{z}\use\partial_{x}\partial_{y}\partial_{z}\tilde{u}^{\epsilon}} 
\underline{-4\epsilon \partial_{y}\partial_{z}(K\use)\partial_{y}^{2}\partial_{z}\tilde{u}^{\epsilon}} 
\\
&\quad
\underline{
	-2\epsilon \partial_{y}^{2}K\partial_{z}\use\partial_{y}\partial_{z}\tilde{u}^{\epsilon}
	-2\epsilon \partial_{y}K\partial_{y}\partial_{z}\use\partial_{y}\partial_{z}\tilde{u}^{\epsilon}}
\\
&\quad
\underline{+2\epsilon \partial_{x}\partial_{y}\partial_{z}\tilde{{u}}^{\epsilon} \partial_{y}\partial_{z}(u^s+\tilde{u}^{\epsilon})}
\underline{+2\epsilon \partial_{y}^{2}\partial_{z}\big(K(u^s+\tilde{{u}}^{\epsilon})\big)\partial_{y}\partial_{z}(u^s+\tilde{u}^{\epsilon}) }
\\		
&\quad 
\underline{
	+\epsilon \partial_{y}\partial_{z}\big(\partial_{y}^{2}K(u^s+\tilde{{u}}^{\epsilon})\big) \partial_{z}(u^s+\tilde{u}^{\epsilon})
	+\epsilon \partial_{y}\partial_{z}\big(\partial_{y}K\partial_{y}(u^s+\tilde{{u}}^{\epsilon})\big) \partial_{z}(u^s+\tilde{u}^{\epsilon})
}\, ,
	\end{align*}
which implies that
	\begin{align}
	\begin{split}
&\partial^6_{z}\tilde{u}^{\epsilon}|_{z=0}\\
&= \uuline{2\partial_{z}\use\partial_{z}^{3}\partial_{x}\tilde{u}^{\epsilon} |_{z=0}+2\partial_{z}^{3}\use\partial_{z}\partial_{x}\tilde{u}^{\epsilon} |_{z=0}}+\sum_{1\leq j\leq 3}C^4_j \bigg(\partial^j_z(u^s + \tilde{u}^{\epsilon}) \partial^{4-j}_z\partial_x\tilde{u}^{\epsilon} \bigg)\bigg|_{z=0}\\
&\quad \uuline{+2\partial_{z}(K\use)\partial_{z}^{3}\partial_{y}\tilde{u}^{\epsilon} |_{z=0}+ 2\partial_{z}^{3}(K\use)\partial_{z}\partial_{y}\tilde{u}^{\epsilon}|_{z=0}} +\sum_{1\leq j\leq 3}C^4_j \bigg(\partial^j_zK(u^s + \tilde{u}^\epsilon) \partial^{4-j}_z\partial_y\tilde{u}^{\epsilon} \bigg)\bigg|_{z=0}\\
&\quad  \uuline{-\partial_{z}^{3}\big(\partial_x\tilde{u}^{\epsilon} +\partial_y\big(K(u^{s}+\tue)\big)\big) \partial_{z}({u}^s+\tilde{u}^{\epsilon}) |_{z=0}-\partial_{z}^{3}({u}^s+\tilde{u}^{\epsilon}) \partial_{z}\big(\partial_x\tilde{u}^{\epsilon} +\partial_y\big(K(u^{s}+\tue)\big)\big) |_{z=0}}\\
&\quad-\sum_{1\leq j\leq 3}C^4_{j+1} \bigg(\partial^{j}_z \big(\partial_x\tilde{u}^{\epsilon} +\partial_y\big(K(u^{s}+\tue)\big)\big) \partial^{4-j}_z ({u}^s + \tilde{u}^{\epsilon}) \bigg) \bigg|_{z=0} \\
&
\quad
\underline{-4\epsilon\partial_{x}\partial_{z}\use\partial_{x}^{2}\partial_{z}\tilde{u}^{\epsilon}} 
\underline{-4\epsilon \partial_{x}\partial_{z}(K\use)\partial_{x}\partial_{y}\partial_{z}\tilde{u}^{\epsilon}} 
\\
&\quad
\underline{
	-2\epsilon \partial_{x}^{2}K\partial_{z}\use\partial_{y}\partial_{z}\tilde{u}^{\epsilon}
	-2\epsilon \partial_{x}K\partial_{x}\partial_{z}\use\partial_{y}\partial_{z}\tilde{u}^{\epsilon}
}
\\
&\quad
\underline{+2\epsilon \partial_{x}^{2}\partial_{z}\tilde{{u}}^{\epsilon} \partial_{x}\partial_{z}(u^s+\tilde{u}^{\epsilon})}	
\underline{+2\epsilon \partial_{x}\partial_{y}\partial_{z}\big(K(u^s+\tilde{{u}}^{\epsilon})\big) \partial_{x}\partial_{z}(u^s+\tilde{u}^{\epsilon})}
\\
&\quad
\underline{
	+\epsilon \partial_{y}\partial_{z}\big(\partial_{x}^{2}K(u^s+\tilde{{u}}^{\epsilon})\big) \partial_{z}(u^s+\tilde{u}^{\epsilon})
	+\epsilon \partial_{y}\partial_{z}\big(\partial_{x}K\partial_{x}(u^s+\tilde{{u}}^{\epsilon})\big) \partial_{z}(u^s+\tilde{u}^{\epsilon})
}
\\
&\quad \underline{-4\epsilon\partial_{y}\partial_{z}\use\partial_{x}\partial_{y}\partial_{z}\tilde{u}^{\epsilon}} 
\underline{-4\epsilon \partial_{y}\partial_{z}(K\use)\partial_{y}^{2}\partial_{z}\tilde{u}^{\epsilon}} 
\\
&\quad
\underline{
	-2\epsilon \partial_{y}^{2}K\partial_{z}\use\partial_{y}\partial_{z}\tilde{u}^{\epsilon}
	-2\epsilon \partial_{y}K\partial_{y}\partial_{z}\use\partial_{y}\partial_{z}\tilde{u}^{\epsilon}}
\\
&\quad
\underline{+2\epsilon \partial_{x}\partial_{y}\partial_{z}\tilde{{u}}^{\epsilon} \partial_{y}\partial_{z}(u^s+\tilde{u}^{\epsilon})}
\underline{+2\epsilon \partial_{y}^{2}\partial_{z}\big(K(u^s+\tilde{{u}}^{\epsilon})\big)\partial_{y}\partial_{z}(u^s+\tilde{u}^{\epsilon}) }
\\		
&\quad 
\underline{
	+\epsilon \partial_{y}\partial_{z}\big(\partial_{y}^{2}K(u^s+\tilde{{u}}^{\epsilon})\big) \partial_{z}(u^s+\tilde{u}^{\epsilon})
	+\epsilon \partial_{y}\partial_{z}\big(\partial_{y}K\partial_{y}(u^s+\tilde{{u}}^{\epsilon})\big) \partial_{z}(u^s+\tilde{u}^{\epsilon})
}
.
	\end{split}
\label{uve 6}
\end{align}
The double underlined terms   can be absorbed by the corresponding sum terms, and the underlined terms produced after cancellation are extra terms due to the addition of  the viscous terms $\epsilon(\partial^2_{x}\tilde{u}^{\epsilon}+\partial^2_{y}\tilde{u}^{\epsilon})$.  All terms on the right-hand side of the equality are  in the desired form,  then we justify the formula \eqref{z2p+1 uv} for $p=2$.	
	
%	then
%	
%	\begin{align*}
%		&\partial^6_{z}\tilde{\textbf{u}}|_{z=0}\\
%		&= \underline{2\partial_{z}\us\partial_{z}^{3}\partial_{x}(\textbf{u}^s+\tilde{\textbf{u}}) |_{z=0}+2\partial_{z}^{3}\us\partial_{z}\partial_{x}\tilde{\textbf{u}} |_{z=0}}\\
%		&\quad+\sum_{1\leq j\leq 3}C^4_j \bigg(\partial^j_z(u^s + \tilde{u}) \partial^{4-j}_z\partial_x(\textbf{u}^s+\tilde{\textbf{u}}) - \partial^{j-1}_z \partial_{x}(u^s+\tu) \partial^{4-j}_z \partial_{z}(\textbf{u}^s+\tilde{\textbf{u}})\bigg)\bigg|_{z=0}\\
%		&\quad +\underline{2\partial_{z}\vs\partial_{z}^{3}\partial_{y}\tilde{\textbf{u}} |_{z=0}+ 2\partial_{z}^{3}\vs\partial_{z}\partial_{y}\tilde{\textbf{u}}|_{z=0}} +\sum_{1\leq j\leq 3}C^4_j \bigg(\partial^j_z(v^s + \tilde{v}) \partial^{4-j}_z\partial_y\tilde{\textbf{u}} \bigg)\bigg|_{z=0}\\
%		&\quad - 2\partial_{z}^{3}\uv \partial_{z}(\textbf{u}^s+\tilde{\textbf{u}}) |_{z=0}-\underline{\partial_{z}^{3}(\textbf{u}^s+\tilde{\textbf{u}}) \partial_{z}(\partial_{x}\tu+\partial_{y}\tv) |_{z=0}}-\sum_{1\leq j\leq 3}C^4_j \bigg(\partial^{j-1}_z \uv \partial^{4-j}_z \partial_{z}(\textbf{u}^s + \tilde{\textbf{u}}) \bigg) \bigg|_{z=0}
%	\end{align*}
Assume \eqref{z2p+1 uv} holds for $k$,   then taking $2k+2$ times derivatives on the equation
 $\eqref{regular-shear-prandtl}_{1}$, one has
	\begin{align}
	\begin{split}
&\partial_{z}^{2k+4} u^{\epsilon}|_{z=0}\\
&=(\partial_t-\epsilon \partial^2_{x}-\epsilon \partial^2_{y})\partial_{z}^{2k+2}\tilde{u}^{\epsilon} |_{z=0} \\
&\quad   
+\sum_{1\leq j\leq 2k+1}C^{2k+2}_j \bigg(\partial^j_z(u^s + \tilde{u}^{\epsilon}) \partial^{(2k+2)-j}_z\partial_x\tilde{u}^{\epsilon} \bigg)\bigg|_{z=0}
+\sum_{1\leq j\leq 2k+1}C^{2k+2}_j \bigg(\partial^j_z(K(u^s + \tilde{u}^{\epsilon})) \partial^{(2k+2)-j}_z\partial_y\tilde{u}^{\epsilon} \bigg)\bigg|_{z=0} \\
&\quad+\sum_{2\leq j\leq 2k+1}C^4_j \bigg(\partial^{j-1}_z \big(\partial_x\tilde{u}^{\epsilon} +\partial_y\big(K(u^{s}+\tue)\big)\big) \partial^{(2k+2)-j}_z \partial_{z}(u^s + \tilde{u}^{\epsilon}) \bigg) \bigg|_{z=0}.
	\end{split}
\label{z2k+4 u}
\end{align}	

By checking the index, it is enough to deal with the first  term on
the right-hand side of \eqref{z2k+4 u},
%\begin{align*}
%&(\partial_t-\epsilon \partial^2_{x}-\epsilon \partial^2_{y})\partial_{z}^{2k+2}\tilde{u}^{\epsilon} |_{z=0}\\
%&=(\partial_t-\epsilon \partial^2_{x}-\epsilon \partial^2_{y})\bigg(\sum^p_{q=2}\sum^{q-1}_{l=0}\epsilon^{l}
%\sum^{2q-1}_{h=0}\partial_{xy}^{h}K
%\sum_{( \beta, \gamma)\in \Lambda_{q, l}}C_{p, l, \beta, \gamma}\prod\limits_{i=1}^{q_{1}}  \partial^{\beta}\partial_{z}\tilde{u}^{\epsilon} \times 
%\prod\limits_{j=1}^{q_{2}}  \partial^{\gamma}\partial_{z}\big(  \tilde{u}^\epsilon+\tilde{v}^\epsilon \big)\bigg) \bigg|_{z=0} \, .
%\end{align*}
\begin{align*}
	&(\partial_t-\epsilon \partial^2_{x}-\epsilon \partial^2_{y})\partial_{z}^{2k+2}\tilde{{u}}^{\epsilon} |_{z=0}\\
	&=(\partial_t-\epsilon \partial^2_{x}-\epsilon \partial^2_{y})\bigg(\sum^p_{q=2}\sum^{q-1}_{l=0}\epsilon^{l}\sum_{( \beta, \gamma)\in \Lambda_{q, l}}C_{K, p, l, \beta, \gamma}\prod\limits_{i=1}^{q_{1}}  \partial^{\beta}\partial_{z}\big( u^s+ \tilde{u}^\epsilon \big) \times 
	\prod\limits_{j=1}^{q_{2}}  \partial^{\gamma}\partial_{z}\big( K( {u}^s+\tilde{u}^\epsilon) \big)\bigg) \bigg|_{z=0} \, .
\end{align*}

Three cases should be considered:

(1) The derivative operator $\partial_t-\epsilon \partial^2_{x}-\epsilon \partial^2_{y}$ on $\partial^{\beta}\partial_{z}\big( u^s+ \tilde{u}^\epsilon \big) $;

(2) The derivative operator $\partial_t-\epsilon \partial^2_{x}-\epsilon \partial^2_{y}$ on $\partial^{\gamma}\partial_{z}\big(K( u^s+ \tilde{u}^\epsilon) \big)$;

(3) The derivative operator $-\epsilon \partial^2_{x}-\epsilon \partial^2_{y}$ separate to $\partial^{\beta}\partial_{z}\big( u^s+ \tilde{u}^\epsilon \big) $ and $\partial^{\gamma}\partial_{z}\big( K(u^s+ \tilde{u}^\epsilon) \big)$.

\noindent\textbf{Case 1.}
\begin{align*}
&(\partial_t-\epsilon \partial^2_{x}-\epsilon \partial^2_{y})\big(\partial^{\beta^{i}}\partial_{z}(u^s+\tu)\big)
|_{z=0}
\\
&=  
-\sum_{\theta_{z} \geq 1,  \theta \leq \beta^{i}}\binom {\beta^{i}} {\theta} \bigg(\partial_z\partial^{\theta-e_{3}}(u^s + \tilde{u}^{\epsilon}) \partial_x\partial_{z}\partial^{\beta^{i}-\theta}\tilde{u}^{\epsilon} \bigg)\bigg|_{z=0}\\
&\quad-
\sum_{ \theta_{z} \geq 1, \theta \leq \beta^{i}}\binom {\beta^{i}} {\theta} \bigg(\partial_z\partial^{\theta-e_{3}}\big(K(u^s + \tilde{u}^{\epsilon})\big) \partial_y\partial_{z}\partial^{\beta^{i}-\theta}\tilde{u}^{\epsilon} \bigg)\bigg|_{z=0} \\
&\quad+\sum_{ \theta_{z} \geq 2,  \theta \leq \beta^{i}}\binom {\beta^{i}} {\theta} \bigg(\partial_{z}\partial^{\theta-2e_{3}}\big(\partial_x\tilde{u}^{\epsilon} +\partial_y\big(K(u^{s}+\tue)\big)\big)  \partial_{z}^{2}\partial^{\beta^{i}-\theta}(u^s + \tilde{u}^{\epsilon}) \bigg) \bigg|_{z=0}\\
&\quad
-\sum_{ \theta_{z} \geq 1,  \theta \leq \beta^{i}}\binom {\beta^{i}} {\theta} \bigg(\partial_{z}\partial^{\theta-e_{3}}\big(\partial_{y}K (u^s+\tue)\big) ~ \partial_{z}\partial^{\beta^{i}-\theta}(u^{s}+\tilde{u}^\epsilon) \bigg) \bigg|_{z=0}
- \partial^3_{z}\partial^{\beta^{i}}(u^s+\tilde{u}^{\epsilon})
\big|_{z=0}.
\end{align*}
We can check each term, which all satisfies Proposition \ref{boundary reduction}.

\noindent\textbf{Case 2.}
\begin{align*}
	&(\partial_t-\epsilon \partial^2_{x}-\epsilon \partial^2_{y})
	\big(\partial^{\gamma^{i}}\partial_{z}\big( K( u^s+\tilde{u}^\epsilon) \big)
	\big|_{z=0}
	\\
	&=  
	-\partial^{\gamma^{i}}\big( \partial_{x}^2K\partial_{z}( u^s+\tilde{u}^\epsilon)
	-\partial^{\gamma^{i}}\big( \partial_{x}K\partial_{x}\partial_{z}( u^s+\tilde{u}^\epsilon)
	-\partial^{\gamma^{i}}\big( \partial_{y}^2K\partial_{z}( u^s+\tilde{u}^\epsilon)
	-\partial^{\gamma^{i}}\big( \partial_{y}K\partial_{y}\partial_{z}( u^s+\tilde{u}^\epsilon)
	\\
	&\quad
	-\sum_{\kappa_{z} \geq 1,  \kappa \leq \gamma^{i}}\binom {\gamma^{i}} {\kappa} \bigg(
	\partial_z\partial^{\kappa-e_{3}}\big(K(u^s + \tilde{u}^{\epsilon})\big) \partial_x\partial_{z}\partial^{\gamma^{i}-\kappa}\tilde{{u}}^{\epsilon} 
	\bigg)\bigg|_{z=0} \\
	&\quad-
	\sum_{ \kappa_{z} \geq 1, \kappa \leq \gamma^{i}}\binom {\gamma^{i}} {\kappa} \bigg(
	\partial_z\partial^{\kappa-e_{3}}\big(K^2(u^s + \tilde{u}^{\epsilon})\big) \partial_y\partial_{z}\partial^{\gamma^{i}-\kappa}\tilde{u}^{\epsilon} 
		\bigg)\bigg|_{z=0} \\
	&\quad+\sum_{ \kappa_{z} \geq 2,  \kappa \leq \gamma^{i}}\binom {\gamma^{i}} {\kappa} \bigg(
	\partial_{z}\partial^{\kappa-2e_{3}}\big(K\partial_x\tilde{u}^{\epsilon} +K\partial_y\big(K(u^{s}+\tue)\big)\big)  \partial_{z}^{2}\partial^{\gamma^{i}-\kappa}({u}^s + \tilde{u}^{\epsilon}) 
	\bigg) \bigg|_{z=0}
	\\
%	&\quad+\sum_{ \kappa_{z} \geq 1,  \kappa \leq \gamma^{i}}\binom {\gamma^{i}} {\kappa} \bigg(
%	\partial_{z}\partial^{\kappa-e_{3}}(-\partial_{y})\tue \, \partial_{z}\partial^{\gamma^{i}-\kappa}(v^{s}+\tilde{v}^\epsilon) 
%	+\partial_{z}\partial^{\kappa-e_{3}}\partial_{x}\tue \, \partial_{z}\partial^{\gamma^{i}-\kappa}(v^{s}+\tilde{v}^\epsilon) 
%	\bigg) \bigg|_{z=0}\\
	&\quad
	-\sum_{ \kappa_{z} \geq 1,  \kappa \leq \gamma^{i}}\binom {\gamma^{i}} {\kappa} \bigg(\partial_{z}\partial^{\kappa-e_{3}}\big(\partial_{y}K (u^s+\tue)\big) ~ \partial_{z}\partial^{\gamma^{i}-\theta}\big(K(u^{s}+\tilde{u}^\epsilon)\big) \bigg)\bigg|_{z=0} \\
	&\quad- \partial^{\gamma^{i}}\big(K\partial^3_{z}(u^s+\tilde{u}^{\epsilon})\big)
	\big|_{z=0}.
\end{align*}
We can also check each term, which all satisfies Proposition \ref{boundary reduction}.

\noindent\textbf{Case 3.} This situation is much easier than cases 1 and 2. We only need to check
the terms like 
$$-2\epsilon \big(\partial^{\beta^i}\partial_{x}\partial_{z}(u^s+\tilde{u}^{\epsilon}) \big)\big(\partial^{\gamma^i}\partial_{x}\partial_{z}(K(u^s+\tilde{u}^\epsilon ))\big)-2\epsilon \big(\partial^{\beta^i}\partial_{y}\partial_{z}(u^s+\tilde{u}^{\epsilon}) \big)\big(\partial^{\gamma^i}\partial_{y}\partial_{z}(K(u^s+\tilde{u}^\epsilon ))\big),$$
it is obvious that all these
terms satisfy Proposition \ref{boundary reduction}. This completes the proof of Proposition \ref{boundary reduction}.
\end{proof}

\begin{proof}[\textbf{Proof of Corollary 3.3}]
By  \ref{uv t=0},   Proposition \ref{boundary reduction} and the definition of $\epsilon\mu^{\epsilon}$  given in the equation \ref{regular-shear-prandtl}, it follows that
\begin{align*}
\partial^{n}_{xy}\mu^{\epsilon}(x, y, 0)=0, \ \ \partial^{2}_{z}\partial^{n}_{xy}\mu^{\epsilon}(x, y, 0)=0.
\end{align*}
Taking $t=0$ for $\eqref{zu4}$ and  evaluating at $z=0$, we obtain
	\begin{align*}
	\epsilon\partial^4_{z}
	\mu^{\epsilon}|_{z=0}
	&=\big(2\partial_{z}(u^s_{0} + \tilde{u}_{0}+\epsilon\mu^{\epsilon})\partial_{x}\partial_{z}(\tilde{u}_{0}+\epsilon\mu^{\epsilon}) \big)|_{z=0} +\big(2\partial_{z}\big(K(u^s_{0} + \tilde{u}_{0})+\epsilon\mu^{\epsilon})\big)\partial_{y}\partial_{z}(\tilde{u}_{0}+\epsilon\mu^{\epsilon})  \big) \big|_{z=0}\\
	&	\quad-\big(\partial_{x}\partial_{z}( \tilde{u}_{0}+\epsilon\mu^{\epsilon}) \partial_{z}(u^s_{0}+\tilde{u}_{0}+\epsilon\mu^{\epsilon})\big)|_{z=0} +\partial_{y}\partial_{z}( K(u^s_0+\tilde{u}_{0})+\epsilon\mu^{\epsilon}) \partial_{z}(u^s_{0}+\tilde{u}_{0}+\epsilon\mu^{\epsilon}) \big) \big|_{z=0}\\
	&\quad
	-\partial^{4}_{z}\tilde{u}_{0} |_{z=0}
	,
\end{align*}
and thus, using the representation of $\partial^{4}_{z}\tilde{u}_{0} |_{z=0}$ given in $\eqref{uv4 t=0}$,
	\begin{align*}
	\partial^4_{z}
	\mu^{\epsilon}|_{z=0}
	&=\big(
	2\partial_{z}(u^s_{0} + \tilde{u}_{0})\partial_{x}\partial_{z}\mu^{\epsilon}
   +2\partial_{z}\mu^{\epsilon}\partial_{x}\partial_{z}\tilde{u}_{0}
   +2\epsilon\partial_{z}\mu^{\epsilon}\partial_{x}\partial_{z}\mu^{\epsilon} 
	  \big) \big|_{z=0}
	 \\
	&\quad+\big(
	2\partial_{z}(K(u^s_{0} + \tilde{u}_{0}))\partial_{y}\partial_{z}\mu^{\epsilon}
	+2\partial_{z}\mu^{\epsilon}\partial_{y}\partial_{z}\tilde{u}_{0}
	+2\epsilon\partial_{z}\mu^{\epsilon}\partial_{y}\partial_{z}\mu^{\epsilon}   
	\big) \big|_{z=0}\\
	&	\quad-\big(
	\partial_{x}\partial_{z} \tilde{u}_{0} \partial_{z}\mu^{\epsilon}
	+\partial_{x}\partial_{z}\mu^{\epsilon} \partial_{z}(u^s_{0}+\tilde{u}_{0})
	+\epsilon\partial_{x}\partial_{z}\mu^{\epsilon} \partial_{z}\mu^{\epsilon}
	\big) \big|_{z=0}\\
	&	\quad-\big(
	\partial_{y}\partial_{z} \big(K(u^s_{0}+\tilde{u}_{0}) \big)\partial_{z}\mu^{\epsilon}
	+\partial_{y}\partial_{z}\mu^{\epsilon} \partial_{z}(u^s_{0}+\tilde{u}_{0}) 
	+\epsilon\partial_{y}\partial_{z}\mu^{\epsilon} \partial_{z}\mu^{\epsilon}
	\big) \big|_{z=0}
	.
\end{align*}
Analogously, taking the values at $t=0$ for \eqref{uve 6},  we have
{\small
	\begin{align*}
	&\partial^6_{z}\mu^{\epsilon}|_{z=0}\\
	&=2\partial_{z}\useo\partial_{z}^{3}\partial_{x}\mu^{\epsilon} |_{z=0}+2\partial_{z}\mu^{\epsilon}\partial_{z}^{3}\partial_{x}\tilde{u}_{0} |_{z=0}+2\epsilon\partial_{z}\mu^{\epsilon}\partial_{z}^{3}\partial_{x}\mu^{\epsilon} |_{z=0}\\
	&\quad 	+
	2\partial_{z}^{3}\useo\partial_{z}\partial_{x}\mu^{\epsilon} |_{z=0}
	+2\partial_{z}^{3}\mu^{\epsilon}\partial_{z}\partial_{x}\tilde{u}_{0} |_{z=0}
	+2\epsilon\partial_{z}^{3}\mu^{\epsilon}\partial_{z}\partial_{x}\mu^{\epsilon} |_{z=0}\\
	&\quad+ \sum_{1\leq j\leq 3}C^4_j \bigg(\partial^j_z\useo \partial^{4-j}_z\partial_x\mu^{\epsilon} 
	+\partial^j_z\mu^{\epsilon} \partial^{4-j}_z\partial_x\tilde{{u}}_{0}
	+\epsilon\partial^j_z\mu^{\epsilon} \partial^{4-j}_z\partial_x\mu^{\epsilon}
	\bigg)\bigg|_{z=0}\\
	&\quad+2\partial_{z}\big(K(u^s+\tilde{{u}}^{\epsilon})\big)\partial_{z}^{3}\partial_{y}\mu^{\epsilon} |_{z=0}+2\partial_{z}\mu^{\epsilon}\partial_{z}^{3}\partial_{y}\tilde{u}_{0} |_{z=0}+2\epsilon\partial_{z}\mu^{\epsilon}\partial_{z}^{3}\partial_{y}\mu^{\epsilon} |_{z=0}\\
	&\quad 	+
	2\partial_{z}^{3}\big(K(u^s+\tilde{{u}}^{\epsilon})\big)\partial_{z}\partial_{y}\mu^{\epsilon} |_{z=0}
	+2\partial_{z}^{3}\mu^{\epsilon}\partial_{z}\partial_{y}\tilde{u}_{0} |_{z=0}
	+2\epsilon\partial_{z}^{3}\mu^{\epsilon}\partial_{z}\partial_{y}\mu^{\epsilon} |_{z=0}\\
	&\quad+ \sum_{1\leq j\leq 3}C^4_j \bigg(\partial^j_z\big(K(u^s+\tilde{{u}}^{\epsilon})\big) \partial^{4-j}_z\partial_y\mu^{\epsilon} 
	+\partial^j_z\mu^{\epsilon} \partial^{4-j}_z\partial_y\tilde{u}_{0}
	+\partial^j_z\mu^{\epsilon} \partial^{4-j}_z\partial_y\mu^{\epsilon}
	\bigg)\bigg|_{z=0}\\
	&\quad-\partial_{z}^{3}\uveo \partial_{z}\mu^{\epsilon} |_{z=0}
	-\partial_{z}^{3}\uvmuo \partial_{z}(u^s_{0}+\tilde{u}_{0}) |_{z=0}
	-\epsilon\partial_{z}^{3}\uvmuo \partial_{z}\mu^{\epsilon} |_{z=0}
	\\
	&\quad -\partial_{z}^{3}(u^s_{0}+\tilde{u}_{0}) \partial_{z}\uvmuo |_{z=0}
	-\partial_{z}^{3}\mu^{\epsilon} \partial_{z}\uveo |_{z=0}
	-\epsilon\partial_{z}^{3}\mu^{\epsilon} \partial_{z}\uvmuo |_{z=0}
	\\
	&\quad  -\sum_{1\leq j\leq 3}C^4_{j+1} \bigg(
	-\partial^{j}_z \uveo \partial^{4-j}_z \mu^{\epsilon}
	-\partial^{j}_z \uvmuo \partial^{4-j}_z (u^s_{0} + \tilde{u}_{0})
	-\partial^{j}_z \uvmuo \partial^{4-j}_z \mu^{\epsilon}
	\bigg) \bigg|_{z=0} \\
	&\quad 
	-\underline{4\partial_{x}\partial_{z}\tu_{0}\partial_{x}^{2}\partial_{z}\tilde{u}_{0}|_{z=0}}
	-4\epsilon\partial_{x}\partial_{z}\tu_{0}\partial_{x}^{2}\partial_{z}\mu^{\epsilon}|_{z=0}
	-4\epsilon\partial_{x}\partial_{z}\mu^{\epsilon}\partial_{x}^{2}\partial_{z}\tilde{u}_{0}|_{z=0}
	-4\epsilon^{2}\partial_{x}\partial_{z}\mu^{\epsilon}\partial_{x}^{2}\partial_{z}\mu^{\epsilon}|_{z=0}
	\\
	&\quad
	-\underline{4\partial_{x}\partial_{z}\big(K(u^s_{0}+\tilde{{u}}^{\epsilon}_{0})\big)\partial_{x}\partial_{y}\partial_{z}\tilde{u}_{0}|_{z=0}}
	-4\epsilon\partial_{x}\partial_{z}\big(K(u^s_{0}+\tilde{{u}}^{\epsilon}_{0})\big)\partial_{x}\partial_{y}\partial_{z}\mu^{\epsilon}|_{z=0}
	-4\epsilon\partial_{x}\partial_{z}\mu^{\epsilon}\partial_{x}\partial_{y}\partial_{z}\tilde{u}_{0}|_{z=0}
	-4\epsilon^{2}\partial_{x}\partial_{z}\mu^{\epsilon}\partial_{x}\partial_{y}\partial_{z}\mu^{\epsilon}|_{z=0}\\
	&\quad
	-\underline{2\partial_{x}^2K\partial_{z}(u^s_0+\tu_0)\partial_{y}\partial_z\tu_0 |_{z=0}}
	-2\epsilon\partial_{x}^2K\partial_{z}(u^s_0+\tu_0)\partial_{y}\partial_z\mu_\epsilon |_{z=0}
	-2\epsilon\partial_{x}^2K\partial_{z}\mu_\epsilon\partial_{y}\partial_z\tu_0 |_{z=0}
	-2\epsilon^2\partial_{x}^2K\partial_{z}\mu_\epsilon\partial_{y}\partial_z\mu_\epsilon |_{z=0}
	\\
		&\quad
	-\underline{2\partial_{x}K\partial_{x}\partial_{z}(u^s_0+\tu_0)\partial_{y}\partial_z\tu_0 |_{z=0}}
	-2\epsilon\partial_{x}K\partial_{x}\partial_{z}(u^s_0+\tu_0)\partial_{y}\partial_z\mu_\epsilon |_{z=0}
	-2\epsilon\partial_{x}K\partial_{x}\partial_{z}\mu_\epsilon\partial_{y}\partial_z\tu_0 |_{z=0}
	-2\epsilon^2\partial_{x}K\partial_{x}\partial_{z}\mu_\epsilon\partial_{y}\partial_z\mu_\epsilon |_{z=0}
	\\
	%%%
	&\quad+\underline{2 \partial_{x}\partial_{y}\partial_{z}\tilde{{u}}_{0} \partial_{y}\partial_{z}\tilde{u}_{0}|_{z=0}}
	+2\epsilon \partial_{x}\partial_{y}\partial_{z}\tilde{{u}}_{0} \partial_{y}\partial_{z}\mu^{\epsilon})|_{z=0}
	+2\epsilon \partial_{x}\partial_{y}\partial_{z}\mu^{\epsilon} \partial_{y}\partial_{z}\tilde{u}_{0}|_{z=0}
	+2\epsilon^{2} \partial_{x}\partial_{y}\partial_{z}\mu^{\epsilon} \partial_{y}\partial_{z}\mu^{\epsilon}|_{z=0}\\
	&\quad 
	+\underline{2\epsilon \partial_{x}\partial_{y}\partial_{z}\big(K(u^s_{0}+\tilde{{u}}^{\epsilon}_{0})\big) \partial_{x}\partial_{z}\tilde{u}_{0}|_{z=0}}
	+2\epsilon \partial_{x}\partial_{y}\partial_{z}\big(K(u^s_{0}+\tilde{{u}}^{\epsilon}_{0})\big) \partial_{x}\partial_{z}\mu^{\epsilon}|_{z=0}
	+2\epsilon \partial_{x}\partial_{y}\partial_{z}\mu^{\epsilon} \partial_{x}\partial_{z}\tilde{u}_{0}|_{z=0}
	+2\epsilon^{2} \partial_{x}\partial_{y}\partial_{z}\mu^{\epsilon} \partial_{x}\partial_{z}\mu^{\epsilon}|_{z=0}
	\\
	%%%%
	&\quad
	+\underline{
		 \partial_{y}\partial_{z}\big(\partial_{x}^{2}K(u^s_0+\tu_0)\big) \partial_{z}(u^s_0+\tilde{u}_0)|_{z=0}
	}
+
\epsilon \partial_{y}\partial_{z}\big(\partial_{x}^{2}K(u^s_{0}+\tilde{{u}}_{0})\big) \partial_{z}\mu_\epsilon|_{z=0}
+
\epsilon \partial_{y}\partial_{z}(\partial_{x}^2K\mu_\epsilon ) \partial_{z}(u^s_0+\tilde{u}_0)
+\epsilon^2 \partial_{y}\partial_{z}
(\partial_{x}^2K\mu_\epsilon )\partial_{z}\mu_\epsilon
\\
	&\quad
+\underline{
	\partial_{y}\partial_{z}\big(\partial_{x}K\partial_{x}(u^s_0+\tu_0)\big) \partial_{z}(u^s_0+\tilde{u}_0)|_{z=0}
}
+
\epsilon \partial_{y}\partial_{z}\big(\partial_{x}K\partial_{x}(u^s_{0}+\tilde{{u}}_{0})\big) \partial_{z}\mu_\epsilon|_{z=0}
+
\epsilon \partial_{y}\partial_{z}(\partial_{x}K\partial_{x}\mu_\epsilon ) \partial_{z}(u^s_0+\tilde{u}_0)
+\epsilon^2 \partial_{y}\partial_{z}
(\partial_{x}K\partial_{x}\mu_\epsilon )\partial_{z}\mu_\epsilon
\\
%%%%%
	&\quad
	-\underline{4\partial_{y}\partial_{z}\tu_{0}\partial_{x}\partial_{y}\partial_{z}\tilde{u}_{0}|_{z=0}}
	-4\epsilon\partial_{y}\partial_{z}\tu_{0}\partial_{x}\partial_{y}\partial_{z}\mu^{\epsilon}|_{z=0}
	-4\epsilon\partial_{y}\partial_{z}\mu^{\epsilon}\partial_{x}\partial_{y}\partial_{z}\tilde{u}_{0}|_{z=0}
	-4\epsilon^{2}\partial_{y}\partial_{z}\mu^{\epsilon}\partial_{x}\partial_{y}\partial_{z}\mu^{\epsilon}|_{z=0}
	\\
	&\quad
	-\underline{4\partial_{y}\partial_{z}\big(K(u^s_{0}+\tilde{{u}}^{\epsilon}_{0})\big)\partial_{y}^{2}\partial_{z}\tilde{u}_{0}|_{z=0}}
	-4\epsilon\partial_{y}\partial_{z}\big(K(u^s_{0}+\tilde{{u}}^{\epsilon}_{0})\big)\partial_{y}^{2}\partial_{z}\mu^{\epsilon}|_{z=0}
	-4\epsilon\partial_{y}\partial_{z}\mu^{\epsilon}\partial_{y}^{2}\partial_{z}\tilde{u}_{0}|_{z=0}
	-4\epsilon^{2}\partial_{y}\partial_{z}\mu^{\epsilon}\partial_{y}^{2}\partial_{z}\mu^{\epsilon}|_{z=0}
	\\
	%%%%%%%   
	&\quad
	-\underline{2\partial_{y}^2K\partial_{z}(u^s_0+\tu_0)\partial_{y}\partial_z\tu_0 |_{z=0}}
	-2\epsilon\partial_{y}^2K\partial_{z}(u^s_0+\tu_0)\partial_{y}\partial_z\mu_\epsilon |_{z=0}
	-2\epsilon\partial_{y}^2K\partial_{z}\mu_\epsilon\partial_{y}\partial_z\tu_0 |_{z=0}
	-2\epsilon^2\partial_{y}^2K\partial_{z}\mu_\epsilon\partial_{y}\partial_z\mu_\epsilon |_{z=0}
	\\
	&\quad
	-\underline{2\partial_{y}K\partial_{y}\partial_{z}(u^s_0+\tu_0)\partial_{y}\partial_z\tu_0 |_{z=0}}
	-2\epsilon\partial_{y}K\partial_{y}\partial_{z}(u^s_0+\tu_0)\partial_{y}\partial_z\mu_\epsilon |_{z=0}
	-2\epsilon\partial_{y}K\partial_{y}\partial_{z}\mu_\epsilon\partial_{y}\partial_z\tu_0 |_{z=0}
	-2\epsilon^2\partial_{y}K\partial_{y}\partial_{z}\mu_\epsilon\partial_{y}\partial_z\mu_\epsilon |_{z=0}
	\\
	%%%%%%%%
	&\quad+\underline{2 \partial_{x}\partial_{y}\partial_{z}\tilde{{u}}_{0} \partial_{y}\partial_{z}\tilde{u}_{0}|_{z=0}}
	+2\epsilon \partial_{x}\partial_{y}\partial_{z}\tilde{{u}}_{0} \partial_{y}\partial_{z}\mu^{\epsilon}|_{z=0}
	+2\epsilon \partial_{x}\partial_{y}\partial_{z}\mu^{\epsilon} \partial_{y}\partial_{z}\tilde{u}_{0} |_{z=0}
	+2\epsilon^{2} \partial_{x}\partial_{y}\partial_{z}\mu^{\epsilon} \partial_{y}\partial_{z}\mu^{\epsilon}|_{z=0}
	\\
	&\quad
	+\underline{2 \partial_{y}^{2}\partial_{z}\big(K(u^s_{0}+\tilde{{u}}^{\epsilon}_{0})\big)\partial_{y}\partial_{z}\tilde{u}_{0} |_{z=0}}
	+2\epsilon \partial_{y}^{2}\partial_{z}\big(K(u^s_{0}+\tilde{{u}}^{\epsilon}_{0})\big)\partial_{y}\partial_{z}\mu^{\epsilon} |_{z=0}
	+2\epsilon \partial_{y}^{2}\partial_{z}\mu^{\epsilon}\partial_{y}\partial_{z}\tilde{u}_{0} |_{z=0}
	+2\epsilon^{2} \partial_{y}^{2}\partial_{z}\mu^{\epsilon}\partial_{y}\partial_{z}\mu^{\epsilon}|_{z=0}
	\\
	%%%%%%%%%
	&\quad
+\underline{
	\partial_{y}\partial_{z}\big(\partial_{y}^{2}K(u^s_0+\tu_0)\big) \partial_{z}(u^s_0+\tilde{u}_0)|_{z=0}
}
+
\epsilon \partial_{y}\partial_{z}\big(\partial_{y}^{2}K(u^s_{0}+\tilde{{u}}_{0})\big) \partial_{z}\mu_\epsilon|_{z=0}
+
\epsilon \partial_{y}\partial_{z}(\partial_{y}^2K\mu_\epsilon ) \partial_{z}(u^s_0+\tilde{u}_0)
+\epsilon^2 \partial_{y}\partial_{z}
(\partial_{y}^2K\mu_\epsilon )\partial_{z}\mu_\epsilon
\\
&\quad
+\underline{
	\partial_{y}\partial_{z}\big(\partial_{y}K\partial_{y}(u^s_0+\tu_0)\big) \partial_{z}(u^s_0+\tilde{u}_0)|_{z=0}
}
+
\epsilon \partial_{y}\partial_{z}\big(\partial_{y}K\partial_{y}(u^s_{0}+\tilde{{u}}_{0})\big) \partial_{z}\mu_\epsilon|_{z=0}
+
\epsilon \partial_{y}\partial_{z}(\partial_{y}K\partial_{y}\mu_\epsilon ) \partial_{z}(u^s_0+\tilde{u}_0)
+\epsilon^2 \partial_{y}\partial_{z}
(\partial_{x}K\partial_{y}\mu_\epsilon )\partial_{y}\mu_\epsilon
	\, ,
\end{align*}
}
where the underlined terms in the above equation are new and different from those in \eqref{uve 6},
thus
	\begin{align}
	\begin{split}
		&\partial^6_{z}\mu^{\epsilon}|_{z=0}\\
		&=\sum_{\beta, \gamma}C_{\beta, \gamma}\partial^{\beta_{x} }
		\partial^{\beta_{y} }\partial^{\beta_{z} +1}\big(K(u^s_0+\tilde{{u}}^{\epsilon})\big)\partial^{\gamma_{x} }
		\partial^{\gamma_{y} }\partial^{\gamma_{z} +1}\mu^{\epsilon}\big|_{z=0}\\
		&\quad+
		\sum_{\beta, \gamma}C_{\beta, \gamma}\partial^{\beta_{x} }
		\partial^{\beta_{y} }\partial^{\beta_{z} +1}\mu^{\epsilon}\partial^{\gamma_{x} }
		\partial^{\gamma_{y} }\partial^{\gamma_{z} +1}\big(u^s_{0} + \tilde{u}_{0}\big)\big|_{z=0}\\
		&\quad+
		\sum_{\beta, \gamma}C_{\beta, \gamma}\partial^{\beta_{x} }
		\partial^{\beta_{y} }\partial^{\beta_{z} +1}\big(K\mu^{\epsilon}\big)\partial^{\gamma_{x} }
		\partial^{\gamma_{y} }\partial^{\gamma_{z} +1}\mu^{\epsilon}\big|_{z=0}\\
		&\quad 
		-\underline{4\partial_{x}\partial_{z}\tu_{0}\partial_{x}^{2}\partial_{z}\tilde{u}_{0}|_{z=0}}
		-\underline{4\partial_{x}\partial_{z}\big(K(u^s_{0}+\tilde{{u}}^{\epsilon}_{0})\big)\partial_{x}\partial_{y}\partial_{z}\tilde{u}_{0}|_{z=0}}
		-\underline{2\partial_{x}^2K\partial_{z}(u^s_0+\tu_0)\partial_{y}\partial_z\tu_0 |_{z=0}}
		-\underline{2\partial_{x}K\partial_{x}\partial_{z}(u^s_0+\tu_0)\partial_{y}\partial_z\tu_0 |_{z=0}}
		\\
		&\quad
		+\underline{2 \partial_{x}\partial_{y}\partial_{z}\tilde{{u}}_{0} \partial_{y}\partial_{z}\tilde{u}_{0}|_{z=0}}
		+\underline{2\epsilon \partial_{x}\partial_{y}\partial_{z}\big(K(u^s_{0}+\tilde{{u}}^{\epsilon}_{0})\big) \partial_{x}\partial_{z}\tilde{u}_{0}|_{z=0}}
		\\
		&\quad
			+\underline{
			\partial_{y}\partial_{z}\big(\partial_{x}^{2}K(u^s_0+\tu_0)\big) \partial_{z}(u^s_0+\tilde{u}_0)|_{z=0}
		}
		+\underline{
			\partial_{y}\partial_{z}\big(\partial_{x}K\partial_{x}(u^s_0+\tu_0)\big) \partial_{z}(u^s_0+\tilde{u}_0)|_{z=0}
		}
		\\
		&\quad
		+\underline{4\partial_{y}\partial_{z}\tu_{0}\partial_{x}\partial_{y}\partial_{z}\tilde{u}_{0}|_{z=0}}
		-\underline{4\partial_{y}\partial_{z}\big(K(u^s_{0}+\tilde{{u}}^{\epsilon}_{0})\big)\partial_{y}^{2}\partial_{z}\tilde{u}_{0}|_{z=0}}
		-\underline{2\partial_{y}^2K\partial_{z}(u^s_0+\tu_0)\partial_{y}\partial_z\tu_0 |_{z=0}}
		-\underline{2\partial_{y}K\partial_{y}\partial_{z}(u^s_0+\tu_0)\partial_{y}\partial_z\tu_0 |_{z=0}}
		\\
		&\quad
+\underline{2 \partial_{x}\partial_{y}\partial_{z}\tilde{{u}}_{0} \partial_{y}\partial_{z}\tilde{u}_{0}|_{z=0}}
		+\underline{2 \partial_{y}^{2}\partial_{z}\big(K(u^s_{0}+\tilde{{u}}^{\epsilon}_{0})\big)\partial_{y}\partial_{z}\tilde{u}_{0} |_{z=0}}
		\\
		&\quad
		+\underline{
			\partial_{y}\partial_{z}\big(\partial_{y}^{2}K(u^s_0+\tu_0)\big) \partial_{z}(u^s_0+\tilde{u}_0)|_{z=0}
		}
		+\underline{
			\partial_{y}\partial_{z}\big(\partial_{y}K\partial_{y}(u^s_0+\tu_0)\big) \partial_{z}(u^s_0+\tilde{u}_0)|_{z=0}
		}
		\, .
	\end{split}
\end{align}
where the summation is for the index $0 \leq\beta(\beta_{x}, \beta_{y}, \beta_{z}) \leq 3;  0 \leq\beta(\gamma_{x}, \gamma_{y}, \gamma_{z}) \leq 3; \beta+\gamma \leq 3.$ The new underlined term
means that the regularizing $\epsilon\partial_{x}^{2}(\tu, \tv)+\epsilon\partial_{y}^{2}(\tu, \tv)$
 term has an affect on the boundary. This is why we add a corrector.

More generally, for $6 \leq 2p \leq m$, we have that $\partial_{z}^{2p+2}\mu^{\epsilon}\big|_{z=0}$ is a liner combination of the terms of the form 
\begin{align*}
&	\prod\limits_{i=1}^{q_{1}}  \partial^{\beta^{i}}\partial_{z}\Big(  u^s_{0} + \tilde{u}_{0}  \Big) \Big|_{z=0}  \times 
	\prod\limits_{j=1}^{q_{2}}  \partial^{ \gamma^{i}}\partial_{z}\mu^{\epsilon}\big|_{z=0},
\\
&	\prod\limits_{i=1}^{q_{1}}  \partial^{ \beta^{i}}\partial_{z} \mu^{\epsilon} \Big|_{z=0}  \times 
	\prod\limits_{j=1}^{q_{2}}  \partial^{ \gamma^{i}}\partial_{z}\Big( K(u^s_{0} + \tilde{u}_{0}) \Big) \Big|_{z=0},
\\
&	\prod\limits_{i=1}^{q_{1}}  \partial^{ \beta^{i}}\partial_{z}\Big( K\mu^{\epsilon} \Big) \Big|_{z=0}  \times 
	\prod\limits_{j=1}^{q_{2}}  \partial^{ \gamma^{i}}\partial_{z}\mu^{\epsilon} \Big|_{z=0},
\end{align*}
and
\begin{align*}
	\prod\limits_{i=1}^{q_{1}}  \partial^{ \beta^{i}}\partial_{z}\Big( u^s_{0} + \tilde{u}_{0}  \Big) \Big|_{z=0}  \times 
	\prod\limits_{j=1}^{q_{2}}  \partial^{ \gamma^{i}}\partial_{z}\Big( K(u^s_{0} + \tilde{u}_{0}) \Big) \Big|_{z=0},
\end{align*}
where $\beta^{i}+\gamma^{j} \leq 2p-1$, and $\partial_{z}^{2p+2}\mu^{\epsilon}\big|_{z=0}$ is determined by $u^s_{0}+\tilde{{u}}^{\epsilon}_{0}, K(u^s_{0}+\tilde{{u}}^{\epsilon}_{0})$, and the low order derivatives of $ K\mu^{\epsilon}$ and  $\mu^{\epsilon}$.

We now construct a polynomial function $\tilde{\mu}^{\epsilon}$ on $z$ by the following Taylor expansion
\begin{align*}
\tilde{\mu}^{\epsilon}(x, y, z)=\sum^{\frac{m}{2}+1}_{p=3}\tilde{\mu}^{\epsilon, 2p}(x, y)\frac{z^{2p}}{(2p)!},
\end{align*}
where 		
\begin{align*}		
&\tilde{\mu}^{\epsilon, 6}(x, y)\\
&=			-\underline{4\partial_{x}\partial_{z}\tu_{0}\partial_{x}^{2}\partial_{z}\tilde{u}_{0}|_{z=0}}
-\underline{4\partial_{x}\partial_{z}\big(K(u^s_{0}+\tilde{{u}}^{\epsilon}_{0})\big)\partial_{x}\partial_{y}\partial_{z}\tilde{u}_{0}|_{z=0}}
-\underline{2\partial_{x}^2K\partial_{z}(u^s_0+\tu_0)\partial_{y}\partial_z\tu_0 |_{z=0}}
-\underline{2\partial_{x}K\partial_{x}\partial_{z}(u^s_0+\tu_0)\partial_{y}\partial_z\tu_0 |_{z=0}}
\\
&\quad
+\underline{2 \partial_{x}\partial_{y}\partial_{z}\tilde{{u}}_{0} \partial_{y}\partial_{z}\tilde{u}_{0}|_{z=0}}
+\underline{2\epsilon \partial_{x}\partial_{y}\partial_{z}\big(K(u^s_{0}+\tilde{{u}}^{\epsilon}_{0})\big) \partial_{x}\partial_{z}\tilde{u}_{0}|_{z=0}}
\\
&\quad
+\underline{
	\partial_{y}\partial_{z}\big(\partial_{x}^{2}K(u^s_0+\tu_0)\big) \partial_{z}(u^s_0+\tilde{u}_0)|_{z=0}
}
+\underline{
	\partial_{y}\partial_{z}\big(\partial_{x}K\partial_{x}(u^s_0+\tu_0)\big) \partial_{z}(u^s_0+\tilde{u}_0)|_{z=0}
}
\\
&\quad
+\underline{4\partial_{y}\partial_{z}\tu_{0}\partial_{x}\partial_{y}\partial_{z}\tilde{u}_{0}|_{z=0}}
-\underline{4\partial_{y}\partial_{z}\big(K(u^s_{0}+\tilde{{u}}^{\epsilon}_{0})\big)\partial_{y}^{2}\partial_{z}\tilde{u}_{0}|_{z=0}}
-\underline{2\partial_{y}^2K\partial_{z}(u^s_0+\tu_0)\partial_{y}\partial_z\tu_0 |_{z=0}}
-\underline{2\partial_{y}K\partial_{y}\partial_{z}(u^s_0+\tu_0)\partial_{y}\partial_z\tu_0 |_{z=0}}
\\
&\quad
+\underline{2 \partial_{x}\partial_{y}\partial_{z}\tilde{{u}}_{0} \partial_{y}\partial_{z}\tilde{u}_{0}|_{z=0}}
+\underline{2 \partial_{y}^{2}\partial_{z}\big(K(u^s_{0}+\tilde{{u}}^{\epsilon}_{0})\big)\partial_{y}\partial_{z}\tilde{u}_{0} |_{z=0}}
\\
&\quad
+\underline{
	\partial_{y}\partial_{z}\big(\partial_{y}^{2}K(u^s_0+\tu_0)\big) \partial_{z}(u^s_0+\tilde{u}_0)|_{z=0}
}
+\underline{
	\partial_{y}\partial_{z}\big(\partial_{y}K\partial_{y}(u^s_0+\tu_0)\big) \partial_{z}(u^s_0+\tilde{u}_0)|_{z=0}
}.
\end{align*}
Taking $\mu^\epsilon = \chi(z)\tilde{\mu}^{\epsilon}$ with $\chi \in C^\infty([0, +\infty[); \chi(z)=1, 0 \leq z \leq 1; \chi(z)=0, z\geq 2$, we complete the proof of the Corollary. 
\end{proof}

\section{Derivation of formal transformations}
In this appendix,  we will derive the formal transformations of system \eqref{regular-shear-prandtl} for $g^{n}$.     Define $g^{n}$ and the other quantities as follows
\begin{align*}
	&g^{n}=\bigg(\frac{\partial_{xy}^{n} u}{u_{z}^{s}+u_{z}}\bigg)_{z}, \qquad 
	\eta_{xz}=\frac{ u_{xz}}{u_{z}^{s}+u_{z}}, \qquad
	 \eta_{yz}
	=\frac{ u_{yz}}{u_{z}^{s}+u_{z}}, \qquad 
	\eta_{zz}
	=\frac{ u_{zz}^{s}+u_{zz}}{u_{z}^{s}+u_{z}}.
\end{align*}
Taking $\partial_{xy}^{n}$ in $\eqref{regular-shear-prandtl}_{1}$, we obtain
%\begin{equation}
%	\begin{cases}
%\partial_{t}\partial_{xy}^{n}u+(u^s+u)\partial_{x}\partial_{xy}^{n}u+K(u^s+u)\partial_{y}\partial_{xy}^{n}u+\partial_{xy}^{n}w\partial_{z}(u^s+u)
%-\partial_{z}^{2}\partial_{xy}^{n} u-\epsilon\partial_{x}^{2}\partial_{xy}^{n} u-\epsilon\partial_{y}^{2}\partial_{xy}^{n} u
%\\
%=
%-\sum_{i=1}^{n}C^i_n\partial_{xy}^i {u} \, \partial^{n  -i}_{xy} \partial_{x}{u}
%-\sum_{i=1}^{n}C^i_n\partial_{xy}^i (Ku) \, \partial^{n  -i}_{xy} \partial_{y}{u}
%-\sum_{i=1}^{n}C^i_n u^s\partial_{xy}^i K \, \partial^{n  -i}_{xy} \partial_{y}{u}
%\\
%\quad
%-\sum_{i=1}^{n}C^i_n\partial_{xy}^i {\varphi} \,\partial^{n  -i}_{xy} {w},
%	\end{cases}
%\label{C1C}
%\end{equation}
\begin{equation}
	\begin{cases}
		\partial_{t}\partial_{xy}^{n}u+(u^s+u)\partial_{x}\partial_{xy}^{n}u+K(u^s+u)\partial_{y}\partial_{xy}^{n}u+\partial_{xy}^{n}w\partial_{z}(u^s+u)
		-\partial_{z}^{2}\partial_{xy}^{n} u-\epsilon\partial_{x}^{2}\partial_{xy}^{n} u-\epsilon\partial_{y}^{2}\partial_{xy}^{n} u
		\\
		=
		-\sum_{i=1}^{n}C^i_n\partial_{xy}^i {u} \, \partial^{n  -i}_{xy} \partial_{x}{u}
		-\sum_{i=1}^{n}C^i_n\partial_{xy}^i (K(u^s+u)) \, \partial^{n  -i}_{xy} \partial_{y}{u}
		-\sum_{i=1}^{n}C^i_n\partial_{xy}^i {\varphi} \,\partial^{n  -i}_{xy} {w},
	\end{cases}
	\label{C1}
\end{equation}
where the notation tilde $\sim$ and the superscript $\epsilon$ are dropped.
Dividing \eqref{C1} with $(u^{s}_{z}+u_{z})$ ,  taking $\partial_{z}$ on the resulting equations, we have
\begin{align}
\begin{split}
	&\partial_{z}\bigg(\frac{\partial_{t}\partial_{xy}^{n} u}{u_{z}^{s}+u_{z}}\bigg)
	+(u+u^{s})\partial_{z}\bigg(\frac{\partial_{x}\partial_{xy}^{n} u}{u_{z}^{s}+u_{z}}\bigg)
	+K(u^s+u)\partial_{z}\bigg(\frac{\partial_{y}\partial_{xy}^{n} u}{u_{z}^{s}+u_{z}}\bigg)
	\\
	&=-K\partial_{y}\partial_{xy}^{n} u+\partial_{y}\partial_{xy}^{n}\big(K(u^s+u)\big)+\partial_{z}\bigg(\frac{\partial_{z}^{2}\partial_{xy}^{n} u+\epsilon\partial_{x}^{2}\partial_{xy}^{n} u+\epsilon\partial_{y}^{2}\partial_{xy}^{n} u}{u_{z}^{s}+u_{z}}\bigg)
	\\
	&\quad+
	\partial_{z}\left\lbrace \bigg(-\sum_{i=1}^{n}C^i_n\partial_{xy}^i {u} \, \partial^{n  -i}_{xy} \partial_{x}{u}
	-\sum_{i=1}^{n}C^i_n\partial_{xy}^i (K(u^s+u)) \, \partial^{n  -i}_{xy} \partial_{y}{u}
	-\sum_{i=1}^{n}C^i_n\partial_{xy}^i {\varphi} \,\partial^{n  -i}_{xy} {w}\bigg) \bigg{/} (u_{z}^{s}+u_{z})\right\rbrace .
\end{split}
\label{C2}
\end{align}
Directly compute some terms in equality \ref{C2} as follows,
\begin{align*}
	\partial_{z}\bigg(\frac{\partial_{t}\partial_{xy}^{n} u}{u_{z}^{s}+u_{z}}\bigg) 
	&= \partial_z\bigg(\partial_t \frac{{\partial^n_{xy}u} }{u^s_z + {u}_z} + \partial_z^{-1} g^{n} \, \frac{\partial_t  {u}_z + \partial_t u^s_z }{u^s_z + {u}_z} \bigg)\\
	&= \partial_t g^{n} + \partial_z\bigg( \partial_z^{-1} g^{n}  \frac{ \partial_t u^s_z+\partial_t  {u}_z}{u^s_z + \tilde{u}_z} \bigg),\\
	(u^s+u)\partial_{z}\bigg(\frac{\partial_{x}\partial_{xy}^{n} u}{u_{z}^{s}+u_{z}}\bigg) 
& 
= (u^s + {u})\bigg\{\partial_x\partial_z\left(\frac{{\partial^n_{xy} u}}{u^s_z + {u}_z}\right) +\partial_z \left(\frac{{\partial^n_{xy}u}}{u^s_z +{u}_z}\right)  \frac{{u}_{xz}}{u^s_z+ {u}_z}  + \left(\frac{\partial^n_{xy}{u}}{u^s_z + {u}_z}\right)\partial_z\left( \frac{{u}_{xz}}{u^s_z + {u}_z}\right)\bigg\}\\
& =  (u^s + {u})(\partial_x g^{n} + g^{n} \eta_{xz} + \partial_z^{-1} g^{n}  \partial_z \eta_{xz}),\\
	K(u^s+u)\partial_{z}\bigg(\frac{\partial_{y}\partial_{xy}^{n} u}{u_{z}^{s}+u_{z}}\bigg) 
& =  K(u^s + {u})(\partial_y g^{n} + g^{n} \eta_{yz} + \partial_z^{-1} g^{n}  \partial_z \eta_{yz}),
\\
	\partial_z\left(\frac{{\partial^2_{z} {\partial^n_{xy} u}}}{u^s_z + {u}_z}\right)
	&=\partial^2_z g^{n}  + 2 \partial_z g^{n} \eta_{zz} +2 g^{n} \partial_z \eta_{zz} -  4g^{n}\eta_{zz}^2- 8 \partial_z^{-1} g^{n} \eta_{zz} \partial_z \eta_{zz}
	\\
	&\quad + \partial_z\bigg(\partial_z^{-1} g^{n}\, \frac{u_{zzz}^s + {u}_{zzz}}{u^s_z + {u}_z}\bigg),
\\
	\partial_z\left(\frac{{\partial^2_{x} {\partial^n_{xy} u}}}{u^s_z + {u}_z}\right)
	&=\partial^2_x g^{n}  + 2 \partial_x g^{n} \eta_{xz} +2 \partial_{x}\partial_{z}^{-1}g^{n} \partial_z \eta_{xz} -  2g^{n}\eta_{xz}^2- 4 \partial_z^{-1} g^{n} \eta_{xz} \partial_z \eta_{xz}
	\\
	&\quad + \partial_z\bigg(\partial_z^{-1} g^{n}\, \frac{u_{xxz}^s + {u}_{zzz}}{u^s_z + {u}_z}\bigg),
\\
	\partial_z\left(\frac{{\partial^2_{y} {\partial^n_{xy} u}}}{u^s_z + {u}_z}\right)
	&=\partial^2_y g^{n}  + 2 \partial_y g^{n} \eta_{yz} +2 \partial_{y}\partial_{z}^{-1}g^{n} \partial_z \eta_{yz} -  2g^{n}\eta_{yz}^2- 4 \partial_z^{-1} g^{n} \eta_{yz} \partial_z \eta_{yz}
	\\
	&\quad + \partial_z\bigg(\partial_z^{-1} g^{n}\, \frac{u_{yyz}^s + {u}_{zzz}}{u^s_z + {u}_z}\bigg).
\end{align*}
Next, Combining the above  estimations yields the following
formal transformations of equation $\eqref{regular-shear-prandtl}_{1}$ for $g^{n}$,
\begin{align*}
	\partial_{t}g^{n}+(u^{s}+u)\partial_{x}g^{n}+K(u^s+u)\partial_{y}g^{n}-\partial_{z}^{2}g^{n}-\epsilon\partial_{x}^{2}g^{n}-\epsilon\partial_{y}^{2}g^{n}-2\epsilon\partial_{x}\partial_{z}^{-1}g^{n}\partial_{z}\eta_{xz}-2\epsilon\partial_{y}\partial_{z}^{-1}g^{n}\partial_{z}\eta_{yz}
	=M(g^{n}),
\end{align*}
with $K(g^{n})=\sum_{i=1}^{7}K_{i}(g^{n})$, where
\begin{align*}
	M_{1}(g^{n})&=-\big\lbrace (u^s + {u})(g^{n} \eta_{xz} + \partial_z^{-1} g^{n}  \partial_z \eta_{xz}) +K(u^s + {u})(  g^{n} \eta_{yz} + \partial_z^{-1} g^{n}  \partial_z \eta_{yz})\big\rbrace ,
\\
	M_{2}(g^{n})&= 2 \partial_z g^{n} \eta_{zz} +2 g^{n} \partial_z \eta_{zz} -  4g^{n}\eta_{zz}^2- 8 \partial_z^{-1} g^{n} \eta_{zz} \partial_z \eta_{zz},
\\
	M_{3}(g^{n})&=\epsilon\big(
	2 \partial_x g^{n} \eta_{xz}  -  2g^{n}\eta_{xz}^2- 4 \partial_z^{-1} g^{n} \eta_{xz} \partial_z \eta_{xz}
	\big) ,
\\
M_{4}(g^{n})&=\epsilon\big(
 2 \partial_y g^{n} \eta_{yz}  -  2g^{n}\eta_{yz}^2- 4 \partial_z^{-1} g^{n} \eta_{yz} \partial_z \eta_{yz}
\big) ,
	\\
	M_{5}(g^{n})&=-K\partial_{y}\partial_{xy}^{n} u+\partial_{y}\partial_{xy}^{n}\big(K(u^s+u)\big),
\\
	M_{6}(g^{n})&=\partial_z\left\lbrace \partial_z^{-1} g^{n} \bigg(  \frac{(u^s+u)\partial_{x}\varphi+K(u^s+u)\partial_{y}\varphi+w(u_{zz}^s+\partial_{z}\varphi)}{u^s_z + \tilde{u}_z}-\partial_{y}K(u^s+u)\bigg)\right\rbrace ,
	\\
	M_{7}(g^{n})&=\partial_{z}	\left\lbrace \bigg(-\sum_{i=1}^{n}C^i_n\partial_{xy}^i {u} \, \partial^{n  -i}_{xy} \partial_{x}{u}
	-\sum_{i=1}^{n}C^i_n\partial_{xy}^i {(K(u^s+u))} \, \partial^{n  -i}_{xy} \partial_{y}{u}
	-\sum_{i=1}^{n}C^i_n\partial_{xy}^i {\varphi} \,\partial^{n  -i}_{xy} {w}\bigg) \bigg{/} (u_{z}^{s}+u_{z})\right\rbrace ,
\end{align*}
where we have used the relation for $K_{6}(g^{n})$,
\begin{align*}
	&\partial_t u_{z}^{s} +\partial_{t} u_{z}
	- u^{s}_{zzz}-u_{zzz} -\epsilon u_{xxz}+\epsilon u_{yyz}
	\\
	&=- (u^s + u) \partial_x\varphi - K(u^s + u) \partial_y\varphi-{w} (u^s_{zz} +  \partial_{z}\varphi) 
	+(u^{s}_{z}+u_{z})\partial_{y}K(u^s+u)
%	- (v^s_{z} + v_{z}) \partial_y{u}-\partial_{z}{w} (u^s_{z}+  {u}_{z})
	.
\end{align*}

Finally, we only need to verify the boundary condition $\partial_{z} g^{n}\big|_{z=0}=0$ in the above equation. Noticing that 
\begin{align*}
	\left(\frac{{\partial^2_{z} {\partial^n_{xy} u}}}{u^s_z + {u}_z}\right)
	&=\partial_z g^{n}  + 2  g^{n} \eta_{zz}  -4 \partial_z^{-1} g^{n} \eta_{zz}^2
	+\partial_z^{-1} g^{n}\bigg( \frac{u_{zzz}^s + {u}_{zzz}}{u^s_z + {u}_z}\bigg),
\end{align*}
and 
\begin{align*}
&\eta_{zz}|_{z=0}\frac{ u_{zz}}{u_{z}^{s}+u_{z}}\bigg|_{z=0}=0, \\
&g^{n}|_{z=0}=\frac{\partial_{xy}^{n} u}{u_{z}^{s}+u_{z}}
\bigg|_{z=0}=0,
\end{align*}
then we have
\begin{align*}
\partial_{z} g^{n}\big|_{z=0}=0.
\end{align*}
\end{appendices}

\section*{Statement about conflicting interests}
The authors declare that there are no conflicts of interest.

\section*{Acknowledgements}
Yuming Qin was supported  by the NNSF of China with contract number 12171082, the fundamental
research funds for the central universities with contract numbers 2232022G-13, 2232023G-13, 2232024G-13.

	\phantomsection
\addcontentsline{toc}{section}{\refname}

\end{document}